\DeclareSymbolFont{CMlargesymbols}{OMX}{cmex}{m}{n}
\DeclareMathDelimiter{(}{\mathopen} {operators}{"28}{CMlargesymbols}{"00}
\DeclareMathDelimiter{)}{\mathclose}{operators}{"29}{CMlargesymbols}{"01}
\DeclareMathAlphabet\mathcal{OMS}{cmsy}{m}{n}
\SetMathAlphabet\mathcal{bold}{OMS}{cmsy}{b}{n}
\newcommand{\ignore}[1]{}
\numberwithin{figure}{section}
\numberwithin{table}{section}
\newcommand{\bigO}{\mathcal{O}}
\newcommand{\poly}{\mathrm{poly}}
\newcommand{\width}{\mathrm{width}}
\newcommand{\depth}{\mathrm{depth}}
\renewcommand{\hat}{\widehat}
\newcommand{\Id}{\mathrm{Id}}
\DeclareMathOperator*{\argmin}{argmin}
\renewcommand{\bar}{\overline}
\newcommand{\dom}{\Omega}
\newcommand{\hu}{\hat{u}}
\newcommand{\ltwo}[1]{\norm{#1}_{L^2(\Omega)}}
\newcommand{\T}{\mathbb{T}}
\renewcommand{\R}{\mathbb{R}}
\newcommand{\N}{\mathbb{N}}
\renewcommand{\S}{\mathscr{S}}
\renewcommand{\L}{\mathcal{L}}
\renewcommand{\div}{{\mathrm{div}}}
\newcommand{\cA}{\mathcal{A}}
\newcommand{\cB}{\mathcal{B}}
\newcommand{\cC}{\mathcal{C}}
\newcommand{\cD}{\mathcal{D}}
\newcommand{\cE}{\mathcal{E}}
\newcommand{\cF}{\mathcal{F}}
\newcommand{\cG}{\mathcal{G}}
\newcommand{\cH}{\mathcal{H}}
\newcommand{\cJ}{\mathcal{J}}
\newcommand{\cK}{\mathcal{K}}
\newcommand{\cL}{\mathcal{L}}
\newcommand{\cN}{\mathcal{N}}
\newcommand{\cP}{\mathcal{P}}
\newcommand{\cQ}{\mathcal{Q}}
\newcommand{\cR}{\mathcal{R}}
\newcommand{\cS}{\mathcal{S}}
\newcommand{\cU}{\mathcal{U}}
\newcommand{\cX}{\mathcal{X}}
\newcommand{\bmalpha}{{\bm{\alpha}}}
\newcommand{\inspace}{\mathcal{X}}
\newcommand{\outspace}{\mathcal{Y}}
\newcommand{\uhat}{{\widehat{u}}}
\renewcommand{\hat}{\widehat}
\renewcommand{\tr}{{{\tau}}}
\newcommand{\trunk}{{\bm{\tau}}}
\newcommand{\branch}{{\bm{\beta}}}
\newcommand{\map}{\Psi}
\renewcommand{\S}{\mathcal{S}} 
\newcommand{\Et}{\mathcal{E}_T} 
\newcommand{\Eg}{\mathcal{E}} 
\newcommand{\E}[1]{{\mathbb{E}\left[ #1 \right]}} 
\newcommand{\df}{\mathcal{L}}
\newcommand{\bu}{u}
\newcommand{\f}{f}
\newcommand{\bv}{v}
\newcommand{\rpde}{\mathcal{R}_{\mathrm{PDE}}}
\newcommand{\rdiv}{\mathcal{R}_{\mathrm{div}}}
\newcommand{\rs}{\mathcal{R}_{s}}
\newcommand{\rsu}{\mathcal{R}_{s,u}}
\newcommand{\rsp}{\mathcal{R}_{s,p}}
\newcommand{\rsgu}{\mathcal{R}_{s,\nabla u}}
\newcommand{\rt}{\mathcal{R}_{t}}
\newcommand{\hn}{\hat{n}}
\newcommand{\qu}[1]{\mathcal{Q}_{#1}}
\def\sA{{\mathbb{A}}}
\def\sC{{\mathbb{C}}}
\def\sD{{\mathbb{D}}}
\def\sP{{\mathbb{P}}}
\def\sR{{\mathbb{R}}}
\newcommand{\1}{\mathbbm{1}}
\newcommand{\sgn}[1]{\mathrm{sgn}\left( #1 \right)}
\newtheorem{theorem}{Theorem}[section]
\newtheorem{lemma}[theorem]{Lemma}
\newtheorem{proposition}[theorem]{Proposition}
\newtheorem{corollary}[theorem]{Corollary}
\newtheorem{example}[theorem]{Example}
\newtheorem{remark}[theorem]{Remark}
\newtheorem{assumption}[theorem]{Assumption}
\newtheorem{definition}[theorem]{Definition}
\newtheorem{question}{Question}
\title[Numerical analysis of PINNs]{Numerical analysis of physics-informed neural networks and related models in physics-informed machine learning}
\author[T. De Ryck and S. Mishra]{Tim De Ryck\\
Seminar for Applied Mathematics, ETH Zürich\\ Rämistrasse 101, 8092 Zürich, Switzerland\\
E-mail: {tim.deryck@math.ethz.ch}\\
\and
Siddhartha Mishra\\
Seminar for Applied Mathematics \& ETH AI Center, ETH Zürich\\ Rämistrasse 101, 8092 Zürich, Switzerland\\
E-mail: {siddhartha.mishra@math.ethz.ch}}
\begin{document}

\label{firstpage}
\maketitle

\vspace{2.5cm}

\begin{abstract}
Physics-informed neural networks (PINNs) and their variants have been very popular in recent years as algorithms for the numerical simulation of both forward and inverse problems for partial differential equations. This article aims to provide a comprehensive review of currently available results on the numerical analysis of PINNs and related models that constitute the backbone of physics-informed machine learning. We provide a unified framework in which analysis of the various components of the error incurred by PINNs in approximating PDEs can be effectively carried out. A detailed review of available results on approximation, generalization and training errors and their behavior with respect to the type of the PDE and the dimension of the underlying domain is presented. In particular, the role of the regularity of the solutions and their stability to perturbations in the error analysis is elucidated. Numerical results are also presented to illustrate the theory. We identify training errors as a key bottleneck which can adversely affect the overall performance of various models in physics-informed machine learning.  
\end{abstract}

\newpage

\tableofcontents 

\section{Introduction}
Machine learning, particularly deep learning \cite{DLbook}, has permeated into every corner of modern science, technology and society, whether it is computer vision, natural language understanding, robotics and automation, image and text generation or protein folding and prediction. 

The application of deep learning to computational science and engineering, in particular, to the numerical solution of (partial) differential equations has gained enormous momentum in recent years. One notable avenue highlighting this application falls within the framework of \emph{supervised learning}, i.e., approximating the solutions of PDEs by deep learning models from (large amounts of) data about the underlying PDE solutions. Examples include high-dimensional parabolic PDEs \cite{E1} and references therein, parametric elliptic \cite{SchwabZech2019,kuty} or hyperbolic PDEs \cite{de2021analysis,lye2020deep} and learning the solution operator directly from data \cite{chen1995universal,lu2021machine,FNO,cno}
and references therein. However, generating and accessing large amounts of data on solutions of PDEs requires either numerical methods or experimental data, both of which can be (prohibitively) expensive. Consequently, there is a need for so-called \emph{unsupervised} or \emph{semi-supervised} learning methods which require only small amounts of data. 

Given this context, it would be useful to solve PDEs using machine learning models and methods, directly from the underlying physics (governing equations) and without having to access data about the underlying solutions. Such methods can be loosely termed as constituting \emph{physics-informed machine learning}. 

The most prominent contemporary models in physics-informed machine learning are \emph{physics-informed neural networks} or PINNs. The idea behind PINNs is very simple: as neural networks are universal approximators of large variety of function classes (even measurable functions), one considers the strong form of the PDE residual within the \emph{ansatz space} of neural networks and minimizes this residual via (stochastic) gradient descent to obtain a neural network that approximates the solution of the underlying PDE. This framework was already considered in the 1990s in \citet{DPT,Lag1,Lag2} and references therein and these papers should be considered as the progenitors of PINNs.  However, the modern version of PINNs was introduced and they were named as such more recently in \citet{KAR1,KAR2}. Since then, there have been an explosive growth in the literature on PINNs and they have been applied in numerous settings, both in the solution of the forward as well as inverse problems for PDEs and related equations (SDEs, SPDEs), see \citet{KarRev,cuomo} for extensive reviews of the available literature on PINNs.

However, PINNs are not the only models within the framework of physics-informed machine learning. One can consider other forms of the PDE residual such as the variational form, resulting in VPINNs \cite{kharazmi2019variational}, the weak form resulting in wPINNs \cite{deryck2022wpinns} or minimizing the underlying energy resulting in DeepRitz method \cite{DR1}. Similarly, one can use Gaussian processes \cite{GP}, DeepONets \cite{lu2021machine} or neural operators \cite{NO} as ansatz spaces to realize alternative models for phyiscs-informed machine learning. 

Given the exponentially growing literature on PINNs and related models in physics-informed machine learning, it is essential to ask whether these methods possess rigorous mathematical guarantees on their performance and whether one can analyze these methods in a manner that is analogous to the huge literature on the analysis of traditional numerical methods such as finite differences, finite elements, finite volumes and spectral methods. Although the overwhelming focus of research has been on the wide-spread applications of PINNs and its variants in different domains in science and engineering, a significant amount of papers rigorously analyzing PINNs has emerged in recent years. The aim of this article is to review the available literature on the \emph{numerical analysis of PINNs and related models that constitute phyiscs-informed machine learning.} Our goal is to critically analyze PINNs and its variants with a view to ascertain when they can be applied and what are the limits to their applicability. To this end, we start by presenting the formulation for physics-informed machine-learning in terms of the underlying PDEs, their different forms of residuals and the approximating ansatz spaces in Section \ref{sec:2}. In Section \ref{sec:analysis}, we outline the main components of the underlying errors with physics-informed machine learning and the resulting approximation, stability, generalization and training errors are analyzed in Sections \ref{sec:approximation-error}, \ref{sec:stability}, \ref{sec:generalization}  and \ref{sec:training}, respectively.

\section{Physics-informed machine learning}
\label{sec:2}

In this section, we set the stage for the rest of the article. In Section \ref{sec:setting}, we introduce an abstract PDE setting and consider a number of important examples of PDEs that will be used in the numerical analysis later on. Next, different model classes are introduced (Section \ref{sec:models}). Variations of physics-informed learning are then introduced based on different formulations of PDEs in Section \ref{sec:residuals}, of which the resulting residuals will constitute the physics-informed loss function by being discretized (Section \ref{sec:quad}) and optimized (Section \ref{sec:intro-optimization}). Finally, a summary of the whole method is given in Section \ref{sec:summary}. 

\subsection{PDE setting}\label{sec:setting}

Throughout this article, we consider the following setting. 
Let $X,Y,W$ be separable Banach spaces with norms $\| \cdot \|_{X}$, $\|\cdot\|_{Y}$ resp. $\|\cdot\|_{W}$, and analogously let $X^{\ast} \subset X$, $Y^{\ast} \subset Y$ and $W^*\subset W$ be closed subspaces with norms $\|\cdot \|_{X^{\ast}}$, $\|\cdot\|_{Y^{\ast}}$ and $\|\cdot\|_{W^{\ast}}$, respectively. Typical examples of such spaces include $L^p$ and Sobolev spaces. We consider PDEs of the following abstract form,
\begin{equation}
    \label{eq:pde}
    \cL[u] = f, \qquad \cB[u] = g,
\end{equation}
where $\cL: X^{\ast} \to Y^{\ast}$ is a \emph{differential operator} and $f \in Y^{\ast}$ is an \emph{input} or \emph{source} function. The boundary conditions (including the initial condition for time-dependent PDEs) are prescribed by the \emph{boundary operator} $\cB: X^{\ast} \to W^{\ast}$ and the boundary data $g\in W^*$. 
We assume that for all $f \in Y^{\ast}$ there exists a unique $u \in X^{\ast}$ such that \eqref{eq:pde} holds. Finally, we assume that for all $u\in X^*$ and $f\in Y^*$ it holds that
\begin{equation}
\label{eq:assm1}
\|\cL[u]\|_{Y^{\ast}} < +\infty, \qquad \|f\|_{Y^{\ast}} < +\infty, \qquad \|\cB[u]\|_{W^{\ast}} < +\infty
\end{equation}
We will also denote the domain of $u$ as $\Omega$, where either $\Omega = D \subset \R^d$ for time-independent PDEs or $\Omega = D \times [0,T] \subset \R^{d+1}$ for time-dependent PDEs. 


\subsubsection{Forward problems}\label{sec:forward}

The \emph{forward problem} for the abstract PDE \eqref{eq:pde} amounts to the following: given a source $f$ and boundary conditions $g$ (and potentially another parameter function $a$ such that $\cL := \cL_a$), find the solution $u$ of the PDE. This can be summarized as finding an \emph{operator} that maps an input function $v\subset \{f, g, a\} \subset \cX$ to the corresponding solution $u\in X^*$ of the PDE, 
\begin{eqnarray}
    \cG : \cX \to X^*: v \mapsto \cG[v] = u. 
\end{eqnarray}
Note that it can also be interesting to determine the whole operator $\cG$ rather than just the function $u$. Finding a good approximation of $\cG$ is referred to as \emph{operator learning}. A first concrete example of an operator would be the mapping between the input or source function $f$ and the corresponding solution $u$, and hence $\cX = Y^*$ and $v=f$, 
\begin{eqnarray}
    \cG: Y^* \to X^*: f \mapsto \cG[f] = u. 
\end{eqnarray}
Another common example for a time-dependent PDE, where $\cB[u] = g$ entails the prescription of the initial condition $u(\cdot, 0) = u_0$, is the \emph{solution operator} that maps the initial condition $u_0$ to the solution of the PDE, 
\begin{eqnarray}
    \cG : W^* \to X^* : u_0 \mapsto \cG[u_0] = u, 
\end{eqnarray}
where $\cX = W^*$ and $v=u_0$. Alternatively one can also consider the mapping $u_0 \mapsto u(\cdot, T)$. 

A final example of operator learning can be found in \emph{parametric PDEs}, where the differential operator, source function, initial condition or boundary condition depends on scalar parameters or a parameter function. Although solving parametric PDEs is usually not called operator learning, it is evident that it can be identified as such, with the only difference being that the input space is not necessarily infinite-dimensional. 

In what follows we give examples of PDEs \eqref{eq:pde} for which we will consider the forward problem later onwards. 

\begin{example}[Semilinear heat equation]\label{def:semilinear-heat}
We first consider the set-up of the semilinear heat equation. Let $D \subset \R^d$ be an open connected bounded set with a continuously differentiable boundary $\partial D$. The semi-linear parabolic equation is then given by,
\begin{equation}
    \label{eq:heat}
    \begin{cases}
     u_t = \Delta u + f(u),  &\forall x\in D,~ t \in (0,T), \\
    u(x,0) = u_0(x),  &\forall x \in D, \\
    u(x,t)= 0,  &\forall x\in \partial D, ~ t \in (0,T). 
    \end{cases}
\end{equation}
Here, $u_0 \in C^k(D)$, $k\geq 2$, is the initial data, $u \in C^k([0,T] \times D)$ is the classical solution and $f:\R \times \R$ is the non-linear source (reaction) term. We assume that the non-linearity is globally Lipschitz i.e, there exists a constant $C_f>0$ such that 
\begin{equation}
    \label{eq:assf}
    |f(v) - f(w)| \leq C_f|v-w|, \quad \forall v,w \in \R.
\end{equation}
In particular, the homogeneous linear heat equation with $f(u) \equiv 0$ and the linear source term $f(u) = \alpha u$ are examples of \eqref{eq:heat}. Semilinear heat equations with globally Lipschitz nonlinearities arise in several models in biology and finance \cite{Jent1}. 
The existence, uniqueness and regularity of the semi-linear parabolic equations with Lipschitz non-linearities such as \eqref{eq:heat} can be found in classical textbooks such as \cite{Frdbook}. 
\end{example}

\begin{example}[Navier-Stokes equations]\label{def:NS-equation}
We consider the well-known incompressible Navier-Stokes equations \citep{temam2001navier} and references therein,
\begin{equation}\label{eq:navier-stokes}
   \begin{cases} u_t + u\cdot \nabla u + \nabla p = \nu
  \Delta u  &\text{in } D \times [0,T] ,\\
  \div(u) = 0 &\text{in } D \times [0,T],\\
  u(t=0) = u_0 &\text{in }  D.
  \end{cases}
\end{equation}
Here, $u:D\times [0,T] \to \mathbb{R}^d$ is the fluid velocity, $p:D\times [0,T]\to\mathbb{R}$ is the pressure and $u_0:D \to \mathbb{R}^d$ is the initial fluid velocity. The viscosity is denoted by $\nu\geq 0$. 
For the rest of the paper, we consider the Navier-Stokes equations \eqref{eq:navier-stokes} on the $d$-dimensional torus $D = \mathbb{T}^d = [0,1)^d$ with periodic boundary conditions. 

\end{example}

\begin{example}[Viscous and inviscid scalar conservation laws]\label{def:scl}
We consider the following one-dimensional version of \emph{viscous scalar conservation laws} as a model problem for quasilinear, convection-dominated diffusion equations,
\begin{equation}
\label{eq:vscl}
\begin{aligned}
u_t + f(u)_x &= \nu u_{xx}, \quad \forall x\in (0,1),~t \in [0,T], \\
u(x,0) &= u_0(x), \quad \forall x \in (0,1). \\
u(0,t) &= u(1,t) \equiv 0, \quad \forall t \in [0,T].
\end{aligned}
\end{equation}
Here, $u_0 \in C^k([0,1])$, for some $k \geq 1$, is the initial data and we consider zero Dirichlet boundary conditions. Note that $0 < \nu \ll 1$ is the viscosity coefficient. The flux function is denoted by $f\in C^k(\R;\R)$. One can follow standard textbooks such as \cite{GRbook} to conclude that as long as $\nu > 0$, there exists a classical solution $u \in C^k([0,T)\times [0,1])$ of the viscous scalar conservation law \eqref{eq:vscl}. 

It is often interesting to set $\nu=0$ and consider the \emph{inviscid scalar conservation law},
\begin{eqnarray}\label{eq:scl}
    u_t + f(u)_x = 0. 
\end{eqnarray}
In this case, the solution might not be continuous anymore as it can develop shocks, even for smooth initial data \cite{holden2015front}. 
\end{example}

\begin{example}[Poisson's equation]\label{def:poisson}
Finally, we consider a prototypical elliptic PDE. Let $\Omega\subset \R^d$ be a bounded open set that satisfies the exterior sphere condition, let $f\in C^1(\Omega)\cap L^\infty(\Omega)$ and let $g\in C(\partial\Omega)$. Then there exists $u\in C^2(\Omega)\cap C(\overline{\Omega})$ that satisifies Poisson's equation,
\begin{equation}\label{eq:poisson}
        \begin{cases}
            -\Delta u=f &\text{in }\Omega, \\
            u=g &\text{on }\partial\Omega.
        \end{cases}
\end{equation}
Moreover, if $f$ is smooth then $u$ is smooth in $\Omega$. 
\end{example}

\subsubsection{Inverse problems}\label{sec:inverse}

Secondly, we consider settings where one does not have complete information on the inputs to the PDE \eqref{eq:pde} (e.g. initial data, boundary conditions and parameter coefficients). As a result, the forward problem can not be solved uniquely. However, when one has access to (noisy) data for (observables of) the underlying solution, one can attempt to determine the unknown inputs of the PDEs and consequently its solution. Problems with the aim to recover the PDE input based on data are referred to as \emph{inverse problems}. 

We will mainly consider one kind of inverse problem, namely the \emph{unique continuation problem} (for time-dependent problems also known as \emph{data assimilation problem}). Recall the abstract PDE $\cL[u] = \f$ in $\dom$ \eqref{eq:pde}. Moreover, we adopt the assumptions \eqref{eq:assm1} of the introduction of this chapter. The difference with the forward problem is that we now have incomplete information on the boundary conditions. As a result, a unique solution to the PDE does not exist. Instead, we assume that we have access to (possibly noisy) measurements of a certain observable
\begin{equation}\label{eq:dt}
    \map(u)= g \qquad \text{in } \dom',
\end{equation}
where $\dom'\subset \dom$ is the observation domain, $g$ is the measured data and $\map:X^*\to Z^*$, where $Z^*$ is some Banach space. We make the assumption that 
\begin{equation}
\label{eq:assm2-inv}
\begin{aligned}
&(H3): \quad \|\map(\bu)\|_{Z^{\ast}} < +\infty, \quad \forall~ \bu \in X^{\ast}, ~{\rm with}~\|\bu\|_{X^{\ast}} < +\infty. \\
&(H4):\quad \|g\|_{Z^{\ast}} < +\infty. 
\end{aligned}
\end{equation}

If $\dom'$ includes the boundary of $\dom$, meaning that one has measurements on the boundary, then it is again feasible to just use the standard framework for forward problems. However, this is often not possible. In heat transfer problems, for example, the maximum temperature will be reached at the boundary. As a result, it might be too hot near the boundary to place a sensor there and one will only have measurements that are far enough away from the boundary. Fortunately, one can adapt the physics-informed framework to retrieve an approximation to $u$ based on $f$ and $g$. 

In analogy with operator learning for forward problems, one can also attempt to entirely learn the \emph{inverse operator}, 
\begin{equation}
    \cG^{-1} : Z^* \to X^* : \Psi[u] = g \mapsto \cG^{-1}[g] = v, 
\end{equation}
where $v$ could for instance be the boundary conditions, the parameter function $a$ or the input/source function $f$. 

\begin{example}[Heat equation]\label{ex:heat-inv}
We revisit the heat equation $u_t - \Delta u = f$ with zero Dirichlet boundary conditions and $f\in L^2(\Omega)$, where $\Omega = D\times (0,T)$. We will assume that $u \in H^1(((0,T);H^{-1}(D)) \cap L^2((0,T);H^1(D))$ will solve the heat equation in a weak sense. The heat equation would have been well-posed if the initial conditions i.e $u_0 = u(x,0)$ had been specified. Recall that the aim of the {data assimilation} problem is to infer the initial conditions and the entire solution field at later times from some measurements of the solution in time. To model this, we consider the following \emph{observables}:
\begin{equation}
    \label{eq:dtht}
    \Psi[u] := u = g, \quad \forall (x,t) \in D^{\prime} \times (0,T) =: \Omega',
\end{equation}
for some open, simply connected observation domain $D^{\prime} \subset D$ and for $g \in L^2(\Omega')$.
Thus solving the data assimilation problem for the heat equation amounts to finding the solution $u$ of the heat equation in the whole space-time domain $\Omega = D\times (0,T)$, given data on the observation sub-domain $\Omega' = D'\times (0,T)$. 
\end{example}

\begin{example}[Poisson equation]\label{ex:poisson-inv}
We revisit the Poisson equation (Example \ref{def:poisson}), but now with data on a subdomain of $\dom$, 
\begin{eqnarray}\label{eq:ps}
    -\Delta u = f, \quad \text{in } \dom \subset \mathbb{R}^d, \qquad u\vert_{\dom'} = g\quad \text{in } \dom' \subset \dom. 
\end{eqnarray}
In this case the observable $\Psi$ introduced in \eqref{eq:dt} is the identity, $\Psi[u] = u$. 
\end{example}

\begin{example}[Stokes equation]\label{ex:stokes-inv}
Finally, we consider a simplified version of the Navier-Stokes equations, namely the stationary Stokes equation. 
Let $D \subset \R^d$ be an open, bounded, simply connected set with smooth boundary. We consider the Stokes' equations as a model of stationary, highly viscous fluid:
\begin{equation}
    \label{eq:st}
    \begin{aligned}
    \Delta \bu + \nabla p &=\f, \quad \forall x \in D, \\
    \div(\bu) &= f_d, \quad \forall x \in D.
    \end{aligned}
\end{equation}
Here, $\bu: D \to \R^d$ is the velocity field, $p: D \to \R$ is the pressure and $\f: D\to \R^d$, $f_d: D \to \R$ are source terms. 

Note that the Stokes equation \eqref{eq:st} is not well-posed as we are not providing any boundary conditions. In the corresponding data assimilation problem \cite{BH1} and references therein, one provides the following \emph{data},
\begin{equation}
    \label{eq:dtst}
   \Psi[u] := \bu = g, \quad \forall x \in D^{\prime},
\end{equation}
for some open, simply connected set $D^{\prime} \subset D$.
Thus, the data assimilation inverse problem for the Stokes equation amounts to inferring the velocity field $\bu$ (and the pressure $p$), given $\f,f_d$ and $g$. 
\end{example}

\subsection{Ansatz spaces}\label{sec:models}

We give an overview of different classes of parametric functions $\{u_\theta\}_{\theta\in\Theta}$ that are often used to approximate the true solution $u$ of the PDE \eqref{eq:pde}. We let $n$ be the number of (real) parameters, such that the parameter space $\Theta$ is a subset of $\R^n$. 

\subsubsection{Linear models}\label{sec:linear-models}

We first consider models that linearly depend on the parameter $\theta$, i.e. models that are a linear combination of a fixed set of functions $\{\phi_i: \Omega \to \R\}_{1\leq i\leq n}$. For any $\theta\in\R^n$ the parameterized linear model $u_\theta$ is then defined as,
\begin{equation}
    u_\theta(x) = \sum_{i=1}^n \theta_i \phi_i(x).
\end{equation}
This general model class constitutes the basis of many existing numerical methods for approximation solutions to PDEs, of which we give an overview below. 

First, \emph{spectral methods} use smooth functions that are globally defined i.e., supported on almost all of $\Omega$, and often form an orthogonal set, see e.g. \citet{hesthaven2007spectral}. The particular choice of functions generally depends on the geometry and boundary conditions of the considered problem: whereas trigonometric polynomials (Fourier basis) are a default choice on periodic domains, Chebyshev and Legendre polynomials might be more suitable choices on non-periodic domains. The optimal parameter vector $\theta^*$ is then determined by solving a linear system of equations. Assuming that $\cL$ is a linear operator, this system is given by
\begin{eqnarray}\label{eq:spectral-linear-system}
    \sum_{i=1}^n \theta_i \int_\Omega \psi_k \cL(\phi_i) = \int_\Omega \psi_k f \qquad \text{for } 1\leq k \leq K, 
\end{eqnarray}
where either $\psi_k = \phi_k$, resulting in the \emph{Galerkin method}, or $\psi_k = \delta_{x_k}$ with $x_k\in\Omega$, resulting in the \emph{collocation method}, or the more general case where the $\psi_k$ and $\phi_k$ are distinct (and also not Dirac deltas), which is referred to as the \emph{Petrov-Galerkin method}. In the collocation method, the choice of the so-called collocation points $\{x_k\}_k$ depends on the functions $\phi_i$. Boundary conditions are implemented through the choice of the functions $\phi_i$ or by adding more equations to the above linear system. 

In large contrast to the global basis functions of the spectral method stands the \emph{finite element method (FEM)}, where one considers locally defined polynomials. More precisely, one first divides $\Omega$ into (generally overlapping) subdomains $\Omega_i$ (subintervals in 1D) of diameter at most $h$ and then considers on each $\Omega_i$ a piecewise-defined polynomial $\phi_i$ of degree at most $p$ that is zero outside out $\Omega_i$. Much like with the spectral Galerkin method one can then define a linear system of equations as \eqref{eq:spectral-linear-system} that needs to be solved to find the optimal parameter vector $\theta^*$. As $\phi_i$ is now non-smooth on $\Omega$ one needs to rewrite $\int_\Omega \phi_k \cL(\phi_i)$ using integration by parts. Another key difference with the spectral method is that, due to the local support of the functions $\phi_i$, the linear system one has to solve will be sparse and have structural properties related to the choice of grid and polynomials. The accuracy of the approximation can be improved by reducing $h$ ($h$-FEM), increasing $p$ ($p$-FEM), or both ($hp$-FEM). Alternatively, the finite element method can also be used in combination with a squared loss, resulting in the \emph{Least-Squares Finite Element Method} \cite{bochev2009least}. The finite element method is ubiquitous in scientific computing and many variants can be found. 

Another choice for the functions $\phi_i$ are \emph{radial basis functions (RBFs)}, see e.g. \citet{buhmann2000radial}. The key ingredient of RBFs is a radial function $\varphi:[0,\infty)\to \R$, for which it must hold  for any choice of nodes $\{x_i\}_{1\leq i\leq n}\subset \Omega$ that both (1) the functions $x\mapsto \phi_i(x) := \varphi(\norm{x-x_i})$ are linearly independent and (2) the matrix $A$ with entries $A_{ij} = \varphi(\norm{x_i-x_j})$ is non-singular. Radial functions can be both globally supported (e.g. Gaussians, polyharmonic splines) as well as compactly supported (e.g. bump functions). The optimal parameter $\theta^*$ can then be determined in a Galerkin, collocation or least-squares fashion. RBFs are particularly advantageous as they are mesh-free (as opposed to FEM), which is useful for complex geometries, and straightforward to use in high-dimensional settings, as one can use randomly generated nodes, rather than nodes on a grid. 

Finally, RBFs with randomly generated nodes can be seen as an example of a more general \emph{random feature model (RFM)}, where the functions $\phi_i$ are randomly drawn from a chosen distribution over a function space, e.g. \citet{rahimi2008uniform}. In practice, one often chooses the function space to be parametric itself, such that generating $\phi_i$ reduces to randomly drawing finite vectors $\alpha_i$ and setting $\phi_i(x) := \phi(x; \alpha_i)$. Random feature models where $\phi_i$ is a neural network with randomly initialized weights and biases are closely connected to neural networks of which only the outer layer is trained. When the network is large then the latter model is often denoted as an \emph{extreme learning machine (ELM)} \citet{huang2006extreme}.


\subsubsection{Nonlinear models}\label{sec:nonlinear-models}

\paragraph{Neural networks.} A first, but very central, example of parametric models that depend nonlinearly on their parameters are \emph{neural networks}. The simplest form of a neural network is a \emph{multilayer perceptron (MLP)}. MLPs get their name from their definition as a concatenation of affine maps and activation functions. As a consequence of this structure, the computational graph of an MLP is organized in layers. At the $k$-th layer, the activation function is component-wise applied to obtain the value
\begin{equation}\label{eq:mlp-zk}
    z^k = \sigma(C_{k-1}(z^{k-1})) = \sigma(W_{k-1}z^{k-1}+b_{k-1}), 
\end{equation}
where $z^0$ is the input of the network, $W_k\in \mathbb{R}^{d_{k+1}\times d_{k}}$, $b_k\in\mathbb{R}^{d_{k+1}}$ and $C_k:\mathbb{R}^{d_{k}}\to\mathbb{R}^{d_{k+1}}:x\mapsto W_k x+b_k$ for $d_k\in\mathbb{N}$ is an affine map. 
A {multilayer perceptron (MLP)} with $L$ layers, activation function $\sigma$, output function $\sigma_o$, input dimension $d_{in}$ and output dimension $d_{out}$ then is a function $f_\theta:\mathbb{R}^{d_{in}}\to\mathbb{R}^{d_{out}}$ of the form
\begin{equation}
    f_\theta(x) = (\sigma_o \circ C_L \circ \sigma \circ \cdots \circ \sigma\circ  C_0)(x) \quad \text{for all } x\in\mathbb{R}^{d_{in}}, 
\end{equation}
where $\sigma$ is applied element-wise and $\theta = ((W_0,b_0), \ldots, (W_L,b_L))$.
The dimension $d_k$ of each layer is also called the \textit{width} of that layer. The width of the MLP is defined as $\max_k d_k$. Furthermore, the matrices $W_k$ are called the \textit{weights} of the MLP and the vectors $b_k$ are called the \textit{biases} of the MLP. Together, they constitute the (trainable) parameters of the MLP, denoted by $\theta = ((W_0,b_0), \ldots, (W_L,b_L))$. Finally, an output function $\sigma_o$ can be employed to increase the interpretability of the output, but is often equal to the identity in the setting of function approximation. 

\begin{remark}
In some texts, the number of layers $L$ corresponds to the number of affine maps in the definition of $f_\theta$. Of these $L$ layers, the first $L-1$ layers are called hidden layers. An $L$-layer network in this alternative definition hence corresponds to a $(L-1)$-layer network in our definition. 
\end{remark}

A lot of the properties of a neural network depend on the choice of activation function. We discuss the most common activation functions below.

\begin{itemize}
    \item The \emph{Heaviside} function is the activation that was used in the McCullocks-Pitts neuron and is defined as
    \begin{equation}
        H(x) = \begin{cases}0, &x<0 \\ 1 &x\geq 0, \end{cases}
    \end{equation}
    Because the gradient is zero wherever it exists, it is not possible to train the network using a gradient-based approach. For this reason, the Heaviside function is not used anymore. 
    \item The \emph{logistic} function is defined as 
    \begin{equation}
        \rho(x) = \frac{1}{1+e^{-x}} 
    \end{equation}
    and can be thought of as a smooth approximation of the Heaviside function as $\rho(\lambda x)\to H(x)$ for $\lambda\to\infty$ for almost every $x$. It is a so-called sigmoidal function, meaning that the function is smooth, monotonic and has horizontal asymptotes for $x\to\pm\infty$. Many early-days approximation-theoretical results were stated for neural networks with a sigmoidal activation function. 
    \item The hyperbolic tangent or \emph{tanh} function, defined by
    \begin{equation}
        \sigma(x) = \frac{e^x-e^{-x}}{e^x+e^{-x}},
    \end{equation}
    is another smooth, sigmoidal activation function that is symmetric, unlike the logistic function. One problem with the tanh (and all other sigmoidal functions) is that the gradient vanishes away from zero, which might be problematic when using a gradient-based optimization approach. 
    \item The Rectified Linear Unit, rectifier function or \emph{ReLU} function, defined as
    \begin{equation}
        \text{ReLU}(x) = \max\{x,0\},
    \end{equation}
    is a very popular activation function. It is easy to compute, scale-invariant and reduces the vanishing gradient problem that sigmoidal functions have. One common issue with ReLU networks is that of `dying neurons', caused by the fact that the gradient of ReLU is zero for $x<0$. This issue can however be overcome by using the so-called \emph{leaky ReLU} function, 
    \begin{equation}
        \text{ReLU}_\nu(x) = \max\{x,-\nu x\},
    \end{equation}
    where $\nu>0$ is small. Other (smooth) adaptations are the Gaussian Error Linear Unit, the Sigmoid Linear Unit, Exponential Linear Unit and softplus function. 
\end{itemize}

\paragraph{Neural operators.} In order to approximate operators, one needs to allow the input and output of the learning architecture to be infinite-dimensional. A possible approach is to use \emph{deep operator networks} (DeepONets), as proposed in \cite{ChenChen1995, deeponets}. 
Given $m$ fixed sensor locations $\{x_j\}_{j=1}^m \subset \Omega$ and the corresponding \emph{sensor values} $\{v(x_j)\}_{j=1}^m$ as input,
a DeepONet can be formulated in terms of two (deep) neural networks: a \emph{branch net} $\branch: \R^m \to \R^{p}$ and a \emph{trunk net} $\trunk :\Omega\to\R^{p+1}$.
The branch and trunk nets are then combined to approximate the underlying nonlinear operator as the following \emph{DeepONet}, 
\begin{eqnarray}
    \cG_\theta:\cX \to L^2(\Omega)\quad \text{with}\quad \cG_\theta[v](y) = \tr_0(y)+\sum_{k=1}^p \beta_k(v) \tr_k(y). 
\end{eqnarray}

One of the alternatives to DeepONets that also allow to learn maps between infinite-dimensional spaces are \emph{neural operators} \cite{li2020neural}. Neural operators are inspired by the fact that under some general conditions the solution $u$ to the PDE \eqref{eq:pde} can be represented in the form
\begin{eqnarray}
    u(x) = \int_D G(x,y)f(y)dy, 
\end{eqnarray}
where $G:\Omega\times \Omega \to\R$ is the Green's function. This kernel representation led the authors of \cite{li2020neural} to propose to replace the formula for a hidden layer of a standard neural network by the following operator, 
\begin{eqnarray}\label{eq:no-integral}
    \cL_\ell(v)(x) = \sigma\left(W_\ell v(x) + \int_D \kappa_\phi(x,y,a(x),a(y))v(y) d\mu_x(dy)\right), 
\end{eqnarray}
where $W_\ell$ and $\phi$ are to be learned from the data, $\kappa_\phi$ is a kernel and $\mu_x$ is a Borel measure for every $x\in D$. In \cite{li2020neural}, the kernel $\kappa_\phi$ is modeled as a neural network. 
More general, neural operators are mappings of the form \cite{li2020neural, kovachki2021universal},
\begin{equation}\label{eq:no}
    \cN: \inspace\to\outspace: u\mapsto \cQ \circ \cL_L \circ \cL_{L-1} \circ \dots \circ \cL_{1} \circ \cR(u),
\end{equation}
for a given depth $L\in \N$, where $\cR: \inspace(D;\R^{d_\inspace}) \to \cU(D;\R^{d_v})$, $d_v \ge d_u$, is a \emph{lifting} operator (acting locally), of the form
\begin{align} \label{eq:no-r}
\cR(a)(x) = R a(x), 
\quad 
R \in \R^{d_v\times d_\inspace},
\end{align}
and $\cQ: \cU(D;\R^{d_v}) \to \outspace(D;\R^{d_\outspace})$ is a local \emph{projection} operator, of the form 
\begin{align} \label{eq:no-q}
\cQ(v)(x) = Qv(x), 
\quad
Q \in \R^{d_\outspace \times d_v}.
\end{align}
In analogy with canonical finite-dimensional neural networks, the layers $\cL_1, \dots, \cL_L$ are non-linear operator layers, $\L_\ell: \cU(D;\R^{d_v}) \to \cU(D;\R^{d_v})$, $v\mapsto \L_\ell(v)$, which we assume to be of the form 
\begin{equation}\label{eq:no-layer}
    \cL_\ell(v)(x)
=
\sigma\bigg(
W_\ell v(x)
+
b_\ell(x)
 + \big(\cK(a;\theta_\ell) v\big)(x)
\bigg),
\quad
\forall \, x\in D.
\end{equation}
Here, the weight matrix $W_\ell \in \R^{d_v\times d_v}$ and bias $b_\ell(x)\in \cU(D;\R^{d_v})$ define an affine pointwise mapping $W_\ell v(x) + b_\ell(x)$. We conclude the definition of a neural operator by stating that $\sigma: \R \to \R$ is a non-linear activation function that is applied component-wise and $\cK$ is a \emph{non-local} linear operator,
\begin{equation}
    \cK: \inspace\times \Theta \to L\left(\cU(D;\R^{d_v}), \cU(D;\R^{d_v})\right),
\end{equation}
that maps the input field $a$ and a parameter $\theta \in \Theta$ in the parameter set $\Theta$ to a bounded linear operator $\cK(a,\theta): \cU(D;\R^{d_v}) \to \cU(D;\R^{d_v})$. When one defines the linear operators $\cK(a,\theta)$ through an integral kernel, then \eqref{eq:no-layer} reduces again to \eqref{eq:no-integral}. \emph{Fourier Neural Operators (FNOs)} \cite{FNO} are special cases of such general neural operators in which this integral kernel corresponds to a convolution kernel, which on the periodic domain $\T^d$ leads to non-linear layers $\cL_\ell$ of the form ,
\begin{align} \label{eq:fno-layer}
\cL_\ell(v)(x)
=
\sigma
\bigg(
W_\ell v(x) + b_\ell(x) + \cF^{-1} \Big(P_{\ell}(k) \cdot \cF(v)(k)\Big)(x)
\bigg).
\end{align}
In practice, however, this definition can not be used as evaluating the Fourier transform requires the exact computation of an integral. The practical implementation (i.e. discretization) of an FNO maps from and to the space of trigonometric polynomials of degree at most $N\in\N$, denoted by $L^2_N$, and can be identified with a finite-dimensional mapping that is a composition of affine maps and nonlinear layers of the form, 
\begin{eqnarray}
    \mathfrak{L}_\ell(z)_j = \sigma(W_\ell v_j + b_{\ell,j} \cF^{-1}_N(P_\ell(k)\cdot \cF_N(z)(k)_j))
\end{eqnarray}
where the $P_\ell(k)$ are coefficients that define a non-local convolution operator via the discrete Fourier transform $\cF_N$, see \cite{kovachki2021universal}.

It has been observed that the proposed operator learning models above may not behave as operators when implemented on a computer, questioning the very essence of what operator learning should be. \citet{bartolucci2023neural} contend that in addition to defining the operator at the continuous level, some form of continuous-discrete equivalence is necessary for an architecture to genuinely learn the underlying operator, rather than just discretizations of it. To this end, they introduce the unifying mathematical framework of \emph{Representation equivalent Neural Operators (ReNOs)} to ensure operations at the continuous and discrete level are equivalent. Lack of this equivalence is quantified in terms of aliasing errors. 

\emph{Gaussian processes.} Deep neural networks and their operator versions are only one possible nonlinear surrogate model for the true solution $u$. Another popular class of nonlinear models is \emph{Gaussian process regression} \cite{GP}, which belongs to a larger class of so-called Bayesian models. Gaussian process regression (GPR) relies on the assumption that $u$ is drawn from a Gaussian measure on a suitable function space, parameterized by,
\begin{equation}
\label{eq:gpr1}
u_\theta({x}) \sim \mathrm{GP}(m_\theta({x}), k_\theta({x}, {x}')),
\end{equation}
where $m_\theta({{x}})=\E{u_\theta({x})}$ is the mean and the underlying covariance function is given by $k_\theta({{x}}, {x}')=\E{(u_\theta({x}) - m_\theta({x})) (u_\theta({x}') - m_\theta({x}'))}$. 

Popular choices for the covariance function in \eqref{eq:gpr1} are the squared exponential kernel (which is a radial function as for RBFs) and Matérn covariance functions, 
\begin{align}
\label{eq:SE_cov}
	k_{\mathrm{SE}}({x}, {x}') &= \exp{\bigg(-\frac{|| {x} - {x}'||^2}{2\ell^2}\bigg)}, \\
	 k_{\mathrm{Matern}}(x, x')&={\frac {2^{1-\nu }}{\Gamma (\nu )}}{\Bigg (}{\sqrt {2\nu }}{\frac {||x -x'||}{\ell }}{\Bigg )}^{\nu }K_{\nu }{\Bigg (}{\sqrt {2\nu }}{\frac {||x -x'||}{\ell }}{\Bigg )}.
\end{align}
Here $||\cdot||$ denotes the standard euclidean norm, $K_\nu$ is the modified Bessel function of the second kind and $\ell$ the \textit{characteristic length}, describing the length scale of the correlations between the points ${x}$ and ${x}'$. The parameter vector then consists of the hyperparameters of $m_\theta$ and $k_\theta$, such as $\nu$ and $\ell$. 

\subsection{Physics-informed residuals and loss functions}\label{sec:residuals}

With a chosen model class in place, thee key of physics-informed learning is to find a \emph{physics-informed loss functional} $\cE_{\mathrm{PIL}}[\cdot]$, independent of $u$, such that when it is minimized over the class of models $\{u_\theta : \theta\in\Theta\}$ the minimizer is a good approximation of the PDE solution $u$. In other words, we must find a functional $\cE_{\mathrm{PIL}}[\cdot]$ such that for $\theta^* = \argmin_{\theta} \cE_{\mathrm{PIL}} [u_\theta]$ it holds that the minimizer $u_{\theta^*}$ is a good approximation of $u$. We discuss three alternative formulations for a PDE, i.e. definitions of what it means to solve a PDE, all leading to different functionals and their corresponding \emph{loss functions}:
\begin{itemize}
    \item The classical formulation of the PDE, as in \eqref{eq:pde}, in which a point-wise condition on a function and its derivatives is stated. 
    \item The weak formulation of the PDE, which is a more general (weaker) condition than the classical formulation, often obtained by multiplying the classical formulation with a smooth test function and performing integration by parts. 
    \item The variational formulation of the PDE, where the PDE solution can be defined as the minimizer of a functional. 
\end{itemize}
Note that not all formulations are available for all PDEs. 

\subsubsection{Classical formulation}\label{sec:classical-residual}

Physics-informed learning is based on the elementary observation that for the true solution of the PDE $u$ it holds that $\cL(u)=f$ and $\cB(u)=g$, and the subsequent hope that if a model $u_\theta$ satisfies that $\cL(u_\theta)-f\approx 0$ and $\cB(u_\theta)-g\approx 0$, that then also $u\approx u_\theta$. To this end, a number of \emph{residuals} are introduced. First and foremost, the \emph{PDE residual} (or \emph{interior residual}) measures how well the model $u_\theta$ satisfies the PDE, 
\begin{eqnarray}\label{eq:PDE-residual}
    \rpde[u_\theta](z) = \cL[u_\theta](z) -f(z) \qquad \forall z\in \Omega,  
\end{eqnarray}
where we recall that $\Omega = D$ and $z=x$ for a time-independent PDE and $\Omega = D\times [0,T]$ and $z = (x,t)$ for a time-dependent PDE. In the first case, the boundary operator $\cB$ in \eqref{eq:pde} uniquely refers to the spatial boundary conditions, such that $\cB(u)=g$ corresponds to $D^k_x u = g$. The corresponding \emph{spatial boundary residual} is then given by, 
\begin{eqnarray}
    \rs[u_\theta](x) = D^k_x u_\theta(x) - g(x) \qquad \forall x\in \partial D. 
\end{eqnarray}
For time-dependent PDEs the boundary operator $\cB$ generally also prescribes the initial condition, i.e. that $u(x,0) = u_0(x)$ for all $x\in D$ for a given function $u_0$. This condition gives rise to the \emph{temporal boundary residual}, 
\begin{eqnarray}
    \rt[u_\theta](x) =  u_\theta(x,0) - u_0(x) \qquad \forall x\in  D. 
\end{eqnarray}
The physics-informed learning framework hence dictates to simultaneously minimize the residuals $ \rpde[u_\theta]$ and $\rs[u_\theta]$, and additionally $\rt[u_\theta]$ for a time-dependent PDE. This is usually translated as the minimization problem
\begin{eqnarray}\label{eq:pil-min}
     \min_{\theta\in\Theta} \cE_{\mathrm{PIL}}[u_\theta]^2, 
\end{eqnarray}
where for a time-independent PDE the \emph{physics-informed loss} $\cE_{\mathrm{PIL}}$ is given by, 
\begin{eqnarray}\label{eq:epil-ti}
    \cE_{\mathrm{PIL}}[u_\theta]^2 = \int_{D\times[0,T]}  \rpde[u_\theta]^2 + \lambda_s \int_{\partial D} \rs[u_\theta]^2,
\end{eqnarray}
where $\lambda_s>0$ is a weighting hyperparameter, and for a time-dependent PDE the quantity $\cE_{\mathrm{PIL}}$ is given by, 
\begin{eqnarray}\label{eq:epil-td}
    \cE_{\mathrm{PIL}}[u_\theta]^2 = \int_{D\times[0,T]}  \rpde[u_\theta]^2 + \lambda_s \int_{\partial D\times [0,T]} \rs[u_\theta]^2+ \lambda_t \int_D \rt[u_\theta]^2,
\end{eqnarray}
where $\lambda_s, \lambda_t>0$ are weighting hyperparameters. Since the boundary conditions (and initial condition) are enforced through an additional term in the loss function, we allow the possbility that the final model will only approximately satisfy the boundary conditions. This is generally referred to as having \emph{soft boundary conditions}. On the other hand, if one chooses the model class in such a way that all $u_\theta$ exactly satisfy the boundary conditions ($\cB[u_\theta]=g$), one has implemented \emph{hard boundary conditions}.

\begin{remark}
The residuals and loss functions defined above can readily be extended to systems of PDEs and more complicated boundary conditions. If the system of PDEs is given by $\cL^{(i)}[u] = f^{(i)}$ then one can readily define multiple PDE residuals $\rpde^{(i)}$ and consider the sum of integrals of the square PDE residuals  $\sum_i \int (\rpde^{(i)})^2$ in the physics-informed loss. The generalization to more complicated boundary conditions, including piecewise defined boundary conditions, proceeds in a completely analogous manner. 
\end{remark}

The minimization problem \eqref{eq:pil-min}, solely based on the physics-informed loss functions defined in \eqref{eq:epil-ti} and \eqref{eq:epil-td}, is the most basic version of physics-informed learning and can be extended by adding additional terms to the objective in \eqref{eq:pil-min}. For example, in some settings one might have access to measurements $\map(u) = g$ on a subdomain $\dom'$, where $\map$ is a so-called \emph{observable}. This case is particularly relevant for inverse problems (Section \ref{sec:inverse}). In this case it makes sense to add a \emph{data term} to the optimization objective, defined in terms of a \emph{data residual} $\cR_d$, 
\begin{eqnarray}\label{eq:data-residual}
    \cR_d[u_\theta](x) = \map[u](x) - g(x) \qquad \forall x\in\dom'. 
\end{eqnarray}
Note that the boundary residual can be seen as a special case of the data residual with $\Psi = \cB$ and $\dom'=\partial \Omega$. Additionally, it might happen in practice that one adds a \emph{parameter regularization term}, often the $L^q$-norm of the parameters $\theta$, to the loss function to either aid the minimization process or improve the (generalization) properties of the final model. A generalization of the physics-informed minimization problem \eqref{eq:pil-min} could therefore be,
\begin{eqnarray}
    \min_{\theta\in\Theta} \left( \cE_{\mathrm{PIL}}[u_\theta]^2 + \lambda_{d} \int_{\dom'} \cR_d[u_\theta]^2 + \lambda_r \norm{\theta}^q_q\right). 
\end{eqnarray}

\subsubsection{Weak formulation}\label{sec:weak-residuals}

In the previous section, we have considered PDE residuals and physics-informed loss functions that are based on the strong formulation of the PDE. However, for various classes of PDEs there might not be a smooth classical solution $u$ for which it holds in every $x\in\Omega$ that $\cL[u](x)=f(x)$. Fortunately, one can sometimes generalize the definition of `a solution to a PDE' in a precise manner such that there are functions that satisfy this generalized definition after all. Such functions are called \emph{weak solutions}. They are said to satisy the \emph{weak formulation} of the PDE, which can be obtained by multiplying the strong formulation of the PDE \eqref{eq:pde} with a smooth and compactly supported test function $\psi$ and then performing integration by parts, 
\begin{eqnarray}\label{eq:weak-formulation}
   \int_\Omega u \cL^*[\psi] = \int_\Omega f\psi, 
\end{eqnarray}
where $\cL^*$ is the (formal) adjoint of $\cL$ (if it exists). 
Consequently, a function $u$ is said to be a weak solution if it satisfies \eqref{eq:weak-formulation} for all (compactly supported) test functions $\psi$ in some predefined set of functions. Note that now $u$ does not need to be differentiable anymore, but merely integrable. 

Weak formulations such as \eqref{eq:weak-formulation} form the basis of many classical numerical methods, in particular the finite element method (FEM; Section \ref{sec:linear-models}). Often, there is however no need to perform integral by parts until $u$ is completely free of derivatives. In FEM, the weak formulation for second-order elliptic equations is generally obtained by performing integration by parts only one time. For the Laplacian $\cL = \Delta$ one then has that $\int_\Omega \Delta u \psi = - \int_\Omega \nabla u \cdot \nabla \psi$ (given suitable boundary conditions), which is a formulation with many useful properties in practice. 

Other than using spectral methods (as in Section \ref{sec:linear-models}), there are now different avenues one can follow to obtain a \emph{weak physics-informed loss function} based on \eqref{eq:weak-formulation}. The first method draws inspiration from the Petrov-Galerkin method (Section \ref{sec:linear-models}) and considers a finite set of $K$ test functions $\psi_k$ for which \eqref{eq:weak-formulation} must hold . The corresponding loss function is then defined as, 
\begin{eqnarray}
    \cE_{\mathrm{VPIL}}[u_\theta]^2 = \sum_{k=1}^K \left(  \int_\Omega u_\theta \cL^*[\psi_k] - \int_\Omega f\psi_k \right)^2, 
\end{eqnarray}
where additional terms can be added to take care of any boundary conditions. This method was first introduced for neural networks under the name \emph{variational PINNs (VPINNs)} \cite{kharazmi2019variational} and used sine and cosine functions or polynomials as test functions. In analogy with $hp$-FEM, the method was later adapted to use localized test functions, leading to $hp$-VPINNs \cite{kharazmi2021hp}. In this case the model $u_\theta$ is of limited regularity and might not be sufficiently many times differentiable for $\cL[u_\theta]$ to be well-defined. 

A second variant of a weak physics-informed loss function can be obtained by replacing $\psi$ by a parametric model $\psi_\vartheta$. In particular, one chooses the $\psi_\vartheta$ to be the function that maximizes the squared weak residual, leading to the loss function, 
\begin{eqnarray}
    \cE_{\mathrm{wPIL}}[u_\theta]^2 = \max_\vartheta \left( \int_\Omega u_\theta \cL^*[\psi_\vartheta] - \int_\Omega f\psi_\vartheta \right)^2,
\end{eqnarray}
where again additional terms can be added to take care of any boundary conditions. Using $\cE_{\mathrm{wPIL}}$ has the advantage over $\cE_{\mathrm{VPIL}}$ that one does not need a basis of test functions $\{\psi_k\}_k$, which might be inconvenient in high-dimensional settings and settings with complex geometries. On the other hand, minimizing $\cE_{\mathrm{wPIL}}$ corresponds to solving a min-max optimization problem where maximum as well as the minimum are taken with respect to neural network training parameters, which is considerably more challenging than a minimization problem. 

The weak physics-informed loss $\cE_{\mathrm{wPIL}}$ was originally proposed to approximate scalar conservation laws with neural networks under the name \emph{weak PINNs (wPINNs)} \cite{deryck2022wpinns}. As there might be infinitely many weak solutions for a given scalar conservation law, one needs to impose further requirements other than \eqref{eq:weak-formulation} to guarantee that a scalar conservation law has a unique solution. This additional challenge is discussed in the below example. 

\begin{example}[wPINNs for scalar conservation laws]\label{ex:scl-weak-residual}
We revisit Example \ref{def:scl} and recall that weak solutions of scalar conservation laws need not be unique \cite{holden2015front}. To recover uniqueness, one needs to impose additional admissibility criteria in the form of \emph{entropy conditions}. To this end, we consider the so-called \emph{Kruzkhov entropy functions}, given by $|u-c|$ for any $c\in\R$, and the resulting entropy flux functions,
\begin{equation}\label{eq:def-q}
    Q:\R^2\to \R: (u,c)\mapsto \sgn{u-c}(f(u)-f(c)).
\end{equation}
We then say that a function $u\in L^\infty(\R\times \R_+)$ is an \emph{entropy solution} of \eqref{eq:scl} with initial data $u_0\in L^\infty(\R)$ if $u$ is a weak solution of \eqref{eq:scl} and if $u$ satisfies that ,
\begin{eqnarray}
    \partial_t \abs{u-c}+\partial_x Q[u;c]\leq 0,
\end{eqnarray}
in a distributional sense, or more precisely that,
\begin{eqnarray}\label{eq:kruzkov}
    &&\int_{0}^T\int_\R \left(\abs{u-c}\varphi_t + Q[u;c]\varphi_x\right) dxdt \\&&\quad- \int_\R\left(\abs{u(x,T)-c}\phi(x,T)- \abs{u_0(x)-c}\phi(x,0)\right)dx \geq 0
\end{eqnarray}
for all $\phi\in C^1_c(\R\times \R_+)$, $c\in \R$ and $T>0$. It holds that entropy solutions are unique and continuous in time. For more details on classical, weak and entropy solutions for hyperbolic PDEs we refer the reader to \cite{leveque2002finite, holden2015front}. 

This definition inspired the authors of \cite{deryck2022wpinns} to define the following \emph{Kruzkhov entropy residual}, 
\begin{equation}\label{eq:R-def}
    \cR(v,\phi, c) := - \int_D\int_{[0,T]} \left(\abs{v(x,t)-c}\partial_t\phi(x,t) + Q[v(x,t); c]\partial_x\phi(x,t) \right)dx dt, 
\end{equation}
based on which then the following weak physics-informed loss is defined,
\begin{eqnarray}
    \cE_{\mathrm{wPIL}}[u_\theta] = \max_{\vartheta,c} \cR(u_\theta,\phi_\vartheta, c),
\end{eqnarray}
where additional terms need to be added to take care of initial and boundary conditions. \cite[Section 2.4]{deryck2022wpinns}. 
\end{example}
\subsubsection{Variational formulation}\label{sec:variational-residual}

Finally, we discuss a third physics-informed loss function that can be formulated for \emph{variational problems}. Variational problems are PDEs \eqref{eq:pde} where the differential operator can (at least formally) be written as the derivative of an \emph{energy functional} $I$, in the sense that,
\begin{eqnarray}\label{eq:varitional-principle}
    \cL[u] -f = I'[u] = 0. 
\end{eqnarray}
The \emph{calculus of variations} thus allows to replace the problem of solving the PDE \eqref{eq:pde} to finding the minimum of the functional $I$, which might be much easier \cite{evans2022partial}. A first instance of this observation is known as \emph{Dirichlet's principle}. 

\begin{example}[Dirichlet's principle]\label{ex:dirichlet-principle}
    Let $\Omega\subset \R^d$ be open and bounded and define $\cA = \{w\in C^2(\Omega): w=g \text{ on } \partial \Omega\}$. If one then considers the functional $I(w) = \tfrac{1}{2}\int_\Omega \abs{\nabla w}^2$ then it holds for function $u\in\cA$ satisfies, 
    \begin{eqnarray}
        I[u] = \min_{w\in\cA} I[w] \quad \iff \quad \begin{cases}
            \Delta u = 0 & x\in\Omega, \\ u=g & x\in\partial \Omega. 
        \end{cases}
    \end{eqnarray}
    In other words, $u$ minimizes the functional $I$ over $\cA$ if and only if $u$ is a solution to Laplace's equation on $\Omega$ with Dirichlet boundary conditions $g$. 
\end{example}

Dirichlet's principle can be generalized to functionals $I$ of the form,
\begin{eqnarray}\label{eq:energy-functional-L}
    I[w] = \int_\Omega L(\nabla w(x), w(x), x)dx,
\end{eqnarray}
that are the integral of a so-called \emph{Lagrangian} $L$, 
\begin{eqnarray}
    \quad L: \R^d\times \R \times \Omega \to \R: (p,z,x)\mapsto L(p,z,x), 
\end{eqnarray}
where $w:\overline{\Omega}\to\R$ is a smooth function that satisfies some prescribed boundary conditions. One can then verify that any smooth minimizer $u$ of $I[\cdot]$ must satisfy the following nonlinear PDE, 
\begin{eqnarray}
    - \sum_{i=1}^d \partial_{x_i}(\partial_{p_i} L(\nabla u, u, x)) + \partial_{z} L(\nabla u, u, x) = 0, \qquad x\in\Omega. 
\end{eqnarray}
This equation is the \emph{Euler-Lagrange equation} associated with the energy functional $I$ from \eqref{eq:energy-functional-L}. Revisiting Example \ref{ex:dirichlet-principle}, one can verify that Laplace's equation is the Euler-Langrage equation associated with the energy functional based on the Lagrangian $L(p,z,x) = \tfrac{1}{2}\abs{p}^2$. Dirichlet's principle can also be further generalized. 

\begin{example}[Generalized Dirichlet's principle]\label{ex:gen-dirichlet-principle}
Let $a^{ij}:\Omega\to\R$ be functions with $a^{ij}=a^{ji}$ and consider the elliptic PDE, 
\begin{eqnarray}
    -\sum_{i,j=1}^d \partial_{x_i}(a^{ij}\partial_{x_j} u) = f, \qquad x\in\Omega.
\end{eqnarray}
The above PDE is the Euler-Lagrange equation associated with the functional of the form \eqref{eq:energy-functional-L} and the Lagrangian, 
\begin{eqnarray}
      L(p,z,x) = \frac{1}{2} \sum_{i,j=1}^d a^{ij}(x) p_i p_j -zf(x).
\end{eqnarray}
\end{example}
More examples and conditions under which $I[\cdot]$ has a (unique) minimizer can be found in e.g. \cite{evans2022partial}. 

Solving a PDE within the class of variational problems by minimizing its associated energy functional is known in the literature as the \emph{Ritz(-Galerkin) method}, or also \emph{Rayleigh-Ritz method} for eigenvalue problems. More recently, \cite{yu2018deep} have proposed the \emph{Deep Ritz method}, which aims to minimize the energy functional over the set of deep neural networks, 
\begin{eqnarray}
    \min_\theta \cE_{\mathrm{DRM}}[u_\theta]^2, \qquad \cE_{\mathrm{DRM}}[u_\theta]^2:= I[u_\theta] + \lambda \int_{\partial \Omega} (\cB[u_\theta]-g)^2,
\end{eqnarray}
where the second term was added as the used neural networks do not automatically satisfy the boundary conditions of the PDE \eqref{eq:pde}. 

\subsection{Quadrature rules}\label{sec:quad}

All loss functions discussed in the previous sections are formulated as integrals in terms of the model $u_\theta$. Very often it will not be feasible or computationally desirable to evaluate these integrals in an exact way. Instead, one can discretize the integral and approximate it using a sum. We recall some basic results on numerical quadrature rules for integral approximation. 

For a mapping $g: \Omega \to \R^m$, $\Omega\subset \R^{\overline{d}}$, such that $g \in L^1(\Omega)$, we are interested in approximating the integral,
$$
\overline{g}:= \int\limits_{\dom} g(y) dy,
$$
with $dy$ denoting the Lebesgue measure on $\Omega$. In order to approximate the above integral by a quadrature rule, we need the quadrature points $y_{i} \in \dom$ for $1 \leq i \leq N$, for some $N \in \N$ as well as weights $w_i$, with $w_i \in \R_+$. Then a quadrature is defined by,
\begin{equation}
    \label{eq:quad}
    \cQ^\Omega_N[g] := \sum\limits_{i=1}^N w_i g(y_i),
\end{equation}
for weights $w_i$ and quadrature points $y_i$. Depending on the choice of weights and quadrature points, as well as the regularity of $g$, the quadrature error is bounded,
\begin{equation}
    \label{eq:assm3}
    \left|\overline{g} - \cQ^\Omega_N[g]\right| \leq C_{\text{quad}} N^{-\alpha},
\end{equation}
for some $\alpha > 0$ and where $C_{\text{quad}}$ depends on $g$ and its properties. 

As an elementary example, we mention the \emph{midpoint rule}. For $M\in\mathbb{N}$, we partition $\dom$ into $N \sim M^d$ cubes of edge length $1/M$ and we denote by $\{y_i\}_{i=1}^N$ the midpoints of these cubes. The formula and accuracy of the midpoint rule $\qu{N}^\dom$ are then given by, 
\begin{equation}\label{eq:quad-error}
    \qu{N}^\dom[g] := \frac{1}{N}\sum_{i=1}^Ng(y_i), \qquad \abs{\overline{g} - \qu{M}^\dom[g]} \leq C_g N^{-2/d},
\end{equation}
where $C_g \lesssim \norm{g}_{C^2}$. 

As long as the domain $\dom$ is in reasonably low dimension, i.e. $\bar{d} \leq 4$, we can use the midpoint rule and more general standard (composite) Gauss quadrature rules on an underlying grid. In this case, the quadrature points and weights depend on the order of the quadrature rule \cite{SBbook} and the rate $\alpha$ depends on the regularity of the underlying integrand. 
On the other hand, these grid based quadrature rules are not suitable for domains in high dimensions. For moderately high dimensions, i.e. $4 \leq \bar{d} \leq 20$, we can use low-discrepancy sequences, such as the Sobol and Halton sequences, as quadrature points. As long as the integrand $g$ is of bounded Hardy-Krause variation, the error in \eqref{eq:assm3} converges at a rate $(\log(N))^{\bar{d}}N^{-1}$ \cite{longo2020higher}. For problems in very high dimensions, $d \gg 20$, Monte-Carlo quadrature is the numerical integration method of choice. In this case, the quadrature points are randomly chosen, independent and identically distributed (with respect to a scaled Lebesgue measure). 

As an example, we discuss how the physics-informed loss based on the classical formulation for a time-dependent PDE \eqref{eq:epil-td} can be discretized. As the loss function \eqref{eq:epil-td} contains integrals over three different domains ($D\times[0,T]$, $\partial D \times[0,T]$ and $D$), we must consider three different quadratures. We consider quadratures \eqref{eq:quad} for which the weights are the inverse of the number of grid points, such as for the midpoint or Monte-Carlo quadrature, and we denote the quadrature points by $\cS_i = \{(z_i)\}_{1\leq i \leq N_i}\subset D\times [0,T]$, $\cS_s = \{y_i\}_{1\leq i \leq N_s}\subset\partial D\times[0,T]$ and $\cS_t = \{x_i\}_{1\leq i \leq N_t}\subset D$. Following machine learning terminology, we call the set of all quadrature points $\cS = (\cS_i , \cS_s, \cS_t)$ as the \emph{training set}. The resulting loss function $\cJ$ now does not only depend on $u_\theta$ but also on $\S$ and is given by, 
\begin{eqnarray}
    \cJ(\theta, \cS) &=& \qu{N_i}^{D\times[0,T]}[\rpde^2] + \lambda_s \qu{N_s}^{\partial D\times[0,T]} [\rs^2] + \lambda_t \qu{N_t}^{D} [\rt^2] \\
    &=& \frac{1}{N_i}\sum_{i=1}^{N_i} (\cL[u_\theta](z_i) - f(z_i))^2 + \frac{\lambda_s}{N_s}\sum_{i=1}^{N_s} (u_\theta(y_i)-u(y_i))^2 \\&&+ \frac{\lambda_t}{N_t}\sum_{i=1}^{N_t} (u_\theta(x_i,0)-u(x_i,0))^2. 
\end{eqnarray}
For any of the other loss functions defined in Section \ref{sec:residuals} a similar discretization can be defined. In practice, physics-informed learning thus comes down to solving the minimization problem given by,
\begin{eqnarray}\label{eq:minimization-problem}
    \theta^*(\cS) := \argmin_{\theta\in\Theta} \cJ(\theta,\cS).
\end{eqnarray}
The final optimized or \emph{trained} model is then given by $u^*:=u_{\theta^*(\cS)}$. 

\subsection{Optimization}\label{sec:intro-optimization}

Since $\Theta$ is high-dimensional and the map $\theta\mapsto \mathcal{J}(\theta,\S)$ is generally non-convex, solving the minimization problem \eqref{eq:minimization-problem} can be very challenging. Fortunately, the loss function $\mathcal{J}$ is almost everywhere differentiable, so that gradient-based iterative optimization methods can be used. 

The simplest example of such an algorithm is \textbf{gradient descent}. Starting from an initial guess $\theta_0$, the idea is to take a small step in the parameter space $\Theta$ in the direction of the steepest descent of the loss function to obtain a new guess $\theta_1$. Note that this comes down to taking a small step in the opposite direction of the gradient evaluated in $\theta_0$. Repeating this procedure yields the following iterative formula,
\begin{equation}
    \theta_{\ell+1} = \theta_\ell - \eta_\ell \nabla_\theta \mathcal{J}(\theta_\ell,\S), \qquad \forall \ell\in\mathbb{N},
\end{equation}
where the learning rate $\eta_\ell$ controls the size of the step and is generally quite small. The gradient descent formula yields a sequence of parameters $\{\theta_\ell\}_{\ell\in\mathbb{N}}$ that converges to a local minimum of the loss function under very general conditions. Because of the non-convexity of the loss function, convergence to a global minimum can not be ensured. Another issue lies in the computation of the gradient $\nabla_\theta \mathcal{J}(\theta_\ell,\S)$. A simple rewriting of the definition of this gradient (up to regularization term),
\begin{equation}
    \nabla_\theta \mathcal{J}(\theta_\ell,\S) = \frac{1}{N}\sum_{i=1}^N \nabla_\theta \mathcal{J}_i(\theta_\ell) \quad \text{where }\mathcal{J}_i(\theta_\ell)=\mathcal{J}(\theta_\ell,\{(x_i,f(x_i))\}),
\end{equation}
reveals that one actually needs to evaluate $N=\abs{\S}$ gradients. As a consequence, the computation of the full gradient will be very slow and memory intensive for large training data sets. 

\textbf{Stochastic gradient descent} (SGD) overcomes this problem by only calculating the gradient in one data point instead of the full training data set. More precise, for every $\ell$ a random index $i_\ell$, with $1\leq i_\ell \leq N$ is chosen, resulting in the following update formula, 
\begin{eqnarray}
\theta_{\ell+1} = \theta_\ell - \eta_\ell \nabla_\theta \mathcal{J}_{i_\ell}(\theta_\ell), \qquad \forall \ell\in\mathbb{N}.
\end{eqnarray}
Similarly to gradient descent, stochastic gradient descent provably converges to a local minimum of the loss function under rather general conditions, although the convergence will be slower. In particular, the convergence will be noisier and there is no guarantee that $\mathcal{J}(\theta_{\ell+1})\leq \mathcal{J}(\theta_{\ell})$ for every $\ell$. Because the cost of each iteration is much lower than that of gradient descent, it might still makes sense to use stochastic gradient descent. 

\textbf{Mini-batch gradient descent} tries to combine the advantages of gradient descent and SGD by evaluating the gradient of the loss function in a subset of $\S$ in each iteration (as opposed to the full training set for gradient descent and one training sample for SGD). More precise, for every $\ell\in\mathbb{N}$ a subset $\S_\ell\subset \S$ of size $m$ is chosen. These subsets are referred to as \textit{mini-batches} and their size $m$ as the mini-batch size. In contrast to SGD, these subsets are not entirely randomly selected in practice. Instead, one randomly partitions the training set into $\lceil N/m \rceil$ mini-batches, which are then all used as a mini-batch $\S_\ell$ for consecutive $\ell$. Such a cycle of $\lceil N/m \rceil$ iterations is called an \textit{epoch}. After every epoch, the mini-batches are reshuffled, meaning that a new partition of the training set is drawn. In this way, every training sample is used one time during every epoch. The corresponding update formula reads as,
\begin{equation}\label{eq:MBGD}
    \theta_{\ell+1} = \theta_\ell - \eta_\ell \nabla_\theta \mathcal{J}(\theta_\ell,\S_{\ell}), \qquad \forall \ell\in\mathbb{N}. 
\end{equation}
By setting $m=N$, resp. $m=1$, one retrieves gradient descent, resp. SGD. For this reason, gradient descent is also often referred to as \textit{full batch} gradient descent. 

The performance of mini-batch gradient descent can be improved in many ways. One particularly popular example of such an improvement is the \textbf{Adam} optimizer \cite{adam}. Adam, of which the name is derived from \textit{adaptive moment estimation}, calculates at each $\ell$ (exponential) moving averages of the first and second moment of the mini-batch gradient $g_\ell$, 
\begin{equation}
   m_\ell = \beta_1 m_{\ell-1}+(1-\beta_1)g_\ell, \quad v_\ell = \beta_2 v_{\ell-1}+(1-\beta_2)g_\ell^2, \quad     g_\ell = \nabla_\theta \mathcal{J}(\theta_\ell,\S_{\ell}), 
\end{equation}
where $\beta_1$ and $\beta_2$ are close to 1. However, one can calculate that these moving averages are biased estimators. This bias can be removed using the following correction,
\begin{eqnarray}
\hat{m}_\ell = \frac{m_\ell}{1-\beta_1^\ell}, \quad \hat{v}_\ell = \frac{v_\ell}{1-\beta_2^\ell}, \qquad \forall \ell\in\mathbb{N}, 
\end{eqnarray}
where $\hat{m}_\ell, \hat{v}_\ell$ are unbiased estimators. 
One can obtain Adam from the basic mini-batch gradient descent update formula \eqref{eq:MBGD} by replacing $\nabla_\theta \mathcal{J}(\theta_\ell,\S_{\ell})$ by the moving average $m_\ell$ and by setting $\eta_\ell = \frac{\alpha}{\sqrt{\hat{v}_\ell+\epsilon}}$, where $\alpha>0$ is small and $\epsilon>0$ is close to machine precision. This leads to the update formula
\begin{eqnarray}
\theta_{\ell+1} = \theta_\ell - \alpha\frac{\hat{m}_\ell}{\sqrt{\hat{v}_\ell+\epsilon}}, \qquad \forall \ell\in\mathbb{N}. 
\end{eqnarray}
Compared to mini-batch gradient descent, the convergence of Adam will be less noisy for two reasons. First, a fraction of the noise will be removed since a moving average of the gradient is used. Second, the step size of Adam decreases if the gradient $g_\ell$ varies a lot, i.e. when $\hat{v}_\ell$ is large. These design choices make that Adam performs well on a large number of tasks, making it one of the most popular optimizers in deep learning. 

Finally, the interest in \textbf{second-order} optimization algorithms has been steadily growing over the past years. Most second-order methods used in practice are adaptations of the well-known Newton method, 
\begin{equation}
    \theta_{\ell+1} = \theta_\ell - \eta_\ell (\nabla^2_\theta \mathcal{J}(\theta, \S))^{-1} \nabla_\theta \mathcal{J}(\theta, \S), \qquad \forall \ell\in\mathbb{N}, 
\end{equation}
where $\nabla^2_\theta \mathcal{J}(\theta, \S)$ denotes the Hessian of the loss function. The generalization to a mini-batch version is immediate. As the computation of the Hessian and its inverse is quite costly, one often uses a \textit{quasi}-Newton method that computes an approximate inverse by solving the linear system $\nabla^2_\theta \mathcal{J}(\theta, \S)h_\ell = \nabla_\theta \mathcal{J}(\theta, \S)$. An example of a popular quasi-Newton method in deep learning for scientific computing is \textbf{L-BFGS} \cite{liu1989limited}. In many other application areas, however, first-order optimizers remain the dominant choice for a myriad of reasons. 
\subsubsection{Parameter initialization}\label{sec:initialization}
All iterative optimization algorithms above require an initial guess $\theta_0$. Making this initial guess is referred to as the parameter initialization, or weights initialization (for neural networks). Although it is often overlooked, a bad initialization can cause a lot of problems, as will be discussed in Section \ref{sec:training}. 

Especially for neural networks many initialization schemes have already been considered. When the initial weights are chosen too large, the corresponding gradients calculated by the optimization algorithm might be very large, leading to the \textit{exploding gradient problem}. Similarly, when the initial weights are too small, the gradients might also be very close to zero, leading to the \textit{vanishing gradient problem}. Fortunately, it is possible to choose the variance of the initial weights in such a way that the output of the network has the same order of magnitude as the input of the network. One such possible choice is \textbf{Xavier initialization} \cite{glorot2010understanding}, meaning that the weights are initializated as
\begin{eqnarray}
   (W_k)_{ij} \sim N\left(0,\frac{2g^2}{d_{k-1}+d_k}\right)\: \text{or} \: (W_k)_{ij} &\sim U\left(-g\sqrt{\frac{6}{d_{k-1}+d_k}},g\sqrt{\frac{6}{d_{k-1}+d_k}}\right), 
\end{eqnarray}
where $N$ denotes the normal distribution, $U$ the uniform distribution, $d_{k}$ is the output dimension of the $k$-th layer and the value $g$ depends on the activation function. For the ReLU activation one sets $g=\sqrt{2}$ and for the tanh activation function $g=1$ is a valid choice. Note that this initialization has primarily been proposed with a supervised learning setting in mind, e.g. when using a square loss such as $\int_\Omega(u_\theta-u)$. For the different loss functions of Section \ref{sec:residuals} different initialization schemes might be better suited. This will be discussed in Section \ref{sec:training}. 

Finally, it is customary to retrain neural network with different starting values $\theta_0$ (drawn from the same distribution) as the gradient-based optimizer might converge to a different local minimum for each $\theta_0$. One can then choose the `best' neural network based on some suitable criterion or combine the different neural networks if the setting allows this. Also note that this task can be performed in parallel. 

\subsection{Summary of algorithm}\label{sec:summary}

We summarize the physics-informed learning algorithm as follows, 
\begin{enumerate}
\item Choose a model class $\{u_\theta: \theta\in\Theta\}$ (Section \ref{sec:models}). 
    \item Choose a loss function based on the classical, weak or variational formulation of the PDE (Section \ref{sec:residuals}). 
    \item Generate a training set $\cS$ based on a suitable quadrature rule to obtain a feasibly computable loss function $\cJ(\theta,\cS)$ (Section \ref{sec:quad}). 
    \item Initialize the model parameters and run an optimization algorithm from Section \ref{sec:intro-optimization} until an approximate local minimum $\theta^*(\cS)$ is reached. The resulting function $u^*:=u_{\theta^*(\cS)}$ is the desired model for approximating the solution $u$ of the PDE \eqref{eq:pde}. 
\end{enumerate}

\section{Analysis}\label{sec:analysis}

As intuitive as the physics-informed learning framework, as summarized in Section \ref{sec:summary}, might seem, there are a priori little theoretical guarantees that it will actually work well. The aim of the next sections is therefore to theoretically analyze physics-informed machine learning and to give an overview of the available mathematical guarantees. 

One central element will be the development of error estimates for physics-informed machine learning. The relevant concept of error is the error emanating from approximating the solution $\bu$ of \eqref{eq:pde} by the model $\bu^{\ast} = u_{\theta^*(\S)}$,
    \begin{equation}
    \label{eq:pinn-error}
    \Eg(\theta):= \|\bu-\bu_\theta\|_{X}, \quad \Eg^* :=  \Eg(\theta^*(\S)).
\end{equation}
In general, we will choose $\norm{\cdot}_X = \norm{\cdot}_{L^2(\Omega)}$. 
Note that it is not possible to compute $\Eg$ during the training process. On the other hand, we monitor the so-called \emph{training error} given by,
\begin{equation}
    \label{eq:pinn-train}
    \Eg_T(\theta,\S)^2:= \cJ(\theta,\cS) , \quad \Eg_T^* :=  \Eg_T(\theta^*(\S),\S),
\end{equation}
with $\cJ$ being the loss function defined in Section \ref{sec:quad} as the discretized version of the (integral-form) loss functions in Section \ref{sec:residuals}. 
Hence, the training error $\Eg_T^*$ can be readily computed, after training has been completed, from the loss function. We will also refer to the integral form of the physics-informed loss function (cf. Section \ref{sec:residuals}) as the \emph{generalization error} $\Eg_G$, 
\begin{eqnarray}\label{eq:pinn-gen}
     \Eg_G(\theta):= \cE_{\mathrm{PIL}}[u_\theta] , \quad \Eg_G^* :=  \Eg_G(\theta^*(\S),\S), 
\end{eqnarray}
as it measures how well the performance of a model transfers or \emph{generalizes} from the training set $\cS$ to the full domain $\Omega$. 

Given these definitions, some fundamental theoretical questions arise immediately: 

\begin{enumerate}
    \item[Q1.] \textbf{Existence}: \emph{Does there exist a model $\hat{u} $ in the hypothesis class for which the generalization error $\Eg_G(\hat{\theta})$ is small?}  More precisely, given a chosen error tolerance $\epsilon>0$, does there exist a model in the hypothesis class $\hat{u} = u_{\hat{\theta}}$ for some $\hat{\theta}\in\Theta$ such that for the corresponding generalization error $\Eg_G(\hat{\theta})$ it holds that $\Eg_G(\hat{\theta})<\epsilon$? As minimizing the generalization error (i.e. the physics-informed loss) is the key pillar of physics-informed learning, obtaining a positive answer to this question is of the utmost importance. Moreover, it would be desirable to relate the size of the parameter space $\Theta$ (eq. hypothesis class) to the accuracy $\epsilon$. For example, for linear models (Section \ref{sec:linear-models}) one would want to know how many functions $\phi_i$ are needed to ensure the existence of $\hat{\theta}\in\Theta$ such that $\Eg_G(\hat{\theta})<\epsilon$. Similarly, for neural networks, one desires to find estimates of how large a neural network (Section \ref{sec:nonlinear-models}) should be (in terms of depth, width and modulus of the weights) in order to ensure this. We will answer this question by considering the \emph{approximation error} of a model class in Section \ref{sec:approximation-error}. 
    \item[Q2.] \textbf{Stability}: \emph{Given that a model $u_\theta$ has a small generalization error $\cE_G({\theta})$, will the corresponding total error $\Eg({\theta})$ be small as well?} In other words, is there a function $\delta: \R_+\to\R_+$ with $\lim_{\epsilon\rightarrow 0} \delta(\epsilon)=0$ such that $\Eg_G({\theta})<\epsilon$ implies that $\Eg({\theta})<\delta(\epsilon)$ for any $\epsilon>0$? In practice, we will be even a bit more ambitious as we hope to find constants $C, \alpha>0$ (independent of $\theta$) such that 
    \begin{eqnarray}\label{eq:stability-q2}
        \cE(\theta) \leq C \cE_G(\theta)^\alpha.
    \end{eqnarray}
    This is again an essential ingredient, as obtaining a small generalization error $\cE_G^*$ is a priori meaningless, given that one only cares about the total error $\cE^*$. The answer to this question first and foremost depends on the properties of the underlying PDE, and only to a lesser extent on the model class. We will discuss this question in Section \ref{sec:stability}. 
    \item[Q3.] \textbf{Generalization}: \emph{Given a small training error $\Eg_T^*$ and a sufficiently large training set $\S$, will the corresponding generalization error $\cE_G^*$ also be small?} This question is not uniquely tied to physics-informed machine learning and is closely tied to the accuracy of the chosen quadrature rule (Section \ref{sec:quad}). We will use standard arguments to answer this question in Section \ref{sec:generalization}. 
    \item[Q4.] \textbf{Optimization}: \emph{Can the training error $\Eg_T^*$ be made sufficiently close to $\min_\theta \cJ(\theta, \cS)$?} This final question acknowledges the fact that it might be very hard to solve the minimization problem \eqref{eq:minimization-problem}, even approximately. Especially for physics-informed neural networks there is a growing literature on the settings in which the optimization problem is hard to solve, or even infeasible, and how to potentially overcoming these training issues. We theoretically analyze these phenomena in Section \ref{sec:training}. 
\end{enumerate}

The above questions are of fundamental importance as affirmative answers to them certify that the model that minimizes the optimization problem \eqref{eq:minimization-problem}, denoted by $u^*$, is a good approximation of $u$ in the sense that the error $\cE^*$ is small. We show this by first proving an error decomposition for the generalization error $\cE_G^*$, for which we need the following auxiliary result. 

\begin{lemma}\label{lem:bound-Eg}
For any $\theta_1, \theta_2 \in \Theta$ and training set $\S$ it holds that
\begin{equation}
    \Eg_G(\theta_1) \leq \Eg_G(\theta_2) + 2 \sup_{\theta\in\Theta}\abs{\Et(\theta,\S)-\Eg_G(\theta)} + \Et(\theta_1,\S)-\Et(\theta_2,\S). 
\end{equation}
\end{lemma}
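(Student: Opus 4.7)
The plan is to use the classical add-and-subtract decomposition that is standard in statistical learning theory, writing the gap $\Eg_G(\theta_1) - \Eg_G(\theta_2)$ as a telescoping sum in which the training error $\Et$ is inserted at both endpoints. Concretely, I would start from the identity
\begin{equation*}
\Eg_G(\theta_1) - \Eg_G(\theta_2) = \bigl(\Eg_G(\theta_1) - \Et(\theta_1,\S)\bigr) + \bigl(\Et(\theta_1,\S) - \Et(\theta_2,\S)\bigr) + \bigl(\Et(\theta_2,\S) - \Eg_G(\theta_2)\bigr),
\end{equation*}
which is a pure algebraic rearrangement and requires no hypothesis at all.

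Next I would bound the first and third bracketed terms by the uniform generalization gap. For each $i\in\{1,2\}$ we have
\begin{equation*}
\Eg_G(\theta_i) - \Et(\theta_i,\S) \;\leq\; \bigl|\Eg_G(\theta_i) - \Et(\theta_i,\S)\bigr| \;\leq\; \sup_{\theta\in\Theta}\bigl|\Et(\theta,\S) - \Eg_G(\theta)\bigr|,
\end{equation*}
and similarly for $\Et(\theta_2,\S) - \Eg_G(\theta_2)$. Summing these two contributions yields the factor $2\sup_{\theta\in\Theta}|\Et(\theta,\S)-\Eg_G(\theta)|$ in the stated bound, while the middle bracket $\Et(\theta_1,\S)-\Et(\theta_2,\S)$ is left untouched.

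Combining and moving $\Eg_G(\theta_2)$ to the right-hand side gives exactly the claimed inequality. There is no real obstacle here; the lemma is a purely abstract decomposition of the generalization error that is independent of the PDE, the ansatz class, the quadrature rule, and even of whether $\theta_1$ or $\theta_2$ are near-minimizers of $\Et$ or $\Eg_G$. The only thing to be careful about is to keep the sign of the middle term, $\Et(\theta_1,\S) - \Et(\theta_2,\S)$, unchanged rather than absorbing it into an absolute value, since the subsequent use of the lemma (applied with $\theta_1 = \theta^*(\S)$ a near-minimizer of $\Et$ and $\theta_2 = \hat\theta$ a near-minimizer of $\Eg_G$) will exploit the fact that this difference is small or nonpositive by the definition of the training optimum.
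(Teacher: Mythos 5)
Your proposal is correct and follows essentially the same argument as the paper: the identical add-and-subtract decomposition inserting $\Et(\theta_1,\S)$ and $\Et(\theta_2,\S)$, bounding the two generalization-gap terms by $2\sup_{\theta\in\Theta}\abs{\Et(\theta,\S)-\Eg_G(\theta)}$, and leaving the signed difference $\Et(\theta_1,\S)-\Et(\theta_2,\S)$ intact. Your closing remark about preserving the sign of that middle term matches exactly how the paper later uses the lemma with $\theta_1=\theta^*(\S)$ and $\theta_2=\hat\theta$.
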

\begin{proof}
Fix $\theta_1,\theta_2\in\Theta$ and generate a random training set $\S$. The proof consists of the repeated adding, subtracting and removing from terms. It holds that
\begin{eqnarray}
        \Eg_G(\theta_1) &=& \Eg_G(\theta_2) + \Eg_G(\theta_1) - \Eg_G(\theta_2) \\
        &=& \Eg_G(\theta_2) - (\Et(\theta_1,\S)-\Eg_G(\theta_1)) + (\Et(\theta_2,\S)-\Eg_G(\theta_2)) \\&&+\Et(\theta_1,\S) -\Et(\theta_2,\S) \\
        &\leq& \Eg_G(\theta_2) + 2\max_{\theta\in\{\theta_2,\theta_1\}}\abs{\Et(\theta,\S)-\Eg_G(\theta)}\\&&+\Et(\theta_1,\S) -\Et(\theta_2,\S). 
\end{eqnarray}
\end{proof}
Assuming that we can answer Q1 in an affirmative manner, we can now take $\theta_1 = \theta^*(\S)$ and $\theta_2 = \hat{\theta}$ (as in Q1). Moreover, if we also assume that we can positively answer question Q2, or more strongly we can find constants $C,\alpha> 0$ such that \eqref{eq:stability-q2} holds, then we find the following error decomposition,
\begin{equation}\label{eq:error-decomposition}
    \cE^* \leq C \big(\underbrace{\Eg_G(\hat{\theta})}_{\text{approximation error}} + 2 \underbrace{\sup_{\theta\in\Theta}\abs{\Et(\theta,\S)-\Eg_G(\theta)}}_{\text{generalization gap}} + \underbrace{\Et^*-\Et(\hat{\theta})}_{\text{optimization error}}\big)^\alpha. 
\end{equation}
The total error $\cE^*$ is now decomposed into three error sources. The \textit{approximation error} $\Eg(\hat{\theta})$ should be provably small, by answering Q1. The second term of the RHS, termed \textit{generalization gap}, quantifies how well the training error approximates the generalization error, conform question Q3. Finally, an \textit{optimization error} is incurred due to the inability of the optimization procedure to find $\hat{\theta}$ based on a finite training data set. Indeed one can see that if $\theta^*(\S)=\hat{\theta}$, the optimization error vanishes. This source of error is addressed by question Q4. Hence, an affirmative answer to questions Q1-Q4 leads to a small error $\cE^*$. 



We will present general results to answer questions Q1-Q4 and then apply them to the following prototypical examples to further highlight interesting phenomena: 
\begin{itemize}
    \item The Navier-Stokes equation (Example \ref{def:NS-equation}) as an example of a challenging, but low-dimensional PDE. 
    \item The heat equation (Example \ref{def:semilinear-heat}) as a prototypical example of a potentially very high-dimensional PDE. We will investigate whether one can overcome the \emph{curse of dimensionality} through physics-informed learning for this equation and related PDEs. 
    \item Invisic scalar conservation laws (Example \ref{def:scl} with $\nu=0$) will serve as an example of a PDE where the solutions might not be smooth and even discontinuous. As a result, the weak residuals from Section \ref{sec:weak-residuals} will be needed. 
    \item Poisson's equation (Example \ref{def:poisson}) as an example for which the variational formulation of Section \ref{sec:variational-residual} can be used. 
\end{itemize}

\section{Approximation error}\label{sec:approximation-error}

In this section we answer the first question Q1. 

\begin{question}[Q1]
    \emph{Does there exist a model $\hat{u} = u_{\hat{\theta}}$ in the hypothesis class for which the generalization error $\Eg_G(\hat{\theta})$ is small?}
\end{question}

The difficulty in answering this question lies in the fact that the generalization error $\Eg_G({\theta})$ in physics-informed learning is given by the physics-informed loss of the model $\cE_{\mathrm{PIL}}(u_\theta)$ rather than just being $\norm{u-u_\theta}_{L^2}^2$ as it would be the case for regression. For neural networks, for example, the universal approximation theorems for neural networks (e.g. \citet{Cybenko1989}) guarantees that any measurable function can be approximated by a neural network in supremum-norm. They however do not imply an affirmative answer to question Q1: a neural network that approximates a function well in supremum-norm might be highly oscillatory, such that the derivatives of the network and that of the function are very different, giving rise to a large PDE residual. Hence, we require results on the existence of models that approximate functions in a \emph{stronger norm} than the supremum-norm. In particular, the norm should quantify how well the derivatives of the model approximate those of the function. 

For solutions of PDEs, a very natural norm that satisfies this criterion is the \emph{Sobolev norm}. For $s\in\N$ and $q\in[1,\infty]$, the Sobolev space $W^{s,q}(\dom;\R^m)$ is the space of all functions $f:\dom\to \R^m$ for which $f$, as well as the (weak) derivatives of $f$ up to order $s$ belong to $L^q(\dom;\R^m)$. When more smoothness is available, one could replace the space $W^{s,\infty}(\dom;\R^m)$ with the space of $s$ times continuously differentiable functions $C^s(\dom;\R^m)$ or can find more information in Appendix \ref{sec:sobolev}. 

Given an approximation result in some Sobolev (semi-)norm, we can prove that the physics-informed loss of the model $\cE_{\mathrm{PIL}}(u_\theta)$ if the following assumption holds true. 

When the physics-informed loss is based on the strong (classical) formulation of the PDE (cf. Section \ref{sec:classical-residual}), we assume that it can be bounded in terms of the errors related to all relevant partial derivatives, denoted by $D^{(k,\bmalpha)}:=D^k_tD^\bmalpha_x := \partial_t^k \partial^{\alpha_1}_{x_1}\ldots \partial^{\alpha_d}_{x_d}$, for $(k,\bmalpha)\in\N_0^{d+1}$ for time-dependent PDEs. This assumption is valid for many classical solutions of PDEs. 
\begin{assumption}\label{ass:diff-operator}
Let $k,\ell\in\N$,  $q\in[1,+\infty]$, $C>0$ be independent from $d$ and let $\cX \subset \{u_\theta: \theta\in\Theta\}$ and $\Omega = [0,T]\times D$. It holds for every $u_\theta\in\cX$ that 
\begin{equation}
    \norm{\cL[u_\theta]}_{L^q(\Omega)} \leq C \cdot \poly(d)\cdot 
    \sum_{\substack{(k',\bmalpha)\in\N^{d+1}_0\\ k'\leq k, \norm{\bmalpha}_1\leq \ell }} \norm{D^{(k',\bmalpha)}(u-u_\theta)}_{L^q(\Omega)}. 
\end{equation}
\end{assumption}
Note that the corresponding assumption for operator learning can be obtained by replacing $u$ by the operator $\cG$ and $u_\theta$ by $\cG_\theta$. We give a brief example to illustrate the assumption. 

\begin{example}[Heat equation]
    For the heat equation it holds that
    \begin{eqnarray}
        \norm{\cL[u_\theta]}_{L^q(\Omega)} &=& \norm{\cL[u_\theta] - \cL[u]}_{L^q(\Omega)} \\&\leq& \norm{\Delta(u_\theta-u)}_{L^q(\Omega)} + \norm{\partial_t(u_\theta-u)}_{L^q(\Omega)}.
    \end{eqnarray}
\end{example}

\subsection{First general result}\label{sec:approx-general}

Note that in particular Assumption \ref{ass:diff-operator} is satisfied when 
\begin{eqnarray}
    \norm{\cL[u_\theta]}_{L^q(\Omega)} \leq C \cdot \poly(d) \cdot \norm{u-u_\theta}_{W^{k, q}(\Omega)}. 
\end{eqnarray}
Hence, it suffices to prove that there exists $\hat{\theta}\in\Theta$ for which $\norm{u-u_{\hat{\theta}}}_{W^{k, q}(\Omega)}$ can be made small. 

For many linear models, such approximation results are widely available. For example, when one chooses the (default enumeration of the) $d$-dimensional Fourier basis as functions $\phi_i$ (cf. Section \ref{sec:linear-models}) then one can use the following result \cite{Specbook}. 

\begin{theorem}\label{lem:acc-trig-pol}
Let $s, k\in\N_0$ with $s>d/2$ and $s\geq k$, 
and $u\in C^s(\T^d)$ it holds that there exists $\hat{\theta}\in \R^{N}$ such that
\begin{equation}
    \textstyle \norm{u-\sum_{i=1}^{N^d} \hat{\theta}_i \phi_i}_{H^k(\T^d)}\leq C(s,d)N^{-(s-k)/d}\norm{u}_{H^s(\T^d)}, 
\end{equation}
for a constant $C(s,d)>0$ that only depends on $s$ and $d$. 
\end{theorem}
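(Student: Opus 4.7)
The plan is to construct $\hat{\theta}$ as the first $N^d$ Fourier coefficients of $u$, and then to bound the tail of the resulting Fourier series using Parseval's identity together with the decay of Fourier coefficients for functions in $H^s(\T^d)$. Since the basis $\{\phi_i\}$ is the enumerated Fourier basis, the truncation $\sum_{i=1}^{N^d}\hat{\theta}_i\phi_i$ is simply a partial sum of the Fourier series, and this is the best $L^2$-approximation of $u$ from the subspace spanned by $\phi_1,\ldots,\phi_{N^d}$.

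Concretely, I would write $u = \sum_{\vec{k}\in\Z^d}\hat{u}_{\vec{k}}\, e^{i\vec{k}\cdot x}$, which converges in $H^s(\T^d)$ since $u\in C^s(\T^d)\subset H^s(\T^d)$. Choosing $M$ so that the cube $\Lambda_M:=\{\vec{k}\in\Z^d:|\vec{k}|_\infty\le M\}$ contains exactly the modes indexed by the first $N^d$ basis functions (so $M=\Theta(N^{1/d})$), I would set $\hat{\theta}_i := \hat{u}_{\vec{k}_i}$ for the corresponding $\vec{k}_i\in\Lambda_M$. By Parseval, the $H^k$-error is
\begin{equation*}
\Bigl\|u-\sum_{i=1}^{N^d}\hat{\theta}_i\phi_i\Bigr\|_{H^k(\T^d)}^2 \simeq \sum_{\vec{k}\notin\Lambda_M}(1+|\vec{k}|^2)^k|\hat{u}_{\vec{k}}|^2.
\end{equation*}

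The decisive step is then to insert the extra factor $(1+|\vec{k}|^2)^{s-k}$ (allowed because $s\ge k$) to get
\begin{equation*}
\sum_{\vec{k}\notin\Lambda_M}(1+|\vec{k}|^2)^k|\hat{u}_{\vec{k}}|^2 \le (1+M^2)^{-(s-k)}\sum_{\vec{k}\notin\Lambda_M}(1+|\vec{k}|^2)^s|\hat{u}_{\vec{k}}|^2 \le (1+M^2)^{-(s-k)}\|u\|_{H^s(\T^d)}^2,
\end{equation*}
where I used that $|\vec{k}|_\infty>M$ implies $|\vec{k}|^2\ge M^2$. Taking square roots and substituting $M\sim N^{1/d}$ yields the claimed rate $N^{-(s-k)/d}$, with a constant $C(s,d)$ absorbing the equivalence between the two expressions for the $H^s$-norm and the conversion between the $\ell^\infty$-cube size $M$ and the total count $N^d$.

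The only mildly delicate point is bookkeeping: one needs an enumeration of the Fourier modes in which the first $N^d$ indices correspond exactly to a symmetric cube $\Lambda_M$, so that the truncated series is a clean spectral projection. Provided the enumeration is chosen in this standard way, every other step is routine. The hypothesis $s>d/2$ is not used in the tail estimate itself; it appears in \citet{Specbook} to guarantee absolute/uniform convergence of the Fourier series and to make the $H^s$-norm equivalence genuinely meaningful pointwise, but the $H^k$-bound derived above holds for any $u\in H^s$ with $s\ge k$.
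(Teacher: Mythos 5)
Your argument is the standard spectral-projection proof (Parseval plus extraction of the factor $(1+M^2)^{-(s-k)}$ from the tail), which is exactly the argument behind the cited reference; the paper itself states this result without proof, so there is nothing to diverge from. The only blemish is a bookkeeping inconsistency you inherited from the statement itself: a cube $\Lambda_M$ containing $N^d$ modes forces $M=\Theta(N)$, not $\Theta(N^{1/d})$, and your claimed rate $N^{-(s-k)/d}$ is the one obtained when the cube contains $N$ modes in total --- consistent with the statement's ``$\hat{\theta}\in\R^{N}$'' but not with its upper summation limit $N^d$.
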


Analogous results are also available for neural networks. We state a result that is tailored to tanh neural networks \cite[Theorem 5.1]{deryck2021approximation}, but more general results are also available \cite{guhring2021approximation}. 

\begin{theorem}\label{thm:approx-sobolev}
Let $\Omega = [0,1]^d$. For every $N\in \mathbb{N}$ and every $f\in W^{s,\infty}(\Omega)$, there exists a tanh neural network $\widehat{f}$ with 2 hidden layers of width $N$ such that for every $0\leq k<s$ it holds that,
\begin{equation}
        \textstyle\norm{f-\widehat{f}}_{W^{k,\infty}(\Omega)} \leq C (\ln(cN))^k N^{-(s-k)/d},
\end{equation}
where $c,C>0$ are independent of $N$ and explicitly known. 
\end{theorem}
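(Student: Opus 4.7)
The plan is a two-stage approximation: first approximate $f$ by a polynomial in Sobolev norm, then emulate that polynomial with a 2-hidden-layer tanh network.

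\emph{Step 1 (Polynomial surrogate).} By a classical simultaneous Sobolev approximation result (e.g.\ a smooth partition of unity combined with local Taylor polynomials, or a truncated tensor-product Chebyshev/Legendre series), there exists a polynomial $p$ of degree $q$ in each variable with $q \lesssim N^{1/d}$ and
\[
\|f-p\|_{W^{k,\infty}(\Omega)} \;\leq\; C\, q^{-(s-k)} \|f\|_{W^{s,\infty}(\Omega)}.
\]
Choosing $q \sim N^{1/d}$ already yields the algebraic rate $N^{-(s-k)/d}$ for the polynomial error.

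\emph{Step 2 (Tanh emulation of polynomials).} Since $\sigma$ is analytic and $\sigma^{(n)}(a)\neq 0$ for a suitable base point $a$ and all $n\leq s-1$, a finite-difference formula
\[
t^n \;\approx\; \frac{n!}{\sigma^{(n)}(a)\, h^n}\sum_{j=0}^{n} c_j\,\sigma\!\bigl(a+jh\,t\bigr),
\]
with $\{c_j\}$ chosen by Vandermonde inversion to kill the lower-order terms of the Taylor expansion of $\sigma$, approximates $t^n$ uniformly on bounded sets with sup-norm error $O(h)$. By polarization, every multivariate monomial $x^\alpha$ with $|\alpha|\leq s-1$ is a fixed linear combination of univariate powers $(w\cdot x)^{|\alpha|}$, so all such monomials can be realized by a single hidden layer of tanh units up to sup-norm error $O(h)$. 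A second hidden layer then assembles these monomials into the basis functions required by $p$, using the same finite-difference gadget together with the identity $uv=\tfrac14\bigl((u+v)^2-(u-v)^2\bigr)$ to realize any products needed. The width budget $N$ is comfortable because only $O(q^d)=O(N)$ monomials are required and $s$ is fixed.

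\emph{Step 3 (Error balancing).} Differentiating the finite-difference gadget up to order $k$ amplifies the residual by a factor of order $h^{-k}$, because each derivative brings out a factor of $h$ that no longer cancels against the $h^{-n}$ normalization. Setting $h \sim 1/\ln N$ makes the tanh-emulation error in $W^{k,\infty}$ at most of order $(\ln N)^k \cdot N^{-(s-k)/d}$, which matches the polynomial-approximation error up to the announced log factor. Summing both contributions and combining with Step~1 gives the stated estimate.

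\emph{Main obstacle.} The technical crux is controlling the $W^{k,\infty}$-error of the finite-difference tanh gadget. The sup-norm error is small only when the step size $h$ is small, yet each additional derivative amplifies that error by $h^{-1}$. One must therefore choose $h$ exactly at the logarithmic scale $h\sim 1/\ln N$ to balance the algebraic gain $N^{-(s-k)/d}$ coming from the polynomial surrogate against the derivative blow-up, and must verify that the Taylor remainder of $\sigma$, which contributes terms of order $(ht)^{n+1}$ before the $h^{-n}$ normalization, indeed leaves an $O(h)$ sup-norm residual uniformly in the bounded range of $t$. This balancing is precisely where the exponent $k$ in the log factor $(\ln(cN))^k$ arises, and explains why the rate is nearly but not exactly equal to the polynomial approximation rate.
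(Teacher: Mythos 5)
There is a genuine gap, and it sits exactly where you locate the ``main obstacle'': the error balancing in Step 3 does not work, and the log factor does not come from where you say it does. If the finite-difference gadget has sup-norm error $O(h)$ and (as you claim) $k$ derivatives amplify this by $h^{-k}$, then with $h\sim 1/\ln N$ the emulation error in $W^{k,\infty}$ is of order $(\ln N)^{k-1}$, which does not tend to zero at all, let alone match the algebraic rate $N^{-(s-k)/d}$ from Step 1; your own numbers are inconsistent with the bound you announce. The correct mechanism is different: for a \emph{fixed} finite collection of monomials, the step size $h$ only enters the weights, not the width, so one takes $h$ polynomially (or exponentially) small in $N$ to make the emulation error negligible compared to $N^{-(s-k)/d}$ (this is why the weight bounds in results like Theorem \ref{thm:pinn-approx-ns} grow polynomially in $N$), while the $(\ln(cN))^k$ factor in the statement arises from an \emph{approximate partition of unity}: the tanh-based cutoffs that localize the piecewise polynomial pieces must have steepness $\sim\ln N$ to keep the localization error algebraic, and each of the $k$ derivatives of these cutoffs contributes one factor $\ln N$. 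In your single-global-polynomial construction there is no partition of unity, so there is no source for the log factor, and no correct calibration of $h$ either.

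A second, related problem is the degree mismatch in Step 2. Your surrogate $p$ has coordinate degree $q\sim N^{1/d}$, so its monomials have degree growing with $N$, yet your emulation argument is phrased for monomials with $\abs{\alpha}\le s-1$ and the width count ``only $O(q^d)=O(N)$ monomials and $s$ fixed'' tacitly assumes bounded degree. For degrees growing like $N^{1/d}$ the Vandermonde coefficients, the factors $n!/\sigma^{(n)}(a)$ and the admissible range of $h$ all degrade rapidly with $n$, and none of this is tracked; it is not at all clear the construction fits in two hidden layers of width $N$ with controlled constants. The paper's route (De Ryck et al., Theorem 5.1 of the cited reference) avoids this by combining a \emph{piecewise} polynomial approximation of fixed degree $s-1$ on $\sim N$ cells (Bramble--Hilbert), tanh emulation of these fixed-degree polynomials and of the multiplication operator, and the approximate partition of unity mentioned above. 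If you want to salvage your plan, either switch to the local fixed-degree polynomials plus partition of unity (and accept the $(\ln N)^k$ from the cutoffs), or, if you keep the global polynomial, you must (i) carry out the high-degree emulation with explicit constants, and (ii) take $h$ algebraically small in $N$ rather than $h\sim 1/\ln N$, rewriting Step 3 entirely.
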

\begin{proof}
The main ingredients are a piecewise polynomial approximation, the existence of which is guaranteed by the Bramble-Hilbert lemma \cite{verfurth1999note}, and the ability of tanh neural networks to efficiently approximate polynomials, the multiplication operator and an approximate partition of unity. The full proof can be found in \cite[Theorem 5.1]{deryck2021approximation}. 
\end{proof}

\begin{remark}
In this section, we have mainly focused on bounding the (strong) PDE residual as it generally is the most difficult residual to bound, as it contains the highest derivatives. Indeed, when using the classical formulation the bounds on the spatial (and temporal) boundary residuals generally follow from a (Sobolev) trace inequality. Similarly, loss functions resulting from the weak or variational formulation contain less high derivatives of the neural network and are therefore easier to bound.
\end{remark}

As an example, we apply the above result to the Navier-Stokes equations. 

\begin{example}[Navier-Stokes]
One can use Theorem \ref{thm:approx-sobolev} to prove the existence of neural networks with a small generalization error for the Navier-Stokes equations (Example \ref{def:NS-equation}). 
The existence and regularity of the solution to  \eqref{eq:navier-stokes} depends on the regularity of $u_0$, as is stated by the following well-known theorem \cite[Theorem 3.4]{majda2002vorticity}. Other regularity results with different boundary conditions can be found in e.g. \citep{temam2001navier}.

\begin{theorem}\label{thm:NS-existence}
If $u_0\in H^r(\mathbb{T}^d)$ with $r>\frac{d}{2}+2$ and $\div{u_0}=0$, then there exist $T>0$ and a classical solution $u$ to the Navier-Stokes equation such that $u(t=0)=u_0$ and $u\in C([0,T];H^r(\mathbb{T}^d))\cap C^1([0,T];H^{r-2}(\mathbb{T}^d))$.
\end{theorem}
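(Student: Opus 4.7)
The plan is to construct the solution by a Galerkin approximation combined with $H^r$ a priori energy estimates, and then upgrade regularity via Sobolev embedding. First I would eliminate the pressure by applying the Leray--Hopf projector $P$ onto divergence-free vector fields, which on $\T^d$ is diagonal in Fourier space. The system reduces to
\begin{equation*}
\partial_t u + P(u \cdot \nabla u) = \nu \Delta u, \qquad u(0) = u_0.
\end{equation*}
Then I would introduce the finite-dimensional projection $P_N$ onto the first $N$ Fourier modes and consider the truncated system
\begin{equation*}
\partial_t u^N + P_N P(u^N \cdot \nabla u^N) = \nu \Delta u^N, \qquad u^N(0) = P_N u_0,
\end{equation*}
which is an ODE on a finite-dimensional space with locally Lipschitz right-hand side, giving local existence of $u^N$ by Picard--Lindelöf.

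The heart of the argument is to obtain $H^r$ bounds on $u^N$ that are uniform in $N$. Testing the equation against $\Lambda^{2r}u^N$ with $\Lambda = (-\Delta)^{1/2}$, the viscous term contributes $-\nu\|\nabla u^N\|_{H^r}^2$, and the nonlinear term is controlled by a commutator estimate of Kato--Ponce type, exploiting $\div\, u^N = 0$ so that the apparent top-order piece $\int (u^N\cdot\nabla)\Lambda^r u^N \cdot \Lambda^r u^N$ vanishes after integration by parts. Together with the Sobolev embedding $H^{r-1}(\T^d) \hookrightarrow L^\infty(\T^d)$, which is where $r > d/2 + 2 > d/2 + 1$ is used, this yields a differential inequality of the form
\begin{equation*}
\tfrac{d}{dt}\|u^N(t)\|_{H^r}^2 \le C \|u^N(t)\|_{H^r}^3.
\end{equation*}
A standard Gronwall-type argument produces a time $T > 0$ depending only on $\|u_0\|_{H^r}$ (not on $N$) and a bound $\|u^N(t)\|_{H^r} \le 2\|u_0\|_{H^r}$ on $[0,T]$. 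The equation then gives $\partial_t u^N$ uniformly bounded in $H^{r-2}$, since both $\nu\Delta u^N$ and $P(u^N\cdot\nabla u^N)$ are controlled in $H^{r-2}$ using Moser-type product estimates.

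With these uniform bounds, the Aubin--Lions compactness lemma extracts a subsequence converging strongly in $C([0,T]; H^{r-1})$ and weak-$\ast$ in $L^\infty([0,T]; H^r)$ to a limit $u$, which one verifies solves the projected equation in the distributional sense. Weak lower semicontinuity promotes $u \in L^\infty([0,T]; H^r)$, and a standard Bona--Smith argument upgrades this to $u \in C([0,T]; H^r)$; differentiating in time in $H^{r-2}$ gives $u \in C^1([0,T]; H^{r-2})$. Finally, since $r > d/2 + 2$, the Sobolev embedding $H^r(\T^d) \hookrightarrow C^2(\T^d)$ makes $u$ a classical solution, and the pressure $p$ is recovered (up to a constant) as the unique solution of $\Delta p = -\div(u\cdot\nabla u)$.

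The main obstacle is the trilinear nonlinear estimate: without the divergence-free cancellation, estimating $\int \Lambda^r(u\cdot\nabla u)\cdot\Lambda^r u$ using only the $H^r$ norm of $u$ is not possible, and one would need stronger norms. Handling this via a careful commutator decomposition $\Lambda^r(u\cdot\nabla u) - u\cdot\nabla\Lambda^r u$ together with $\div\,u=0$ is the technical crux; everything else is compactness and embedding.
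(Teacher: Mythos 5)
Your proposal is sound. Note first that the paper does not prove this statement at all: it is quoted as a classical result and attributed to Majda and Bertozzi (Theorem 3.4 of \emph{Vorticity and Incompressible Flow}), so the only comparison available is with that textbook proof. There, the regularization is done with mollifiers $J_\epsilon$ applied to the Leray-projected equation, yielding an ODE in the Banach space $H^r$ solved by Picard iteration, followed by $\epsilon$-uniform $H^r$ energy estimates and a limit passage; you instead use a Fourier--Galerkin truncation $P_N$, which on $\T^d$ plays exactly the same role. The analytic core is identical in both routes: the divergence-free cancellation of the top-order term, a Kato--Ponce/Moser commutator estimate giving $\frac{d}{dt}\|u\|_{H^r}^2 \le C\|u\|_{H^r}^3$ (using $r>\tfrac{d}{2}+1$ for $\|\nabla u\|_{L^\infty}\lesssim \|u\|_{H^r}$), a Riccati/Gronwall argument giving a time $T$ depending only on $\|u_0\|_{H^r}$, compactness, and then continuity in the top norm --- via Bona--Smith in your write-up, via a separate argument in the textbook --- with $H^r\hookrightarrow C^2$ delivering classicality since $r>\tfrac{d}{2}+2$. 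Two small points worth keeping in mind: the paper's setting allows $\nu\ge 0$, and your Bona--Smith step is indeed what makes the $C([0,T];H^r)$ claim uniform in $\nu$ (for $\nu>0$ alone, parabolic smoothing gives it more cheaply); and in the Galerkin energy estimate you should note that $\Lambda^{2r}u^N$ lies in the range of $P_N P$, so the projections can be moved onto the test function before invoking the cancellation. With those caveats your argument is complete in outline and buys nothing more or less than the cited classical proof.
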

Based on this result, one can prove that $u$ is Sobolev regular i.e., that $u\in H^k(D\times [0,T])$ for some $k\in\mathbb{N}$, provided that $r$ is large enough \cite{deryck2021navierstokes}. 

\begin{corollary}\label{cor:NS-Hk}
If $k\in\mathbb{N}$ and $u_0\in H^r(\mathbb{T}^d)$ with $r>\frac{d}{2}+2k$ and $\div{u_0}=0$, then there exist $T>0$ and a classical solution $u$ to the Navier-Stokes equation such that $u\in H^{k}(\mathbb{T}^d\times [0,T])$, $\nabla p\in H^{k-1}(\mathbb{T}^d\times [0,T])$ and $u(t=0)=u_0$. 
\end{corollary}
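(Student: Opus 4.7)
The plan is to upgrade the pointwise-in-time regularity supplied by Theorem \ref{thm:NS-existence} to full space-time Sobolev regularity by repeatedly using the Navier--Stokes equation to trade one time derivative for two spatial derivatives. Since $r>\tfrac{d}{2}+2k\geq \tfrac{d}{2}+2$ whenever $k\geq 1$, Theorem \ref{thm:NS-existence} directly produces $T>0$ and a classical solution $u\in C([0,T];H^r(\mathbb{T}^d))\cap C^1([0,T];H^{r-2}(\mathbb{T}^d))$ with $u(0)=u_0$. On the torus, taking the divergence of the momentum equation and using $\div u=0$ yields $\Delta p = -\div(u\cdot\nabla u)$, so $\nabla p = -(\Id-P)(u\cdot\nabla u)$, where $P$ denotes the Leray projection; both $P$ and $\Id-P$ are Fourier multipliers of order zero and therefore preserve every $H^s(\mathbb{T}^d)$. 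In particular, the velocity equation can be rewritten as the closed evolution $\partial_t u = \nu\Delta u - P(u\cdot\nabla u)$.

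The core of the argument is an induction on $j\in\{0,1,\ldots,k\}$ establishing that $\partial_t^j u\in C([0,T];H^{r-2j}(\mathbb{T}^d))$. The base case $j=0$ is given. For the induction step, one differentiates the closed evolution above $j-1$ times in time and substitutes the previously established regularity of $\partial_t^i u$ for $0\leq i<j$. This expresses $\partial_t^j u$ as $\nu\Delta\partial_t^{j-1}u$ plus a finite sum, produced by the Leibniz rule, of products of factors of the form $D_x^{\bmalpha}\partial_t^i u$ with $i<j$; the Sobolev algebra estimate $\|fg\|_{H^s}\lesssim\|f\|_{H^s}\|g\|_{H^s}$ for $s>d/2$ controls each such product. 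The hypothesis $r>\tfrac{d}{2}+2k$ ensures $r-2j>d/2$ for every $j\leq k$, so algebra is available at every step and the induction closes. Consequently $\partial_t^j D_x^{\bmalpha}u\in C([0,T];L^2(\mathbb{T}^d))$ whenever $j+|\bmalpha|\leq k$, because $r-2j-|\bmalpha|\geq r-2k\geq 0$. Since $[0,T]$ is bounded, $C([0,T];L^2)\hookrightarrow L^2([0,T];L^2)=L^2(\mathbb{T}^d\times[0,T])$, and summing over $j+|\bmalpha|\leq k$ yields $u\in H^k(\mathbb{T}^d\times[0,T])$. Applying the same scheme to $\nabla p = -(\Id-P)(u\cdot\nabla u)$ loses exactly one spatial derivative relative to $u$ (because the nonlinearity contains $\nabla u$ but no Laplacian), so one obtains $\nabla p\in H^{k-1}(\mathbb{T}^d\times[0,T])$.

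The main obstacle is the bookkeeping at the inductive step: one must verify that every term arising when the nonlinearity is differentiated $j-1$ times in time (and possibly further in space, to recover mixed derivatives) stays within the algebra regime, and that the indices never slip below $d/2$. The assumption $r>\tfrac{d}{2}+2k$ is tailored exactly so that the intermediate Sobolev index $r-2i$ remains strictly above $d/2$ for all $0\leq i\leq k$, and the pressure's milder gain of regularity (one derivative rather than two) reflects the fact that $\nabla p$ is an algebraic -- not differential -- function of $u$ via the Leray decomposition.
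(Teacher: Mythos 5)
Your proposal is correct, and it follows essentially the same route as the proof the paper points to in \citet{deryck2021navierstokes}: start from Theorem \ref{thm:NS-existence}, rewrite the system with the Leray projection, and bootstrap time derivatives via the equation ($\partial_t^j u\in C([0,T];H^{r-2j})$ by induction using Sobolev algebra, with the pressure recovered from $\nabla p=-(\Id-P)(u\cdot\nabla u)$), the hypothesis $r>\tfrac{d}{2}+2k$ being exactly what keeps the algebra property available at every step. The review article itself does not reproduce this argument, but your sketch is a faithful and correct version of it.
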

Combining this regularity result with Theorem \ref{thm:approx-sobolev} then gives rise to the following approximation result for the Navier-Stokes equations \cite[Theorem 3.1]{deryck2021navierstokes}. 

\begin{theorem}\label{thm:pinn-approx-ns}
Let $n\geq 2$, $d,r,k\in\mathbb{N}$, with $k\geq 3$,
and let $u_0\in H^r(\mathbb{T}^d)$ with $r>\frac{d}{2}+2k$ and $\div{u_0}=0$. Let $T>0$ be the time from Corollary \ref{cor:NS-Hk} such that the classical solution of $u$ exists on $\T^d\times [0,T]$. 
Then for every $N>5$, there exist tanh neural networks $\hu_j$, $1\leq j\leq d$, and $\widehat{p}$, each with two hidden layers, of widths $3\left\lceil\frac{k+n-2}{2}\right\rceil\binom{d+k-1}{d}+\lceil TN\rceil +dN$ and $3\left\lceil\frac{d+n}{2}\right\rceil \binom{2d+1}{d}\lceil TN \rceil N^{d}$, such that, 
    \begin{align}
    \begin{split}
        \ltwo{(\hu_j)_t + \hu\cdot \nabla \hu_j + (\nabla\widehat{p})_j - \nu \Delta \hu_j} &\leq C_1\ln^2(\beta N) N^{-k+2},
    \end{split}\label{eq:bound-pinn-res-1}\\
    \ltwo{\div{\hu}} &\leq C_2 \ln(\beta N) N^{-k+1},\label{eq:bound-pinn-res-2}\\
     \norm{(u_0)_j-\hu_j(t=0)}_{L^2(\mathbb{T}^d)} &\leq C_3 \ln(\beta N)N^{-k+1}, \label{eq:bound-pinn-res-3}
    \end{align}
for every $1\leq j\leq d$ and where the constants $\beta,C_1,C_2,C_3$ are explicitly defined in the proof and can depend on $k$, $d$, $T$, $u$ and $p$ but not on $N$. {\color{black}The weights of the networks can be bounded by $\bigO(N^\gamma \ln(N))$ where $\gamma = \max\{1,d(2+k^2+d)/n\}$.}
\end{theorem}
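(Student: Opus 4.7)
The plan is to exploit the Sobolev regularity of the Navier--Stokes solution guaranteed by Corollary~\ref{cor:NS-Hk}, invoke the tanh approximation theorem (Theorem~\ref{thm:approx-sobolev}) componentwise for the velocity and pressure, and then bound each of the three residuals by triangle and H\"older inequalities. Under $u_0\in H^r(\mathbb{T}^d)$ with $r>d/2+2k$ and $\div u_0=0$, Corollary~\ref{cor:NS-Hk} yields a classical solution $u\in H^k(\mathbb{T}^d\times[0,T])$ with $\nabla p\in H^{k-1}(\mathbb{T}^d\times[0,T])$. Because $r$ exceeds $d/2+2k$ with room to spare, Sobolev embedding on the $(d{+}1)$-dimensional space--time cylinder ensures that $u_j\in W^{k,\infty}$ and $\partial_{x_i}p\in W^{k-1,\infty}$, with quantitative norm control depending only on $r,d,T,u_0$; this provides exactly the input regularity needed by Theorem~\ref{thm:approx-sobolev}.

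Next I would apply Theorem~\ref{thm:approx-sobolev} separately to each velocity component $u_j$ and to the pressure $p$. The stated widths arise naturally from the underlying Bramble--Hilbert construction: the factors $\binom{d+k-1}{d}$ and $\binom{2d+1}{d}$ count monomials of total degree at most $k$ that must be simultaneously realised by tanh sub-networks, while the factors $\lceil TN\rceil$ and $N^d$ come from tensorising an $O(1/N)$ mesh in time and space. The output is a pair of two-hidden-layer tanh networks $\hu_j$ and $\hat p$ such that
\begin{equation}
\norm{u_j-\hu_j}_{W^{2,\infty}}\leq C\ln^2(\beta N)\,N^{-k+2},\qquad \norm{\nabla p-\nabla\hat p}_{L^2}\leq C\ln(\beta N)\,N^{-k+1},
\end{equation}
with parameters bounded by $\bigO(N^\gamma\ln N)$ for $\gamma$ as in the statement.

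With these estimates the divergence residual $\ltwo{\div\hu}=\ltwo{\sum_j\partial_{x_j}(\hu_j-u_j)}$ and the initial residual $\norm{\hu_j(\cdot,0)-(u_0)_j}_{L^2}$ would follow immediately from the $W^{1,\infty}$ and $L^\infty$ components of the bound above, yielding \eqref{eq:bound-pinn-res-2} and \eqref{eq:bound-pinn-res-3}. For \eqref{eq:bound-pinn-res-1} I would subtract the exact equation $u_t+u\cdot\nabla u+\nabla p-\nu\Delta u=0$ and estimate the linear terms $(\hu_j-u_j)_t$, $\nu\Delta(\hu_j-u_j)$ and $\partial_{x_j}(\hat p-p)$ directly in $L^2$ from the $W^{2,\infty}$ and $H^1$ bounds. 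The nonlinear convection term splits as
\begin{equation}
\hu\cdot\nabla\hu_j-u\cdot\nabla u_j=(\hu-u)\cdot\nabla\hu_j+u\cdot\nabla(\hu_j-u_j),
\end{equation}
and H\"older's inequality bounds it by $\norm{\hu-u}_{L^\infty}\norm{\nabla\hu_j}_{L^2}+\norm{u}_{L^\infty}\norm{\nabla(\hu_j-u_j)}_{L^2}$; since $\hu_j\to u_j$ in $W^{1,\infty}$, $\norm{\nabla\hu_j}_{L^2}$ stays uniformly bounded in $N$, so both contributions decay at rate $N^{-k+2}$.

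The hard part will be this nonlinear convection: it is what forces Theorem~\ref{thm:approx-sobolev} to be applied in the stronger $W^{2,\infty}$ norm rather than in $L^2$, which in turn drives the large combinatorial widths and the $\bigO(N^\gamma\ln N)$ weight bound. Carefully tracking the dependence of these widths and weights on $k$, $d$ and $N$ through the Bramble--Hilbert plus tanh-partition-of-unity construction, and verifying that two hidden layers suffice for simultaneous pointwise control of $\hu_j$ together with its first and second spatial derivatives and its time derivative, constitutes the most delicate bookkeeping of the proof.
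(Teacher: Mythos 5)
Your overall strategy is the same as the paper's (i.e.\ as the proof in the cited Navier--Stokes PINN paper): obtain space--time Sobolev regularity from Corollary \ref{cor:NS-Hk}, apply the tanh approximation theorem on the cylinder componentwise to $u$ and $p$, and bound the three residuals via the triangle and H\"older inequalities, splitting the convection term exactly as you do.

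There is, however, a genuine gap in your regularity step. You claim that $u_j\in W^{k,\infty}$ follows ``by Sobolev embedding on the $(d{+}1)$-dimensional space--time cylinder''. Starting from the regularity that Corollary \ref{cor:NS-Hk} actually provides, namely $u\in H^{k}(\T^d\times[0,T])$, this is false: $H^{k}$ on a $(d{+}1)$-dimensional domain embeds into $W^{m,\infty}$ only for $m<k-\tfrac{d+1}{2}$, so you lose roughly $\lceil (d{+}1)/2\rceil$ derivatives and cannot feed $u$ into Theorem \ref{thm:approx-sobolev}, which requires $W^{s,\infty}$ regularity of the target. Since your bounds \eqref{eq:bound-pinn-res-1}--\eqref{eq:bound-pinn-res-3} all rest on the resulting $W^{2,\infty}$ estimate, the argument as written does not close. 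The paper's proof sidesteps this entirely by using the $L^2$-based Sobolev version of the tanh approximation result, i.e.\ approximation of $H^{k}(\T^d\times[0,T])$ functions in $H^2$ (for the PDE residual) and $H^1$ (for the divergence and, via a trace inequality, the initial-condition residual); this is exactly what produces the stated rates $\ln^2(\beta N)N^{-k+2}$ and $\ln(\beta N)N^{-k+1}$, while the uniform ($L^\infty$, $W^{1,\infty}$) control needed to handle the convection term comes from the explicit network construction rather than from upgrading the regularity of $u$ itself. Your route could be repaired by proving $u\in C^{k}(\T^d\times[0,T])$ directly from the bootstrap $u\in C^{j}([0,T];H^{r-2j})$, which $r>\tfrac{d}{2}+2k$ does permit, but that is a different argument from the embedding you invoke.
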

\end{example}

\subsection{Second general result}

Combining results as Theorem \ref{lem:acc-trig-pol} or Theorem \ref{thm:approx-sobolev} with Assumption \ref{ass:diff-operator} thus allows to prove that the generalization error $\cE_G$ can be made arbitrarily small, thereby answering question Q1. There is however still room for improvement as,
\begin{itemize}
    \item For many model classes there are primarily results available that only state that the error $\norm{u-u_\theta}_{L^q}$ can be made small. Is there a way to infer that if $\norm{u-u_\theta}_{L^q}$ is small then also $\norm{u-u_\theta}_{W^{k,q}}$ is small (under some assumptions)? 
    \item Both Theorem \ref{lem:acc-trig-pol} or Theorem \ref{thm:approx-sobolev} suffer from the \emph{curse of dimensionality (CoD)}, meaning that the number of parameters needed to reach an accuracy of $\epsilon$ scales exponentially with the input dimension $d$, namely as $\epsilon^{-(s-k)/d}$. Unless $s\approx k+d $ this will mean that an infeasibly large number of parameters is needed to guarantee a sufficiently small generalization error. Experimentally, however, one has observed for many PDEs that the CoD can be overcome. This raises the following question. Can we improve upon the approximation results stated in this section? 
    \item Both Theorem \ref{lem:acc-trig-pol} or Theorem \ref{thm:approx-sobolev} require that the true solution $u$ of the PDE is sufficiently (Sobolev) regular. Is there a way to still prove that the approximation error is small if $u$ is less regular, e.g. as for inviscid scalar conservation laws? 
\end{itemize}
As it turns out, the first two of the questions above can be answered in the same manner, which is the topic of the below. 

We first show that, under some assumptions, one can indeed answer question Q1 even if one only has access to an approximation result for $\norm{u-u_\theta}_{L^q}$ and not for $\norm{u-u_\theta}_{W^{k,q}}$. We do this by using finite difference (FD) approximations. 
Depending on whether forward, backward or central differences are used, a FD operator might not be defined on the whole domain $D$, e.g. for $f\in C([0,1])$ the (forward) operator $\Delta^+_h[f] := f(x+h)-f(x)$ is not well-defined for $x\in (1-h,1]$. This can be solved by resorting to piecewise-defined FD operators, e.g. a forward operator on $[0,0.5]$ and a backward operator on $(0.5,1]$. In a general domain $\Omega$ one can find a well-defined piecewise FD operator if $\Omega$ satisfies the following assumption, which is satisfied by many domains (e.g. rectangular or smooth domains). 
\begin{assumption}\label{ass:partition}
There exists a finite partition $\cP$ of $\Omega$ such that for all $P\in\cP$ there exists $\epsilon_P>0$ and $v_P\in B^1_\infty = \{x\in\R^{\dim(\Omega)}: \norm{x}_\infty\leq 1\}$ such that for all $x\in P$ it holds that $x+\epsilon_P(v_P+B^1_\infty) \subset \Omega$. 
\end{assumption}
Under this assumption we can prove an upper bound on the (strong) PDE residual of the model $u_\theta$. 

\begin{theorem}\label{thm:approx-f-to-df}
Let $q\in [1,\infty]$, $r, \ell\in\N$ with $\ell\leq r$ and $u, u_\theta \in C^{r}(\Omega)$. If Assumption \ref{ass:diff-operator} and Assumption \ref{ass:partition} hold then there exists a constant $C(r)>0$ such that for any $\bmalpha\in\N^d_0$ with $\ell := \norm{\bmalpha}_1$ it holds for all $h>0$ that
\begin{equation}
    \norm{\cL(u_\theta)}_{L^q} \leq C\left(\norm{u-u_\theta}_{L^q}h^{-\ell} + (|u| _{C^{r}}+|u_\theta|_{C^{r}})h^{r-\ell}\right). 
\end{equation}
\end{theorem}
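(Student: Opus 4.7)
The plan is to reduce $\|\cL(u_\theta)\|_{L^q}$ to a sum of derivative errors via Assumption \ref{ass:diff-operator} and then estimate each derivative $D^{\bmalpha}(u-u_\theta)$ by comparing it to a finite difference quotient applied to $u-u_\theta$. Splitting the derivative into the FD quotient plus a Taylor remainder gives the $h^{-\ell}$ term (from the $L^q$-stability of the FD operator) and the $h^{r-\ell}$ term (from the pointwise Taylor consistency error).

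First I would, for a fixed multi-index $\bmalpha$ with $\|\bmalpha\|_1=\ell$, write down a piecewise finite difference operator $\Delta^{\bmalpha}_h$ on $\Omega$. Using Assumption \ref{ass:partition}, on each cell $P\in\cP$ I would choose an $r$-th order accurate one-sided FD stencil whose shifts lie in the direction $v_P$ (scaled by $h\leq \epsilon_P$), which guarantees that every evaluation point $x+h\,\xi$ appearing in $\Delta^{\bmalpha}_h w(x)$ stays inside $x+\epsilon_P(v_P+B^1_\infty)\subset \Omega$ for $x\in P$. Each such stencil is a linear combination of shifted evaluations of $w$ weighted by coefficients of the form $c_j\,h^{-\ell}$.

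Next I would establish two estimates valid on each $P$ and then glue them via the finite partition. The consistency estimate is Taylor's theorem: for any $w\in C^r(\Omega)$,
\begin{equation}
\bigl| D^{\bmalpha} w(x) - \Delta^{\bmalpha}_h w(x)\bigr| \;\leq\; C(r)\, h^{r-\ell}\, |w|_{C^r(\Omega)},
\end{equation}
which follows by expanding each shifted value in the stencil to order $r$ and using that the FD coefficients are chosen to annihilate all polynomial terms of degree below $r$ except the one reproducing $D^{\bmalpha}$. The stability estimate uses Minkowski's inequality on the explicit sum defining $\Delta^{\bmalpha}_h$: since every shift $x\mapsto x+h\xi$ induces an isometry on $L^q$ of a subdomain of $\Omega$, one obtains
\begin{equation}
\bigl\| \Delta^{\bmalpha}_h (u-u_\theta) \bigr\|_{L^q(P)} \;\leq\; C(r)\, h^{-\ell}\, \|u-u_\theta\|_{L^q(\Omega)}.
\end{equation}
Summing the triangle inequality $\|D^{\bmalpha}(u-u_\theta)\|_{L^q(P)} \leq \|\Delta^{\bmalpha}_h(u-u_\theta)\|_{L^q(P)} + \|\Delta^{\bmalpha}_h(u-u_\theta)-D^{\bmalpha}(u-u_\theta)\|_{L^q(P)}$ over the finitely many $P\in\cP$ produces $\|D^{\bmalpha}(u-u_\theta)\|_{L^q(\Omega)} \leq C(h^{-\ell}\|u-u_\theta\|_{L^q} + h^{r-\ell}(|u|_{C^r}+|u_\theta|_{C^r}))$. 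Inserting this into Assumption \ref{ass:diff-operator}, together with an analogous bound for mixed time-space derivatives (absorbing the time index into $\bmalpha$ and using the partition on the space-time domain), yields the claimed estimate after absorbing the finitely many terms of the sum into the constant $C(r)$.

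The main obstacle, and the reason Assumption \ref{ass:partition} is invoked, is that a naive central FD operator is not well-defined near $\partial\Omega$, so one must carefully choose one-sided (or skewed) stencils per cell and control both that their evaluation points stay in $\Omega$ and that their coefficients satisfy the order-$r$ consistency identities simultaneously. Once the piecewise stencil is in place, the stability estimate is routine; the bookkeeping across cells and across the finitely many multi-indices appearing in Assumption \ref{ass:diff-operator} is standard and only contributes dimension-dependent factors into $C(r)$.
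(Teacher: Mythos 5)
Your proposal is correct and follows essentially the same route as the paper, whose proof is only sketched there: approximate each $D^{\bmalpha}(u-u_\theta)$ by an $r$-th order accurate (piecewise one-sided) finite-difference formula, made admissible by Assumption \ref{ass:partition}, split into an $L^q$-stability term of size $h^{-\ell}\norm{u-u_\theta}_{L^q}$ and a Taylor consistency term of size $h^{r-\ell}(|u|_{C^r}+|u_\theta|_{C^r})$, and insert into Assumption \ref{ass:diff-operator}. Your write-up simply supplies the details that the paper delegates to \cite[Lemma B.1]{deryck2022generic}.
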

\begin{proof}
    The proof follows from approximating any $D^\bmalpha (u_\theta-u)$ using a $r$-th order accurate finite-difference formula (which is possible thanks to Assumption \ref{ass:partition}), and combining this with Assumption \ref{ass:diff-operator}. See also \cite[Lemma B.1]{deryck2022generic}. 
\end{proof}

Theorem \ref{thm:approx-f-to-df} can show that $\norm{\cL(u_\theta)}_{L^q}$ is small if $\norm{u-u_\theta}_{L^q}$ is small and $|u_\theta|_{C^{r}}$ does not increase too much in terms of the model size. It must be stated that these assumptions are not necessarily trivially satisfied. Indeed, assume that $\Theta\subset \R^n$, that $\norm{u-u_\theta}_{L^q} \sim n^{-\alpha}$ and  $|u_\theta|_{C^{r}}\sim n^\beta$, and finally that $n \sim h^{-\gamma}$ for $\alpha,\beta,\gamma \geq 0$. In this case, the upper bound of Theorem \ref{thm:approx-f-to-df} is equivalent to
\begin{eqnarray}
    \norm{\cL(u_\theta)}_{L^q} \lesssim h^{\alpha\gamma} h^{-\ell} + h^{-\beta\gamma} h^{r-\ell}. 
\end{eqnarray}
To make sure that the right hand side is small for $h\to 0$ the inequality $\beta \ell < \alpha (r-\ell)$ should hold. Hence, either $\norm{u-u_\theta}_{L^q}$ should converge fast (large $\alpha$), $|u_\theta|_{C^{r}}$ should diverge slowly (small $\beta$) or not at all ($\beta=0$) or the true solution $u$ should be very smooth (large $r$). By way of example, we investigate what kind of bound is produced by Theorem \ref{thm:approx-f-to-df} if the Fourier basis is used. 

\begin{example}[Fourier basis]
    Theorem \ref{lem:acc-trig-pol} tells us that $\alpha = r/d$ and that $\beta=0$. Hence, the condition $\beta \ell < \alpha (r-\ell)$ is already satisfied. We now choose $\gamma =d $ such that the optimal rate is obtained, for which $h^{\alpha\gamma} h^{-\ell} = h^{r-\ell}$. The final convergence rate is hence $ N^{-(r-\ell)/d}$, in agreement with that of Theorem \ref{lem:acc-trig-pol}. 
\end{example}

\begin{remark}
Unlike in the previous example, Theorem \ref{thm:approx-f-to-df} is not expected to produce optimal convergence rates, particularly when loose upper bounds for $|u_\theta|_{C^{r}}$ are used. However, this is not a problem when trying to prove that a model class can overcome the curse of dimensionality in the approximation error, as is the topic of the next section. 
\end{remark}

Finally, as noted in the beginning of this section, all results in the section so far all assume that the solution $u$ of the PDE is sufficiently (Sobolev) regular. In the next example, we show that one can also prove approximation results, thereby answering Question 1, when $u$ has discontinuities. 

\begin{example}[Scalar conservation laws]
In Example \ref{ex:scl-weak-residual} a physics-informed loss based on the (weak) Kruzhkov entropy residual was introduced for scalar conservation laws (Example \ref{def:scl}). It consists of the term $\max_{\vartheta,c} \cR(u_\theta,\phi_\vartheta, c)$ together with the integral of the squared boundary residual $\rs[u_\theta]$ and the integral of the squared initial condition residual $\rt[u_\theta]$. Moreover, as solutions to scalar conservation laws might not be Sobolev regular, an approximation result cannot be directly proven based on Theorem \ref{thm:approx-sobolev}. Instead, in \cite{deryck2022wpinns} it was proven that the relevant counterpart to Assumption \ref{ass:diff-operator} is the following lemma. 
\begin{lemma}\label{thm:approx-general}
Let $p,q>1$ be such that $\frac{1}{p}+\frac{1}{q}=1$ or let $p=\infty$ and $q=1$. Let $u$ be the entropy solution of \eqref{eq:scl} and let $v\in L^q(D\times [0,T])$. Assume that $f$ has Lipschitz constant $L_f$. Then it holds for any $\varphi\in W_0^{1,\infty}(D\times [0,T])$ that
\begin{equation}
    \cR(v,\varphi, c) \leq (1+3L_f)\abs{\varphi}_{W^{1,p}(D\times [0,T])} \norm{u-v}_{L^q(D\times[0,T])}. 
\end{equation}
\end{lemma}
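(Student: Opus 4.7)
The plan is to compare $\cR(v,\varphi,c)$ with the same quantity evaluated at the entropy solution $u$, since the Kruzhkov entropy condition \eqref{eq:kruzkov} provides a sign control on the latter. Concretely, I would first write the trivial decomposition
$$\cR(v,\varphi,c) = \cR(u,\varphi,c) + \bigl[\cR(v,\varphi,c) - \cR(u,\varphi,c)\bigr].$$
Because $\varphi\in W_0^{1,\infty}(D\times[0,T])$ has vanishing trace on $\partial(D\times[0,T])$, every boundary contribution in \eqref{eq:kruzkov} drops out, and with the standard nonnegativity assumption $\varphi\geq 0$ used in the wPINN framework one obtains $\cR(u,\varphi,c)\leq 0$. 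The estimate is thereby reduced to bounding the difference $\cR(v,\varphi,c) - \cR(u,\varphi,c)$ from above.

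Expanding via \eqref{eq:R-def}, this difference equals
$$-\iint_{D\times[0,T]} \Bigl[(|v-c|-|u-c|)\varphi_t + (Q[v;c]-Q[u;c])\varphi_x\Bigr]\,dx\,dt,$$
so everything hinges on two pointwise Lipschitz-type estimates for the map $u\mapsto (|u-c|,Q[u;c])$. The first, $\bigl||v-c|-|u-c|\bigr|\leq |v-u|$, is just the reverse triangle inequality. The main obstacle is the second, $|Q[v;c]-Q[u;c]|\leq 3L_f|v-u|$, because the sign function in \eqref{eq:def-q} is discontinuous at $c$. I would handle this by adding and subtracting $\sgn{v-c}(f(u)-f(c))$ to get
$$Q[v;c] - Q[u;c] = \sgn{v-c}\bigl(f(v)-f(u)\bigr) + \bigl(\sgn{v-c}-\sgn{u-c}\bigr)\bigl(f(u)-f(c)\bigr).$$
The first summand is bounded by $L_f|v-u|$ by the Lipschitz assumption on $f$. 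The sign-difference factor in the second summand vanishes unless $c$ lies strictly between $u$ and $v$, in which case $|u-c|\leq |u-v|$ and hence $|f(u)-f(c)|\leq L_f|u-v|$; combined with $|\sgn{v-c}-\sgn{u-c}|\leq 2$, this summand contributes at most $2L_f|v-u|$, which is precisely what produces the factor $3L_f$ in the statement.

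With the two pointwise estimates in hand, Hölder's inequality with conjugate exponents $(p,q)$ (and the limit case $p=\infty,q=1$) gives
$$\cR(v,\varphi,c) \leq \|u-v\|_{L^q}\bigl(\|\varphi_t\|_{L^p} + 3L_f\|\varphi_x\|_{L^p}\bigr) \leq (1+3L_f)\,|\varphi|_{W^{1,p}(D\times[0,T])}\,\|u-v\|_{L^q(D\times[0,T])},$$
which is exactly the claimed bound. The only genuinely delicate step is the sign-function bookkeeping in the $Q$-estimate; the remainder is a clean combination of the entropy inequality with the reverse triangle and Hölder inequalities, and I anticipate no further technical obstacles.
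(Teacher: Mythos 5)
Your argument is correct and is essentially the same proof as in the cited source \cite{deryck2022wpinns} (the paper itself states the lemma without reproducing the proof): split off $\cR(u,\varphi,c)\le 0$ via the Kruzhkov entropy inequality with a test function vanishing on the boundary, bound the difference pointwise using the reverse triangle inequality and the $3L_f$-Lipschitz estimate for $Q[\cdot\,;c]$ obtained by the sign-splitting you describe, and conclude with H\"older. Your observation that nonnegativity of $\varphi$ is needed is exactly right: taking $v=u$ reduces the claim to $\cR(u,\varphi,c)\le 0$, which fails for sign-changing $\varphi$ when $u$ has a shock, so $\varphi\ge 0$ is an implicit hypothesis of the lemma rather than a gap in your proof.
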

Hence, we need an approximation result for $\norm{u-u_\theta}_{L^q(D\times[0,T])}$ now rather than for a higher-order Sobolev norm of $u-u_\theta$. The following holds \cite[Lemma 3.3]{deryck2022wpinns}. 
\begin{lemma}\label{lem:bv-nn}
For every $u\in BV([0,1]\times [0,T])$ and $\epsilon>0$ there is a tanh neural network $\hu$ with two hidden layers and at most $O(\epsilon^{-2})$ neurons such that
\begin{equation}
    \norm{u-\hu}_{L^1([0,1]\times [0,T])}\leq \epsilon.
\end{equation}
\end{lemma}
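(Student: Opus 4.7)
The plan is to approximate $u$ in $L^1$ in two stages: first by a piecewise-constant function on a uniform grid, then by a two-hidden-layer tanh network that mimics that piecewise-constant function.

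In the first stage, fix a uniform grid on $[0,1]\times[0,T]$ with mesh $h = 1/N$ (so roughly $N^2$ cells), and let $\bar{u}$ be the piecewise-constant function whose value on each cell equals the average of $u$ over that cell. The classical estimate for $L^1$ approximation of $\BV$ functions on a uniform grid yields
\begin{equation*}
\norm{u - \bar{u}}_{L^1([0,1]\times[0,T])} \leq C\,h\,\abs{u}_{\BV([0,1]\times[0,T])},
\end{equation*}
so choosing $N \sim \abs{u}_{\BV}/\epsilon$ makes the first-stage error at most $\epsilon/2$.

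In the second stage, build $\hat{u}$ as a two-hidden-layer tanh network that approximates $\bar{u}$. Introduce the smoothed Heaviside $\psi_\lambda(s) := \tfrac{1}{2}(1+\tanh(\lambda s))$, which has transition width $O(1/\lambda)$. The indicator of the cell $[x_i,x_{i+1}]\times[t_j,t_{j+1}]$ is, up to the smoothing, the product $\psi_\lambda(x-x_i)\,\psi_\lambda(x_{i+1}-x)\,\psi_\lambda(t-t_j)\,\psi_\lambda(t_{j+1}-t)$, but computing a four-fold product would require an extra hidden layer. To stay within two hidden layers I would place all the one-dimensional step units $\psi_\lambda(x-x_i)$, $\psi_\lambda(x_{i+1}-x)$, $\psi_\lambda(t-t_j)$, $\psi_\lambda(t_{j+1}-t)$ in the first hidden layer; these are shared between cells, so only $O(N)$ units suffice. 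In the second hidden layer I would place, for each cell $(i,j)$, a single tanh neuron implementing a ``soft-AND'',
\begin{equation*}
\tfrac{1}{2}\Bigl(1 + \tanh\bigl(a\,[\psi_1 + \psi_2 + \psi_3 + \psi_4 - 7/2]\bigr)\Bigr),
\end{equation*}
which saturates near $1$ precisely when all four first-layer step values are simultaneously near $1$ (i.e.\ inside cell $(i,j)$) and near $0$ otherwise. This uses $O(N^2)$ second-layer units, and the output layer is a linear combination of them with coefficients equal to the cell averages of $u$, yielding $\hat{u}$.

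The approximation error $\hat{u} - \bar{u}$ is supported in a neighbourhood of the grid lines of total area $O(N/\lambda)$ (a union of $O(N)$ strips of width $O(1/\lambda)$), and on this neighbourhood $\abs{\hat{u} - \bar{u}} \leq 2\norm{u}_\infty$. Under the standard assumption $u \in L^\infty$ (automatic for entropy solutions of scalar conservation laws via the maximum principle, which is the setting of the application), choosing $a$ and $\lambda$ large enough (e.g.\ both $\sim N\,\norm{u}_\infty/\epsilon$) gives $\norm{\hat{u} - \bar{u}}_{L^1} \leq \epsilon/2$. Together with the first stage this yields $\norm{u - \hat{u}}_{L^1} \leq \epsilon$, and the total network size is $O(N^2) = O(\epsilon^{-2})$.

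The main technical obstacle is the two-hidden-layer constraint: the naive four-fold product construction would require a third hidden layer, so it is essential to replace the product by the soft-AND described above and to quantify carefully that thresholding a sum of four sigmoidal steps does not enlarge the transition region beyond $O(1/\lambda)$. Once this soft-AND bound is established, the remaining estimates (grid-line area count, cell-average normalisation, and combination of the two error sources) are routine.
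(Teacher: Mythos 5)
Your construction is essentially correct, but it takes a genuinely different route from the paper's. The paper's proof first regularizes: it invokes a mollification result for $BV$ functions (Bartels) to obtain a smooth $\tilde{u}$ with $\norm{u-\tilde{u}}_{L^1}\lesssim\epsilon$ and $\norm{\nabla\tilde{u}}_{L^1}\lesssim\norm{u}_{BV}+\epsilon$, and then applies the general tanh Sobolev-approximation machinery (the $W^{1,1}$-based analogue of Theorem \ref{thm:approx-sobolev}); the $\bigO(\epsilon^{-2})$ count comes from the rate $N^{-(s-k)/d}$ with $s=1$, $k=0$, $d=2$. You instead go through piecewise constants (the cell-wise $BV$--Poincar\'e inequality giving the $h\abs{u}_{BV}$ bound) and then hand-build the two-hidden-layer network, sharing the $\bigO(N)$ one-dimensional smoothed steps in the first layer and realizing each cell indicator with a single second-layer soft-AND neuron. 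Your route is more elementary and self-contained and makes the $\epsilon^{-2}$ scaling transparent, at the price of having to control the sigmoid tails yourself: besides the strip argument, note that at a generic point all $\bigO(N^2)$ far-away gates contribute $\bigO(e^{-ca})$ each, so you also need $a\gtrsim\log(N/\epsilon)$; your choice of $a$ covers this, but it should be said. The paper's route avoids all of this bookkeeping by outsourcing it to the existing approximation theorems.

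The one genuine deviation from the statement is your standing assumption $u\in L^\infty$. In two dimensions $BV\not\subset L^\infty$ (only $BV\subset L^2$), and both your strip estimate and your tail estimate are phrased in terms of $\norm{u}_{L^\infty}$, so as written you prove the lemma only for $BV\cap L^\infty$, whereas the statement (and the paper's mollification proof) requires no boundedness. The repair within your framework is one line: truncate, setting $u_M:=\max\{\min\{u,M\},-M\}$, which satisfies $\abs{u_M}_{BV}\le\abs{u}_{BV}$ and $\norm{u-u_M}_{L^1}\to 0$ as $M\to\infty$ since $u\in L^1$, and then run your construction on $u_M$ with $M$ chosen so that $\norm{u-u_M}_{L^1}\le\epsilon/2$. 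With that amendment (and the logarithmic lower bound on $a$ made explicit) your proof is complete.
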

\begin{proof}
Write $\Omega = [0,1]\times [0,T]$. For every $u\in BV(\Omega)$ and $\epsilon>0$ there exists a function $\Tilde{u}\in C^\infty(\Omega)\cap BV(\Omega)$ such that $\norm{u-\Tilde{u}}_{L^1(\Omega)}\lesssim \epsilon$ and $\norm{\nabla \Tilde{u}}_{L^1(\Omega)} \lesssim \norm{u}_{BV(\Omega)}+\epsilon$ \cite{bartels2012total}. 
Then use the approximation techniques of \cite{deryck2021approximation, deryck2021navierstokes} and the fact that $\norm{\Tilde{u}}_{W^{1,1}(\Omega)}$ can be uniformly bounded in $\epsilon$ to find the existence of a tanh neural network $\uhat$ with two hidden layers and at most $O(\epsilon^{-2})$ neurons that satisfies the mentioned error estimate. 
\end{proof}
If one additionally knows that $u$ is piecewise smooth, for instance as in the solutions of convex scalar conservation laws \cite{holden2015front}, then one can use the results of \cite{petersen2018optimal} to obtain the following result \cite[Lemma 3.4]{deryck2022wpinns}. 
\begin{lemma}\label{lem:pw-nn}
Let $m,n\in\N$, $1\leq q<\infty$ and let $u:[0,1]\times [0,T]\to \R$ be a function that is piecewise $C^m$ smooth and with a $C^n$ smooth discontinuity surface. Then there is a tanh neural network $\hu$ with at most three hidden layers and $\bigO(\epsilon^{-2/m}+\epsilon^{-q/n})$ neurons such that
\begin{equation}
    \norm{u-\hu}_{L^q([0,1]\times [0,T])}\leq \epsilon.
\end{equation}
\end{lemma}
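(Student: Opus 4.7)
The plan is to mirror the strategy in Petersen--Voigtl\"ander for ReLU networks, but to realise each of its ingredients with tanh networks using the machinery from \cite{deryck2021approximation, deryck2021navierstokes} that was already invoked in the proof of Lemma \ref{lem:bv-nn}. The target quantity $\norm{u-\hu}_{L^q}$ naturally splits into a contribution from the smooth pieces of $u$ and a contribution from the (measure-zero but topologically essential) discontinuity surface, and the two contributions account for the two terms $\epsilon^{-2/m}$ and $\epsilon^{-q/n}$ in the size estimate.

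First I would write, using the piecewise-$C^m$ assumption, $u=u_1\,\chi_A+u_2\,(1-\chi_A)$ on $[0,1]\times[0,T]$, where $u_1,u_2$ are $C^m$ extensions to the whole square and $A$ is the region whose boundary is the $C^n$ discontinuity surface $\Gamma$. (If the decomposition involves several pieces, repeat the argument finitely often; this only changes constants.) For the smooth parts I would apply Theorem \ref{thm:approx-sobolev} with $d=2$, $s=m$, $k=0$ to each $u_j$ to obtain tanh networks $\hu_j$, each with two hidden layers and $N_1\sim \epsilon^{-2/m}$ neurons (up to a logarithmic factor, which is absorbed by adjusting $\epsilon$ by a polylog factor), such that $\norm{u_j-\hu_j}_{L^\infty}\leq \epsilon$. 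These networks cost the $\epsilon^{-2/m}$ term in the final bound.

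Next I would approximate $\chi_A$. Since $\Gamma$ is $C^n$ smooth, by compactness it is covered by finitely many charts on which $\Gamma$ is the graph of a $C^n$ function $\gamma:I\to\R$, so $\chi_A$ agrees locally with $H(x_2-\gamma(x_1))$ for the Heaviside function $H$. Apply Theorem \ref{thm:approx-sobolev} to obtain a tanh approximation $\hat\gamma$ of $\gamma$ with $\norm{\gamma-\hat\gamma}_{L^\infty}\le \delta$ using $\bigO(\delta^{-1/n})$ neurons, and replace $H$ by a scaled tanh $\sigma_\lambda(t):=\tfrac12(1+\tanh(\lambda t))$ with transition scale $\lambda^{-1}$. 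Setting $\hat\chi_A(x):=\sigma_\lambda(x_2-\hat\gamma(x_1))$, the error $|\chi_A(x)-\hat\chi_A(x)|$ is $O(1)$ only on a strip of width $O(\delta+\lambda^{-1})$ around $\Gamma$, so its $L^q$-norm is $O((\delta+\lambda^{-1})^{1/q})$. Choosing $\delta\sim\lambda^{-1}\sim \epsilon^{q}$ then gives $\norm{\chi_A-\hat\chi_A}_{L^q}\lesssim \epsilon$ at a cost of $\bigO(\epsilon^{-q/n})$ neurons, which yields the second term in the bound. A partition-of-unity construction (itself implemented by tanh networks, as in \cite{deryck2021approximation}) patches the local graphs into a single network if $\Gamma$ is not globally a graph, without changing the rate.

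Finally I would set $\hu:=\hu_1\,\hat\chi_A+\hu_2\,(1-\hat\chi_A)$, realising the two products through the tanh multiplication gadget of \cite{deryck2021approximation}, which turns a product into a tanh subnetwork of size independent of $\epsilon$ and introduces at most one additional hidden layer; this is where the bound on the number of hidden layers rises from two to three. The triangle inequality then gives
\begin{equation}
\norm{u-\hu}_{L^q}\le \sum_{j=1,2}\norm{u_j-\hu_j}_{L^q}+\norm{u_1-u_2}_{L^\infty}\norm{\chi_A-\hat\chi_A}_{L^q}+\text{(product error)}\lesssim \epsilon,
\end{equation}
after rescaling $\epsilon$ by an absolute constant. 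I expect the main technical obstacle to be the indicator step: controlling the $L^q$-norm of the boundary approximation requires balancing the transition width of the sharpened tanh against the $L^\infty$ accuracy of the graph approximation, and verifying that the composition $\sigma_\lambda\circ(x_2-\hat\gamma(x_1))$ can be implemented by a tanh network whose total size is $\bigO(\epsilon^{-q/n})$ rather than inflating with $\lambda$. This is precisely the place where one must reuse, in tanh form, the "skip"-style construction that Petersen--Voigtl\"ander exploit for ReLU networks.
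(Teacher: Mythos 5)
Your proposal follows essentially the same route as the paper: the paper's own ``proof'' of this lemma is a citation to the piecewise-smooth approximation results of \cite{petersen2018optimal}, emulated with the tanh machinery of \cite{deryck2021approximation, deryck2022wpinns} (approximate the smooth pieces via localized Taylor polynomials and multiply by smoothed approximate indicators whose discontinuity surface is locally a $C^n$ graph), and your rate bookkeeping reproduces the size estimate exactly: $\bigO(\epsilon^{-2/m})$ neurons for the $C^m$ pieces in dimension $d=2$ via Theorem \ref{thm:approx-sobolev}, and an error strip of width $\sim\epsilon^{q}$ around $\Gamma$ forcing $\delta\sim\epsilon^q$ and hence $\bigO(\epsilon^{-q/n})$ neurons for the horizon, with the sharpening parameter $\lambda$ entering only through weight scaling and not through the neuron count.

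The one point where your write-up does not deliver the statement as claimed is the depth count. Your indicator $\hat\chi_A(x)=\sigma_\lambda\bigl(x_2-\hat\gamma(x_1)\bigr)$ already uses three hidden layers (two for $\hat\gamma$ from Theorem \ref{thm:approx-sobolev}, plus one for the sharpened tanh), so after synchronizing with the two-hidden-layer networks $\hat{u}_j$ and applying the multiplication gadget you end up with four hidden layers, not the ``at most three'' asserted in the lemma; the multiplication is not the only source of extra depth, contrary to your closing remark. This is repairable rather than fatal: for the univariate $C^n$ horizon $\gamma$ a one-hidden-layer tanh approximation with the same rate $\bigO(\delta^{-1/n})$ is available, or one can fold the sharpened Heaviside and the product into a single selection layer as in the Petersen--Voigtl\"ander construction, after which your argument closes with three hidden layers. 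You should also state explicitly that the $C^m$ pieces admit $C^m$ extensions to the whole square (Whitney extension, using that the pieces are smooth up to the $C^n$ boundary), since Theorem \ref{thm:approx-sobolev} is applied on $[0,1]\times[0,T]$ and not on the individual subdomains.
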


Finally, \citet{deryck2022wpinns} found that one has to consider test functions $\varphi$ that might grow as $\abs{\varphi}_{W^{1,p}}\sim\epsilon^{-3(1+2(p-1)/p)}$. Consequently, we will need to use Lemma \ref{lem:bv-nn} with $\epsilon \leftarrow \epsilon^{4+6(p-1)/p}$, leading to the following corollary.
\begin{corollary}
\label{cor:er}
Assume the setting of Lemma \ref{lem:pw-nn}, assume that $m q > 2n$ and let $p\in[1,\infty]$ be such that $\frac{1}{p}+\frac{1}{q}=1$. There is a fixed depth tanh neural network $\hu$ with size $\bigO(\epsilon^{-(4q+6)n})$ such that
\begin{equation}
    \max_{c\in \cC}\sup_{\varphi\in\overline{\Phi}_\epsilon}\cR(\hu,\varphi, c)+  \lambda_{s}\norm{\rs[\hu]}^2_{L^2(\partial D\times [0,T])} + \lambda_{t}\norm{\rt[\hu]}^2_{L^2(D)} \leq \epsilon, 
\end{equation}
where $\overline{\Phi}_\epsilon = \{\varphi\: :\: \abs{\varphi}_{W^{1,p}}=\bigO(\epsilon^{-3(1+2(p-1)/p))}\}$.
\end{corollary}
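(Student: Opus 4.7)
The plan is to combine Lemma \ref{thm:approx-general} (which bounds the weak residual by a product of the $W^{1,p}$-seminorm of the test function and the $L^q$-distance from the entropy solution) with the piecewise-smooth $L^q$ approximation result of Lemma \ref{lem:pw-nn}. Concretely, for any admissible test function $\varphi\in\overline{\Phi}_\epsilon$ and any $c\in\cC$, Lemma \ref{thm:approx-general} yields
\begin{equation}
    \cR(\hu,\varphi,c) \leq (1+3L_f)\,\abs{\varphi}_{W^{1,p}(D\times[0,T])}\,\norm{u-\hu}_{L^q(D\times[0,T])},
\end{equation}
uniformly in $c\in\cC$. Since $\abs{\varphi}_{W^{1,p}} = \bigO(\epsilon^{-3(1+2(p-1)/p)})$ on $\overline{\Phi}_\epsilon$, taking the supremum over $\varphi$ reduces the weak-residual contribution to ensuring a sufficiently small $L^q$ error, namely $\norm{u-\hu}_{L^q} \lesssim \epsilon^{1+3(1+2(p-1)/p)}$.

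With this target in hand, set $\tilde\epsilon := \epsilon^{1+3(1+2(p-1)/p)}$ and invoke Lemma \ref{lem:pw-nn} to obtain a fixed-depth tanh network $\hu$ with $\bigO(\tilde\epsilon^{-2/m}+\tilde\epsilon^{-q/n})$ neurons satisfying $\norm{u-\hu}_{L^q}\leq \tilde\epsilon$. Using the conjugacy relation $(p-1)/p = 1/q$ the exponent simplifies to $1+3(1+2/q) = 4+6/q$, so $\tilde\epsilon^{-q/n}=\epsilon^{-(4q+6)/n}$ and $\tilde\epsilon^{-2/m}=\epsilon^{-(8q+12)/(mq)}$. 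The hypothesis $mq>2n$ is precisely what is needed to guarantee $(8q+12)/(mq)<(4q+6)/n$, so the second term dominates and the neuron count reduces to $\bigO(\epsilon^{-(4q+6)/n})$, matching the stated complexity (up to a harmless reinterpretation of the exponent in the statement). With this choice, the weak-residual term is at most $(1+3L_f)\cdot\bigO(\tilde\epsilon \cdot \epsilon^{-3(1+2/q)})=\bigO(\epsilon)$, which can be absorbed into the target $\epsilon$ by adjusting constants.

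The main obstacle is that Lemma \ref{lem:pw-nn} only controls the interior $L^q$ error, whereas the statement also requires controlling the traces $\norm{\rs[\hu]}_{L^2(\partial D\times[0,T])}^2$ and $\norm{\rt[\hu]}_{L^2(D)}^2$, which do not follow from an interior $L^q$ estimate. The cleanest way around this is to impose hard boundary and initial conditions by replacing the ansatz $\hu$ by $B(x,t)\hu + U(x,t)$, where $B$ is a fixed smooth cutoff (or low-complexity tanh network) vanishing on $\partial D$ for all $t$ and $U$ is a fixed tanh approximation of $u_0$ at $t=0$; this forces $\rs[\hu]\equiv 0$ and $\rt[\hu]$ to be controlled directly by the approximation quality of $U$ to $u_0$. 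Alternatively, one can apply Lemma \ref{lem:pw-nn} (or its simpler $L^2$-variant) separately on the boundary and on the initial slice, whose piecewise $C^m$ regularity with $C^n$ discontinuity surface is inherited from $u$, and add these low-complexity corrections to $\hu$. In either case one has to verify that the modified construction remains compatible with the piecewise-polynomial multiplication/partition-of-unity tanh approximation of \cite{deryck2022wpinns} while preserving the fixed depth and the leading $\bigO(\epsilon^{-(4q+6)/n})$ size bound; this compatibility check is the technical crux of the argument.
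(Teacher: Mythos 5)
Your treatment of the main residual term and the size bound coincides with the paper's: combine Lemma \ref{thm:approx-general} with the growth $\abs{\varphi}_{W^{1,p}}\sim\epsilon^{-3(1+2(p-1)/p)}$, invoke Lemma \ref{lem:pw-nn} at accuracy $\tilde\epsilon=\epsilon^{4+6/q}$, and use $mq>2n$ to see that the $\tilde\epsilon^{-q/n}=\epsilon^{-(4q+6)/n}$ term dominates the neuron count. That part is correct and is exactly the paper's computation (the exponent placement in the corollary's statement is indeed a typographical quirk of the paper).

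Where you diverge is the handling of $\rs[\hu]$ and $\rt[\hu]$. You are right that the \emph{statement} of Lemma \ref{lem:pw-nn} only gives interior $L^q$ control and that trace bounds do not follow from it. But the paper does not modify the ansatz to fix this: it observes that the network constructed in the \emph{proof} of Lemma \ref{lem:pw-nn} approximates piecewise Taylor polynomials of $u$ in $C^0$-norm, i.e.\ the construction yields a uniform approximation, not merely an $L^q$ one. Uniform closeness to $u$ on $\overline{D}\times[0,T]$ immediately controls the restriction to $\partial D\times[0,T]$ and to $D\times\{0\}$, so the spatial and temporal residuals are small for the \emph{same} network, with no change to depth or size. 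Your proposed workarounds (hard boundary conditions via a cutoff, or separate boundary-slice approximations plus corrections) would alter the architecture, require re-deriving the size bound, and — as you concede — leave the compatibility with the partition-of-unity construction unverified; they also prove a slightly different statement than the corollary, which asserts the bound for the network of Lemma \ref{lem:pw-nn} itself. So the missing idea is not an architectural fix but simply exploiting the stronger norm in which the approximation of Lemma \ref{lem:pw-nn} is actually built.
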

Hence, we have answered Question 1 in an affirmative manner for scalar conservation laws. 
\end{example}
\begin{proof}
We find that $\uhat$ will need to have a size of $\bigO(\epsilon^{-(4q+6)/\beta})$ for it to hold that $\max_{c\in \cC}\sup_{\varphi\in\Phi_\epsilon}\cR(\uhat,\varphi, c) \leq \epsilon/3$, where we used that $q = p/(p-1)$. Since in the proof of Lemma \ref{lem:pw-nn} the network $\uhat$ is constructed as an approximation of piecewise Taylor polynomials, the spatial and temporal boundary residuals ($\rs$ and $\rt$) are automatically minimized as well, given that Taylor polynomials provide approximations in $C^0$-norm. 
\end{proof}

\subsection{Neural networks overcome curse of dimensionality in approximation error}\label{sec:approx-cod}

We investigate whether (physics-informed) neural networks can overcome the curse of dimensionality (CoD) for the approximation error. Concretely, we want to prove the existence of a neural network with parameter $\hat{\theta}$ for which $\cE_G(\hat{\theta})<\epsilon$ without the size of the network growing exponentially in the input dimension $d$. We discuss two frameworks in which this can be done, both of which exploit the properties of the PDE. 
\begin{itemize}
    \item The first framework is tailored to time-dependent PDEs and considers initial conditions and PDE solutions that are Sobolev regular (or continuously differentiable). The crucial assumption is that one must be able to approximate the initial condition $u_0$ with a neural network without incurring the CoD, i.e. approximate it to accuracy $\epsilon$ with a network of size $\bigO(d^\alpha \epsilon^{-\beta})$ with $\alpha,\beta>0$ independent of $d$. This framework can also be used to prove results for (physics-informed) operator learning. 
    \item The second framework circumvents this assumption by considering PDE solutions that belong to a smaller space, namely the so-called \emph{Barron space}. It can be proven that functions in this space can be approximated without the curse of dimensionality. Hence, in this case the challenge lies in proving that the PDE solution is indeed a \emph{Barron function}. 
\end{itemize}

\subsubsection{Results based on Sobolev- and $C^k$-regularity}

We first direct our focus to the first framework. To start with, we give particular attention to the case where it is known that a neural network can efficiently approximate the solution to a time-dependent PDE at a fixed time. Such neural networks are usually obtained by emulating a classical numerical method. Examples include finite difference schemes, finite volume schemes, finite element methods, iterative methods and Monte Carlo methods, e.g. {\color{black} \cite{jentzen2018proof,opschoor2020deep,chen2021representation,marwah2021parametric}}.
In these cases Theorem \ref{thm:approx-f-to-df} can not be directly applied, as there is no upper bound for $\norm{u-u_\theta}_{L^q(D\times [0,T])}$. To allow for a further generalization to operator learning, we will write down the following assumption in terms of operators (see Section \ref{sec:forward}. 

More precisely, for $\epsilon>0$, we assume to have access to an operator {\color{black}$\cU^\epsilon:\cX\times [0,T]\to \cH$} that for any $t\in[0,T]$ maps any initial condition/parameter function $v\in \cX$ to a neural network $\cU^\epsilon(v,t)$ that approximates the PDE solution $\cG(v)(\cdot, t) = u(\cdot, t) {\color{black}\in L^q(D), q\in\{2,\infty\},}$ at time $t$, as specified below. Moreover, we will assume that we know how its size depends on the accuracy $\epsilon$. 

\begin{assumption}\label{ass:nn}
Let $q\in\{2,\infty\}$. For any $B,\epsilon>0$, $\ell \in\N$, $t\in [0,T]$ and any $v\in \cX$ with $\norm{v}_{C^\ell}\leq B$ there exist a neural network $\cU^\epsilon(v,t):D\to\R$ and a constant $C_{\epsilon,\ell}^B>0$ s.t.
\begin{equation}
    \norm{\cU^\epsilon(v,t)-\cG(v)(\cdot,t)}_{L^q(D)} \leq \epsilon \quad \text{and} \quad \max_{t\in [0,T]} \norm{\cU^\epsilon(v,t)}_{W^{\ell,q}(D)} \leq C_{\epsilon,\ell}^B.
\end{equation}
\end{assumption}
For vanilla neural networks and PINNs one can however always set $\cX := \{v\}$, with with $v:=u_0$ or $v:=a$ and $\cG(v) := u$ in Assumption \ref{ass:nn} above. 

Under Assumption \ref{ass:nn}, the existence of space-time neural networks that minimize the generalization error $\cE_G$ (i.e. the physics-informed loss) can be proven \cite{deryck2022generic}. 
\begin{theorem}\label{thm:nn-to-pinn}
Let $s,r\in\N$, let $u\in C^{(s,r)}([0,T]\times D)$ be the solution of the PDE \eqref{eq:pde} and let Assumption \ref{ass:nn} be satisfied. There exists a constant $C(s,r)>0$ such that for every $M\in\N$ and $\epsilon, h>0$ there exists a tanh neural network $u_\theta:[0,T]\times D\to \R$ for which it holds that,
\begin{equation}
\label{eq:pinnerr1}
    \norm{u_\theta-u}_{L^q([0,T]\times D)} \leq C(\norm{u}_{C^{(s,0)}}M^{-s}+\epsilon). 
\end{equation}
and if additionally Assumption \ref{ass:diff-operator} and Assumption \ref{ass:partition} hold then,
\begin{equation}\label{eq:pinn-acc}
\begin{split}
     &\norm{\cL(u_\theta)}_{L^2([0,T]\times D)} + \norm{u_\theta-u}_{L^2(\partial([0,T]\times D))} \\
     &\qquad \leq C\cdot \poly(d)\cdot \ln^k(M)(\norm{u}_{C^{(s,\ell)}}M^{k-s}+M^{2k}(\epsilon h^{-\ell}+C^B_{\epsilon,\ell}h^{r-\ell})). 
\end{split}
\end{equation}
Moreover, the depth is given by $\depth(u_\theta)\leq C\cdot\sup_{t\in[0,T]}\depth(\cU^\epsilon(u(t)))$ and the width by $\width(u_\theta) \leq CM\cdot \sup_{t\in[0,T]}\width(\cU^\epsilon(u(t)))$. 
\end{theorem}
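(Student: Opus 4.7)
The strategy is to ``stitch together'' time snapshots. I would choose $M+1$ interpolation nodes $\{t_j\}_{j=0}^M\subset[0,T]$ (Chebyshev nodes so that the Lebesgue constant of the associated Lagrange basis $\{L_j\}$ is $\bigO(\log M)$) and, for each $j$, invoke Assumption \ref{ass:nn} to obtain a spatial snapshot network $\hat u_j\defeq\cU^{\epsilon}(v,t_j)$ with $\|\hat u_j-u(\cdot,t_j)\|_{L^q(D)}\le\epsilon$ and $\|\hat u_j\|_{W^{\ell,q}(D)}\le C^B_{\epsilon,\ell}$. Form the ideal interpolant $\tilde u(x,t):=\sum_{j=0}^M L_j(t)\hat u_j(x)$, and then replace each Lagrange polynomial $L_j$ and each product $L_j(t)\hat u_j(x)$ by its tanh emulation (using the polynomial/multiplication building blocks underlying Theorem \ref{thm:approx-sobolev}) to obtain the final space-time tanh network $u_\theta$. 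This construction immediately accounts for the stated depth and width: one extra layer stacks the polynomial emulator on top of the snapshot networks, and width $M$ is the number of parallel branches.

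For \eqref{eq:pinnerr1}, I would decompose
\[
\|u-u_\theta\|_{L^q}\le\|u-\tilde u\|_{L^q}+\|\tilde u-u_\theta\|_{L^q},
\]
where the second term is absorbed into $\epsilon$ by choosing the tanh emulation accuracy sufficiently small at construction time. For the first term, at each $t$ I would split
\[
u(\cdot,t)-\tilde u(\cdot,t)=\Big(u(\cdot,t)-\sum_j L_j(t)u(\cdot,t_j)\Big)+\sum_j L_j(t)\big(u(\cdot,t_j)-\hat u_j\big);
\]
classical polynomial interpolation in $t$ for $u\in C^{(s,0)}$ bounds the first bracket by $C(s)M^{-s}\|u\|_{C^{(s,0)}}$, while the second is bounded in $L^q(D)$ by (Lebesgue constant)$\times\epsilon\lesssim \epsilon\log M$, which is absorbed into the constant.

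For \eqref{eq:pinn-acc}, I would invoke Theorem \ref{thm:approx-f-to-df}: up to the $\poly(d)$ factor coming from Assumption \ref{ass:diff-operator},
\[
\|\cL[u_\theta]\|_{L^2}\lesssim \|u-u_\theta\|_{L^q}h^{-\ell}+\big(|u|_{C^r}+|u_\theta|_{C^r}\big)h^{r-\ell}.
\]
The first factor is already controlled from \eqref{eq:pinnerr1}. To bound $|u_\theta|_{C^r}$, differentiate the interpolant: $D_x^{\bmalpha}D_t^{k'}u_\theta\approx\sum_j L_j^{(k')}(t)D_x^{\bmalpha}\hat u_j(x)$. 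Markov's inequality for polynomials of degree $M$ yields $\|L_j^{(k')}\|_{\infty}\lesssim M^{2k'}$, and Assumption \ref{ass:nn} gives $\|D_x^{\bmalpha}\hat u_j\|_{L^q}\le C_{\epsilon,\ell}^B$, so $|u_\theta|_{C^r}\lesssim M^{2k}C_{\epsilon,\ell}^B$; the $\ln^k(M)$ factor is the cost of tanh emulation of degree-$M$ polynomials in $W^{k,\infty}$. Substituting into Theorem \ref{thm:approx-f-to-df} yields \eqref{eq:pinn-acc}. The boundary contribution $\|u_\theta-u\|_{L^2(\partial([0,T]\times D))}$ is handled analogously: the temporal slice $t=0$ is immediate from $\tilde u(x,0)=\hat u_0(x)$ and the $\epsilon$-accuracy of $\hat u_0$, while the lateral boundary uses the same snapshot construction together with a trace inequality.

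The main obstacle is the disciplined bookkeeping needed for emulating Lagrange polynomials and pointwise products by tanh networks in $W^{k,\infty}$-norm rather than only in sup-norm, so that every mixed derivative appearing in $\cL[u_\theta]$ is genuinely controlled throughout the construction. A secondary subtlety is the $M^{2k}$ blow-up from Markov's inequality: the bound only becomes useful when $M^{-s}$ dominates, i.e.\ in the regime $s>2k$; balancing $h$, $\epsilon$ and $M$ in Theorem \ref{thm:approx-f-to-df} then produces the final trade-off in \eqref{eq:pinn-acc}.
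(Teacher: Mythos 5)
Your overall blueprint---time snapshots from Assumption \ref{ass:nn}, a polynomial-in-time reconstruction emulated by tanh networks, control of the PDE residual through the finite-difference result (Theorem \ref{thm:approx-f-to-df}), and a trace inequality for the boundary term---is the same as the paper's, and your depth/width accounting is consistent with the statement. The difference is the time-reconstruction device: the paper (following \cite[Theorem~3.5]{deryck2022generic}) divides $[0,T]$ into $M$ \emph{uniform subintervals} and emulates a \emph{local Taylor polynomial} in time whose coefficients (time derivatives of $u$) are themselves approximated by finite differences of the snapshot networks, whereas you use one \emph{global} Lagrange interpolant at Chebyshev nodes together with Markov's inequality. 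This is a genuinely different route, but as executed it does not deliver the stated estimates, for two concrete reasons.

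First, the Lebesgue constant is not a constant: your argument gives $\|u-u_\theta\|_{L^q}\le C\log M\,(\|u\|_{C^{(s,0)}}M^{-s}+\epsilon)$, since the snapshot errors are amplified by $\Lambda_M\sim\log M$ and, because $u$ is only $C^{(s,0)}$ in time (so the Cauchy remainder formula is unavailable and one must go through Jackson plus near-best interpolation), the $M^{-s}$ term picks up the same factor; $\log M$ cannot be ``absorbed into the constant'' in \eqref{eq:pinnerr1}, which allows no $M$-dependence in $C(s,r)$. Second, the Markov step puts the $M^{2k}$ amplification in the wrong place. In \eqref{eq:pinn-acc} the factor $M^{2k}$ multiplies only the $\epsilon h^{-\ell}$ and $C^B_{\epsilon,\ell}h^{r-\ell}$ terms, while the smoothness term is $\|u\|_{C^{(s,\ell)}}M^{k-s}$, independent of $h$. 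Feeding your $L^q$ bound into Theorem \ref{thm:approx-f-to-df} instead produces a term $\|u\|_{C^{(s,0)}}M^{-s}h^{-\ell}$, which blows up as $h\to 0$ and is not dominated by $M^{k-s}$; if you avoid this by estimating time derivatives of the interpolation error directly with Markov, you get $M^{2k-s}$ rather than $M^{k-s}$---indeed you concede a restriction $s>2k$ that the theorem does not impose. (Also note that summing $\|L_j^{(k')}\|_\infty\lesssim M^{2k'}$ over the $M+1$ basis functions costs an extra factor of $M$; one must apply Markov to the whole degree-$M$ polynomial $t\mapsto D_x^{\bmalpha}\tilde u(x,t)$ and pay the Lebesgue constant instead.) Both defects are exactly what the paper's local construction avoids: on subintervals of length $T/M$ the Taylor remainder after $k$ time derivatives is $\bigO(M^{k-s})$ with no Lebesgue constant, the finite-difference reconstruction of the time derivatives from snapshots is what confines the $M$-amplification to the $\epsilon$-term, and the $\ln^k(M)$ is only the cost of emulating monomials and products in $W^{k,\infty}$. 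A final caution: for $q=2$, Assumption \ref{ass:nn} controls the snapshots only in $W^{\ell,2}$, so the sup-norm quantity $|u_\theta|_{C^r}$ you insert into Theorem \ref{thm:approx-f-to-df} is not directly available; the estimate has to be run with $L^q$-based finite-difference remainders, as in the referenced proof. Switching your construction to a piecewise (local) polynomial reconstruction in time would repair all of these points and essentially reproduces the paper's argument.
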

\begin{proof}
We only provide a sketch of the full proof \cite[Theorem 3.5]{deryck2022generic}. The main idea is to divide $[0,T]$ into $M$ uniform subintervals and construct a neural network that approximates a Taylor approximation in time of $u$ in each subinterval. In the obtained formula, we approximate the monomials and multiplications by neural networks and approximate the derivatives of $u$ by finite differences and use the accuracy of finite difference formulas to find an error estimate in $C^k([0,T], L^q(D))$-norm. We use again finite difference operators to prove that spatial derivatives of $u$ are accurately approximated as well. The neural network will also approximately satisfy the initial/boundary conditions as $\Vert u_\theta-u\Vert_{L^2(\partial([0,T]\times D))} \lesssim C \poly(d) \Vert u_\theta-u\Vert_{H^1([0,T]\times D)}$, which follows from a Sobolev trace inequality. 
\end{proof}

As a next step, we use Assumption \ref{ass:nn} to prove estimates deep operator learning. Given the connection between DeepONets and FNOs \cite[Theorem 36]{kovachki2021universal}, we focus on DeepONets in the following. 
In order to prove this error estimate, we need to assume that the operator $\cU^\epsilon$ from Assumption \ref{ass:nn} is stable with respect to its input function, as specified in Assumption \ref{ass:stability} below. Moreover, we will take the $d$-dimensional torus as domain $D=\T^d = [0,2\pi)^d$ and assume periodic boundary conditions for simplicity in what follows. This is not a restriction, as for every Lipschitz subset of $\T^d$ there exists a (linear and continuous) $\T^d$-periodic extension operator of which also the derivatives are $\T^d$-periodic \cite[Lemma 41]{kovachki2021universal}. 
\begin{assumption}\label{ass:stability}
Assumption \ref{ass:nn} is satisfied and let $p\in\{2,\infty\}$. For every $\epsilon>0$ there exists a constant $C^\epsilon_{\mathrm{stab}}>0$ such that for all $v,v'\in \cX$ it holds that,
\begin{equation}
    \norm{\cU^\epsilon(v,T)-\cU^\epsilon(v',T)}_{L^2}\leq C^\epsilon_{\mathrm{stab}} \norm{v-v'}_{L^p}. 
\end{equation}
\end{assumption}
In this setting, we prove a generic approximation result for DeepONets \cite[Theorem 3.10]{deryck2022generic}. 

\begin{theorem}\label{thm:nn-to-pido}
Let $s,r\in\N$, $T>0$,  $\cA\subset C^r(\T^d)$ and let $\cG: \cA \to C^{(s,r)}([0,T]\times \T^d)$ be an operator that maps a function $u_0$ to the solution $u$ of the PDE \eqref{eq:pde} with initial condition $u_0$, let Assumption \ref{ass:diff-operator}, Assumption \ref{ass:partition} and Assumption \ref{ass:stability} be satisfied {\color{black}and let $p^*\in\{2,\infty\}\setminus\{p\}$}. There exists a constant $C>0$ such that for every $Z,N,M\in\N$, $\epsilon, \rho>0$ there is an DeepONet $\cG_\theta: \cA \to L^2([0,T]\times\T^d)$ with $Z^d$ sensors with accuracy, 
\begin{align}
    \norm{\cL(\cG_\theta(v))}_{L^2([0,T]\times\T^d)} &\leq CM^{k+\rho}\big[\norm{u}_{C^{(s,\ell)}}M^{-s}\\&\quad+M^{s-1}N^\ell(\epsilon+C^\epsilon_{\mathrm{stab}}Z^{-r+d/p^*}+C^{CB}_{\epsilon,r}N^{-r})\big], 
\end{align}
for all $v$. Moreover, it holds that, 
\begin{equation}
    \begin{array}{ll}
    \width(\branch)= \bigO(M(Z^d+N^d\width(\cU^\epsilon))), & \depth(\branch) =\depth(\cU^\epsilon), \\ 
    \width(\trunk) = \bigO(MN^d(N+\ln(N))), & \depth(\trunk) = 3, 
    \end{array}
\end{equation}
where $\width(\cU^\epsilon) = \sup_{u_0\in \cA} \width(\cU^\epsilon)(u_0))$ and similarly for $\depth(\cU^\epsilon)$. 
\end{theorem}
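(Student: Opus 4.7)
The plan is to lift the PINN construction of Theorem \ref{thm:nn-to-pinn} to the operator setting, routing all dependence on the input $v$ through the branch and keeping the space--time basis in the trunk. The target architecture is $\cG_\theta(v)(x,t) = \tr_0(x,t) + \sum_k \branch_k(v)\,\tr_k(x,t)$, with each $\tr_k(x,t) = \phi_j(x)\,\psi_{m,i}(t)$ a product of a trigonometric basis function of degree at most $N$ on $\T^d$ and a monomial $(t-t_m)^i/i!$ localised to the subinterval $[t_m,t_{m+1}]$ by a tanh bump, where $t_m = mT/M$ and $i\le s-1$. The branch coefficient $\branch_k(v)$ stores the appropriate spatial spectral coefficient of either $\cU^\epsilon(\tilde v,t_m)$ or of a finite-difference approximation of $\partial_t^i \cU^\epsilon(\tilde v,t_m)$, where $\tilde v$ is a trigonometric reconstruction of $v$ from its $Z^d$ sensor values on a uniform grid of $\T^d$.

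I would then assemble three error sources in the $L^q$-norm, uniformly in $t$. First, standard sampling bounds on $C^r(\T^d)$ yield $\norm{v-\tilde v}_{L^p} \lesssim Z^{-r+d/p^*}$, so Assumption \ref{ass:stability} combined with Assumption \ref{ass:nn} gives $\norm{\cU^\epsilon(\tilde v,t)-u(\cdot,t)}_{L^q(D)} \lesssim \epsilon + C^\epsilon_{\mathrm{stab}} Z^{-r+d/p^*}$. Second, the $W^{r,q}$-bound $\norm{\cU^\epsilon(\tilde v,t_m)}_{W^{r,q}} \le C^{CB}_{\epsilon,r}$ from Assumption \ref{ass:nn} gives an additional $\bigO(C^{CB}_{\epsilon,r} N^{-r})$ error when each time slice is spectrally truncated to degree $N$. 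Third, the degree-$(s-1)$ Taylor interpolation in time contributes $\norm{u}_{C^{(s,0)}} M^{-s}$, exactly as in the proof of Theorem \ref{thm:nn-to-pinn}. Since the spatial spectral coefficients of $\cU^\epsilon(\tilde v,t_m)$ are linear read-outs of $\cU^\epsilon(\tilde v,\cdot)$ evaluated at $N^d$ grid points, and $\tilde v$ is affine in the sensor values, the branch is built by prepending an affine encoder of width $Z^d$ to $N^d$ copies of $\cU^\epsilon$ and replicating for $m=0,\ldots,M-1$, yielding $\width(\branch)=\bigO(M(Z^d+N^d\width(\cU^\epsilon)))$ and $\depth(\branch)=\depth(\cU^\epsilon)$; the trunk houses the trigonometric basis and the tanh-emulated bump--monomials, requiring width $\bigO(MN^d(N+\ln N))$ and depth $3$.

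To convert the $L^q$-accuracy of $\cG_\theta(v)$ into an $L^2$-bound on the PDE residual, I would invoke Assumption \ref{ass:diff-operator} to reduce $\norm{\cL(\cG_\theta(v))}_{L^2}$ to a weighted sum of $\norm{D^{(k',\bmalpha)}(\cG_\theta(v)-u)}_{L^2}$ for $k'\le k$ and $\norm{\bmalpha}_1 \le \ell$, and then apply the finite-difference argument of Theorem \ref{thm:approx-f-to-df} (enabled by Assumption \ref{ass:partition}) at scale $h\sim N^{-1}$ to trade each spatial derivative of order $\ell$ for a factor $N^\ell$. Differentiating the divided-difference formula for the temporal Taylor coefficients contributes $M^{s-1}$, while the $M^{k+\rho}$ prefactor arises from differentiating the time-localisation bumps of width $T/M$ (contributing $M^k$) together with absorbing the $\log^k M$ factors from tanh emulation of polynomials into $M^\rho$ via $\log^k M \le C_\rho M^\rho$. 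The main obstacle is bookkeeping: one must ensure that the stability constant $C^\epsilon_{\mathrm{stab}}$ is applied only once globally rather than summed across the $M$ time slices --- made possible by the time-uniform nature of Assumptions \ref{ass:nn} and \ref{ass:stability} --- and that the three error sources $\epsilon$, $C^\epsilon_{\mathrm{stab}} Z^{-r+d/p^*}$ and $C^{CB}_{\epsilon,r} N^{-r}$ are each amplified only by the common factor $M^{s-1}N^\ell$, rather than by compound factors arising from interleaving differentiation with stability estimates.
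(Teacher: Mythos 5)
Your construction is correct and takes essentially the same route as the source: the paper itself only cites \cite[Theorem~3.10]{deryck2022generic} without reproducing the argument, but your lifting of the Theorem~\ref{thm:nn-to-pinn} construction --- trigonometric reconstruction of $v$ from the $Z^d$ sensors controlled via Assumption~\ref{ass:stability} (giving the $C^\epsilon_{\mathrm{stab}}Z^{-r+d/p^*}$ term), spectral truncation at degree $N$ using the $W^{r,q}$-bound of Assumption~\ref{ass:nn}, temporal Taylor/divided differences at scale $T/M$, and the finite-difference conversion of $L^q$-accuracy into residual bounds at scale $h\sim N^{-1}$ --- is precisely the intended strategy, and your bookkeeping correctly reproduces each factor ($M^{-s}$, $M^{s-1}$, $N^\ell$, $M^{k+\rho}$) and both architecture bounds. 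No gaps.
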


Finally, we show how the results of this section can be applied to high-dimensional PDEs, for which it is not possible to obtain efficient approximation results using standard neural network approximation theory (cf. Theorem \ref{thm:approx-sobolev}) as they will lead to convergence rates that suffer from the \emph{curse of dimensionality (CoD)}, meaning that the neural network size scales exponentially in the input dimension. In literature, one has shown for some PDEs that their solution at a fixed time can be approximated to accuracy $\epsilon>0$ with a network that has size $\bigO(\poly(d)\epsilon^{-\beta})$, with $\beta>0$ independent of $d$, and therefore \emph{overcomes the CoD}. 

\begin{example}[Linear Kolmogorov PDEs]\label{ex:kolmogorov-approx}
Linear Kolmogorov PDEs are a class of linear time-dependent PDEs, including the heat equation and the Black-Scholes equation, of the following form: Let $s,r\in\N$, $u_0\in C^2_0(\R^d)$ and let $u\in C^{(s,r)}([0,T]\times \R^d)$ be the solution of
\begin{equation}\label{eq:kolmogorov-pde}
    \cL[u] = \partial_t u - \frac{1}{2}\mathrm{Tr}(\sigma(x)\sigma(x)^\top \Delta_x[u]) - \mu^\top \nabla_x[u] = 0, \quad u(0,x)=u_0(x)
\end{equation}
for all $(x,t)\in D \times [0,T]$, where $\sigma:\mathbb{R}^d\to \mathbb{R}^{d\times d}$ and $\mu:\mathbb{R}^d\to \mathbb{R}^d$ are affine functions. We make the assumption that  $\norm{u}_{C^{(s,2)}}$ grows at most polynomially in $d$ and that for every $\epsilon>0$, there is a neural network $\uhat_0$ of width $\bigO(\poly(d)\epsilon^{-\beta})$ such that $\norm{u_0-\uhat_0}_{L^\infty(\R^d)}<\epsilon$. 

In this setting, the authors of \cite{grohs2018proof, berner2020analysis, jentzen2018proof} construct a neural network that approximates $u(T)$ and overcomes the CoD by emulating Monte-Carlo methods based on the Feynman-Kac formula. In \cite{deryck2021pinn} one has proven that PINNs overcome the CoD as well, in the sense that the network size grows as $\bigO(\poly(d\rho_d)\epsilon^{-\beta})$, where $\rho_d$ is a PDE-dependent constant that for a subclass of Kolmogorov PDEs scales as $\rho_d=\poly(d)$, such that the CoD is fully overcome. We demonstrate that the generic bounds of this section (Theorem \ref{thm:nn-to-pinn}) can be used to provide a (much shorter) proof for this result \cite[Theorem 4.2]{deryck2022generic}.

\begin{theorem}\label{thm:kolmogorov}
For every $\sigma,\epsilon>0$ and $d\in\N$, there is a tanh neural network $u_\theta$ of depth $\bigO(\mathrm{depth}(\uhat_0))$ and width $\bigO(\poly(d\rho_d)\epsilon^{-(2+\beta)\frac{r+\sigma}{r-2}\frac{s+1}{s-1}-\frac{1+\sigma}{s-1}})$ such that,
\begin{equation}
    \norm{\cL(u_\theta)}_{L^2([0,T]\times [0,1]^d)} + \norm{u_\theta-u}_{L^2(\partial([0,T]\times [0,1]^d))} \leq \epsilon.
\end{equation}
\end{theorem}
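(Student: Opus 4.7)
The plan is to invoke the general framework of Theorem~\ref{thm:nn-to-pinn} and reduce everything to verifying its three standing assumptions for the Kolmogorov PDE, followed by a parameter-optimisation step. First I would verify Assumption~\ref{ass:nn}: for each fixed $t\in[0,T]$ the Feynman--Kac formula represents $u(\cdot,t)$ as an expectation over an Itô diffusion driven by $\sigma,\mu$, and the results of \cite{grohs2018proof,berner2020analysis,jentzen2018proof} emulate the corresponding Monte-Carlo estimator by a (tanh) neural network $\cU^\epsilon(u_0,t)$ that approximates $u(\cdot,t)$ in $L^q(D)$ with accuracy $\epsilon$ and width $\bigO(\poly(d\rho_d)\epsilon^{-\beta})$, using the hypothesis that $u_0$ itself admits such a network. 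The Sobolev bound $\max_t \norm{\cU^\epsilon(u_0,t)}_{W^{\ell,q}(D)}\le C^B_{\epsilon,\ell}$ is then extracted by differentiating the explicit architecture and summing weights layer by layer, yielding polynomial growth $C^B_{\epsilon,\ell}\lesssim \poly(d\rho_d)\epsilon^{-\gamma}$ for some $\gamma=\gamma(\beta,\ell)$.

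Assumption~\ref{ass:diff-operator} is straightforward for \eqref{eq:kolmogorov-pde} with $k=1$ and $\ell=2$: the coefficients $\sigma,\mu$ are affine, hence their uniform norms on $[0,1]^d$ grow at most polynomially in $d$, giving
\[
\norm{\cL[u_\theta]}_{L^q} \;\le\; C\cdot\poly(d)\!\!\sum_{\substack{k'\le 1\\ \norm{\bmalpha}_1\le 2}}\!\!\norm{D^{(k',\bmalpha)}(u-u_\theta)}_{L^q}.
\]
Assumption~\ref{ass:partition} is immediate on the rectangular domain $[0,T]\times[0,1]^d$. With all three assumptions in hand, Theorem~\ref{thm:nn-to-pinn} applied with $(k,\ell)=(1,2)$ and smoothness exponents $(s,r)$ produces a tanh network $u_\theta$ of depth $\bigO(\depth(\cU^\epsilon))$ and width $\bigO(M\cdot\width(\cU^\epsilon))$ satisfying
\[
\norm{\cL u_\theta}_{L^2} + \norm{u_\theta-u}_{L^2(\partial\Omega)} \;\lesssim\; \poly(d\rho_d)\ln(M)\Bigl[M^{1-s} + M^{2}\bigl(\epsilon h^{-2} + C^B_{\epsilon,2}\,h^{r-2}\bigr)\Bigr].
\]

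It remains to balance $M,h,\epsilon$ against the target accuracy. Setting $M\sim \epsilon_{\text{target}}^{-(1+\sigma)/(s-1)}$ absorbs the first term (the $\sigma$-slack accommodates the logarithm). Balancing $\epsilon h^{-2}=C^B_{\epsilon,2}h^{r-2}$ gives $h\sim(\epsilon/C^B_{\epsilon,2})^{1/r}$ and makes the bracket scale like $(C^B_{\epsilon,2})^{2/r}\epsilon^{(r-2)/r}$; inserting the polynomial dependence $C^B_{\epsilon,2}\lesssim \poly(d\rho_d)\epsilon^{-\gamma}$ and requiring the whole right-hand side to be $\lesssim\epsilon_{\text{target}}$ determines the required internal $\epsilon$, which after composition with the width formula $\width(u_\theta)=\bigO(M\cdot\poly(d\rho_d)\epsilon^{-\beta})$ produces exactly the stated compound exponent $(2+\beta)\tfrac{r+\sigma}{r-2}\tfrac{s+1}{s-1}+\tfrac{1+\sigma}{s-1}$. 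The main obstacle is the quantitative Sobolev control on the Feynman--Kac emulator: the cited constructions were designed for $L^\infty$ accuracy, so controlling $C^B_{\epsilon,\ell}$ requires opening up their architectures and carefully propagating bounds on spatial derivatives through the Monte-Carlo sum; every other step is a routine application of Theorem~\ref{thm:nn-to-pinn} combined with a scaling exercise.
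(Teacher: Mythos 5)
Your overall route is the one the paper intends: Theorem \ref{thm:kolmogorov} is obtained exactly by verifying Assumptions \ref{ass:nn}, \ref{ass:diff-operator} (with $(k,\ell)=(1,2)$, using that $\sigma,\mu$ are affine) and \ref{ass:partition} (trivial on the rectangle) and then invoking the generic bound of Theorem \ref{thm:nn-to-pinn}, as in \cite[Theorem 4.2]{deryck2022generic}. The gap is in your quantitative bookkeeping of Assumption \ref{ass:nn}. You assign to $\cU^\epsilon(u_0,t)$ the width $\bigO(\poly(d\rho_d)\epsilon^{-\beta})$, i.e.\ you transfer the width hypothesis on $\uhat_0$ directly to the network emulating $u(\cdot,t)$. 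But the Feynman--Kac/Monte-Carlo constructions of \cite{grohs2018proof,berner2020analysis,jentzen2018proof} realize $\cU^\epsilon(u_0,t)$ as an average of order $\epsilon^{-2}$ shifted and scaled copies of $\uhat_0$ (the sampling error decays like $n^{-1/2}$), so $\width(\cU^\epsilon)=\bigO(\poly(d\rho_d)\epsilon^{-(2+\beta)})$. That factor $\epsilon^{-2}$ is precisely the origin of the ``$2$'' in the exponent $(2+\beta)\tfrac{r+\sigma}{r-2}\tfrac{s+1}{s-1}$ of the statement; running your own balancing with $\width(\cU^\epsilon)\sim\epsilon^{-\beta}$ and $\width(u_\theta)\sim M\cdot\width(\cU^\epsilon)$ would give the exponent $\beta\tfrac{r+\sigma}{r-2}\tfrac{s+1}{s-1}+\tfrac{1+\sigma}{s-1}$ instead, so the claimed exponent does not in fact ``come out exactly'' from your premises.

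A second, related point: asserting $C^B_{\epsilon,2}\lesssim\poly(d\rho_d)\epsilon^{-\gamma}$ with unspecified $\gamma$ is not sufficient. Redoing your balance $\epsilon h^{-2}=C^B_{\epsilon,2}h^{r-2}$ with a generic $\gamma$ replaces the factor $\tfrac{r+\sigma}{r-2}$ by roughly $\tfrac{r}{r-2-2\gamma}$, so the stated rate only survives if the growth of the $W^{2,q}$-norm of the emulating network in $1/\epsilon$ is mild enough to be absorbed, together with the logarithms, into the $\sigma$-slack. This derivative/weight-growth control of the Monte-Carlo emulator is exactly the nontrivial quantitative input established in \cite{deryck2021pinn} and reused in \cite{deryck2022generic}; you correctly identify it as the main obstacle, but the proof needs the explicit bound, not merely ``some polynomial growth''. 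The remaining ingredients of your argument (the verification of Assumptions \ref{ass:diff-operator} and \ref{ass:partition}, the depth bound $\bigO(\depth(\uhat_0))$, and the choice $M\sim\epsilon^{-(1+\sigma)/(s-1)}$ to absorb the $M^{1-s}\ln M$ term) are in line with the paper's argument.
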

\end{example}

\begin{example}[Nonlinear parabolic PDEs]
Next, we consider nonlinear parabolic PDEs as in \eqref{eq:AC}, which  typically arise in the context of nonlinear diffusion-reaction equations that describe the change in space and time of some quantities, such as in the well-known \emph{Allen-Cahn equation} \cite{allen1979microscopic}. 
Let $s,r\in\N$ and for $u_0\in \cX\subset C^r(\T^d)$ let $u\in  C^{(s,r)}([0,T]\times \T^d)$ be the solution of
\begin{equation}\label{eq:AC}
   {\color{black}\cL (u)(x,t) =  \partial_t u(t,x) - \Delta_x u(t,x) - F(u(t,x))=0, }\qquad u(0,x)=u_0(x), 
\end{equation}
for all $(t,x)\in [0,T] \times D$, with period boundary conditions, where $F:\R\to\R$ is a polynomial. As in Example \ref{ex:kolmogorov-approx}, we assume that $\norm{u}_{C^{(s,2)}}$ grows at most polynomially in $d$ and that for every $\epsilon>0$, there is a neural network $\uhat_0$ of width $\bigO(\poly(d)\epsilon^{-\beta})$ such that $\norm{u_0-\uhat_0}_{L^\infty(\T^d)}<\epsilon$. 

In \cite{hutzenthaler2020proof} the authors have proven that ReLU neural networks overcome the CoD in the approximation of $u(T)$. Using Theorem \ref{thm:nn-to-pinn} we can now prove that PINNs overcome the CoD for nonlinear parabolic PDEs \cite[Theorem 4.4]{deryck2022generic}. 
\begin{theorem}\label{thm:AC-pinn}
For every $\sigma, \epsilon>0$ and $d\in\N$ there is a tanh neural network $u_\theta$ of depth $\bigO(\mathrm{depth}(\uhat_0)+\poly(d)\ln(1/\epsilon))$ and width $\bigO(\poly(d)\epsilon^{-(2+\beta)\frac{r+\sigma}{r-2}\frac{s+1}{s-1}-\frac{1+\sigma}{s-1}})$ such that,
\begin{equation}
     \norm{\cL(u_\theta)}_{L^2([0,T]\times\T^d)} + \norm{u-u_\theta}_{L^2(\partial([0,T]\times\T^d))} \leq  \epsilon. 
\end{equation}
\end{theorem}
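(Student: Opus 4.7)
The strategy is to invoke the generic bridge Theorem~\ref{thm:nn-to-pinn} so that the work reduces to verifying the three standing assumptions for the Allen--Cahn type equation \eqref{eq:AC}. Assumption~\ref{ass:partition} is immediate on the torus $\T^d$ (equivalently, a cube with periodic identifications admits a trivial piecewise finite-difference partition). For Assumption~\ref{ass:diff-operator}, using that $F$ is polynomial and that $u,u_\theta$ will live in a bounded $C^0$-ball (by the assumption on $\norm{u}_{C^{(s,2)}}$ and by controlling $\norm{u_\theta}_{L^\infty}$), $F$ is Lipschitz on the relevant range, so
\begin{equation*}
\norm{\cL[u_\theta]}_{L^2} \le \norm{\partial_t(u_\theta-u)}_{L^2} + \norm{\Delta_x(u_\theta-u)}_{L^2} + \Lip(F)\cdot \norm{u_\theta-u}_{L^2},
\end{equation*}
which is exactly of the form required (with $k=1$, $\ell=2$, and a $\poly(d)$ constant coming from $\Lip(F)$ together with the $C^{(s,2)}$-bound).

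The heart of the proof is verifying Assumption~\ref{ass:nn}: for every $t\in [0,T]$ and every initial datum $v$ with $\norm{v}_{C^\ell}\le B$, one needs a neural network $\cU^\epsilon(v,t)$ with $\norm{\cU^\epsilon(v,t)-\cG(v)(\cdot,t)}_{L^q(\T^d)}\le \epsilon$ and a controlled Sobolev bound $\norm{\cU^\epsilon(v,t)}_{W^{\ell,q}(\T^d)} \le C^B_{\epsilon,\ell}$. The $L^q$-approximation is supplied by the result of \cite{hutzenthaler2020proof}, which constructs a ReLU network of size $\bigO(\poly(d)\epsilon^{-\gamma})$ approximating $u(t)$ by emulating a multilevel Picard/Monte Carlo scheme starting from the given $\widehat{u}_0$; a standard tanh-emulates-ReLU argument (approximating $\max(x,0)$ by a rescaled tanh on a bounded domain, followed by a ceiling applied to the Sobolev semi-norms) transforms this into a tanh network of the same asymptotic size. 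The Sobolev bound $C^B_{\epsilon,\ell}$ can be taken polynomial in $1/\epsilon$ (indeed, a crude bound of the form $C^B_{\epsilon,r}\lesssim \mathrm{size}(\cU^\epsilon)^{c r}$ obtained from weight bounds is sufficient, since it will only worsen the final exponent of $\epsilon$ by a constant amount that is absorbed in the stated rate).

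With all three assumptions verified, Theorem~\ref{thm:nn-to-pinn} yields
\begin{equation*}
\norm{\cL(u_\theta)}_{L^2} + \norm{u_\theta-u}_{L^2(\partial([0,T]\times\T^d))} \lesssim \poly(d)\,\ln^k(M)\bigl(\norm{u}_{C^{(s,\ell)}} M^{k-s} + M^{2k}(\epsilon h^{-\ell}+C^B_{\epsilon,\ell}h^{r-\ell})\bigr),
\end{equation*}
and the remaining step is to balance the three free parameters $M$, $h$, and the internal accuracy $\epsilon'$ used inside $\cU^{\epsilon'}$ to achieve total error $\epsilon$. Setting $h\sim (\epsilon'/C^B_{\epsilon',\ell})^{1/r}$ equalizes the two terms in the bracket, then picking $M\sim \epsilon^{-(1+\sigma)/(s-1)}$ and $\epsilon'$ as a suitable power of $\epsilon$ (chosen to make the $M^{2k}\epsilon' h^{-\ell}$ term match $\epsilon$) produces the exponent $-(2+\beta)\tfrac{r+\sigma}{r-2}\tfrac{s+1}{s-1}-\tfrac{1+\sigma}{s-1}$ advertised in the statement. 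The depth and width bounds follow by tracking the construction in Theorem~\ref{thm:nn-to-pinn}, where the depth inherits from $\depth(\widehat{u}_0)$ plus an additional $\poly(d)\ln(1/\epsilon)$ factor needed to encode the polynomial nonlinearity $F$ and the Picard iteration by tanh networks, and the width scales as $M$ times the width of $\cU^\epsilon$.

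The main obstacle in this plan is the passage from an $L^q$-approximation result for $u(t)$ (which is the content of \cite{hutzenthaler2020proof}) to the joint $L^q$-plus-Sobolev-norm control demanded by Assumption~\ref{ass:nn}. A naive bound on $\norm{\cU^{\epsilon'}(v,t)}_{W^{\ell,q}}$ via weight norms degrades the final exponent, so care is needed to keep the dependence on $\epsilon'$ polynomial; this is exactly the mechanism responsible for the appearance of $\beta$ and $\sigma$ in the exponent, and tracking it precisely through the parameter balancing is the main technical chore.
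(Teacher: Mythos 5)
Your proposal follows exactly the route the paper takes: Theorem~\ref{thm:AC-pinn} is obtained by instantiating the generic bridge result (Theorem~\ref{thm:nn-to-pinn}) for the nonlinear parabolic equation \eqref{eq:AC}, with Assumption~\ref{ass:nn} supplied by the multilevel Picard/Monte Carlo network construction of \cite{hutzenthaler2020proof} and the stated hypotheses on $\norm{u}_{C^{(s,2)}}$ and $\uhat_0$, followed by the same balancing of $M$, $h$ and the internal accuracy to produce the advertised exponent. Your identification of the Sobolev-norm control $C^B_{\epsilon,\ell}$ as the delicate point, and of $\beta$ and $\sigma$ as the artifacts of that control, matches the mechanism in the cited proof of \cite[Theorem 4.4]{deryck2022generic}.
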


Similarly, one can use the results of this section to obtain estimates for (physics-informed) DeepONets for nonlinear parabolic PDEs \eqref{eq:AC} such as the Allen-Cahn equation. In particular, a dimension-independent convergence rate can be obtained if the solution is smooth enough, which improves upon the result of \cite{LMK1}, which incurred the CoD. For simplicity, we present results for $C^{(2,r)}$ functions, rather than $C^{(s,r)}$ functions, as we found that assuming more regularity did not necessarily further improve the convergence rate \cite[Theorem 4.5]{deryck2022generic}. 

\begin{theorem}\label{thm:AC-operator}
Let $\cG: \cX\to C^{r}(\T^d):u_0\mapsto u(T)$ and $\cG^*: \cX \to C^{(2,r)}([0,T]\times \T^d):u_0\mapsto u$. For every $\sigma, \epsilon>0$, there exists a DeepONet $\cG_\theta^*$ such that
\begin{equation}
   \norm{\cL(\cG^*_\theta)}_{L^2([0,T]\times\T^d\times \cX)} \leq \epsilon. 
\end{equation}
Moreover, for $\cG^*_\theta$ we have $\bigO(\epsilon^{-\frac{(3+\sigma)d}{r-2}})$ sensors and,
\begin{equation}
    \begin{array}{ll}
    \width(\branch)= \bigO( \epsilon^{-1-\frac{(3+\sigma)(d+r(2+\beta))}{r-2}}), & \depth(\branch) =\bigO( \ln(1/\epsilon)), \\ 
    \width(\trunk) = \bigO(\epsilon^{-1-\frac{(3+\sigma)(d+1)}{r-2}}), & \depth(\trunk) = 3.
    \end{array}
\end{equation}
\end{theorem}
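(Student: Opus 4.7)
The plan is to apply the generic DeepONet approximation result, Theorem \ref{thm:nn-to-pido}, to the nonlinear parabolic PDE \eqref{eq:AC} on $\T^d$ with $s=2$ and $\ell=2$. To invoke that theorem I need to verify its three hypotheses for Allen--Cahn and then balance the free parameters $M$, $N$, $Z$ and the internal tolerance $\epsilon$ so that the aggregated error is at most $\epsilon$.

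First, Assumption \ref{ass:diff-operator} is immediate for $\cL[u]=\partial_t u-\Delta u-F(u)$: since $F$ is polynomial and hence locally Lipschitz, one can bound $\|\cL[u_\theta]\|_{L^2}$ by $\|\partial_t(u-u_\theta)\|_{L^2}+\|\Delta(u-u_\theta)\|_{L^2}+\Lip(F)\|u-u_\theta\|_{L^2}$ on bounded sets, which is of the required form with $k=1$, $\ell=2$ and constant scaling polynomially in $d$ given the assumed polynomial growth of $\|u\|_{C^{(s,2)}}$. Assumption \ref{ass:partition} holds trivially for the periodic domain $\T^d$ (one may take a single piece with any interior direction $v_P$ since there is no boundary). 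The central step is to produce the operator $\cU^\epsilon$ required by Assumption \ref{ass:stability} (and Assumption \ref{ass:nn}): invoking the result of \cite{hutzenthaler2020proof}, for every $\epsilon>0$ and every $u_0\in\cX$ there exists a neural network $\cU^\epsilon(u_0,T)$ of size $\bigO(\poly(d)\epsilon^{-\beta'})$ that approximates $\cG(u_0)=u(T)$ in $L^2(\T^d)$ to accuracy $\epsilon$. Since Hutzenthaler's construction emulates a multilevel-Picard / branching-diffusion scheme whose building blocks are Lipschitz in the input data with constant $C^\epsilon_{\mathrm{stab}}$ depending polynomially on $d$ and sub-polynomially on $1/\epsilon$, one obtains the stability estimate $\|\cU^\epsilon(v,T)-\cU^\epsilon(v',T)\|_{L^2}\le C^\epsilon_{\mathrm{stab}}\|v-v'\|_{L^p}$ demanded by Assumption \ref{ass:stability}. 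Extending from a single-time approximation to the operator $(u_0,t)\mapsto\cU^\epsilon(u_0,t)$ needed in Assumption \ref{ass:nn} is done exactly as in the proof of Theorem \ref{thm:AC-pinn}: one discretizes $[0,T]$ into $M$ subintervals, reapplies the scheme on each, and uses semigroup stability to control propagation of errors.

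With these ingredients in hand, Theorem \ref{thm:nn-to-pido} with $s=2$, $k=1$, $\ell=2$, $p^*\in\{2,\infty\}\setminus\{p\}$ yields
\[
\|\cL(\cG_\theta^*(v))\|_{L^2}\le C M^{1+\rho}\!\left[\poly(d)\,M^{-2}+MN^2\bigl(\epsilon+C^\epsilon_{\mathrm{stab}}Z^{-r+d/p^*}+C^{CB}_{\epsilon,r}N^{-r}\bigr)\right].
\]
Setting each of the four contributions equal to $\epsilon$ (up to $\poly$-in-$d$ factors absorbed in $\rho$) gives the scalings $M\sim\epsilon^{-1/(1-\rho)}$, $N\sim\epsilon^{-(3+\sigma)/(r-2)}$, $Z\sim\epsilon^{-(3+\sigma)/(r-2)}$ and internal tolerance $\epsilon\sim\epsilon^{1+(3+\sigma)\cdot 2/(r-2)}$, after which the stated sensor count $\bigO(\epsilon^{-(3+\sigma)d/(r-2)})$, branch width $\bigO(\epsilon^{-1-(3+\sigma)(d+r(2+\beta))/(r-2)})$, branch depth $\bigO(\ln(1/\epsilon))$, trunk width $\bigO(\epsilon^{-1-(3+\sigma)(d+1)/(r-2)})$ and trunk depth $3$ are read off directly from the architectural bounds of Theorem \ref{thm:nn-to-pido}, inserting $\width(\cU^\epsilon)=\poly(d)\epsilon^{-\beta}$.

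The main obstacle will be the quantitative verification of Assumption \ref{ass:stability} for the Hutzenthaler approximation: one must show that the Lipschitz constant $C^\epsilon_{\mathrm{stab}}$ grows only polynomially in $d$ and sub-polynomially in $1/\epsilon$, because otherwise the term $C^\epsilon_{\mathrm{stab}}Z^{-r+d/p^*}$ would force $Z$ to scale exponentially in $d$ and destroy the dimension-explicit bound. This reduces to a careful propagation-of-errors analysis for the multilevel-Picard scheme combined with the global-in-time boundedness of the Allen--Cahn semigroup coming from the dissipative structure of $F$, and is the only place where one genuinely uses the semilinear (as opposed to merely linear) nature of the equation.
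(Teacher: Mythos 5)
Your overall route is the same one the paper takes: the paper does not give a standalone proof of Theorem \ref{thm:AC-operator} but obtains it by applying the generic physics-informed DeepONet bound of Theorem \ref{thm:nn-to-pido} (with $s=2$, $k=1$, $\ell=2$) to the nonlinear parabolic equation \eqref{eq:AC}, using the multilevel-Picard--based networks of \cite{hutzenthaler2020proof} to furnish the operator $\cU^\epsilon$ of Assumptions \ref{ass:nn} and \ref{ass:stability}, exactly as in \cite[Theorem 4.5]{deryck2022generic}; your verification of Assumptions \ref{ass:diff-operator} and \ref{ass:partition} and your identification of the stability constant $C^\epsilon_{\mathrm{stab}}$ as the delicate point match that strategy.

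Where your write-up falls short is the quantitative balancing, which as written does not reproduce the stated architecture exponents. You set the four error contributions equal to $\epsilon$ while treating $C^{CB}_{\epsilon,r}$ as a harmless constant, but this quantity is the $W^{r,q}$-norm bound on the emulated network from Assumption \ref{ass:nn} and it grows as the internal tolerance shrinks; its growth is precisely what produces the factor $r(2+\beta)$ in the branch-width exponent $\epsilon^{-1-\frac{(3+\sigma)(d+r(2+\beta))}{r-2}}$ (compare the exponents in Theorems \ref{thm:kolmogorov} and \ref{thm:AC-pinn}, where the same $(2+\beta)$ appears for the same reason). Relatedly, your internal tolerance $\epsilon'\sim\epsilon^{1+2(3+\sigma)/(r-2)}$ is too weak: after multiplying by the prefactor $M^{k+\rho}\cdot M^{s-1}N^{\ell}$ the emulation error must be beaten at level roughly $\epsilon\,M^{-(2+\rho)}N^{-2}\sim\epsilon^{(3+\sigma)r/(r-2)}$, and smaller still once the $\epsilon$-dependence of $C^{CB}_{\epsilon,r}$ is folded in; plugging your $\epsilon'$ into $\width(\cU^{\epsilon'})\sim\poly(d)\,(\epsilon')^{-\beta}$ gives a branch width with exponent $\beta$ rather than $r(2+\beta)$ in the relevant term. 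None of this changes the viability of the approach, but to actually prove the theorem as stated you must carry the $\epsilon$-dependence of both $C^{\epsilon}_{\mathrm{stab}}$ and $C^{CB}_{\epsilon,r}$ through the optimization of $M$, $N$, $Z$ and $\epsilon'$, as is done in \cite{deryck2022generic}.
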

\end{example}

\subsubsection{Results based on Barron regularity}\label{sec:cod-barron}

A second framework that can be used to prove that neural networks can overcome the curse of dimensionality is that of Barron spaces. These spaces are named after the seminal work of \citet{barron1993universal}, where it is shown that a function $f$ with Fourier transform $\hat{f}$ can be approximated to accuracy $\tfrac{1}{\sqrt{m}}$ by a shallow neural network with $m$ neurons, as long as 
\begin{eqnarray}\label{eq:barron-condition}
    \int_{\R^d} \vert \hat{f}(\xi)\vert \cdot \abs{\xi} d\xi <\infty. 
\end{eqnarray}
Later, the space of functions satisfying condition \eqref{eq:barron-condition} was named a \emph{Barron space} and its definition was generalized in many different ways. One notable generalization is that of \emph{spectral Barron spaces}. 

\begin{definition}\label{def:spectral-barron}
The spectral Barron space with index $s$, denoted $\cB^s(\R^d)$, is defined as the collection of functions $f:\R^d\to\R$ for which the spectral Barron norm is finite, 
\begin{eqnarray}
    \norm{f}_{\cB^s(\R^d)} := \int_{\R^d} \vert \hat{f}(\xi)\vert \cdot (1+\abs{\xi}^2)^{\frac{s}{2}} d\xi <\infty. 
\end{eqnarray}
\end{definition}
First, notice how the initial condition \eqref{eq:barron-condition} of \citet{barron1993universal} corresponds to the spectral Barron space $\cB^1(\R^d)$. Second, from this definition the difference between Barron spaces and Sobolev spaces become apparent: whereas the spectral Barron norm is the $L^1$-norm of $\vert \hat{f}(\xi)\vert \cdot (1+\abs{\xi}^2)^{\frac{s}{2}}$, the Sobolev $H^s(\R^d)$-norm is defined as the $L^2$-norm of that same quantity. Finally, it follows that $\cB^r(\R^d)\subset \cB^s(\R^d)$ for $r\geq s$ and that $\cB^0(\R^d)\subset L^\infty(\R^d)$, see e.g. \cite{chen2023regularity}. 

An example of an approximation result for functions in spectral Barron spaces can be found in \cite{lu2021priori} for functions with the softplus activation function. 
\begin{theorem}\label{thm:spectral-barron-approx}
    Let $\Omega = [0,1]^d$. For every $u\in \cB^2(\Omega)$, there exists a shallow neural network $\hat{u}$ with softplus activation function ($\sigma(x) = \ln(1+\exp(x))$, with width at most $m$, such that
    \begin{eqnarray}
        \norm{u-\hat{u}}_{H^1(\Omega)}\leq \frac{\norm{u}_{\cB^2(\Omega)}(6\log(m)+30)}{\sqrt{m}}. 
    \end{eqnarray}
    Moreover, an exact bound on the weights is given in \citet[Theorem 2.2]{lu2021priori}.
\end{theorem}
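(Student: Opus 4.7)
The approach I would take follows the classical Barron--Maurey strategy, adapted to the softplus activation and an $H^1$ target norm.

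First, extend $u \in \cB^2(\Omega)$ to a function on $\R^d$ (losing at most a constant factor in the Barron norm) and write Fourier inversion
\begin{equation*}
u(x) = \Re \int_{\R^d} \hat{u}(\xi)\, e^{2\pi i \xi \cdot x}\, d\xi
= \int_{\R^d} |\hat{u}(\xi)|\cos(2\pi \xi \cdot x + \phi(\xi))\, d\xi,
\end{equation*}
where $\phi(\xi) = \arg \hat{u}(\xi)$. The goal is to rewrite the cosine as an integral of ridge functions built from the softplus activation $\sigma(t) = \ln(1+e^t)$, so that $u$ becomes the expectation of a random single neuron under a probability measure whose mass is bounded by $\norm{u}_{\cB^2}$. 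Since $\sigma''$ is a smooth positive function with unit integral, convolving shifts of $\sigma$ against an appropriate kernel yields sinusoids on a bounded set, and the factor $(1+|\xi|^2)$ in the $\cB^2$ norm is exactly what is needed to absorb the two derivatives of $\sigma$ when representing $\cos(2\pi\xi\cdot x + \phi)$ as $\int c(\xi,b)\,\sigma''(w\cdot x + b)\,db$ on the compact set $\Omega$. This produces a representation $u(x) = \int_{\cW} g(x;w,b)\,d\mu(w,b)$ with $\|\mu\|_{TV} \le c\,\norm{u}_{\cB^2}$ and $\|g(\slot;w,b)\|_{H^1(\Omega)}$ uniformly bounded, up to a mild logarithmic factor in $|(w,b)|$.

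Next, apply the Maurey--Jones--Barron empirical method in the Hilbert space $H^1(\Omega)$: drawing $(w_i,b_i)$ i.i.d.\ from the probability measure $d\mu/\|\mu\|_{TV}$ and forming $\hat{u}_m(x) = \tfrac{\|\mu\|_{TV}}{m}\sum_{i=1}^m g(x;w_i,b_i)$, one has the standard variance bound
\begin{equation*}
\E{\norm{u-\hat{u}_m}_{H^1(\Omega)}^2} \le \frac{\|\mu\|_{TV}^2 \cdot \sup_{(w,b)\in \mathrm{supp}(\mu)} \norm{g(\slot;w,b)}_{H^1(\Omega)}^2}{m},
\end{equation*}
so there exists a realisation with $\norm{u-\hat{u}_m}_{H^1(\Omega)} \lesssim \norm{u}_{\cB^2}/\sqrt{m}$. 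To land in the class of shallow softplus networks of width exactly $m$, the ridge function $g(x;w,b) = c(w,b)\,\sigma''(w\cdot x + b)$ is expressed as a difference of two shifted softplus evaluations via a finite difference $\sigma''(t) \approx h^{-2}[\sigma(t+h) - 2\sigma(t) + \sigma(t-h)]$; since $\sigma$ is smooth the error is controlled by a polynomial in $h$ times bounds on $w$ and $b$.

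The main obstacle, and the source of the $\log m$ factor, is truncation. The integral representation has unbounded inner weights $(w,b)$, and to get a clean shallow network of width $m$ with controllable weights one must restrict $\mu$ to a ball of radius $R = R(m)$. The tail of $\mu$ outside $|(w,b)|\le R$ is controlled by the Barron norm, while the finite difference approximation error and the variance bound scale polynomially in $R$. Balancing these gives $R = O(\log m)$, which is what yields the advertised $(6\log m + 30)/\sqrt{m}$ constant after carefully tracking the numerical prefactors through the cosine-to-ridge representation, the truncation, the finite-difference emulation of $\sigma''$ by $\sigma$, and the Maurey sampling step. The ``hard'' work is thus bookkeeping: every step is essentially standard, but producing the explicit constants $6$ and $30$ requires a concrete choice of the kernel representing the cosine in terms of $\sigma$, a careful estimate of the $H^1$ norms of the resulting ridge basis elements, and optimisation of $R$ against $m$.
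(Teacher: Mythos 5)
The paper itself does not prove this statement; it is quoted from \citet[Theorem 2.2]{lu2021priori}, so your proposal has to be measured against the proof given there. Your overall skeleton -- write $u$ as a mixture of ridge functions built from the activation, run the Maurey/Jones/Barron sampling argument in the Hilbert space $H^1(\Omega)$, and then emulate the idealized ridge unit by genuine softplus neurons -- is the same family of argument as in the cited work. But there is a genuine gap in the step where you control the unbounded inner weights, and it is precisely the step where the two approaches diverge. You propose to truncate the mixing measure to $|(w,b)|\le R$ and balance, claiming $R=O(\log m)$ suffices. The spectral Barron norm only controls $\int|\hat u(\xi)|(1+|\xi|^2)\,d\xi$, so the $H^1$-error of discarding frequencies $|\xi|>R$ is of order $\norm{u}_{\cB^2(\Omega)}/R$ (one power of $\xi$ is consumed by the $H^1$ norm, leaving only one power for decay), with no faster rate available for a general $\cB^2$ function. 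With $R=O(\log m)$ this truncation error is $O(1/\log m)$, which is far larger than the advertised $(\log m)/\sqrt m$; and if you instead take $R$ polynomially large in $m$ to fix this, your own variance and finite-difference error terms, which you acknowledge grow polynomially in $R$ because $\norm{\sigma(w\cdot x+b)}_{H^1(\Omega)}$ grows with $|w|$, destroy the $1/\sqrt m$ rate. So the balancing as described cannot produce the stated bound.

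The cited proof avoids this obstruction by never letting the frequency magnitude enter the neuron at all: following the Klusowski--Barron device, one represents the second-order Taylor remainder $e^{iz}-1-iz$ as an integral of truncated-power ridge functions $(w\cdot x-t)_+$ with \emph{normalized} direction $w=\xi/|\xi|_1$ and $t$ ranging over a bounded interval, so that the factor $|\xi|^2$ is absorbed into the mixing density -- this is exactly what the index $2$ in $\cB^2$ pays for. All ridge units then have uniformly bounded $H^1(\Omega)$ norms, Maurey sampling gives a clean $\norm{u}_{\cB^2(\Omega)}/\sqrt m$ term, and the $\log m$ factor arises only afterwards, from approximating the ReLU-type unit by a softplus neuron at a scale chosen as a function of $m$, not from truncating frequencies. (A smaller bookkeeping issue: your finite-difference emulation of $\sigma''$ uses three softplus evaluations per sampled unit, so the resulting network has width $3m$ rather than $m$; harmless up to renaming, but it would already change the constants you are trying to reproduce.) To repair your argument you would need to replace the truncation-plus-$\sigma''$ step with a normalized representation of this kind, at which point you are essentially reproducing the proof of \citet[Theorem 2.2]{lu2021priori}.
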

There are two key questions that must be answered, before one can use this theorem to argue that neural networks can overcome the CoD in the approximation of PDE solutions: 
\begin{itemize}
    \item Under which conditions does it hold that $u\in \cB^2(\Omega)$? 
    \item How does $\norm{u}_{\cB^2(\Omega)}$ depend on $d$? 
\end{itemize}
Answering these questions requires building a regularity theory for Barron spaces, which can be challenging as they are not Hilbert spaces such as the Sobolev spaces $H^s$. An important contribution for high-dimensional elliptic equations has been made in \citet{lu2021priori}. 

\begin{example}[Poisson's equation]\label{ex:barron-poisson}
Let $s\geq 0$, let $f\in \cB^{s+2}(\Omega)$ satisfy $\int_\Omega f(x)dx = 0$ and let $u$ be the unique solution to Poisson's equation $ - \Delta u = f$ with zero Neumann boundary conditions. Then it holds that $u\in \cB^s(\Omega)$ and that $\norm{u}_{\cB^s(\Omega)}\leq d \norm{f}_{\cB^{s+2}(\Omega)}$ \cite[Theorem 2.6]{lu2021priori}.
\end{example}

\begin{example}[Static Schrödinger equation]\label{ex:barron-schr}
Let $s\geq 0$, let $V\in \cB^{s+2}(\Omega)$ satisfy $V(x) \geq V_{\mathrm{min}} > 0$  for all $x\in \Omega$ and let $u$ be the unique solution to static Schrödinger's equation $ - \Delta u +Vu= f$ with zero Neumann boundary conditions. Then it holds that $u\in \cB^s(\Omega)$ and that $\norm{u}_{\cB^s(\Omega)}\leq C \norm{f}_{\cB^{s+2}(\Omega)}$ \cite[Theorem 2.3]{chen2023regularity}.
\end{example}

Similar results for a slightly different definition of Barron spaces \cite{ma2022barron} can be found in e.g. \cite{weinan2022some}. The authors of these works also show that the Barron space is the \emph{right} space for two-layer neural network models in the sense that optimal direct and inverse approximation theorems hold. Moreover, in \cite{wojtowytsch2020banach, wojtowytsch2022representation} they explore the connection between Barron and Sobolev spaces and provide examples of functions that are and functions that (sometimes surprisingly) are not Barron functions. 

Another slightly different definition can be found in \cite{chen2021representation}, where a Barron function $g$ is defined as a function that can be written as an infinite-width neural network, 
\begin{eqnarray}\label{eq:g-barron}
    g=\int a\sigma(w^\top x+b)\rho(da,dw,db),
\end{eqnarray}
where $\rho$ is a probability distribution on the (now infinite) parameter space $\Theta$. In particular, the neural network $\tfrac{1}{k}\sum_{i=1}^k a_i \sigma(w_i^\top x+b_i)$ is a Barron function. 

\begin{definition}\label{def:barron}
Fix  $\Omega\subset\R^d$ and $R\in[0,+\infty]$. For a function $g$ as in \eqref{eq:g-barron}, we define its Barron norm on $\Omega$ with index $p\in [1,+\infty]$ and support radius $R$ by
\begin{equation}
\begin{split}
    \norm{g}_{\cB^p_R(\Omega)} & =\inf_{\rho\in \cA_g}\biggl\{\biggl(\int|a|^p\rho(da,dw,db)\biggr)^{1/p}\biggr\},
\end{split}
\end{equation}
where $\cA_g$ is the set of probability measures $\rho$ supported on $\R\times \overline{B}^d_R\times\R$, where $\overline{B}_R^d=\{x\in\R^d:\norm{x}\leq R\}$, such that $g = \int a\sigma(w^\top x+b)\rho(da,dw,db)$.
The corresponding Barron space is then defined as
\begin{equation}
    \cB^p_R(\Omega)=\left\{g:\norm{g}_{\cB^p_R(\Omega)}<\infty\right\}.
\end{equation}
\end{definition}
A number of interesting facts have been proven about this space, such has,
\begin{itemize}
    \item The Barron norms and spaces introduced in Definition \ref{def:barron} are independent of $p$ i.e.,  $\norm{g}_{\cB^p_R(\Omega)} = \norm{g}_{\cB^q_R(\Omega)}$ and hence $\cB^p_R(\Omega) =  \cB^q_R(\Omega)$ for any $p,q\in [1,+\infty]$. See \cite[Proposition 2.4]{chen2021representation} and also \cite[Proposition 1]{ma2022barron}. 
    \item Under some conditions on the activation function $\sigma$, the Barron space $\cB^p_R(\Omega)$ is an algebra \cite[Lemma 3.3]{chen2021representation}. 
    \item Neural networks can approximate Barron functions in Sobolev norm without incurring the curse of dimensionality. 
\end{itemize}

We demonstrate the final point by slightly adapting \cite[Theorem 2.5]{chen2021representation}. 

\begin{theorem}
Let $\sigma$ be a smooth activation function, $\ell\in\N$, $R\geq 1$, let $\Omega\subset \R^d$ be open and let $f\in B^1_R(\Omega)$.  Let $\mu$ a probability measure on $\Omega$ and set $C_1 := \max_{m\leq \ell}\norm{\sigma^{(m)}}_\infty<\infty $. For any $k \in\ N$ there exist $\{(a_i,w_i, b_i)\}_{i=1}^k$ such that,
\begin{equation}
    \norm{\frac{1}{k}\sum_{i=1}^k a_i \sigma(w_i^Tx+b_i)-f(x)}_{H^\ell_\mu(\Omega)}\leq \frac{2C_1 (R\sqrt{ed})^\ell \norm{f}_{B^1_R(\Omega)}}{\sqrt{k}}
\end{equation}
\end{theorem}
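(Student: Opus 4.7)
The plan is to execute the classical Maurey--Barron probabilistic argument, adapted to control the $H^\ell_\mu$-Sobolev norm rather than merely an $L^2$-norm. First, I would unpack the definition of the Barron norm: for any $\eta > 0$, the infimum in Definition \ref{def:barron} produces a probability measure $\rho$ on $\R\times\bar B^d_R\times\R$ with $f(x) = \int a\sigma(w^\top x + b)\rho(da,dw,db)$ and $Z := \int |a|\rho(da,dw,db) \leq \norm{f}_{\cB^1_R(\Omega)}+\eta$. I would then reparametrize by $\tilde\rho(da,dw,db):=Z^{-1}|a|\rho(da,dw,db)$, a probability measure under which $f(x) = \E{Z\sgn{a}\sigma(w^\top x+b)}$. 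The gain is that the new integrand is uniformly bounded by $ZC_1$, making Monte Carlo estimates tractable.

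Next, I would draw $(a_i,w_i,b_i)_{i=1}^k$ i.i.d.\ from $\tilde\rho$, set $\hat a_i := Z\sgn{a_i}$, and consider the random empirical network $f_k(x):=\tfrac{1}{k}\sum_{i=1}^k \hat a_i\sigma(w_i^\top x+b_i)$. Using that $\norm{\sigma^{(m)}}_\infty \leq C_1$ for all $m\leq\ell$, I would differentiate under the integral to obtain $D^\alpha f(x) = \E{Z\sgn{a}w^\alpha\sigma^{(|\alpha|)}(w^\top x+b)}$ for any $|\alpha|\leq\ell$. The i.i.d.\ variance identity together with $\Var(Y)\leq \E{Y^2}$ then yields, for each fixed $x$,
\begin{equation*}
\E{(D^\alpha f_k(x)-D^\alpha f(x))^2} \leq \tfrac{1}{k}\E{Z^2(w^\alpha)^2\abs{\sigma^{(|\alpha|)}(w^\top x+b)}^2} \leq \tfrac{Z^2 C_1^2}{k}\E{(w^\alpha)^2}.
\end{equation*}
Summing over $|\alpha|\leq\ell$ and integrating against $\mu$ (via Fubini--Tonelli) will yield
\begin{equation*}
\E{\norm{f_k-f}^2_{H^\ell_\mu(\Omega)}} \leq \frac{Z^2 C_1^2}{k}\,\E{\sum_{|\alpha|\leq\ell}(w^\alpha)^2}.
\end{equation*}

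The remaining step is purely combinatorial. Using $\norm{w}_\infty\leq \norm{w}\leq R$, each $|w_i|\leq R$, so $(w^\alpha)^2 \leq R^{2|\alpha|}\leq R^{2\ell}$ since $R\geq 1$. The number of multi-indices with $|\alpha|\leq \ell$ in $\N^d_0$ equals $\binom{d+\ell}{\ell}$ by the hockey-stick identity, and a standard Stirling estimate gives $\binom{d+\ell}{\ell}\leq (ed)^\ell$. Substituting back, $\E{\norm{f_k-f}^2_{H^\ell_\mu(\Omega)}}\leq Z^2C_1^2(edR^2)^\ell/k$; Jensen's inequality then guarantees that at least one realization of the samples satisfies $\norm{f_k-f}_{H^\ell_\mu(\Omega)} \leq ZC_1(R\sqrt{ed})^\ell/\sqrt{k}$. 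Finally, choosing $\eta=\norm{f}_{\cB^1_R(\Omega)}$ makes $Z\leq 2\norm{f}_{\cB^1_R(\Omega)}$, delivering the claimed bound.

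The hardest part will be justifying the exchange of expectation and differentiation in the second step, which requires dominated convergence with an integrable majorant for $|w^\alpha|$ under $\tilde\rho$ (supplied by the support constraint $\norm{w}\leq R$) together with the uniform bound $\norm{\sigma^{(m)}}_\infty\leq C_1$. The combinatorial counting is otherwise elementary once one commits to bounding $(w^\alpha)^2$ pointwise by $R^{2|\alpha|}$ and then invoking the hockey-stick identity, rather than expanding $(1+\norm{w}_2^2)^\ell$ via the multinomial theorem (which would give a suboptimal $d$-dependence).
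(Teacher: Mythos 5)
Your proposal is correct and follows essentially the same Maurey--Barron Monte-Carlo sampling argument as the paper: draw the parameters i.i.d.\ from a representing measure, bound the variance of each derivative $D^\alpha$ pointwise using $\abs{\sigma^{(m)}}\leq C_1$ and $\norm{w}\leq R$, and conclude via Jensen's inequality together with the positive-probability argument. The only cosmetic differences are that you normalize the amplitude by importance-sampling with $\abs{a}\rho/Z$ where the paper instead invokes the $p$-independence of the Barron norm to control $\int\abs{a}^2\,d\rho$, and that you count the $\binom{d+\ell}{\ell}\leq(ed)^\ell$ multi-indices directly where the paper cites a lemma delivering the same $(ed)^\ell$ factor.
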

\begin{proof}
Since $f\in B^1_R(\Omega)$, there must exist a probability measure $\rho$ supported on $\R\times \overline{B}^d_R\times\R$ such that
\begin{equation}
    f(x) = \int a\sigma(w^T x+b)\rho(da, dw, db), \quad \int \abs{a}^2 d\rho \leq \frac{4}{\sqrt{\pi}} \norm{f}_{B^2(\Omega)},  
\end{equation}
where $\tfrac{4}{\sqrt{\pi}}$ is a constant that is strictly larger than 1, which will be convenient later on. 
Define the error,
\begin{equation}
    \cE_k = \frac{1}{k}\sum_{i=1}^k a_i \sigma(w_i^Tx+b_i)-f(x).
\end{equation}
We calculate that for $\ell=\abs{\alpha}_1$,
\begin{eqnarray}
&&\E{\norm{D^\alpha \cE_k}_{L^2(\Omega_0)}^2}\\ &=& \iint_{\Omega_0} \left(\frac{1}{k}\sum_{i=1}^k a_i \sigma^l(w_i^T x+b_i)\prod_{j=1}^d w_{ij}^{\alpha_j}-D^\alpha f(x) \right)^2d\mu d\rho^k\\
&=& \frac{1}{k} \iint_{\Omega_0} \left(a \sigma^l(w^T x+b)\prod_{j=1}^d w_{j}^{\alpha_j}-D^\alpha f(x) \right)^2d\mu d\rho\\
&=&  \frac{1}{k} \int_{\Omega_0} \mathrm{Var}\left(a \sigma^l(w^T x+b)\prod_{j=1}^d w_{j}^{\alpha_j}\right)d\mu \\
&\leq& \frac{1}{k} \int_{\Omega_0} \E{\left(a \sigma^l(w^T x+b)\prod_{j=1}^d w_{j}^{\alpha_j}\right)^2}d\mu \\
&\leq& \frac{(C_1R^l)^2}{k} \E{\abs{a}^2} \leq \frac{4(C_1R^l)^2\norm{f}^2_{B^2(\Omega)}}{k\sqrt{\pi}}.
\end{eqnarray}
Then we find that using \cite[Lemma 2.1]{deryck2021approximation} and the previous inequality that,
\begin{equation}
     \E{\norm{\cE_k}_{H^\ell(\Omega)}^2} \leq \sqrt{\pi}(ed)^\ell \max_{\substack{\alpha\in\N_0^d, \\ \abs{\alpha}_1\leq \ell}}\E{\norm{D^\alpha \cE_k}^2} \leq \frac{4C_1^2(R^2ed)^\ell\norm{f}^2_{B^2(\Omega)}}{k}.
\end{equation}
We can then conclude by using the fact that for a random variably $Y$ that satisfies $\E{\abs{Y}}\leq \epsilon$ for some $\epsilon>0$, it must hold that $\mathbb{P}(\abs{Y}\leq \epsilon)>0$ \cite[Proposition 3.3]{grohs2018proof}.
\end{proof}

If one knows how $\norm{f}_{B^2(\Omega)}$ scales with $d$, then one can prove the existence of neural networks that overcome the CoD for physics-informed loss functions, under Assumption \ref{ass:diff-operator} and by following the approach of Section \ref{sec:approx-general}. More related works can be found in \cite{bach2017breaking, hong2021priori} and references therein. 

\section{Stability}\label{sec:stability}

Next, we investigate whether a small physics-informed loss implies a small total error, often the $L^2$-error, as formulated in the following question. 
\begin{question}[Q2]
    \emph{Given that a model $u_\theta$ has a small generalization error $\cE_G({\theta})$, will the corresponding total error $\Eg({\theta})$ be small as well?}
\end{question}
In the general results we will focus on physics-informed loss functions based on the strong (classical) formulation of the PDE, but we also give some examples when the loss is based on the weak solution (Example \ref{ex:scl-stab}) or the variational solution (Example \ref{ex:poisson-stab}). We first look at stability results for forward problems (Section \ref{sec:stab-forward}) and afterwards for inverse problems (Section \ref{sec:stab-inverse}). 

\subsection{Stability for forward problems}\label{sec:stab-forward}

We investigate whether a small PDE residual implies that the total error \eqref{eq:pinn-error} will be small as well (Question Q2). 
Such a stability bound can be formulated as the requirement that for any $\bu,\bv \in X^*$, the differential operator $\df$ satisfies
\begin{equation}\label{eq:assm2}
\|\bu - \bv\|_{X} \leq C_{\text{PDE}} \|\df(\bu) - \df(\bv)\|_{Y},
\end{equation}
where the constant $C_{\text{PDE}} > 0$ is allowed to depend on $\|\bu\|_{X^*}$ and $\|\bv\|_{X^*}$.

As a first example of a PDE with solutions satisfying \eqref{eq:assm2}, we consider a linear differential operator $\df: X \to Y$, i.e. $\df(\alpha \bu + \beta \bv) = \alpha \df(\bu) + \beta \df(\bv)$, for any $\alpha,\beta \in \R$. For simplicity, let $X^{\ast} = X$ and $Y^{\ast} = Y$. By the assumptions on the existence and uniqueness of the underlying linear PDE \eqref{eq:pde}, there exists an \emph{inverse} operator $\df^{-1}: Y \to X$. Note that the assumption \eqref{eq:assm2} is satisfied if the inverse is bounded i.e, $\|\df^{-1}\|_{\mathrm{op}} \leq C < +\infty$, with respect to the natural norm on linear operators from $Y$ to $X$. Thus, the assumption \eqref{eq:assm2} on stability boils down to the boundedness of the inverse operator for linear PDEs. Many well-known linear PDEs possess such bounded inverses \cite{DL1}. 

As a second example, we will consider a nonlinear PDE \eqref{eq:pde}, but with a well-defined linearization i.e, there exists an operator $\overline{\df}: X^{\ast} \mapsto Y^{\ast}$, such that 
\begin{equation}
    \label{eq:lin}
    \df(\bu) - \df(\bv) = \overline{\df}_{(\bu,\bv)}\left(\bu - \bv\right), \quad \forall \bu,\bv \in X^{\ast}.
\end{equation}
Again for simplicity, we will assume that $X^{\ast} = X$ and $Y^{\ast} = Y$. We further assume that the inverse of $\overline{\df}$ exists and is bounded in the following manner,
\begin{equation}
    \label{eq:lin1}
    \|\overline{\df}_{(\bu,\bv)}^{\: -1}\|_{\mathrm{op}} \leq C\left(\|\bu\|_{X},\|\bv\|_{X}  \right) < +\infty, \quad \forall \bu,\bv \in X,
    \end{equation}
with the norm of $\overline{\df}^{-1}$ being an operator norm, induced by linear operators from $Y$ to $X$. Then a straightforward calculation shows that \eqref{eq:lin1} suffices to establish the stability bound \eqref{eq:assm2}. 

We summarize our findings with the following informal theorem. 

\begin{theorem}
    Let $\cL$ be a linear operator or a nonlinear operator with linearization as in \eqref{eq:lin1}. If $\cL$ has a bounded inverse operator, then equation \eqref{eq:assm2} will hold i.e., a small physics-informed loss will imply a small total error. 
\end{theorem}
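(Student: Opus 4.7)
The plan is to treat the two cases as essentially one argument: in each case, the difference $u-v$ can be recovered from $\cL(u)-\cL(v)$ by applying a bounded linear operator, and taking norms immediately yields \eqref{eq:assm2}.

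First, for the linear case, I would use linearity to write $\cL(u)-\cL(v)=\cL(u-v)$ for all $u,v\in X$. Since by hypothesis $\cL:X\to Y$ is invertible with bounded inverse $\cL^{-1}:Y\to X$, I apply $\cL^{-1}$ to the identity $\cL(u-v)=\cL(u)-\cL(v)$ to get $u-v = \cL^{-1}\bigl(\cL(u)-\cL(v)\bigr)$. Taking the $X$-norm and using the operator-norm bound gives
\begin{equation}
\|u-v\|_X \leq \|\cL^{-1}\|_{\mathrm{op}}\,\|\cL(u)-\cL(v)\|_Y,
\end{equation}
which is exactly \eqref{eq:assm2} with $C_{\mathrm{PDE}}=\|\cL^{-1}\|_{\mathrm{op}}$, a constant independent of $u,v$.

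Next, for the nonlinear case, I would invoke the hypothesis \eqref{eq:lin} that provides a linearization $\overline{\cL}_{(u,v)}:X^\ast\to Y^\ast$ satisfying $\cL(u)-\cL(v)=\overline{\cL}_{(u,v)}(u-v)$ for every $u,v\in X^\ast$. By \eqref{eq:lin1}, the inverse $\overline{\cL}_{(u,v)}^{-1}$ exists as a bounded operator from $Y$ to $X$, with operator norm controlled by the constant $C(\|u\|_X,\|v\|_X)$. Applying this inverse to both sides of the linearization identity yields
\begin{equation}
u-v = \overline{\cL}_{(u,v)}^{-1}\bigl(\cL(u)-\cL(v)\bigr),
\end{equation}
and taking $X$-norms together with the bound on $\|\overline{\cL}_{(u,v)}^{-1}\|_{\mathrm{op}}$ delivers \eqref{eq:assm2} with $C_{\mathrm{PDE}}=C(\|u\|_X,\|v\|_X)$, which depends on $u$ and $v$ only through their $X^\ast$-norms as allowed.

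Since the substantive hypotheses (linearity plus bounded inverse, or the existence of a uniformly invertible linearization) have effectively been postulated, there is no genuine technical obstacle in the proof itself; the argument is really just the observation that stability in $X$ is equivalent to the boundedness of the inverse acting on the residual. The main conceptual point worth emphasising in the write-up is that in the nonlinear case the constant $C_{\mathrm{PDE}}$ depends on the solutions $u,v$ through $\|u\|_X$ and $\|v\|_X$, so when applying this to a PINN solution $u_\theta$ one must additionally control $\|u_\theta\|_X$ a priori (for instance via the approximation argument of Section~\ref{sec:approximation-error}) in order to extract a uniform stability constant.
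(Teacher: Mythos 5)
Your argument is correct and coincides with the paper's own (informal) justification: in both cases one writes $u-v$ as the bounded inverse (of $\cL$ or of the linearization $\overline{\cL}_{(u,v)}$) applied to $\cL(u)-\cL(v)$ and takes operator norms, with the constant in the nonlinear case depending on $\|u\|_{X}$ and $\|v\|_{X}$ exactly as permitted by \eqref{eq:assm2}. No gaps; this is the same route the paper sketches in the discussion preceding the theorem.
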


We give examples of the above theorem, with explicit stability constants, for the semilinear heat equation (strong formulation), Navier-Stokes equation (strong formulation), scalar conservation laws (strong and weak formulation) and the Poisson equation (variational formulation). Further examples can be found in \citep{MM1, lu2021priori}. 

\begin{example}[Semilinear heat equation]
We address the {stability} (Q2) of the semilinear heat equation (see Example \ref{def:semilinear-heat}). The following theorem \cite[Theorem 3.1]{MM1} ensures that one can indeed bound the total error $\cE(\theta)$ in terms of the generalization error $\cE_G(\theta)$. A generalization to linear Kolmogorov equations (including the Black-Scholes model) can be found in \cite[Theorem 3.7]{deryck2021pinn}.

\begin{theorem}
\label{thm:stability-heat}
Let $u \in C^k(\bar{D} \times [0,T])$ be the unique classical solution of the semilinear parabolic equation \eqref{eq:heat} with the source $f$ satisfying \eqref{eq:assf} and let $v\in C^2(\bar{D} \times [0,T])$. Then the total error \eqref{eq:pinn-error} can be estimated as, 
\begin{eqnarray}
    \label{eq:hegenb0}
    \norm{u-v}_{L^2(D\times [0,T])}^2
     &\leq& C_1 \big[\norm{\rpde[v]}_{L^2(D\times [0,T])}^2 + \norm{\rt[v]}_{L^2(D)}^2 \\&&+ C_2  \norm{\rs[v]}_{L^2(\partial D\times [0,T])}  \big],
\end{eqnarray}
with constants given by,
\begin{equation}
    \label{eq:hct}
    \begin{aligned}
        C_1 &= \sqrt{T + (1+2C_f)T^2e^{(1+2C_f)T}}, \\
        C_2 &= \sqrt{T^{\frac{1}{2}}|\partial D|^{\frac{1}{2}}\left(\|u\|_{C^1([0,T] \times \partial D)} + \|v\|_{C^1([0,T] \times \partial D)}\right)}. 
\end{aligned}
\end{equation}
\end{theorem}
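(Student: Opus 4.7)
The plan is a classical energy estimate on the error, combined with Gr\"onwall's inequality. Define the error $w := v - u$; by the definitions of the three residuals, $w$ satisfies the semilinear parabolic problem
\begin{equation*}
w_t - \Delta w = \rpde[v] + (f(v)-f(u)) \text{ in } D\times(0,T), \quad w(\cdot,0) = \rt[v], \quad w = \rs[v] \text{ on } \partial D\times(0,T).
\end{equation*}
First I would test this equation against $w$ and integrate over $D$; integration by parts yields the pointwise-in-time energy identity
\begin{equation*}
\tfrac{1}{2}\tfrac{d}{dt}\|w(\cdot,t)\|_{L^2(D)}^2 + \|\nabla w(\cdot,t)\|_{L^2(D)}^2 = \int_{\partial D} w\,\partial_n w\,ds + \int_D w\,\rpde[v]\,dx + \int_D w\,(f(v)-f(u))\,dx.
\end{equation*}
The Lipschitz assumption \eqref{eq:assf} controls the nonlinear term by $C_f\|w(\cdot,t)\|_{L^2(D)}^2$, while Young's inequality handles the residual term, $|\int_D w\,\rpde[v]| \leq \tfrac{1}{2}\|w\|_{L^2}^2 + \tfrac{1}{2}\|\rpde[v]\|_{L^2}^2$. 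Dropping the dissipative gradient contribution and writing $y(t) := \|w(\cdot,t)\|_{L^2(D)}^2$ leaves the differential inequality $y'(t) \leq (1+2C_f)\,y(t) + \|\rpde[v](\cdot,t)\|_{L^2(D)}^2 + 2\int_{\partial D}w\,\partial_n w\,ds$.

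The main obstacle is the boundary term: under the soft boundary formulation, $w$ does not vanish on $\partial D$ but equals $\rs[v]$, and a naive Young inequality would introduce $\|\partial_n w\|_{L^2(\partial D)}^2$, which the loss does not control. The key observation is that both $u$ and $v$ are assumed $C^1$ up to the boundary, so $|\partial_n w| \leq |\partial_n u| + |\partial_n v| \leq \|u\|_{C^1(\partial D\times[0,T])} + \|v\|_{C^1(\partial D\times[0,T])}$ pointwise. Consequently, by successive applications of Cauchy-Schwarz in space and in time,
\begin{equation*}
2\int_0^T\!\!\int_{\partial D}|w\,\partial_n w|\,ds\,dt \;\leq\; 2\bigl(\|u\|_{C^1(\partial D\times[0,T])} + \|v\|_{C^1(\partial D\times[0,T])}\bigr)\,T^{1/2}\,|\partial D|^{1/2}\,\|\rs[v]\|_{L^2(\partial D\times[0,T])},
\end{equation*}
which enters linearly (rather than quadratically) in $\|\rs[v]\|_{L^2}$ -- precisely the structure of the factor $C_2$ in the statement.

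To close the argument, I would apply Gr\"onwall's lemma to the integrated form of the differential inequality above to obtain a pointwise-in-$t$ bound on $y(t)$ in terms of $y(0) = \|\rt[v]\|_{L^2(D)}^2$, the cumulative $L^2$-norm of $\rpde[v]$, and the boundary contribution derived above. A final integration of $y(t)$ over $[0,T]$ then converts this pointwise estimate into the desired bound on $\|w\|_{L^2(D\times[0,T])}^2$. The explicit shape of the prefactor $C_1^2 = T + (1+2C_f)T^2 e^{(1+2C_f)T}$ emerges from carefully bookkeeping the two successive integrations (Gr\"onwall, then integration in $t$); this calculation is routine, and the exact numerical constants are inessential to the overall argument -- what matters is that $C_1$ depends only on $T$ and $C_f$, while $C_2$ depends only on $T$, $|\partial D|$, and the $C^1$-norms of $u$ and $v$ on $\partial D\times[0,T]$.
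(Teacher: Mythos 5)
Your proposal is correct and follows essentially the same route as the paper's proof: write $w=v-u$, note it solves the parabolic equation with the three residuals as forcing, initial and boundary data, multiply by $w$, integrate by parts, use the Lipschitz bound \eqref{eq:assf} and Cauchy--Schwarz (bounding $\partial_n w$ on $\partial D$ by the $C^1$-norms of $u$ and $v$, which is exactly how the linear dependence on $\norm{\rs[v]}_{L^2(\partial D\times[0,T])}$ and the constant $C_2$ arise), then apply Gr\"onwall and integrate once more in time. No gaps; the only differences are cosmetic (you apply Cauchy--Schwarz in space and time at once rather than in space first and time later).
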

\begin{proof}
First, note that for $w = v-u$ it holds that, 
\begin{eqnarray}
    \partial_t w = \Delta w + f(v)-f(u)+\rpde[v], \quad w(x,0) = \rt[v](x), \quad v = \rs[v]. 
\end{eqnarray}
Multiplying this PDE with $w$, integrating over $D$ and performing integration by parts then gives rise to the inequality, 
\begin{eqnarray}
    \frac{1}{2}\frac{d}{dt}\int_D \abs{w}^2 &\leq& \int_{\partial D} \rs[v] \nabla w\cdot \hat{n}_D + \int_D w (f(v)-f(u)+\rpde[v]) \\
    &\lesssim& \left(\int_{\partial D} \abs{\rs[v]}^2\right)^{1/2} + \int_D \abs{w}^2 + \int_D \abs{\rpde[v]}^2. 
\end{eqnarray}
Integrating the above inequality over $[0,\tau]\subset [0,T]$, applying Grönwall's inequality and then integrating once again over time yields the claimed result. The full version can be found in \cite[Theorem 3.1]{MM1}. 
\end{proof}
\end{example}

\begin{example}[Radiative transfer equation]
The study of radiative transfer is of vital importance in many fields of science and engineering including astrophysics, climate dynamics, meteorology, nuclear engineering and medical imaging \cite{Modbook}. The fundamental equation describing radiative transfer is a \emph{linear partial integro-differential equation}, termed as the \emph{radiative transfer equation}. Under the assumption of a static underlying medium, it has the following form \cite{Modbook}, 
\begin{equation}
\label{eq:genRTE}
\begin{aligned}
  \frac{1}{c}u_t +   \omega\cdot\nabla_x u  + ku + \sigma\Bigg(u - \frac{1}{s_d}\int\limits_{\Lambda}\int\limits_{S} \Phi(\omega, \omega^{\prime}, \nu, \nu^{\prime})u(t,x,\omega^{\prime}, \nu^{\prime}) d\omega^{\prime}d\nu^{\prime} \Bigg)= & f,
\end{aligned}
\end{equation}
with time variable $t \in [0,T]$, space variable $x \in D \subset \R^d$ (and $D_T = [0,T] \times D$), \emph{angle} $\omega \in S = {\mathbb S}^{d-1}$ i.e. the $d$-dimensional sphere and \emph{frequency} (or group energy) $\nu \in \Lambda \subset \R$. The constants in \eqref{eq:genRTE} are the speed of light $c$ and the surface area $s_d$ of the $d$-dimensional unit sphere. The unknown of interest in \eqref{eq:genRTE} is the so-called \emph{radiative intensity} $u: D_T \times S \times \Lambda \mapsto \R$, while $k = k(x,\nu): D \times \Lambda \mapsto \R_+$ is the \emph{absorption coefficient} and $\sigma =  \sigma(x,\nu): D \times \Lambda \mapsto \R_+$ is the \emph{scattering coefficient}. The integral term in \eqref{eq:genRTE} involves the so-called \emph{scattering kernel} $\Phi: S\times S \times \Lambda \times \Lambda \mapsto \R$, which is normalized as $\int_{S\times \Lambda} \Phi(\cdot 
,~\omega^{\prime},~\cdot,~\nu^{\prime}) d\omega^{\prime} d \nu^{\prime} = 1$, in order to account for the conservation of photons during scattering. The dynamics of radiative transfer are driven by a source (emission) term $f = f(x,\nu):  D \times \Lambda \mapsto \R$. 

Although the radiative transfer equation \eqref{eq:genRTE} is linear, explicit solution formulas are only available in very special cases \cite{Modbook}. Hence, numerical methods are essential for the simulation of the radiative intensity. 
Fortunately, \citet{mishra2020physics} have provided an affirmative answer to Question 2 for the radiative transfer equations, so that one can use physics-informed techniques (based on the strong PDE residual) to retrieve an approximation of the solution of \eqref{eq:genRTE}. 

\begin{theorem}
Let $u \in L^2(\dom)$ be the unique weak solution of the radiative transfer equation \eqref{eq:genRTE}, with absorption coefficient $0 \leq k \in L^{\infty}(D\times \Lambda)$, scattering coefficient $0 \leq \sigma \in L^{\infty}(D\times \Lambda)$ and a symmetric scattering kernel $\Phi \in C^\ell(S \times \Lambda \times S \times \Lambda)$, for some $\ell \geq 1$, such that the function $\Psi$, given by,
\begin{equation}
    \label{eq:psi}
    \Psi(\omega,\nu) = \int\limits_{S \times \Lambda} \Phi(\omega,\omega^{\prime},\nu,\nu^{\prime}) d\omega^{\prime} d\nu^{\prime},
\end{equation}
is in $L^{\infty}(S \times \Lambda)$. For any sufficiently smooth model $u_\theta$ it holds that, 
\begin{eqnarray}
    \norm{u-u_\theta}^2_{L^2} \leq C (\norm{\rpde[u_\theta]}^2_{L^2} + \norm{\rs[u_\theta]}^2_{L^2}+\norm{\rt}^2_{L^2}), 
\end{eqnarray}
where $C>0$ is a constant that only depends (in a monotonously increasing way) on $T$ and the quantity ${s_d^{-1}}({\|\sigma\|_{L^{\infty}}+\|\Psi\|_{L^{\infty}}})$, where $s_d$ is the surface area of the $d$-dimensional unit sphere. 
\end{theorem}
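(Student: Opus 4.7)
The plan is to mimic the energy-method argument that yielded the stability estimate for the semilinear heat equation (Theorem~\ref{thm:stability-heat}), but adapted to the transport-plus-scattering structure of \eqref{eq:genRTE}. Let $w := u_\theta - u$. By linearity of the operator on the left-hand side of \eqref{eq:genRTE}, the function $w$ satisfies
\begin{equation*}
\tfrac{1}{c}w_t + \omega\cdot\nabla_x w + kw + \sigma w - \tfrac{\sigma}{s_d}\!\!\int_{\Lambda}\!\!\int_{S}\!\Phi(\omega,\omega',\nu,\nu') w(\slot,\omega',\nu')\,d\omega'd\nu' = \rpde[u_\theta],
\end{equation*}
with $w(0,\slot)=\rt[u_\theta]$ and $w$ restricted to the inflow boundary of $\partial D \times S \times \Lambda$ equal to $\rs[u_\theta]$. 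The first move is to multiply this identity by $w$, integrate over $D \times S \times \Lambda$, and apply the divergence theorem in $x$ to rewrite $\int w\,\omega\cdot\nabla_x w$ as a boundary term $\tfrac{1}{2}\int_{\partial D\times S\times \Lambda}(\omega\cdot\hat n) w^2$. The inflow part of this boundary integral is controlled by $\|\rs[u_\theta]\|_{L^2}^2$, while the outflow part has a favourable sign and can be discarded.

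The key technical step is to estimate the scattering bilinear form $I := \int w(\omega,\nu)\,\sigma\,\int\Phi(\omega,\omega',\nu,\nu') w(\omega',\nu')\,d\omega'd\nu'\,d\omega d\nu$ by a multiple of $\|w\|_{L^2}^2$. I would use the non-negativity of $\Phi$ together with the elementary inequality $|w(\omega,\nu)w(\omega',\nu')| \le \tfrac{1}{2}(w^2(\omega,\nu)+w^2(\omega',\nu'))$, followed by the symmetry of $\Phi$ in the pairs $(\omega,\nu)\leftrightarrow(\omega',\nu')$, to deduce
\begin{equation*}
|I| \le \|\sigma\|_{L^\infty}\!\!\int w^2(\omega,\nu)\,\Psi(\omega,\nu)\,d\omega d\nu \le \|\sigma\|_{L^\infty}\|\Psi\|_{L^\infty}\|w\|_{L^2}^2.
\end{equation*}
This is where the symmetry of the kernel and the finiteness of $\Psi$ are used in an essential way, and I expect it to be the main obstacle in carrying out the argument cleanly, since naive Cauchy--Schwarz would introduce an unwanted dependence on $\|\Phi\|_{L^2}$.

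Combining the above with the non-negativity of the absorption term $\int k w^2 \ge 0$ and the trivial bound $|\int w\,\rpde[u_\theta]|\le\tfrac{1}{2}\|w\|_{L^2}^2+\tfrac{1}{2}\|\rpde[u_\theta]\|_{L^2}^2$ produces a differential inequality of the form
\begin{equation*}
\tfrac{1}{2c}\tfrac{d}{dt}\|w(t)\|_{L^2}^2 \le \Big(\tfrac{1}{2}+\tfrac{\|\sigma\|_{L^\infty}\|\Psi\|_{L^\infty}}{s_d}\Big)\|w(t)\|_{L^2}^2 + \tfrac{1}{2}\|\rpde[u_\theta](t)\|_{L^2}^2 + C_\partial\|\rs[u_\theta](t)\|_{L^2}^2,
\end{equation*}
for a geometric constant $C_\partial$. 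Finally, I would apply Grönwall's lemma starting from $\|w(0)\|_{L^2}^2=\|\rt[u_\theta]\|_{L^2}^2$, then integrate once more in $t$ over $[0,T]$, to obtain the desired estimate
\begin{equation*}
\|u-u_\theta\|_{L^2}^2 \le C\big(\|\rpde[u_\theta]\|_{L^2}^2 + \|\rs[u_\theta]\|_{L^2}^2 + \|\rt[u_\theta]\|_{L^2}^2\big),
\end{equation*}
with $C$ depending monotonically on $T$ and on $s_d^{-1}(\|\sigma\|_{L^\infty}+\|\Psi\|_{L^\infty})$ via the exponential factor produced by Grönwall. The regularity hypothesis on $u_\theta$ is only used to justify the integration by parts and to make each term above finite.
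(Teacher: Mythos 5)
Your proposal is correct and takes essentially the same route as the proof the paper relies on (the energy/Gr\"onwall argument of \citet{mishra2020physics}, directly analogous to the proof of Theorem~\ref{thm:stability-heat}): multiply the residual equation for $w=u_\theta-u$ by $w$, integrate by parts in the transport term keeping the inflow-boundary residual and discarding the outflow part, bound the scattering term via the symmetry of $\Phi$ and $\|\Psi\|_{L^\infty}$, and conclude with Gr\"onwall plus a final integration in time. The only cosmetic deviations are that your scattering estimate implicitly uses $\Phi\ge 0$ (otherwise replace $\Phi$ by $|\Phi|$ in the definition of $\Psi$) and that your constant enters through $s_d^{-1}\|\sigma\|_{L^\infty}\|\Psi\|_{L^\infty}$ rather than $s_d^{-1}(\|\sigma\|_{L^\infty}+\|\Psi\|_{L^\infty})$, which is harmless bookkeeping.
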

\end{example}

\begin{example}[Navier-Stokes equations]\label{ex:NS-stability}
Next, we revisit Example \ref{def:NS-equation} to show that neural networks for which the PINN residuals are small, will provide a good $L^2$-approximation of the true solution $u:\Omega=D\times [0,T]\to \mathbb{R}^d$, $p:\Omega\to\mathbb{R}$ of the Navier-Stokes equation \eqref{eq:navier-stokes} on the torus $D=\mathbb{T}^d=[0,1)^d$ with periodic boundary conditions. The analysis can be readily extended to other boundary conditions, such as no-slip boundary condition i.e., $u(x,t)=0$ for all $(x,t)\in \partial D\times [0,T]$, and no-penetration boundary conditions i.e., $u(x,t)\cdot \hn_D=0$ for all $(x,t)\in \partial D\times [0,T]$. 

For neural networks $(u_\theta, p_\theta)$, we define the following PINN-related residuals, 
\begin{align}\label{eq:new-pinn-residuals}
    \begin{split}
        &\rpde = \partial_t u_\theta + (u_\theta\cdot\nabla)u_\theta + \nabla p_\theta - \nu\Delta u_\theta, \qquad
        \rdiv = \div{u_\theta},\\ &\rsu(x) = u_\theta(x)-u_\theta(x+1), \qquad \rsp(x) = p_\theta(x)-p_\theta(x+1),\\ &\rsgu(x) =\nabla u_\theta(x)-\nabla u_\theta(x+1), \qquad \rs = (\rsu, \rsp, \rsgu), \\&\rt = u_\theta(t=0)-u(t=0), 
    \end{split}
\end{align}
where we drop the $\theta$-dependence in the definition of the residuals for notational convenience. 

The following theorem \cite{deryck2021navierstokes} then bounds the $L^2$-error of the PINN in terms of the residuals defined above. We write $\abs{\partial D}$ for the $(d-1)$-dimensional Lebesgue measure of $\partial D$ and $\abs{D}$ for the $d$-dimensional Lebesgue measure of $D$. 

\begin{theorem}\label{thm:stability-ns}
Let $d\in\mathbb{N}$, $D=\mathbb{T}^d$ and $u\in C^1(D\times [0,T])$ be the classical solution of the Navier-Stokes equation \eqref{eq:navier-stokes}. Let $(u_\theta,p_\theta)$ be a PINN with parameters $\theta$, then the resulting $L^2$-error is bounded as follows, 
\begin{align}
\begin{split}
   \int_\Omega \norm{u(x,t)-u_\theta(x,t)}_2^2 dxdt &\leq  \mathcal{C} T \exp(T(2d^2\norm{\nabla u}_{L^\infty(\Omega)}+1)),
\end{split}
\end{align}
where the constant $\mathcal{C}$ is defined as,
\begin{eqnarray}
     \mathcal{C} &=& \: \norm{\rt}^2_{L^2(D)} + \norm{\rpde}^2_{L^2(\Omega)} + C_1\sqrt{T}\big[\sqrt{\abs{D}}\norm{\rdiv}_{L^2(\Omega)} \\&&+ (1+\nu)\sqrt{\abs{\partial D}}\norm{\rs}_{L^2(\partial D\times [0,T])}\big], 
\end{eqnarray}
and $C_1 = C_1\big(\norm{u}_{C^1},\norm{u_\theta}_{C^1},\norm{p}_{C^0},\norm{p_\theta}_{C^0}\big)<\infty$. 
\end{theorem}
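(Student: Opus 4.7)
The plan is to carry out a standard energy estimate on the error equation, using the PINN residuals as forcing terms and applying Grönwall's inequality at the end. Define the error fields $\hat{u} := u_\theta - u$ and $\hat{p} := p_\theta - p$. Subtracting the exact Navier-Stokes system from the PINN system and using the algebraic identity
\begin{equation*}
    (u_\theta \cdot \nabla) u_\theta - (u\cdot\nabla) u = (\hat u \cdot \nabla) u + (u_\theta\cdot\nabla)\hat u
\end{equation*}
yields the error PDE $\partial_t \hat u + (\hat u \cdot\nabla) u + (u_\theta\cdot\nabla)\hat u + \nabla\hat p - \nu\Delta\hat u = \rpde$ together with $\nabla\cdot\hat u = \rdiv$ and the initial condition $\hat u(\cdot,0) = \rt$. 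The key point of this particular decomposition of the nonlinear term is that only $\nabla u$, and not $\nabla u_\theta$, will enter the Grönwall exponent; this matches the statement of the theorem.

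Next I take the $L^2(D)$ inner product of the error PDE with $\hat u$. The term $\int_D \hat u\cdot (\hat u\cdot\nabla) u$ is controlled pointwise by $d^2 \norm{\nabla u}_{L^\infty(\Omega)} |\hat u|^2$ after a Cauchy-Schwarz estimate on indices, producing exactly the factor $2d^2 \norm{\nabla u}_{L^\infty}$ inside the exponential. The transport term $\int_D \hat u\cdot (u_\theta\cdot\nabla)\hat u$ is rewritten as $\tfrac12\int_D u_\theta\cdot\nabla|\hat u|^2$ and integrated by parts: the volume contribution produces $-\tfrac12 \int_D \rdiv\,|\hat u|^2$, which is bounded by $\tfrac12 \norm{\hat u}_{L^\infty}^2\sqrt{|D|}\,\norm{\rdiv}_{L^2(D)}$, and thus absorbed into the constant $\mathcal{C}$ through the $C^0$-norms of $u,u_\theta$. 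The pressure term $\int_D\hat u\cdot\nabla\hat p$ similarly yields a volume term $-\int_D \hat p\,\rdiv$ and a boundary term. Finally the viscous term produces the non-negative contribution $\nu\norm{\nabla\hat u}_{L^2(D)}^2$, which I simply discard, plus a boundary piece $-\nu\int_{\partial D}\hat u\cdot(\nabla\hat u\cdot\hn)$.

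The bookkeeping of the surface terms on $\partial D = \partial\mathbb{T}^d$ is the main technical step, and this is where the residuals $\rsu, \rsp, \rsgu$ enter. Since the true $u,p$ are periodic but $u_\theta,p_\theta$ are not, the would-be cancellation of a boundary integral over a pair of opposite faces $F_j^{\pm}$ of the torus leaves a defect precisely equal to the periodicity residuals. For example,
\begin{equation*}
    \int_{\partial D} u_\theta\cdot\hn\,|\hat u|^2 = \sum_{j=1}^d \int_{F_j}\bigl(u_\theta(x^+)|\hat u(x^+)|^2 - u_\theta(x^-)|\hat u(x^-)|^2\bigr)dS,
\end{equation*}
and the integrand on the right is controlled by $\norm{\rsu}$ times a bound depending on $\norm{u_\theta}_{C^0}$ and $\norm{u}_{C^0}$; the pressure and viscous boundary contributions are handled identically, generating factors of $\norm{\rsp}$ and $\norm{\rsgu}$. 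A single Cauchy-Schwarz on each face then produces the $(1+\nu)\sqrt{|\partial D|}\,\norm{\rs}_{L^2(\partial D\times[0,T])}$ contribution to $\mathcal{C}$.

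Putting everything together, I arrive at a differential inequality of the form
\begin{equation*}
    \tfrac{d}{dt}\norm{\hat u(\cdot,t)}_{L^2(D)}^2 \leq (2d^2\norm{\nabla u}_{L^\infty(\Omega)} + 1)\norm{\hat u(\cdot,t)}_{L^2(D)}^2 + \mathcal{F}(t),
\end{equation*}
where $\mathcal{F}(t)$ aggregates the squared interior residual at time $t$ and the boundary/divergence contributions bounded by the coefficients in $\mathcal{C}$. Grönwall's lemma, together with the initial datum $\norm{\hat u(\cdot,0)}_{L^2(D)}^2 = \norm{\rt}_{L^2(D)}^2$, and a final integration over $[0,T]$ (producing the outer factor of $T$) give the claimed estimate. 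I expect the chief obstacle to be only bookkeeping: carefully tracking the multiple boundary residuals $\rsu,\rsp,\rsgu$ through the integrations by parts on the torus, and ensuring the resulting $L^2(\partial D\times[0,T])$ norms aggregate into the single $\norm{\rs}_{L^2(\partial D\times[0,T])}$ in the stated bound with the correct $(1+\nu)\sqrt{|\partial D|}$ prefactor.
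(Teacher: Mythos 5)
Your plan is correct and follows essentially the same route as the paper's proof (which, like Theorem \ref{thm:stability-heat}, is an energy estimate: subtract the equations, test the error equation with $\hat u$ over $D$, integrate by parts so that $\rdiv$ and the periodicity defects $\rsu,\rsp,\rsgu$ absorb the volume and face-pair boundary terms, then apply Gr\"onwall and integrate in time, as in \citet{deryck2021navierstokes}). Your choice of the splitting $(\hat u\cdot\nabla)u+(u_\theta\cdot\nabla)\hat u$ is exactly what places $\norm{\nabla u}_{L^\infty}$, rather than $\norm{\nabla u_\theta}_{L^\infty}$, in the exponential, matching the stated constant.
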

\begin{proof}
    The proof is in a similar spirit to that of Theorem \ref{thm:stability-heat} and can be found in \cite{deryck2021navierstokes}. 
\end{proof}
\end{example}

\begin{example}[Scalar conservation law]\label{ex:scl-stab}
In this example, we consider viscous scalar conservation laws, as introduced in Example \ref{def:scl}. We first assume that the solution to it is sufficiently smooth, so that we can use the strong PDE residual as in Section \ref{sec:classical-residual}. One can then prove the following stability bound \cite[Theorem 4.1]{MM1}. 

    \begin{theorem}
\label{thm:burg}
Let $\Omega = (0,T) \times (0,1)$, $\nu > 0$ and let $u \in C^k(\Omega)$ be the unique classical solution of the viscous scalar conservation law \eqref{eq:vscl}. Let $u^{\ast} = u_{\theta^{\ast}}$ be the PINN, generated following Section \ref{sec:summary}. Then, the total error is bounded by,
\begin{eqnarray}\label{eq:begenb}
    \norm{u-u_\theta}_{L^2(\Omega)}^2 &\leq& (T+C_1 T^2 e^{C_1T}) \big[\norm{\rpde[u_\theta]}_{L^2(\Omega)}^2 \\&&+ \norm{\rt[u_\theta]}_{L^2(D)}^2 + 2C_2  \norm{\rs[u_\theta]}_{L^2(\partial D\times [0,T])}^2 \\&&+ 2\nu\sqrt{T}C_3 \norm{\rs[u_\theta]}_{L^2(\partial D\times [0,T])} \big]
\end{eqnarray}
Here the constants are defined as, 
\begin{eqnarray}
    C_1 &=& 1+2 \left|f''(\max\{\|u\|_{L^\infty},\|u_\theta\|_{L^\infty}\})\right|\|u_x\|_{L^{\infty}}, \\
    C_2 &=& \|\partial_x u\|_{C(\overline{\Omega})} + \|\partial_x u_\theta\|_{C(\overline{\Omega})}, \\
    C_3 &=& C_3(\|f^{\prime}\|_{\infty},\|u_\theta\|_{C^0(\Omega)}). 
\end{eqnarray}
\end{theorem}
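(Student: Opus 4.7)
The plan is to follow the standard energy-method strategy used for the semilinear heat equation (Theorem~\ref{thm:stability-heat}) and the Navier--Stokes system (Theorem~\ref{thm:stability-ns}), adapted to handle the nonlinear flux $f(u)$ of the viscous conservation law. Setting $w \defeq u_\theta - u$ and subtracting the PDE satisfied by $u$ from the residual identity for $u_\theta$, one obtains the error equation
\begin{equation*}
    w_t + (f(u_\theta)-f(u))_x - \nu w_{xx} = \rpde[u_\theta], \qquad w(x,0) = \rt[u_\theta](x), \qquad w|_{\partial D} = \rs[u_\theta],
\end{equation*}
which I would analyse by multiplying with $w$ and integrating over $D = (0,1)$.

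Several of the resulting terms are standard. The time derivative gives $\tfrac{1}{2}\tfrac{d}{dt}\|w\|_{L^2(D)}^2$, and the residual contribution is controlled by Young's inequality, $\int_D \rpde[u_\theta]\, w\,dx \leq \tfrac{1}{2}\|w\|_{L^2(D)}^2 + \tfrac{1}{2}\|\rpde[u_\theta]\|_{L^2(D)}^2$, producing the $+1$ in $C_1$. Integration by parts in the diffusion term yields $\nu \|w_x\|_{L^2(D)}^2 - \nu [w_x \cdot w]_0^1$; the bulk part is non-negative and available for absorption, while the boundary part, using $w|_{\partial D} = \rs[u_\theta]$ and the uniform $C^0$-bound on $\partial_x w = \partial_x u_\theta - \partial_x u$, contributes via Cauchy--Schwarz in time precisely the $\nu\sqrt{T}\, C_3\, \|\rs[u_\theta]\|_{L^2(\partial D \times [0,T])}$ term with $C_3 = C_3(\|f'\|_\infty, \|u_\theta\|_{C^0})$.

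The delicate step is the flux term $\int_D w\,(f(u_\theta)-f(u))_x\,dx$. I would expand $f(u_\theta) - f(u) = f'(u)\, w + \tfrac{1}{2} f''(\xi)\, w^2$ by Taylor's theorem, with $\xi$ on the segment between $u$ and $u_\theta$ so that $|f''(\xi)| \leq |f''(\max\{\|u\|_\infty, \|u_\theta\|_\infty\})|$. Differentiating and integrating by parts, the bulk contribution splits into a piece bounded by $|f''(\max\{\|u\|_\infty,\|u_\theta\|_\infty\})|\,\|u_x\|_\infty\, \|w\|_{L^2(D)}^2$, which gives the remaining part of $C_1$, and a cross-term involving $w\, w_x$ that is absorbed by the positive diffusive contribution $\nu\|w_x\|_{L^2(D)}^2$ via a further Young's inequality. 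The corresponding boundary traces $[f'(\zeta)\, w^2]_0^1$ combine to give the $2\, C_2\, \|\rs[u_\theta]\|_{L^2(\partial D \times [0,T])}^2$ term.

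Assembling everything produces a differential inequality of the form $\tfrac{d}{dt}\|w(\cdot,t)\|_{L^2(D)}^2 \leq C_1 \|w(\cdot,t)\|_{L^2(D)}^2 + \|\rpde[u_\theta](\cdot,t)\|_{L^2(D)}^2 + g(t)$, where $g$ collects the boundary contributions and has the expected time integral. Starting from $\|w(\cdot,0)\|_{L^2(D)}^2 = \|\rt[u_\theta]\|_{L^2(D)}^2$, Gr\"onwall's inequality and one final integration in $t \in [0,T]$ yield the claimed bound $(T + C_1 T^2 e^{C_1 T})[\,\cdots\,]$. The main obstacle I anticipate is the bookkeeping inside the flux term: ensuring that the bulk coefficient in $C_1$ depends only on $\|u_x\|_\infty$ (not $\|\partial_x u_\theta\|_\infty$) forces one to Taylor-expand around the \emph{true} solution $u$ and to absorb every cross-term containing $\partial_x u_\theta$ into the diffusive reserve $\nu\|w_x\|_{L^2(D)}^2$, which constrains how each integration by parts may be carried out and how Young's inequality is balanced against the available viscous dissipation.
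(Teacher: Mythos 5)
Your overall plan --- form $w=u_\theta-u$, multiply the error equation by $w$, integrate over $D$, integrate the diffusion term by parts, and close with Gr\"onwall plus a final integration in time --- is the same energy argument the paper relies on: the paper does not prove Theorem \ref{thm:burg} in detail but refers to \citet{MM1}, whose proof is of exactly this type (compare the sketch given for Theorem \ref{thm:stability-heat}). The genuine gap is in your treatment of the flux term. Writing $f(u_\theta)-f(u)=f'(u)w+\tfrac12 f''(\xi)w^2$ and then ``differentiating'' does not give the splitting you describe: $\xi(x,t)$ is an implicitly defined intermediate point whose $x$-derivative you do not control, so the derivative of the quadratic remainder is not a controlled $w\,w_x$ cross-term plus a piece bounded by $\abs{f''}\,\norm{u_x}_{L^\infty}\norm{w}_{L^2(D)}^2$. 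More importantly, the mechanism you propose for the cross-term --- Young's inequality against the dissipative reserve $\nu\norm{w_x}_{L^2(D)}^2$ --- necessarily injects a factor $\nu^{-1}$ into the Gr\"onwall rate, which is incompatible with the stated, $\nu$-independent constant $C_1=1+2\abs{f''(\max\{\norm{u}_{L^\infty},\norm{u_\theta}_{L^\infty}\})}\norm{u_x}_{L^{\infty}}$; your closing remark that every cross-term containing $\partial_x u_\theta$ must be absorbed into the viscous reserve makes this unavoidable in your scheme. The intended argument needs no absorption into the viscosity at all: one decomposes, e.g., $(f(u_\theta)-f(u))_x=f'(\cdot)\,w_x+(f'(u_\theta)-f'(u))\,u_x$ (or uses the averaged coefficient $\int_0^1 f'(u+sw)\,ds$), and the remaining term $\int_D f'(\cdot)\,w\,w_x\,dx=\tfrac12\int_D f'(\cdot)\,(w^2)_x\,dx$ is integrated by parts once more, leaving only a boundary trace and a bulk term whose coefficient is $\abs{f''}$ times a sup-norm of a first derivative, which is what feeds $C_1$.

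A secondary bookkeeping inconsistency: by the estimates you actually describe, the viscous boundary trace $\nu[w\,w_x]_0^1$ is bounded by $\nu\sqrt{T}\,\bigl(\norm{\partial_x u}_{C(\overline{\Omega})}+\norm{\partial_x u_\theta}_{C(\overline{\Omega})}\bigr)\norm{\rs[u_\theta]}_{L^2(\partial D\times[0,T])}$, i.e.\ with the constant the theorem calls $C_2$, whereas the flux boundary trace is bounded by a constant depending on $\norm{f'}_\infty$ and the $C^0$-norms times the \emph{squared} boundary residual, i.e.\ a $C_3$-type constant. You assert the opposite pairing so as to match the displayed statement; that attribution does not follow from the bounds you give, and the claim that your estimates produce ``precisely'' the stated terms should be replaced by a derivation of (or at least a remark on) this mismatch.
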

The proof and further details can be found in \cite{MM1}. A close inspection of the estimate \eqref{eq:begenb} reveals that at the very least, the classical solution $u$ of the PDE \eqref{eq:vscl} needs to be in $L^{\infty}((0,T);W^{1,\infty}((0,1)))$ for the RHS of \eqref{eq:begenb} to be bounded. This indeed holds as long as $\nu > 0$. However, it is well known (see \cite{GRbook} and references therein) that if $u^{\nu}$ is the solution of \eqref{eq:vscl} for viscosity $\nu$, then for some initial data,
\begin{equation}
    \label{eq:bbup}
    \|u^{\nu}\|_{L^{\infty}((0,T);W^{1,\infty}((0,1)))} \sim \frac{1}{\sqrt{\nu}}.
\end{equation}
Thus, in the limit $\nu \rightarrow 0$, the constant $C_1$ can blow up (exponentially in time) and the bound \eqref{eq:begenb} no longer controls the generalization error. This is not unexpected as the whole strategy of this paper relies on pointwise realization of residuals. However, the
zero-viscosity limit of \eqref{eq:vscl}, leads to a scalar conservation law with discontinuous solutions (shocks) and the residuals are measures that do not make sense pointwise. Thus, the estimate \eqref{eq:begenb} also points out the limitations of a PINN for approximating discontinuous solutions. 

The above discussion is also the perfect motivation to consider the weak residual, rather than the strong residual, for scalar conservation laws. \citet{deryck2022wpinns} therefore introduce a weak PINN (wPINN) formulation for scalar conservation laws. The wPINN loss function also reflects the fact that \emph{physically admissable} weak solutions should also satisfy an entropy condition, giving rise to an entropy residual. The details of this weak residual and the corresponding loss function were already discussed in Example \ref{ex:scl-weak-residual}. Using the famous doubling of variables argument of Kruzkhov, one can following stability bound on the $L^1$-error with \emph{wPINNs} \cite[Theorem 3.7]{deryck2022wpinns}.

\begin{theorem}\label{thm:stability-scl}
Assume that $u$ is the piecewise smooth entropy solution of \eqref{eq:vscl} with $\nu=0$, essential range $\cC$ and $u(0,t)=u(1,t)$ for all $t\in[0,T]$. 
There is a constant $C>0$ such that for every $\epsilon>0$ and $u_\theta\in C^1(D\times[0,T])$, it holds that 
\begin{equation}\label{eq:stability}
    \begin{split}
        \int_0^1 \abs{u_\theta(x,T)-u(x,T)}dx &\leq C\bigg(\int_0^1 \abs{u_\theta(x,0)-u(x,0)}dx + \max_{c\in\cC, \varphi\in\Phi_\epsilon}\cR(u_\theta,\varphi, c) \\
        &\quad + (1+\norm{u_\theta}_{C^1})\ln(1/\epsilon)^3\epsilon + \int_0^T \abs{u_\theta(1,t)-u_\theta(0,t)}dt\bigg).
    \end{split}
\end{equation}
\end{theorem}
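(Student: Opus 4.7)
The plan is to adapt Kruzkhov's classical doubling-of-variables technique to the present setting, in which $u$ is an exact entropy solution while $u_\theta$ only approximately satisfies the entropy inequality, with discrepancy measured by the entropy residual $\cR(u_\theta,\varphi,c)$. The essential observation is that the entropy-solution property of $u$ can be written as $\cR(u,\varphi,c) + (\text{boundary contributions at }t=0,T) \leq 0$ for arbitrary admissible $\varphi \geq 0$ and $c\in\R$, whereas for $u_\theta$ the corresponding quantity cannot be bounded for arbitrary $\varphi$ but only for $\varphi\in\Phi_\epsilon$, where it is controlled by $\max_{c\in\cC, \varphi\in\Phi_\epsilon}\cR(u_\theta,\varphi,c)$.

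First I would substitute $c = u_\theta(y,s)$ into the Kruzkhov inequality for $u(x,t)$ and symmetrically $c = u(x,t)$ into the approximate inequality for $u_\theta(y,s)$, with a test function of the form
\[
\varphi(x,t,y,s) = \rho_\eta(x-y)\rho_\eta(t-s)\psi_\epsilon\Bigl(\tfrac{x+y}{2},\tfrac{t+s}{2}\Bigr),
\]
where $\rho_\eta$ is a standard $1$D mollifier of width $\eta$ and $\psi_\epsilon$ is a smoothed indicator of $[0,1]\times[0,T]$, softened on scale $\epsilon$ near $t=0,T$ and near the spatial boundary $\{0,1\}$. Adding the two inequalities and integrating out the non-active variables, the mixed entropy-flux derivatives combine (as in the classical argument) through the cancellation identity $(\partial_x + \partial_y) Q[u(x,t);u_\theta(y,s)] = 0$, so that the only non-cancelling contributions survive on the diagonal.

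Passing $\eta\to 0$ by Lebesgue differentiation yields an inequality of the form
\[
\int_0^1|u(x,T)-u_\theta(x,T)|\,dx \leq \int_0^1|u(x,0)-u_\theta(x,0)|\,dx + \mathcal{I}_{\mathrm{res}} + \mathcal{I}_{\mathrm{cut}} + \mathcal{I}_{\mathrm{bdy}},
\]
where $\mathcal{I}_{\mathrm{res}}$ aggregates entropy residuals of $u_\theta$ against rescaled test functions falling within $\Phi_\epsilon$, hence is majorised by $\max_{c\in\cC,\varphi\in\Phi_\epsilon}\cR(u_\theta,\varphi,c)$; $\mathcal{I}_{\mathrm{bdy}}$ collects spatial boundary terms which, since $u$ is periodic but $u_\theta$ is not, telescope to $\int_0^T|u_\theta(1,t)-u_\theta(0,t)|\,dt$; and $\mathcal{I}_{\mathrm{cut}}$ captures the error from replacing sharp cutoffs by smooth ones on scale $\epsilon$. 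A direct estimate using the $L^\infty$-bound on $u-u_\theta$ and the $C^1$-bound on $u_\theta$ (to control the difference between $u_\theta$ evaluated at $(x,t)$ and at $(y,s)$ within a $\rho_\eta$-ball) bounds $\mathcal{I}_{\mathrm{cut}}$ by $C(1+\|u_\theta\|_{C^1})\ln(1/\epsilon)^3\epsilon$, where the cubic log factor arises from simultaneously balancing the mollifier width, the softening width of $\psi_\epsilon$, and the $W^{1,p}$-budget prescribed by $\Phi_\epsilon$ as in \cite{deryck2022wpinns}.

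The principal obstacle is the quantitative implementation of the doubling-of-variables limit: one cannot actually let $\eta\to 0$, because for small $\eta$ the test function $\rho_\eta(x-y)\rho_\eta(t-s)\psi_\epsilon$ leaves the admissible class $\Phi_\epsilon$. Instead, $\eta$ must be frozen at a scale comparable to a power of $\epsilon$, and the resulting defects must be tracked carefully, optimising the three length-scales so that the cutoff error decays as $\epsilon$ up to at most cubic logarithmic corrections rather than a worse power. Once this tuning is carried out, the remaining estimates are routine manipulations of the Kruzkhov kind.
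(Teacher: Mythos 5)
Your proposal follows essentially the same route as the paper's proof (given in \citet{deryck2022wpinns} and summarized here as Kruzkhov's doubling-of-variables argument): double the variables with mollified test functions kept inside $\Phi_\epsilon$, exploit the exact entropy inequality for $u$ with $c=u_\theta(y,s)$ and the residual bound for $u_\theta$ with $c=u(x,t)\in\cC$, and trade the impossible limit $\eta\to 0$ for a quantitative choice of scales tied to $\epsilon$, which is exactly where the $(1+\norm{u_\theta}_{C^1})\ln(1/\epsilon)^3\epsilon$ term and the periodicity-mismatch boundary term arise. This matches the paper's argument in both structure and in the origin of each term of the bound.
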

Whereas the bound of Theorem \ref{thm:burg} becomes unusable in the low viscosity regime ($\nu \ll 1 $), the bound of Theorem \ref{thm:stability-scl} is still valid if the solution contains shocks. 

Hence, we can give an affirmative answer to Question 2 for scalar conservation laws using both the PDE residual (classical formulation, only for $\nu>0$) and the weak residual (for $\nu=0$).  
\end{example}
\begin{example}[Poisson's equation]\label{ex:poisson-stab}
We revisit Poisson's equation (Example \ref{def:poisson}) for $\Omega  = [0,1]^d$, but now with zero Neumann conditions i.e., $\tfrac{\partial u}{\partial \nu} = 0$ on $\partial \Omega$, and with $f\in L^2(\Omega)$ with $\int_{\Omega} f dx = 0$. Following Section \ref{sec:variational-residual}, the solution of Poisson's equation is a minimizer of the following loss function, which is equal to the energy functional or variational formulation of the PDE,
\begin{eqnarray}
     I[w] := \frac{1}{2} \int_{\Omega} |\nabla w|^2   dx + \frac{1}{2}\Big(\int_{\Omega} w dx\Big)^2 - \int_{\Omega} f w dx. 
\end{eqnarray}
Now define $\Tilde{u}=  \argmin_{w\in H^1(\Omega)} I[w]$ and define
\begin{equation}
    \cE_G(\theta) := I[u_\theta] - I[\Tilde{u}], 
\end{equation}
where we slightly adapted the definition of the generalization error to reflect the fact that $I[\cdot]$ merely needs to be minimized and does not need to vanish to produce a good approximation. In this setting, one can prove the following stability result \cite{lu2021priori}. 
\begin{proposition}
For any $w\in H^1(\Omega)$ it holds that, 
\begin{equation}
    2(I[w]-I[\Tilde{u}])\leq \|w-\Tilde{u}\|^2_{H^1(\Omega)} \leq  2 \max\{2C_P +1, 2\} (I[w]-I[\Tilde{u}]), 
\end{equation}
where $C_P$ is the Poincar\'e constant on the domain $\Omega$, i.e.,
for any $v\in H^1(\Omega)$,
\begin{eqnarray}
    \Big\|v- \int_{\Omega} v dx  \Big\|_{L^2(\Omega)}^2 \leq C_P \|\nabla v\|_{L^2(\Omega)}^2 .
\end{eqnarray}
\end{proposition}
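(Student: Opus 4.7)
The plan is to set $v \defeq w-\Tilde{u}$, expand $I[\Tilde{u}+v]$ using the quadratic structure of $I$, and exploit the fact that $\Tilde{u}$ is a minimizer to eliminate all first-order terms in $v$. Concretely, using bilinearity one obtains
\begin{equation}
I[\Tilde u+v] = I[\Tilde u] + \Big[\int_\Omega \nabla\Tilde u\cdot\nabla v + \Big(\int_\Omega \Tilde u\Big)\Big(\int_\Omega v\Big) - \int_\Omega f v\Big] + \frac{1}{2}\int_\Omega |\nabla v|^2 + \frac{1}{2}\Big(\int_\Omega v\Big)^2.
\end{equation}
The Euler--Lagrange condition for $\Tilde u$ (valid for every test function, in particular for $v\in H^1(\Omega)$) asserts that the bracketed term vanishes. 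Hence the identity
\begin{equation}\label{eq:key-identity}
2\big(I[w]-I[\Tilde u]\big) = \|\nabla v\|_{L^2(\Omega)}^2 + \Big(\int_\Omega v\,dx\Big)^2
\end{equation}
holds, and the two inequalities in the proposition reduce to comparing the right-hand side of \eqref{eq:key-identity} with $\|v\|_{H^1(\Omega)}^2 = \|v\|_{L^2(\Omega)}^2+\|\nabla v\|_{L^2(\Omega)}^2$.

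The lower bound is the easy direction: by Cauchy--Schwarz and $|\Omega|=1$ one has $(\int_\Omega v)^2 \le \|v\|_{L^2(\Omega)}^2$, so the right-hand side of \eqref{eq:key-identity} is bounded by $\|v\|_{H^1(\Omega)}^2$. For the upper bound I would decompose $v = (v-\bar v)+\bar v$, where $\bar v \defeq \int_\Omega v\,dx$ (since $|\Omega|=1$). Because $v-\bar v$ has mean zero, the Poincar\'e inequality stated in the proposition gives $\|v-\bar v\|_{L^2}^2 \le C_P \|\nabla v\|_{L^2}^2$. By orthogonality of $v-\bar v$ and the constant $\bar v$ in $L^2(\Omega)$, this yields
\begin{equation}
\|v\|_{L^2(\Omega)}^2 = \|v-\bar v\|_{L^2(\Omega)}^2 + \bar v^{\,2} \le C_P\|\nabla v\|_{L^2(\Omega)}^2 + \Big(\int_\Omega v\,dx\Big)^2,
\end{equation}
and therefore $\|v\|_{H^1(\Omega)}^2 \le (C_P+1)\|\nabla v\|_{L^2(\Omega)}^2 + (\int_\Omega v)^2$. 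Comparing the coefficients of the two terms $\|\nabla v\|_{L^2}^2$ and $(\int_\Omega v)^2$ with those in \eqref{eq:key-identity}, we get the desired constant $\max\{2C_P+1,2\}$.

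There is no real obstacle here; the only point to verify carefully is the first-variation identity that makes the linear bracket disappear, and the splitting into the mean and mean-zero parts, where using $|\Omega|=1$ keeps the constants clean. If $|\Omega|\neq 1$ one would have to track factors of $|\Omega|$ through Cauchy--Schwarz and the orthogonal decomposition, but this is purely bookkeeping.
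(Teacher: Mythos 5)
Your proof is correct, and it is the standard argument for this statement (the paper itself does not spell out a proof but defers to the cited reference of Lu et al.): expand the quadratic functional around the minimizer, use the vanishing first variation to get the identity $2(I[w]-I[\Tilde u]) = \|\nabla v\|_{L^2(\Omega)}^2 + (\int_\Omega v\,dx)^2$ with $v = w-\Tilde u$, then compare with $\|v\|_{H^1(\Omega)}^2$ using Cauchy--Schwarz (with $|\Omega|=1$) for the lower bound and the mean/mean-zero splitting plus Poincar\'e for the upper bound. One small remark: by exploiting the exact $L^2$-orthogonality of $v-\bar v$ and the constant $\bar v$ you actually obtain the sharper constant $C_P+1$ in place of $\max\{2C_P+1,2\}$; the stated constant corresponds to the cruder estimate $\|v\|_{L^2}^2 \leq 2\|v-\bar v\|_{L^2}^2 + 2\bar v^2$, so your bound implies the proposition a fortiori. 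The only steps worth stating explicitly in a final write-up are (i) that the first variation vanishes in every direction $v\in H^1(\Omega)$ because $I$ is a smooth (quadratic) functional minimized over the whole linear space, and (ii) that $|\Omega|=1$ for $\Omega=[0,1]^d$, which you already flag as the bookkeeping needed for general domains.
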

In particular, by setting $w=u_\theta$ this implies that 
\begin{eqnarray}
    \cE(\theta)^2 := \|u_\theta-\Tilde{u}\|^2_{H^1(\Omega)} \leq  2 \max\{2C_P +1, 2\} \cE_G(\theta). 
\end{eqnarray}
\end{example}

The examples and theorems above might give the impression that stability holds for all PDEs and loss functions. It is important to highlight that this is not the case. The next example discusses a negative answer to Question 2 for the Hamilton-Jacobi Bellman (HJB) equation when using an $L^2$-based loss function \cite{wang20222}. 

\begin{example}[Hamilton-Jacobi Bellman equation] 
\citet{wang20222} consider the class of HJB equations in which the cost rate function is formulated as $r(x,m)= a_1|m_1|^{\alpha_1}+\cdots+a_n|m_n|^{\alpha_n}-f(x,t)$, giving rise to HJB equations of the form,
\begin{equation}
\label{eq:hjb-study}
\begin{cases}
\displaystyle{
\mathcal{L}[u]:=\partial_t u(x,t)+\frac 1 2 \sigma^2 \Delta u(x,t)-\sum_{i=1}^n A_i |\partial_{x_i}u|^{c_i}=f(x,t)}\\
\mathcal{B}[u]:=u(x,T)=g(x),
\end{cases}
\end{equation}
for $x\in\R^d$ and $t\in [0,T]$ and where 
$A_i=(a_i\alpha_i)^{-\frac{1}{\alpha_i-1}}-a_i(a_i\alpha_i)^{-\frac{\alpha_i}{\alpha_i-1}}\in(0,+\infty)$ and $c_i={\frac{\alpha_i}{\alpha_i-1}}\in(1,\infty)$. Their chosen form of the cost function is relevant in optimal control, e.g. in optimal execution problems in finance. 
One of the main results in \citet{wang20222} is that stability can only be achieved when the physics-informed loss is based on the $L^p$ norm of the PDE residual with $p \sim d$. They show that this linear dependency on $d$ cannot be relaxed in the following theorem \cite[Theorem 4.3]{wang20222}. 
\begin{theorem}
\label{thm:lower-bound}
There exists an instance of the HJB equation (\ref{eq:hjb-study}), with exact solution $u$, such that for any $\varepsilon>0$, $A>0$, $r\geq 1$, $m\in\mathbb{N} \cup\{0\}$ and $p\in\left[1,\frac d 4\right]$, there exists a function $w\in C^{\infty}(\mathbb{R}^n\times(0,T])$ such that $\mathrm{supp}(w-u)$ is compact and,
\begin{eqnarray}
    \|\mathcal{L}[w]-f\|_{L^p(\sR^d\times[0,T])}<\varepsilon, \qquad \mathcal{B}[w]=\mathcal{B}[u],
\end{eqnarray}
and yet simultaneously, 
\begin{eqnarray}
    \|w-u\|_{W^{m,r}(\sR^d\times[0,T])}>A.
\end{eqnarray}
\end{theorem}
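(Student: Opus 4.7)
The plan is to prove the failure of $L^p$-stability by explicitly building a family of perturbations of the exact solution whose HJB residual can be driven to zero in $L^p$ while their $W^{m,r}$-distance to the solution remains large. First I would fix a tractable instance of \eqref{eq:hjb-study}: take $\alpha_i = 2$ for every $i$, which gives $c_i = 2$ and $A_i = \tfrac{1}{4a_i}$, fix $\sigma > 0$, and impose $f \equiv 0$, $g \equiv 0$ so that the unique classical solution is $u \equiv 0$. Any candidate $w = \phi$ with compact support in $\R^d \times [0,T]$ and $\phi(\cdot,T) = 0$ then automatically satisfies $\mathcal{B}[w] = \mathcal{B}[u]$, and the residual reduces to
\[
\mathcal{L}[\phi] - f = \partial_t\phi + \tfrac{1}{2}\sigma^2\Delta\phi - \tfrac{1}{4}\sum_{i=1}^d a_i^{-1}|\partial_{x_i}\phi|^2,
\]
so it remains to design $\phi$ with $\|\mathcal{L}[\phi]\|_{L^p}<\varepsilon$ and $\|\phi\|_{W^{m,r}}>A$ for arbitrary prescribed $\varepsilon,A>0$.

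Next I would use a concentrated-bump ansatz $\phi_{a,\delta}(x,t) = a\,\chi(t)\,\eta(x/\delta)$ with $\chi\in C^\infty([0,T])$ satisfying $\chi(0)=1,\chi(T)=0$, and $\eta\in C_c^\infty(\R^d)$ a fixed nontrivial bump, and compute by scaling that
\[
\|\partial_t\phi\|_{L^p}\lesssim a\delta^{d/p},\ \ \|\Delta\phi\|_{L^p}\lesssim a\delta^{d/p-2},\ \ \||\partial_{x_i}\phi|^{2}\|_{L^p}\lesssim a^{2}\delta^{d/p-2},\ \ \|\phi\|_{W^{m,r}}\gtrsim a\delta^{d/r-m}.
\]
Writing $a=\delta^{-s}$, the residual is $O(\delta^{\min(d/p-2-s,\, d/p-2-2s)})$ while the $W^{m,r}$-norm is $\Theta(\delta^{d/r-m-s})$. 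The sharp threshold $p\le d/4$ enters here: it forces $d/(2p)-1\ge 1$, so the interval $s\in(d/r-m,\,d/(2p)-1)$ of admissible exponents — in which both residual contraction and $W^{m,r}$-divergence occur as $\delta\downarrow 0$ — is non-empty for $(r,m)$ with $d/r-m<d/(2p)-1$. For the remaining regimes (notably small $r$ and small $m$) I would either superpose $N=\delta^{-\gamma}$ disjointly translated bumps with effective dimension $D=d-\gamma$ and re-run the scaling in $(D,s)$, or switch to a dual large-scale ansatz $\phi(x,t)=a\chi(t)\rho(x/R)$ with $R\to\infty$ and $a=\varepsilon R^{-d/p}$, which absorbs the dominant $\partial_t$-residual and yields $\|\phi\|_{W^{m,r}}\sim \varepsilon R^{d/r-m-d/p}$, diverging precisely when $d/r>m+d/p$, again feasible under $p\le d/4$.

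The main obstacle is patching these two dual constructions — concentrated bumps at scale $\delta\to 0$ and spread-out bumps at scale $R\to\infty$, possibly via a bi-scale superposition $\phi=\sum_j a_j\chi(t)\eta((x-x_j)/\delta_j)$ — so that for \emph{every} admissible $(r,m,p)$ at least one component produces a large $W^{m,r}$-norm while all components keep the $L^p$-residual below $\varepsilon$. The sharpness of $p\le d/4$ then follows by observing that the exponent $d/(2p)-1$ governing the quadratic-gradient term drops to zero at $p=d/4$ (with $c_i=2$) and becomes negative beyond, obstructing any such scaling and matching the positive stability regime known to hold for larger $p$. Once the bi-scale window is tuned, the remaining verifications are routine: compact support of $w-u$ is inherited from $\eta$ (or $\rho$), the terminal condition from $\chi(T)=0$, and the difference $|\partial_{x_i}(u+\phi)|^{c_i}-|\partial_{x_i}u|^{c_i}$ collapses to $|\partial_{x_i}\phi|^{c_i}$ thanks to $u\equiv 0$, which has already been estimated.
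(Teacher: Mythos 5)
The paper does not reproduce a proof of this statement (it is quoted from \citet{wang20222}), so I can only assess your construction on its own terms, and there is a genuine gap in it --- one that you partially flag yourself, but which is more serious than a ``patching'' issue. Your scaling computations for the concentrated bump $\phi=a\chi(t)\eta(x/\delta)$ are correct, and they do yield the conclusion for the exponent range $m-d/r>\max\bigl(2-d/p,\,1-d/(2p)\bigr)$; likewise the spread-out ansatz covers $r<p$. But neither ansatz, nor any multi-bump or bi-scale superposition of them, can reach the case $m=0$ and $r$ between $p$ and roughly $2pd/(d-2p)$ (e.g.\ $m=0$, $r=p=2$, $d=8$), which is precisely the headline case ``small $L^2$ residual but large $L^2$ error''. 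The obstruction is structural: since $\phi(\cdot,T)=0$, one has $\|\phi\|_{L^p(\R^d\times[0,T])}\le T\,\|\partial_t\phi\|_{L^p}\le T\bigl(\|\cL[w]-f\|_{L^p}+\tfrac12\sigma^2\|\Delta\phi\|_{L^p}+\sum_iA_i\,\||\partial_{x_i}\phi|^{c_i}\|_{L^p}\bigr)$. In every one of your families each spatial term of the residual is \emph{separately} $O(\varepsilon)$ or $o(1)$ whenever the total residual is, so this inequality forces $\|w-u\|_{L^p}\lesssim T\varepsilon$ --- the $W^{0,p}$ error is provably \emph{small}, not large. Making $\|w-u\|_{L^r}$ large for $r$ near $p$ therefore requires a perturbation whose time derivative is large in $L^p$ but nearly \emph{cancels} against the spatial part of the operator, i.e.\ $w$ must be an approximate solution of the PDE that is far from $u$; pure amplitude--support scaling of a fixed profile cannot produce such cancellation, and no choice of the parameters $(a,\delta,R,\gamma)$ in your window will fix this.

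A secondary, smaller point: in the spread-out regime you state that $\|\phi\|_{W^{m,r}}\sim\varepsilon R^{d/r-m-d/p}$, but since $R\to\infty$ the dominant seminorm is the zeroth one, so $\|\phi\|_{W^{m,r}}\gtrsim\|\phi\|_{L^r}\sim\varepsilon R^{d/r-d/p}$ and the divergence condition is simply $r<p$ (weaker than the condition $d/r>m+d/p$ you wrote, so this works in your favour but does not enlarge the covered range in the problematic regime). To complete the argument you would need a genuinely different mechanism for the low-$m$, $r\approx p$ cases --- for instance perturbations built from near-solutions of the equation (exploiting the specific homogeneity of the $|\partial_{x_i}u|^{c_i}$ nonlinearity, or the Hopf--Cole structure available when $c_i=2$) rather than generic bumps --- and this is exactly the part of the proof that carries the content of the theorem.
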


This shows that for high-dimensional HJB equations, the learned solution may be arbitrarily distant from the true solution $u$ if an $L^2$-based physics-inspired loss based on the classical residual is used. 
\end{example}

Another example that stability of physics-informed learning is not guaranteed can be found in the following two variants on physics-informed neural networks. 

\begin{example}[XPINNs and cPINNs]
\citet{jagtap2020extended} and \citet{jagtap2020conservative} proposed two variants on PINNs, inspired by domain decomposition techniques, in which separate neural networks are trained on different subdomains. In order to ensure continuity of the different subnetworks over the boundaries of the subdomains, both methods add a first term to the loss function to enfore that the function values of the subnetworks are (approximately) equal on the boundaries. \emph{Extended PINNs (XPINNs)} \cite{jagtap2020extended} then propose to add another additional term to make sure the strong PDE residual is (approximately) continuous over the boundaries of the subdomains. In contrast, \emph{conservative PINNs (cPINNs)} \cite{jagtap2020conservative} focus on PDEs of the form $u_t + \nabla_x f(u) = 0$ (i.e. conservation laws) and add an additional term to make sure that the fluxes based on $f$ are (approximately) equal over the boundaries. In \citet{deryck2023stability} an affirmative answer has been given to Question 2 for cPINNs for the Navier-Stokes equations, advection-diffusion equations and scalar conservation laws. On the other hand, XPINNs were not found to be stable (in the sense of Q2) for prototypical PDEs such as Poisson's equation and the heat equation. 
\end{example}

Finally, we highlight some other works that have proven stability results for physics-informed machine learning. 
In \citet{shin2020convergence}, the authors prove a consistency result for PINNs, for linear elliptic and parabolic PDEs, where they show that if $\cE_G(\theta_m)\to 0$ for a sequence of neural networks $\{u_{\theta_m}\}_{m\in\mathbb{N}}$, then $\norm{u_{\theta_m}-u}_{L^\infty}\to0$, under the assumption that one adds a specific $C^{k,\alpha}$-regularization term to the loss function, thus partially addressing question Q2 for these PDEs. However, this result does not provide quantitative estimates on the underlying errors. A similar result, with more quantitative estimates for advection equations is provided in \citet{Zhang1}. 

\subsection{Stability for inverse problems}\label{sec:stab-inverse}
Next, we extend our analysis to inverse problems, as introduced in Section \ref{sec:inverse}. A crucial ingredient to answer Question 2 for inverse problems will be the assumption that solutions to the inverse problem, defined by \eqref{eq:pde} and \eqref{eq:dt}, satisfy the following \emph{conditional stability estimate}.
Let $\hat{X} \subset X^{\ast} \subset X = L^{p}(\dom)$ be a Banach space. For any $\bu,\bv \in \hat{X}$, the differential operator $\df$ and restriction operator $\map$ satisfy,
\begin{equation}
    \label{eq:assm-inv}
(H5):\quad    \|\bu - \bv\|_{L^{p}(E)} \leq C_{\mathrm{pd}} \left(\|\df(\bu) - \df(\bv)\|^{\tau_p}_{Y} + \|\map(\bu) - \map(\bv)\|^{\tau_d}_{Z} \right),
\end{equation}
for some $0 < \tau_p,\tau_d \leq 1$, for any subset $\dom^{\prime} \subset E \subset \dom$ and where $C_{\mathrm{pd}}$ can depend on $\|\bu\|_{\hat{X}}$ and $\|\bv\|_{\hat{X}}$. This bound \eqref{eq:assm-inv} is termed a \emph{conditional stability estimate} as it presupposes that the underlying solutions have sufficiently regularity as $\hat{X} \subset X^{\ast} \subset X$. 
\begin{remark}
We can extend the hypothesis for the inverse problem in the following ways,
\begin{itemize}
    \item Allow the measurement set $\dom^{\prime}$ to intersect the boundary i.e, $\partial \dom^{\prime} \cap \dom \neq \emptyset$.
    \item Replace the bound \eqref{eq:assm-inv} with the weaker bound,
    \begin{equation}
    \label{eq:assm3-inv}
(H6):\quad    \|\bu - \bv\|_{L^{p}(E)} \leq C_{\mathrm{pd}} \omega \left(\|\df(\bu) - \df(\bv)\|_{Y} + \|\map(\bu) - \map(\bv)\|_{Z} \right),
\end{equation}
    with $\omega:\R \mapsto \R_+$ being a modulus of continuity. 
\end{itemize}
\end{remark}

We will prove a general estimate on the error due to a model $u_\theta$ in approximating the solution $\bu$ of the inverse problem for PDE \eqref{eq:pde} with data \eqref{eq:dt}. 
Following \citet{MM1} we set $\dom^{\prime} \subset E \subset \dom$ and define the total error \eqref{eq:pinn-error} as,
 \begin{equation}
    \label{eq:egen-inv}
     \Eg(E;\theta) := \|\bu-u_{\theta}\|_{L^{p}(E)}.
\end{equation}
We will bound the error in terms of the PDE residual \eqref{eq:PDE-residual} and the data residual \eqref{eq:data-residual} \cite[Theorem 2.4]{mishra2021estimates}. 
\begin{theorem}
\label{thm:gen-bound-inv}
Let $\bu \in \hat{X}$ be the solution of the inverse problem associated with PDE 
\eqref{eq:pde} and data \eqref{eq:dt}. Assume that the stability hypothesis \eqref{eq:assm-inv} holds for any $\dom^{\prime} \subset E \subset \dom$. Let $u_\theta$ be any sufficiently smooth function. Assume that the residuals $\rpde$ and  $\cR_d$ are square integrable. Then the following estimate on the generalization error \eqref{eq:egen-inv} holds,
\begin{equation}
    \label{eq:egenb-inv}
     \|\bu - u_\theta \|_{L^{2}(E)} \leq C_{\mathrm{pd}} \left(\norm{\rpde[u_\theta]}_{L^2(\dom)}^{\tau_p} + \norm{\cR_d[u_\theta]}_{L^2(\dom')}^{\tau_d}\right),
\end{equation}
with constants $C_{\mathrm{pd}} = C_{\mathrm{pd}}\left(\|\bu\|_{\hat{X}},\|\bu^{\ast}\|_{\hat{X}}\right)$ as from \eqref{eq:assm-inv}.  
\end{theorem}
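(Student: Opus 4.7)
The plan is to derive the estimate by a direct, one-line application of the conditional stability hypothesis \eqref{eq:assm-inv}, specializing its abstract norms to the $L^2$-setting of the theorem. The first step is to recognize what the two residuals in \eqref{eq:egenb-inv} represent in terms of the operators $\df$ and $\map$. Since $\bu\in \hat{X}$ is by hypothesis a solution of the inverse problem, we have $\df[\bu]=f$ in $\dom$ and $\map(\bu)=g$ in $\dom'$. Comparing with the definitions in \eqref{eq:PDE-residual} and \eqref{eq:data-residual}, this means
\begin{equation}
\df[\bu]-\df[u_\theta] = -\rpde[u_\theta] \quad \text{in } \dom, \qquad \map(\bu)-\map(u_\theta) = -\cR_d[u_\theta] \quad \text{in } \dom'.
\end{equation}
The two quantities on the right-hand side of \eqref{eq:assm-inv} therefore collapse onto the two residual norms appearing in the target inequality.

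Next, I would verify that the stability hypothesis applies. The estimate \eqref{eq:assm-inv} requires both arguments to lie in $\hat{X}$: we have $\bu\in\hat{X}$ by assumption, and the phrase ``sufficiently smooth'' in the statement is to be interpreted as $u_\theta\in\hat{X}$, which also ensures that $\df[u_\theta]$ and $\map(u_\theta)$ are well defined and lie in the spaces $Y$ and $Z$. I would then specialize the abstract norms in \eqref{eq:assm-inv} by taking $p=2$, $Y=L^2(\dom)$ and $Z=L^2(\dom')$, which is permitted by the $L^2$-integrability hypothesis on the residuals. Substituting $\bv := u_\theta$ into \eqref{eq:assm-inv} and using the identifications above yields
\begin{equation}
\|\bu-u_\theta\|_{L^2(E)} \le C_{\mathrm{pd}}\bigl(\|\rpde[u_\theta]\|_{L^2(\dom)}^{\tau_p} + \|\cR_d[u_\theta]\|_{L^2(\dom')}^{\tau_d}\bigr),
\end{equation}
which is precisely \eqref{eq:egenb-inv}. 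The constant inherits the dependence on $\|\bu\|_{\hat{X}}$ and $\|u_\theta\|_{\hat{X}}$ declared in hypothesis \eqref{eq:assm-inv}.

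There is essentially no analytical obstacle in this argument, because the entire PDE-theoretic content of the theorem has already been absorbed into the conditional stability assumption \eqref{eq:assm-inv}. The non-trivial work — typically a unique continuation result proved via Carleman estimates, three-ball inequalities, or monotonicity arguments that depend strongly on the specific operator $\df$ and observation operator $\map$ — is precisely what yields such a bound with explicit exponents $\tau_p,\tau_d$; once this ingredient is granted, the step from stability to a residual-based a posteriori error estimate is immediate. Consequently, the proof functions as a template: any instance of \eqref{eq:assm-inv} automatically delivers the physics-informed error bound \eqref{eq:egenb-inv}, and the only per-example work is verifying the conditional stability assumption.
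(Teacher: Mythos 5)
Your proposal is correct and matches the paper's argument: the result is indeed an immediate application of the conditional stability estimate \eqref{eq:assm-inv} with $\bv = u_\theta$, using $\df(\bu)=f$ and $\map(\bu)=g$ to identify the differences $\df(\bu)-\df(u_\theta)$ and $\map(\bu)-\map(u_\theta)$ with the PDE and data residuals, exactly as in the cited source. Your reading of ``sufficiently smooth'' as $u_\theta\in\hat{X}$ (so the stability bound and the constant $C_{\mathrm{pd}}$ apply) is also the intended one.
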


Many concrete examples of PDEs where PINNs are used to find a unique continuation can be found in \citet{mishra2021estimates}. The next examples summarize their results for the Poisson equation and heat equation. 

\begin{example}[Poisson equation]
We consider the inverse problem for the Poisson equation, as introduced in Example \ref{ex:poisson-inv}. 
In this case, the conditional stability, cf. \eqref{eq:assm-inv}, is guaranteed by the three balls inequality \cite{ALE1}. Therefore, a result like Theorem \ref{thm:gen-bound-inv} holds. However, note that this theorem only calculates the generalization error on $E\subset \dom$. It can however be guaranteed that the generalization error is small on the whole domain $\dom$ and even in Sobolev norm, as follows from the following lemma \cite[Lemma 3.3]{mishra2021estimates}. 
\begin{lemma}
\label{lem:inv-poisson} 
For $f \in C^{k-2}(\dom)$ and $g \in C^k(\dom^{\prime})$, with continuous extensions of the functions and derivatives up to the boundaries of the underlying sets and with $k \geq 2$, Let $u \in H^1(\dom)$ be the weak solution of the inverse problem corresponding to the Poisson's equation \eqref{eq:ps} and let $u_\theta$ be any sufficiently smooth model. Then, the total error is bounded by, 
\begin{equation}
    \label{eq:pbd1}
    \|u-u_\theta\|_{H^1(D)} \leq C 
    \left| \log\left(\norm{\rpde[u_\theta]}_{L^2(\dom)} + \norm{\cR_d[u_\theta]}_{L^2(\dom')}\right)\right|^{-\tau}.
\end{equation}
for some $\tau \in (0,1)$ and a constant $C>0$ depending on $u$, $u_\theta$ and $\tau$.
\end{lemma}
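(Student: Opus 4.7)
The plan is to apply the framework of Theorem \ref{thm:gen-bound-inv} to the difference $w := u - u_\theta$ and then upgrade the resulting interior $L^2$-estimate to a logarithmic $H^1$-estimate on the whole domain. The key observation is that $w$ satisfies the Cauchy problem
\begin{equation}
    -\Delta w = -\rpde[u_\theta] \text{ in } \dom, \qquad w = -\cR_d[u_\theta] \text{ on } \dom',
\end{equation}
so the natural data for the ill-posed continuation problem for $w$ are precisely the two residuals we wish to control. Under the stated smoothness assumptions on $f$, $g$, $u$ and $u_\theta$, standard elliptic regularity yields an a priori bound $\norm{w}_{H^2(\dom)} \leq M$ for some constant $M = M(u,u_\theta)$, which is what makes this otherwise ill-posed problem amenable to a conditional stability analysis.

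First, I would invoke the three-balls inequality of \cite{ALE1}, which underlies Theorem \ref{thm:gen-bound-inv}, to obtain on any relatively compact subdomain $E \Subset \dom$ with $\dom' \subset E$ a Hölder-type bound
\begin{equation}
    \norm{w}_{L^2(E)} \leq C_E \bigl( \norm{\rpde[u_\theta]}_{L^2(\dom)} + \norm{\cR_d[u_\theta]}_{L^2(\dom')} \bigr)^{\tau_E},
\end{equation}
for some $\tau_E \in (0,1)$. Since the Hölder exponent $\tau_E$ degenerates as $E$ approaches $\partial\dom$, this interior estimate alone cannot produce a uniform power-type bound on all of $\dom$, which already foreshadows why only a logarithmic rate is achievable globally.

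The main technical obstacle is propagating this interior smallness up to the boundary, and it is this step that forces the logarithmic modulus. The standard route, in the spirit of the quantitative unique continuation results of Alessandrini--Rondi--Rosset--Vessella, is to chain the three-balls inequality along a sequence of overlapping balls linking $\dom'$ to an arbitrary point in $\dom$, and then to optimize the number of iterations against the distance to $\partial\dom$. Each iteration applies the Hölder rate, so after $N$ chained applications one controls $\norm{w}_{L^2}$ by a quantity of the form $\epsilon^{\tau^N} M^{1-\tau^N}$, where $\epsilon$ denotes the sum of the two residual norms. Balancing $N$ as a function of $\epsilon$ and $M$ converts the Hölder rate into a logarithmic modulus of continuity,
\begin{equation}
    \norm{w}_{L^2(\dom)} \leq C \bigl|\log\bigl( \norm{\rpde[u_\theta]}_{L^2(\dom)} + \norm{\cR_d[u_\theta]}_{L^2(\dom')} \bigr)\bigr|^{-\tau'},
\end{equation}
for some $\tau' \in (0,1)$ and a constant $C$ depending on $M$, $\dom$, $\dom'$ and $\tau'$.

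Finally, I would upgrade the $L^2(\dom)$-estimate to an $H^1(\dom)$-estimate by standard interpolation: since $\norm{w}_{H^2(\dom)} \leq M$, the Sobolev interpolation inequality $\norm{w}_{H^1(\dom)} \lesssim \norm{w}_{L^2(\dom)}^{1/2}\,\norm{w}_{H^2(\dom)}^{1/2}$ produces a bound of the same logarithmic form but with exponent $\tau := \tau'/2 \in (0,1)$. Absorbing all constants into $C$ yields exactly the claimed estimate.
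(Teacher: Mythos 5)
Your overall route coincides with the paper's: the paper proves this lemma by applying, to the difference $w = u - u_\theta$, the known conditional stability estimate of logarithmic type for unique continuation of the Poisson equation (the three-balls inequality of \cite{ALE1}, globalized to $H^1(\Omega)$ as in \cite{mishra2021estimates}), exactly as you set up in your first display where the PDE and data residuals of $u_\theta$ become the data for $w$. Your chaining of three-balls estimates, boundary-layer optimization and final interpolation is precisely the standard proof of that cited stability estimate, so you are re-deriving the ingredient the paper simply invokes, not taking a genuinely different path.

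The one step that does not hold as written is the claim that ``standard elliptic regularity yields $\|w\|_{H^2(\Omega)} \leq M$''. Interior regularity only gives $w \in H^2_{\mathrm{loc}}(\Omega)$; $H^2$ control up to $\partial\Omega$ requires boundary information on $u$, which is exactly what the unique continuation problem lacks (a harmonic $H^1$ function on a ball need not lie in $H^2$ up to the boundary). This is why the stability is only \emph{conditional}: the global $H^2$ (or $C^1$) bound on $u$ and $u_\theta$ must be imposed as an a priori hypothesis --- it is implicitly encoded in the lemma's constant $C$ depending on $u$ and $u_\theta$ and in the requirement that $u_\theta$ be sufficiently smooth --- rather than deduced from the equation. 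Once that bound is assumed instead of ``derived'', your chained H\"older estimates, the optimization of the chain length against the distance to $\partial\Omega$ (including the thin boundary layer that the chain cannot reach, which is absorbed using the a priori bound), and the $L^2$--$H^2$ interpolation to upgrade to $H^1$ go through and reproduce the logarithmic estimate the paper obtains by citation.
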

\end{example}

\begin{example}[Heat equation]
We consider the data assimilation problem for the heat equation, as introduced in Example \ref{ex:heat-inv}, which amounts to finding the solution $u$ of the heat equation in the whole space-time domain $\Omega = D\times (0,T)$, given data on the observation sub-domain $\Omega' = D'\times (0,T)$. For any $0 \leq \tau < T$, we define the error of interest for the model $u_\theta$ as,
\begin{equation}
    \label{eq:gerht}
    \cE_{\tau}(\theta) = \|u - u_\theta\|_{C([\tau,T];L^2(D))} + \|u - u_\theta\|_{L^2((0,T);H^1(D))}.
\end{equation}
The theory for this data assimilation inverse problem for the heat equation is classical and several well-posedness and stability results are available. Our subsequent error estimates for physics-informed models rely on a classical result of Imanuvilov \cite{Im1}, based on the well-known Carleman estimates. 
Using these results, one can state the following theorem \cite[Lemma 4.3]{mishra2021estimates}. 
\begin{theorem}
For $f \in C^{k-2}(\Omega)$ and $g \in C^k(\Omega')$, with continuous extensions of the functions and derivatives upto the boundaries of the underlying sets and with $k \geq 2$, let $u \in H^1((0,T);H^{-1}(D)) \cap L^2((0,T);H^1(D))$ be the solution of the inverse problem corresponding to the heat equation and satisfies the data \eqref{eq:dtht}. Then, for any $0\leq \tau <T$, the error \eqref{eq:gerht} corresponding to the sufficiently smooth model $u_\theta$ is bounded by, 
\begin{equation}
    \label{eq:hbd}
    \cE_\tau(\theta) \leq C \left(\norm{\rpde[u_\theta]}_{L^2(\dom)} + \norm{\cR_d[u_\theta]}_{L^2(\dom')} + \norm{\rs[u_\theta]}_{L^2(\partial D \times (0,T))} \right),
\end{equation}
for some constant $C$ depending on $\tau$, $u$ and $u_\theta$.
\end{theorem}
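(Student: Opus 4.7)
The plan is to reduce the estimate to a classical conditional stability result for the data-assimilation problem for the heat equation applied to the error $w := u - u_\theta$. First I would observe that, by the very definitions of the residuals in Section \ref{sec:classical-residual} together with \eqref{eq:data-residual}, the error $w$ solves
\begin{equation*}
    w_t - \Delta w = -\rpde[u_\theta] \quad\text{in } \Omega,
    \qquad
    w = -\rs[u_\theta] \quad\text{on } \partial D\times(0,T),
    \qquad
    w = -\cR_d[u_\theta] \quad\text{in } \Omega',
\end{equation*}
using that $u$ solves the heat equation with source $f$, satisfies the zero Dirichlet condition, and matches the observation $\Psi[u]=g$ on $\Omega'$. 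The initial condition for $w$ is unknown, exactly as in the original data-assimilation problem.

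Next I would invoke the classical Carleman-based stability result of Imanuvilov (as cited in the text, \cite{Im1}), which is precisely the conditional stability estimate \eqref{eq:assm-inv}/\eqref{eq:assm3-inv} specialized to the parabolic setting. Written in the form needed here, it states that any $v$ lying in the same regularity class as $u$ satisfies, for each $0\le\tau<T$,
\begin{equation*}
    \|v\|_{C([\tau,T];L^2(D))} + \|v\|_{L^2((0,T);H^1(D))}
    \le C\bigl(\|v_t - \Delta v\|_{L^2(\Omega)}
    + \|v\|_{L^2(\Omega')}
    + \|v\|_{L^2(\partial D\times(0,T))}\bigr),
\end{equation*}
with a constant $C$ depending on $\tau$ and on suitable higher-order norms of $v$. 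Applying this inequality to $v=w$ and substituting the identities from the first step immediately yields the claimed bound \eqref{eq:hbd}, with the constant inheriting its dependence on $u$ and $u_\theta$ from the conditional stability estimate (which requires an a priori bound on $\|w\|_{\hat X}\lesssim\|u\|_{\hat X}+\|u_\theta\|_{\hat X}$).

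The main obstacle is ensuring that the Imanuvilov estimate really applies to $w$: the hypotheses on $f\in C^{k-2}$, $g\in C^k$ and the regularity assumed for $u$ and $u_\theta$ are precisely what is needed so that $w$ has enough smoothness (and in particular the residuals are square integrable) for the Carleman weight computations underlying \cite{Im1} to be carried out up to the lateral boundary $\partial D\times(0,T)$ and up to the observation subdomain $\Omega'$. Once this verification is in place, the derivation of \eqref{eq:hbd} is a direct substitution, analogous to the elliptic version used to obtain \eqref{eq:pbd1} in Lemma \ref{lem:inv-poisson} via the three-balls inequality of \cite{ALE1}, and the parabolic case is in fact cleaner because Imanuvilov's inequality is Hölder (indeed linear) rather than logarithmic in the residuals.
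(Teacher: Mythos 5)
Your proposal is correct and follows essentially the same route as the paper: the cited proof in \citet{mishra2021estimates} likewise writes the error $w=u-u_\theta$ as a solution of the heat equation with the PDE, data and boundary residuals as right-hand sides, and then applies the Carleman-estimate-based conditional stability result of Imanuvilov \cite{Im1} (the parabolic instance of the abstract hypothesis \eqref{eq:assm-inv} used in Theorem \ref{thm:gen-bound-inv}) to obtain \eqref{eq:hbd}. Your sign computations for the residuals and the observation that the constant inherits its dependence on $\tau$, $u$ and $u_\theta$ from the conditional stability estimate match the paper's argument.
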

\end{example}

\begin{example}[Stokes equation]
The effectiveness of PINNs in approximating inverse problems was showcased in the recent paper \cite{KAR4}, where the authors proposed PINNs for the data assimilation problem with the Navier-Stokes equations (Example \ref{def:NS-equation}). As a first step towards rigorously analyzing this, we follow \citet{mishra2021estimates} and focus on the stationary Stokes equation as introduced in Example \ref{ex:stokes-inv}. 

Recall that the data assimilation inverse problem for the Stokes equation amounts to inferring the velocity field $\bu$ (and the pressure $p)$), given $\f,f_d$ and $g$. In particular, we wish to find solutions $\bu \in H^1(D;\R^d)$ and $p \in L_0^2(D)$ (i.e, square integrable functions with zero mean), such that the following holds, 
\begin{equation}
    \label{eq:wkst}
    \begin{aligned}
    \int\limits_D \nabla \bu\cdot \nabla \bv dx + \int\limits_D p \div(\bv) dx &= \int\limits_D \f \bv dx, \\
    \int\limits_D \div(\bu) w dx &= \int\limits_D f_d w dx, \end{aligned}
\end{equation}
for all test functions $\bv \in H^1_0(D;\R^d)$ and $w \in L^2(D)$.

Let $B_{R_1}(x_0)$ be the largest ball inside the observation domain $D^{\prime} \subset D$. We will consider balls $B_{R}(x_0) \in D$
such that $R > R_1$ and estimate the following error for the model $u_\theta$,
\begin{equation}
    \label{eq:gerst}
    \cE_R(\theta) := \|\bu-\bu_\theta\|_{L^2(B_R(x_0))}. 
\end{equation}
The well-posedness and conditional stability estimates for the data assimilation problem for the Stokes equation \eqref{eq:st}, \eqref{eq:dtst} has been extensively investigated in \citet{Uhl} and references therein. Using these results, we can state the following estimate on $\cE_R(\theta)$ \cite[Lemma 6.2]{mishra2021estimates}. The physics-informed residuals $\rpde$ and $\rdiv$ are defined analogously to those in Example \ref{ex:NS-stability}.
\begin{theorem}
\label{lem:st1} 
For $\f \in C^{k-2}(D;\R^d), f_d \in C^{k-1}(D)$ and $g \in C^k(D^{\prime})$, with $k \geq 2$. Let $\bu \in H^1(D;\R^d)$ and $p \in H^1(D)$ be the solution of the inverse problem corresponding to the Stokes' equations \eqref{eq:st} i.e, they satisfy \eqref{eq:wkst} for all test functions $\bv \in H^1_0(D;\R^d)$, $w\in L^2(D)$ and satisfies the data \eqref{eq:dtst}. Let $u_\theta$ be a sufficiently smooth model and and let $B_{R_1}(x_0)$ be the largest ball inside $D^{\prime} \subset D$.
Then, there exists $\tau\in (0,1)$ such that the generalization error \eqref{eq:gerst} for balls $B_{R}(x_0) \subset D$ with $R > R_1$ is bounded by,
\begin{equation}
    \label{eq:stbd}
    \begin{aligned}
    \cE_R(\theta)^2 \leq C \cdot  \max_{\gamma\in \{1,\tau\}} (\norm{\rpde[u_\theta]}_{L^2(D)}^2+\norm{\rdiv[u_\theta]}_{L^2(D)}^2)^\gamma(1+\norm{\cR_d[u_\theta]}_{L^2(D)}^{2\tau}),
    \end{aligned}
\end{equation}
where $C$ depends on $u$ and $u_\theta$.
\end{theorem}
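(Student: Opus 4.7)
The plan is to instantiate the abstract framework of Theorem \ref{thm:gen-bound-inv} for the Stokes inverse problem, using a conditional stability (Carleman-type) estimate for Stokes developed in \citet{Uhl}. First, I would set $w := \bu - \bu_\theta$ and $q := p - p_\theta$. Since the true solution satisfies $\Delta \bu + \nabla p = \f$ and $\div(\bu) = f_d$, subtracting gives
\begin{equation*}
    \Delta w + \nabla q = -\rpde[u_\theta] \text{ in } D, \qquad \div(w) = -\rdiv[u_\theta] \text{ in } D,
\end{equation*}
while the data constraint $\bu = g$ on $D'$ yields $w = -\cR_d[u_\theta]$ on $D'$. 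In other words, the PDE and divergence residuals act as interior sources, while the data residual controls $w$ on the observation subdomain.

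Next, I would invoke the three-balls/Carleman conditional stability estimate for the Stokes system from \citet{Uhl}. This yields the existence of some $\tau \in (0,1)$, depending on $R$, $R_1$ and the geometry of $D, D'$, and of a constant depending on an a priori $H^1$-bound $M$ for $(w, q)$ (itself controlled by $\|\bu\|_{H^1} + \|\bu_\theta\|_{H^1} + \|p\|_{H^1} + \|p_\theta\|_{H^1}$), such that
\begin{equation*}
    \|w\|_{L^2(B_R(x_0))} \leq C \bigl(\|w\|_{L^2(D')} + \|\rpde[u_\theta]\|_{L^2(D)} + \|\rdiv[u_\theta]\|_{L^2(D)}\bigr)^{\tau}
\end{equation*}
whenever the right-hand side is at most a fixed normalization, together with the trivial bound $\|w\|_{L^2(B_R(x_0))} \leq M$ otherwise.

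Substituting $\|w\|_{L^2(D')} = \|\cR_d[u_\theta]\|_{L^2(D')}$, squaring, splitting into the two regimes (small vs.\ moderate total residual), and using $(a+b)^{2\tau} \lesssim a^{2\tau} + b^{2\tau}$ then yields \eqref{eq:stbd}: the exponent $\gamma = \tau$ arises in the small-residual regime and $\gamma = 1$ in the opposite one, while the factor $(1 + \|\cR_d[u_\theta]\|_{L^2(D)}^{2\tau})$ absorbs both the direct contribution of $\|w\|_{L^2(D')}$ and the absolute constant from the trivial bound. All $u$- and $u_\theta$-dependence of $C$ is contributed by $M$.

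The principal technical obstacle lies in the second step. For scalar elliptic equations, the classical three-balls inequality suffices (cf.\ Example \ref{ex:barron-poisson}); for the Stokes system one must instead apply a Carleman inequality adapted to the coupling between $w$ and $q$ without losing control of the pressure, and, crucially, in a version that accommodates the inhomogeneous divergence $\div(w) = -\rdiv[u_\theta]$. This is exactly what \citet{Uhl} provides; once their estimate is borrowed, the remainder of the argument is elementary bookkeeping.
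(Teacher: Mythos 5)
Your overall strategy coincides with the one the paper itself relies on: the survey gives no proof but points to \citet[Lemma 6.2]{mishra2021estimates}, whose argument is exactly what you describe — form the error equations for $(\bu-\bu_\theta,\,p-p_\theta)$, in which $\rpde$ and $\rdiv$ act as interior sources and $\cR_d$ controls the error on $D'$, and then invoke the Carleman/three-balls conditional stability estimate for the Stokes system from \citet{Uhl}, absorbing the $(1-\tau)$-power factor into a constant controlled by a priori $H^1$-bounds of $u$, $u_\theta$, $p$, $p_\theta$. Up to that point the proposal is sound, and your remark that the estimate must accommodate the inhomogeneous divergence is the right technical caveat.

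The gap is in the final bookkeeping. From the stability estimate you write down, squaring and using $(a+b)^{2\tau}\lesssim a^{2\tau}+b^{2\tau}$ gives an \emph{additive} bound of the form $\cE_R(\theta)^2 \le C\bigl((\norm{\rpde[u_\theta]}_{L^2(D)}^2+\norm{\rdiv[u_\theta]}_{L^2(D)}^2)^{\tau} + \norm{\cR_d[u_\theta]}_{L^2(D')}^{2\tau}\bigr)$, with the large-residual regime contributing the exponent $1$ as you say. No splitting into regimes converts this into the \emph{multiplicative} form \eqref{eq:stbd}, because in \eqref{eq:stbd} the data residual appears only multiplied by the PDE/divergence factor, which can vanish: take $u_\theta=u+c$ with $c$ a nonzero constant and $p_\theta=p$; then $\rpde[u_\theta]=\rdiv[u_\theta]=0$ while $\cR_d[u_\theta]=c\neq 0$, so the right-hand side of \eqref{eq:stbd} is zero although $\cE_R(\theta)>0$. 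Hence the claim that ``splitting into the two regimes \dots yields \eqref{eq:stbd}'' cannot be correct as stated — the standalone $\norm{\cR_d}^{2\tau}$ term is not absorbable into $(1+\norm{\cR_d}^{2\tau})$ once that factor is multiplied by $\max_{\gamma}(\norm{\rpde}^2+\norm{\rdiv}^2)^{\gamma}$, and no argument admitting such $u_\theta$ can produce the displayed product. What your route genuinely proves is the additive conditional-stability bound above; if you want the precise displayed form you must quote it (and its derivation) directly from \citet{mishra2021estimates} rather than obtain it by the elementary manipulations you sketch, and you should flag the mismatch (including the fact that \eqref{eq:stbd} measures $\cR_d$ over $D$ rather than $D'$) rather than paper over it.
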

    
\end{example}
Further examples can be found in \citet{mishra2021estimates}. 

\section{Generalization}\label{sec:generalization}

Next, we move our attention to Question 3. 
\begin{question}[Q3]
\emph{Given a small training error $\Eg_T^*$ and a sufficiently large training set $\S$, will the corresponding generalization error $\cE_G^*$ also be small?}
\end{question}
We will answer this question by proving that for any model $u_\theta$ it holds that
\begin{eqnarray}
    \cE_G(\theta) \leq \cE_T(\theta) + \epsilon(\theta), 
\end{eqnarray}
for some value $\epsilon(\theta)>0$ that depends on the model class and the size of the training set. Using the terminologly of the error decomposition \eqref{eq:error-decomposition}, this will then imply that the \emph{generalization gap} is small,
\begin{eqnarray}
    \sup_{\theta\in\Theta}\abs{\Et(\theta,\S)-\Eg_G(\theta)} \leq \sup_{\theta\in\Theta} \epsilon(\theta). 
\end{eqnarray}
As $\epsilon(\theta)$ can depend on $\theta$, one must choose the model (parameter) space $\Theta$ in a suitable way to avoid that $\sup_{\theta\in\Theta} \epsilon(\theta)$ diverges. 

In view of Section \ref{sec:quad}, one can realize that the training error is nothing more than applying a suitable quadrature rule to each term in the generalization error. 
It is clear that the error bound for a quadrature rule immediately proves that the generalization error can be bounded in terms of the training error and the training set size, thereby answering question Q3, at least for deterministic quadrature rules. We can now combine this generalization result with the stability bounds from Section \ref{sec:stability} to prove an upper bound on the total error of the optimized model $\Eg^*$ \eqref{eq:pinn-error} \cite[Theorem 2.6]{MM1}. 

\begin{theorem}
\label{thm:pinn-error-det}
Let $\bu \in X^{\ast}$ be the unique solution of the PDE 
\eqref{eq:pde} and assume that the stability hypothesis \eqref{eq:assm2} holds. Let $u_\theta \in X^{\ast}$ be a model and let $\S$ be a training set of quadrature points corresponding to the quadrature rule \eqref{eq:quad} with order $\alpha$, as in \eqref{eq:assm3}. Then for any $L^2$-based generalization error $\cE_G$, such as \eqref{eq:epil-ti} or \eqref{eq:epil-td}, it holds that,
\begin{equation}
    \label{eq:egenb}
    \Eg_G(\theta)^2 \leq \Eg_T(\theta)^2 + C_{\text{quad}}N^{-\alpha},
\end{equation}
with constant $C_{\text{quad}}$ stemming from \eqref{eq:assm3}, and which might depend on $u_\theta$. 
\end{theorem}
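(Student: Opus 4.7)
The plan is to directly apply the quadrature error bound \eqref{eq:assm3} term-by-term to each integral appearing in the physics-informed loss. The key observation is that $\cE_G(\theta)^2$ (as in \eqref{eq:epil-ti} or \eqref{eq:epil-td}) is a weighted sum of a finite number of integrals of squared residuals, while $\cE_T(\theta,\S)^2 = \cJ(\theta,\S)$ is exactly the weighted sum of the quadrature approximations of those same integrals, using the quadrature points contained in $\S$.

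First I would fix the time-dependent case as the representative one, so that
\[
\cE_G(\theta)^2 = \int_{D\times[0,T]} \rpde[u_\theta]^2 + \lambda_s \int_{\partial D\times [0,T]} \rs[u_\theta]^2 + \lambda_t \int_D \rt[u_\theta]^2,
\]
and correspondingly
\[
\cE_T(\theta,\S)^2 = \cQ^{D\times[0,T]}_{N_i}[\rpde[u_\theta]^2] + \lambda_s \cQ^{\partial D\times[0,T]}_{N_s}[\rs[u_\theta]^2] + \lambda_t \cQ^{D}_{N_t}[\rt[u_\theta]^2].
\]
Since $u_\theta \in X^*$, each squared residual is sufficiently regular for the quadrature bound \eqref{eq:assm3} to apply to it, yielding, for each of the three domains, an estimate of the form
\[
\left| \int_{\Omega_j} \cR_j[u_\theta]^2 - \cQ^{\Omega_j}_{N_j}[\cR_j[u_\theta]^2] \right| \leq C_j\, N_j^{-\alpha},
\]
where $C_j$ depends on the regularity of $\cR_j[u_\theta]^2$ and hence on $u_\theta$.

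Next I would add these three bounds, weighted by $1$, $\lambda_s$, $\lambda_t$ respectively, to obtain
\[
\cE_G(\theta)^2 - \cE_T(\theta,\S)^2 \;\leq\; C_i N_i^{-\alpha} + \lambda_s C_s N_s^{-\alpha} + \lambda_t C_t N_t^{-\alpha}.
\]
Setting $N = \min\{N_i, N_s, N_t\}$ and defining the global constant $C_{\text{quad}} := C_i + \lambda_s C_s + \lambda_t C_t$ (which indeed depends on $u_\theta$ through each $C_j$) collapses the right-hand side to $C_{\text{quad}} N^{-\alpha}$, producing the desired inequality \eqref{eq:egenb}. The time-independent case follows by dropping the temporal boundary term and repeating the same argument.

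There is essentially no obstacle here beyond bookkeeping: the statement is really a direct consequence of the definitions of $\cE_G$ and $\cE_T$ together with the assumed order of the quadrature rule. The one point that deserves a sentence of justification is that the constants $C_j$ are finite, which follows from the standing assumption that $u_\theta \in X^*$ (so that all residuals are well-defined and bounded in the relevant norms as per \eqref{eq:assm1}) together with the regularity requirements implicit in the quadrature error bound \eqref{eq:assm3}. No application of the stability hypothesis \eqref{eq:assm2} is needed for this step; stability only enters if one subsequently wants to translate the bound on $\cE_G^2$ into a bound on the total error $\cE^*$.
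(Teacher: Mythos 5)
Your proposal is correct and follows essentially the same route as the paper: the paper itself observes that the training error is exactly a quadrature approximation of each term of the generalization error, so the bound \eqref{eq:assm3} applied term-by-term (with the constant absorbing the $u_\theta$-dependent regularity of the squared residuals) immediately yields \eqref{eq:egenb}. Your side remark that the stability hypothesis \eqref{eq:assm2} is not actually used in this step, but only when passing from $\cE_G$ to the total error, is also consistent with how the paper uses the result.
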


The previously described approach only makes sense for low to moderately high dimensions ($d<20$) since the convergence rate of numerical quadrature rules with deterministic quadrature decreases with increasing $d$. For problems in very high dimensions, $d \gg 20$, Monte-Carlo quadrature is the numerical integration method of choice. In this case, the quadrature points are randomly chosen, independent and identically distributed (with respect to a scaled Lebesgue measure). Since in this case the optimized model is correlated with all training points, one must be careful when one calculates the convergence rate. We can, however, prove error bounds that hold with a certain probability, or alternatively, that hold averaged over all possible training sets of the same size. 

In this setting, we prove a general \emph{a posteriori} upper bound on the generalization error. Consider $\cF:\cD\to\R$ (an operator or function) and a corresponding model $\cF_\theta:\cD\to\R$, $\theta\in\Theta$.
Given a training set $\S=\{X_1, \ldots X_N\}$, where $\{X_i\}_{i=1}^N$ are iid random variables on $\cD$ (according to a measure $\mu$), the training error $\Et$ and generalization error $\Eg$ are, 
\begin{equation}
    \cE_T(\theta, \S)^2 = \frac{1}{N}\sum_{i=1}^N \abs{\cF(z_i)-\cF_\theta(z_i)}^2, \qquad \cE_G(\theta)^2 = \int_\cD\abs{\cF_\theta(z)-\cF(z)}^2d\mu(z), 
\end{equation}
where $\mu$ is a probability measure on $\cD$. This setting allows to bound all possible terms and residuals that were mentioned in Section \ref{sec:residuals}:
\begin{itemize}
    \item For the term resulting from the PDE residual \eqref{eq:PDE-residual} we can set $\cD = \Omega$, $\cF=0$ and $\cF_\theta = \cL[u_\theta]-f$. 
    \item For the data term \eqref{eq:data-residual}, we can set $\cD = \dom'$, $\cF=u$ and $\cF_\theta = u_\theta$. Similarly, for the term arising from the spatial boundary conditions we set $\cD = \partial D$ (or $\cD = \partial D\times [0,T]$) and for the term arising from the initial condition we set $\cD = D$. 
    \item For operator learning with an input function space $\cX$ we can set $\cD = \Omega\times \cX$, $\cF=\cG$ and $\cF_\theta = \cG_\theta$. 
    \item Finally, for physics-informed operator learning one can set $\cD = \Omega\times \cX$, $\cF=0$ and $\cF_\theta = \cL(\cG_\theta)$. 
\end{itemize}

With the above definitions in place, we can state the following theorem \cite[Theorem 3.11]{deryck2022generic}, which provides a computable a posteriori error bound on the expectation of the generalization error for a general class of approximators. We refer to e.g. \citet{beck2020error, deryck2021pinn} for bounds on $n$, $c$ and $\mathfrak{L}$. 

\begin{theorem}\label{thm:generalization}
For $R>0$ and $N,n\in\N$, let $\Theta = [-R,R]^{n}$ be the parameter space, and for every training set $\S$, let $\theta^*(\S) \in\Theta$ be an (approximate) minimizer of $\theta\mapsto\Et(\theta,\S)^2$, assume that $\theta\mapsto\Eg(\theta,\S)^2$ and $\theta\mapsto\Et(\theta,\cS)^2$ are bounded by $c>0$ and Lipschitz continuous with Lipschitz constant $\mathfrak{L}>0$.  
If $N\geq 2c^2e^8/(2R\mathfrak{L})^{{n}/2}$ then it holds that 
\begin{equation}\label{eq:generalization}
    \E{\cE_G(\theta^*(\S))^2} \leq \E{\cE_T(\theta^*(\S),\S)^2} + \sqrt{\frac{2c^2(n+1)}{N}\ln(R\mathfrak{L}\sqrt{N})}.
\end{equation}
\end{theorem}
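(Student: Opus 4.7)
The plan is to bound the expected generalization gap $\E{\cE_G(\theta^*(\S))^2 - \cE_T(\theta^*(\S),\S)^2}$ by the expected uniform discrepancy $\E{\sup_{\theta \in \Theta} |\cE_G(\theta)^2 - \cE_T(\theta,\S)^2|}$. Passing to the supremum is essential because $\theta^*(\S)$ is itself a random variable depending on the entire training set, so a pointwise concentration bound at a fixed $\theta$ cannot be applied directly; replacing $\theta^*(\S)$ by a supremum over $\Theta$ decouples the random parameter from the random sample at the cost of requiring uniform control over the compact set $\Theta = [-R,R]^n$.

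I would then discretize $\Theta$ via a $\delta$-net $\cN_\delta$ of cardinality $M \leq (2R/\delta)^n$ and invoke the Lipschitz hypothesis on both $\theta \mapsto \cE_G(\theta)^2$ and $\theta \mapsto \cE_T(\theta,\S)^2$ to reduce the supremum to a finite maximum,
\begin{equation*}
\sup_{\theta \in \Theta}\bigl|\cE_G(\theta)^2 - \cE_T(\theta,\S)^2\bigr| \;\leq\; 2\mathfrak{L}\,\delta \;+\; \max_{\theta_j \in \cN_\delta}\bigl|\cE_G(\theta_j)^2 - \cE_T(\theta_j,\S)^2\bigr|.
\end{equation*}
For each fixed $\theta_j$, $\cE_T(\theta_j,\S)^2 = \tfrac{1}{N}\sum_{i=1}^N Y_i^{(j)}$ where $Y_i^{(j)} := |\cF(X_i)-\cF_{\theta_j}(X_i)|^2$ are iid, nonnegative, bounded by $c$ (inherited from the global bound on $\cE_T^2$), and with mean $\cE_G(\theta_j)^2$. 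Hoeffding's inequality thus yields the sub-Gaussian tail
\begin{equation*}
\Prob{\bigl|\cE_G(\theta_j)^2 - \cE_T(\theta_j,\S)^2\bigr| > t} \;\leq\; 2\exp\bigl(-Nt^2/(2c^2)\bigr);
\end{equation*}
a union bound over $\cN_\delta$ together with the standard conversion of a sub-Gaussian tail into an expectation produces $\E{\max_j|\cdot|} \lesssim c\sqrt{2\ln(2M)/N}$.

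Finally, I would balance the two contributions by choosing $\delta$ of order $c/(\mathfrak{L}\sqrt N)$, so that the covering error satisfies $2\mathfrak{L}\delta = O(c/\sqrt N)$ while $\ln M \lesssim n\ln(R\mathfrak{L}\sqrt N/c)$, and summing gives
\begin{equation*}
\E{\cE_G(\theta^*(\S))^2 - \cE_T(\theta^*(\S),\S)^2} \;\lesssim\; \sqrt{\tfrac{2c^2(n+1)}{N}\,\ln(R\mathfrak{L}\sqrt N)}.
\end{equation*}
The technical hypothesis $N \geq 2c^2 e^8/(2R\mathfrak{L})^{n/2}$ is precisely what guarantees that $\ln(R\mathfrak{L}\sqrt N)$ dominates the $O(1)$ remnants left over from both the covering error and the Gaussian tail integral, so that those remnants can be absorbed into the extra $+1$ inside $(n+1)$. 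The main obstacle is this final book-keeping: while the covering-plus-Hoeffding-plus-union-bound skeleton is entirely standard, extracting the exact constants $2c^2$ and $n+1$ and the specific logarithmic argument $R\mathfrak{L}\sqrt N$ (as opposed to any other constant multiple of it) requires a careful choice of the covering radius $\delta$ and careful treatment of the truncated tail integral — and it is precisely these constants that dictate the quantitative lower bound imposed on $N$.
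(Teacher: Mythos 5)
Your plan—uniform control over $\Theta$ via a $\delta$-net whose size is controlled by the Lipschitz hypothesis, Hoeffding's inequality at each net point, a union bound, and a final balancing of the covering radius against the concentration term—is exactly the route the paper takes, which it describes as combining covering numbers with Hoeffding's inequality (following the cited works of De Ryck and Mishra). The proposal is correct and essentially identical in approach to the paper's proof.
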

\begin{proof}
The proof combines standard techniques, based on covering numbers and Hoeffding's inequality, with an error composition from \citet{deryck2021pinn}. 
\end{proof}
Due to its generality, Theorem \ref{thm:generalization} provides a satisfactory answer to Question 3 when a randomly generated training set is used. The two central assumptions, the existence of the constants $c>0$ and $\mathfrak{L}>0$, should however not be swept under the rug. In particular for neural networks it might be initially unclear how large these constants are. 

For any type of neural network architecture of depth $L$, width $W$ and weights bounded by $R$, one finds that $N \sim LW(W+d)$. For tanh neural networks and operator learning architectures, one has that $\ln(\mathfrak{L}) \sim L\ln(dRW)$, whereas for physics-informed neural networks and DeepONets one finds that $\ln(\mathfrak{L}) \sim (k+\ell)L\ln(dRW)$ with $k$ and $\ell$ as in Assumption \ref{ass:diff-operator} \cite{LMK1, deryck2021pinn}. Taking this into account, one also finds that the imposed lower bound on $n$ is not very restrictive. Moreover, the RHS of \eqref{eq:generalization} depends at most polynomially on $L, W, R, d, k, \ell$ and $c$. For physics-informed architectures, however, upper bounds on $c$ often depend exponentially on $L$ \cite{deryck2021pinn, deryck2021navierstokes}. 

\begin{remark}
As Theorem \ref{thm:generalization} is an a posteriori error estimate, one can use the network sizes of the trained networks for $L$, $W$ and $R$. The sizes stemming from the approximation error estimates of the previous sections can be disregarded for this result. Moreover, instead of considering the expected values of $\Eg$ and $\Et$ in \eqref{eq:generalization}, one can also prove that such an inequality holds with a certain probability \cite{deryck2022generic}. 
\end{remark}

Next, we show how Theorem \ref{thm:pinn-error-det} and Theorem \ref{thm:generalization} can be used in practice as a posteriori error estimate. 

\begin{example}[Navier-Stokes]
We first state the a posteriori estimate for the Navier-Stokes equation (Example \ref{def:NS-equation}). By using the midpoint rule \eqref{eq:quad-error} on the sets $D\times [0,T]$ with $N_\mathrm{int}$ quadrature points, $D$ with $N_t$ quadrature points and $\partial D \times [0,T]$ with $N_s$ quadrature points, one can prove the following estimate \cite[Theorem 3.10]{deryck2021navierstokes}. 

\begin{theorem}\label{thm:generalization-ns}
Let $T>0$, $d\in\mathbb{N}$, let $(u,p)\in C^4(\mathbb{T}^d\times [0,T])$ be the classical solution of the Navier-Stokes equation \eqref{eq:navier-stokes} and let $(u_\theta,p_\theta)\in C^4(\mathbb{T}^d\times [0,T])$ be a model. Then the following error bound holds,
\begin{equation}
    \label{eq:etrain1}
\begin{aligned}
   \cE(\theta)^2=\int_\Omega \norm{u(x,t)-u_\theta(x,t)}_2^2 dxdt &\leq \bigO\left(\Et(\theta,\S) + N_t^{-\frac{2}{d}} + N_\mathrm{int}^{-\frac{1}{d+1}} + N_s^{-\frac{1}{d}}\right).
\end{aligned}
\end{equation}
\end{theorem}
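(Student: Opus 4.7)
The plan is to obtain the a posteriori bound by composing two ingredients already assembled in the paper: the stability estimate for Navier–Stokes (Theorem \ref{thm:stability-ns}), which controls the $L^2$ error in terms of the \emph{integral} residuals, and the midpoint-rule quadrature bound \eqref{eq:quad-error}, which lets us replace each integral residual by its \emph{discrete} training-error counterpart at the price of an explicit quadrature remainder.

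First, I would apply Theorem \ref{thm:stability-ns} to the given classical solution $(u,p)$ and the model $(u_\theta, p_\theta)$. This produces
\begin{equation*}
\cE(\theta)^2 \;\leq\; \mathcal{C}\, T\, \exp\!\bigl(T(2d^2\|\nabla u\|_{L^\infty(\Omega)}+1)\bigr),
\end{equation*}
where $\mathcal{C}$ is a linear combination of $\|\rt\|_{L^2(D)}^2$, $\|\rpde\|_{L^2(\Omega)}^2$, $\|\rdiv\|_{L^2(\Omega)}$, and $\|\rs\|_{L^2(\partial D\times[0,T])}$, with a prefactor $C_1$ depending on $\|u\|_{C^1}$, $\|u_\theta\|_{C^1}$, $\|p\|_{C^0}$, $\|p_\theta\|_{C^0}$. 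Since $(u,p), (u_\theta,p_\theta) \in C^4$, all of these are finite and can be absorbed into the $\bigO(\cdot)$ constant.

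Second, I would apply the midpoint-rule quadrature estimate \eqref{eq:quad-error} to each of the four integrals appearing in $\mathcal{C}$. The integrands of interest are $\rt^2$ on $D \subset \R^d$, $\rpde^2$ and $\rdiv^2$ on $D \times [0,T] \subset \R^{d+1}$, and $\rs^2$ on $\partial D \times [0,T] \subset \R^d$. Each of these integrands is $C^2$ because $u_\theta, p_\theta \in C^4$ and the residuals are polynomial combinations of $u_\theta$, $p_\theta$, and their derivatives up to order two; hence \eqref{eq:quad-error} applies with constants controlled by $\|u_\theta\|_{C^4}, \|p_\theta\|_{C^4}, \|u\|_{C^1}$. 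This gives
\begin{align*}
\|\rt\|_{L^2(D)}^2 &\leq \qu{N_t}^{D}[\rt^2] + C\, N_t^{-2/d},\\
\|\rpde\|_{L^2(\Omega)}^2 &\leq \qu{N_{\mathrm{int}}}^{\Omega}[\rpde^2] + C\, N_{\mathrm{int}}^{-2/(d+1)},\\
\|\rdiv\|_{L^2(\Omega)}^2 &\leq \qu{N_{\mathrm{int}}}^{\Omega}[\rdiv^2] + C\, N_{\mathrm{int}}^{-2/(d+1)},\\
\|\rs\|_{L^2(\partial D\times[0,T])}^2 &\leq \qu{N_s}^{\partial D\times[0,T]}[\rs^2] + C\, N_s^{-2/d}.
\end{align*}
For the two terms $\|\rdiv\|_{L^2(\Omega)}$ and $\|\rs\|_{L^2(\partial D\times[0,T])}$ that appear in $\mathcal{C}$ with a square-root (i.e.\ without the square), I would take the square root of the inequalities above and use $\sqrt{a+b}\leq \sqrt{a}+\sqrt{b}$ to split the quadrature remainder off cleanly.

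Finally, recognizing that the sum of the four midpoint sums $\qu{N_t}^{D}[\rt^2] + \qu{N_{\mathrm{int}}}^{\Omega}[\rpde^2] + \qu{N_{\mathrm{int}}}^{\Omega}[\rdiv^2] + \qu{N_s}^{\partial D\times[0,T]}[\rs^2]$ is precisely (a constant multiple of) the training error $\Et(\theta,\S)$ for the PINN loss associated with the residuals \eqref{eq:new-pinn-residuals}, collecting the remainders yields \eqref{eq:etrain1}. The only genuine care needed is in (i) checking that the $C^2$-norms of the squared residuals appearing in $C_{\text{quad}}$ remain finite — which is exactly where the $C^4$ regularity of $(u,p)$ and $(u_\theta,p_\theta)$ is used — and (ii) making sure that the mismatch between the $L^2$- and $L^1$-style appearances of $\rdiv$ and $\rs$ in Theorem \ref{thm:stability-ns} is handled by the square-root trick above. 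These are the main (but routine) obstacles; everything else is bookkeeping of constants into the $\bigO(\cdot)$.
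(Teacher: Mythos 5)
Your proposal is correct and follows essentially the same route as the paper: apply the stability estimate of Theorem \ref{thm:stability-ns} and then replace each integral residual by its midpoint-rule quadrature sum plus the remainder from \eqref{eq:quad-error}, with the $C^4$ regularity guaranteeing the $C^2$-norms of the squared residuals are finite, and the square-root handling of the $\rdiv$ and $\rs$ terms producing the halved rates $N_{\mathrm{int}}^{-1/(d+1)}$ and $N_s^{-1/d}$. The only cosmetic imprecision is that the sum of the quadrature sums equals $\Et(\theta,\S)^2$ rather than $\Et(\theta,\S)$ itself, but since the square-rooted terms contribute $\Et$ to the first power and $\Et^2\lesssim\Et$ up to constants absorbed in the $\bigO$, this does not affect the argument.
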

The exact constant implied in the $\bigO$-notation depends on  $(u_\theta,p_\theta)$ and $(u,p)$ and their derivatives \cite[Theorem 3.10]{deryck2021navierstokes}. There are two interesting observations to be made from equation \eqref{eq:etrain1}: 
\begin{itemize}
    \item When the training set is large enough, the total error scales with the \emph{square root} of the training error, $\cE(\theta) \sim \sqrt{\cE_T(\theta,\cS)}$. This sublinear relation is also observed in practice, see Figure \ref{fig:ns}. 
    \item The bound \eqref{eq:etrain1} reveals different convergence rates in terms of the training set sizes $N_\mathrm{int}$, $N_t$ and $N_s$. In particular, one needs relatively many training points in the interior of the domain compared to its boundary. We will see that these ratios will be different for other PDEs. 
\end{itemize}

In \citet{deryck2021navierstokes} the above result was verified by training a PINN to approximate the Taylor-Green vortex in two space dimensions, the exact solution of which is given by,
\begin{eqnarray}
 u(t,x,y) &=& - \cos(\pi x) \sin(\pi y) \text{exp}(-2\pi^2\nu t)  \\
 v(t,x,y) &=& \sin(\pi x) \cos(\pi y) \text{exp}(-2\pi^2\nu t) \\
p(t,x,y) &=& -\frac{\rho}{4}\left[\cos(2\pi x) + \cos(2\pi y)\right] \text{exp}(-4 \pi^2\nu t)  
\end{eqnarray}
The spatio-temporal domain is $x, y \in [0.5,4.5]^2$ and $t \in [0,1]$.
The results are reported in Figure \ref{fig:ns}. As one can see, all error types decrease with increasing number of quadrature points. In particular, the relationship $\cE(\theta) \sim \sqrt{\cE_T(\theta,\cS)}$ is (approximately) observed as well. 



\begin{figure}
\centering
\begin{minipage}{.52\textwidth}
  \centering
  \includegraphics[width=\linewidth]{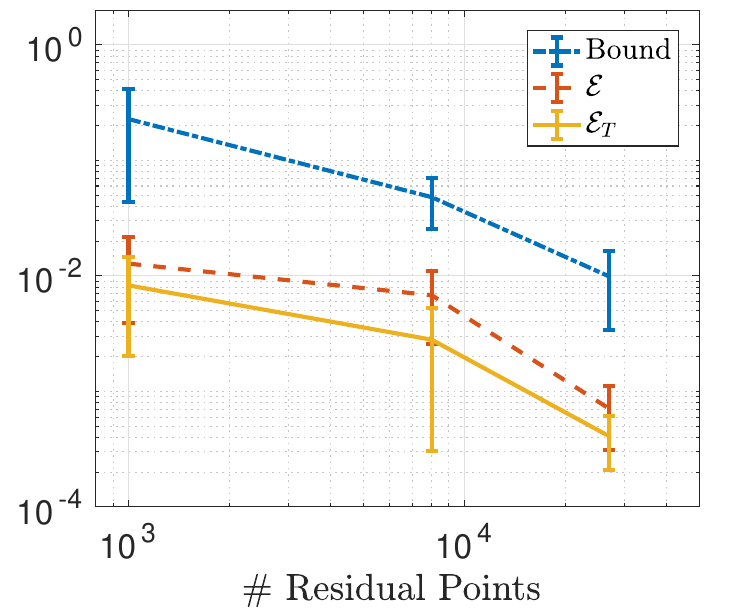}
\end{minipage}%
\begin{minipage}{.48\textwidth}
  \centering
  \includegraphics[width=\linewidth]{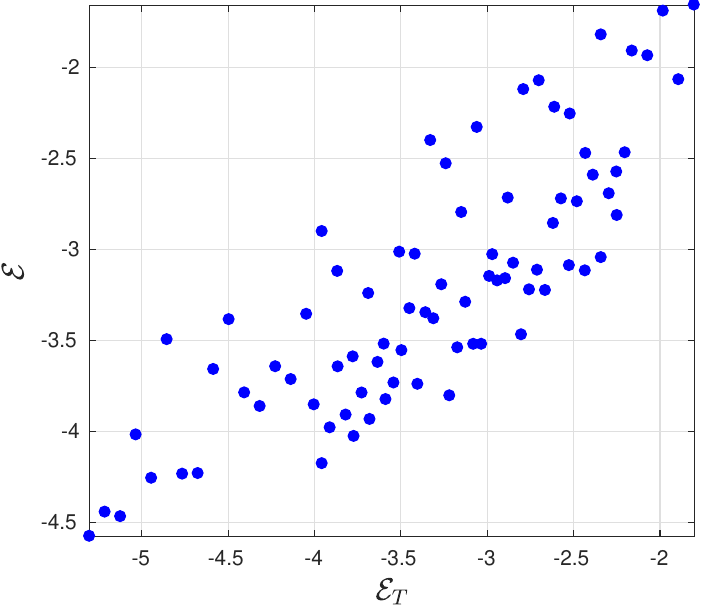}
\end{minipage}
\caption{Experimental results for the Navier-Stokes equation with $\nu=0.01$. The total error $\cE$ and the training error $\cE_T$ are shown in terms of the number of residual points $N_{\mathrm{int}}$ (\emph{left}; and compared with the bound from Theorem \ref{thm:generalization-ns}) and also in terms of each other (\emph{right}). Figure from \citet{deryck2021navierstokes}.}
        \label{fig:ns}
\end{figure}

\end{example}

In the next example we show that Theorem \ref{thm:generalization} can be used to show that also the curse of dimensionality can be overcome in the training set size. 
\begin{example}[High-dimensional heat equation]
We consider the high-dimensional (linear) heat equation (Example \ref{def:semilinear-heat}), which is an example of a linear Kolmogorov equation, for which we have proven that the CoD can be overcome in the approximation error in Example \ref{ex:kolmogorov-approx}. In this example, we show that the training set size only needs to grow at most polynomially in the input dimension $d$ to achieve a certain accuracy. Applying Theorem \ref{thm:generalization} to every term of the RHS of stability result for the heat equation (Theorem \ref{thm:stability-heat}) then gives us the following result.
\begin{theorem}\label{thm:generalization-heat}
Under the assumptions of Theorem \ref{thm:generalization} and given a training set of size $N_{\mathrm{int}}$ on $D\times [0,T]$, size $N_t$ on $D$ and $N_s$ on $\partial D \times [0,T]$, we find that there exist constants $C, \alpha >0$ that are independent of $d$ such that,
    \begin{eqnarray}
         \E{\cE(\theta^*(\S))^2} \leq Cd^\alpha \left(\E{\cE_T(\theta^*(\S),\S)} + \frac{\ln(N_{\mathrm{int}})}{N_{\mathrm{int}}^{2}}+\frac{\ln(N_t)}{N_t^{2}}+\frac{\ln(N_s)}{N_s^{4}}\right)
    \end{eqnarray}
\end{theorem}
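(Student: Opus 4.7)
The plan is to chain the stability estimate of Theorem \ref{thm:stability-heat} with the generic Monte-Carlo generalization bound of Theorem \ref{thm:generalization}, applied term by term to each of the three integrated residuals, and then to verify that all of the constants $n, R, c, \mathfrak{L}$ appearing on the right-hand side of Theorem \ref{thm:generalization} grow only polynomially in $d$ when $u_\theta$ is drawn from the network class of Example \ref{ex:kolmogorov-approx}. That last verification is where the CoD-overcoming claim really lives.

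First I would fix $v = u_{\theta^*(\S)}$ in Theorem \ref{thm:stability-heat} (with $C_f = 0$ for the linear heat equation) and take expectations in $\S$, obtaining
\begin{equation*}
\E{\cE(\theta^*(\S))^2} \leq C_1 \E{\norm{\rpde[v]}_{L^2(\Omega)}^2} + C_1 \E{\norm{\rt[v]}_{L^2(D)}^2} + C_1 C_2 \E{\norm{\rs[v]}_{L^2(\partial D \times [0,T])}},
\end{equation*}
where $C_1 = O(T^{1/2}+T\,e^{T/2})$ is $d$-independent and $C_2$ carries an $|\partial D|^{1/2}$ factor together with the $C^1$-norms of $u$ and $u_\theta$, each $O(\poly(d))$ under the regularity hypotheses of Example \ref{ex:kolmogorov-approx}. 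Next I would recognise each integrated squared residual as a generalization error in the sense of Theorem \ref{thm:generalization}, by choosing $\cF \equiv 0$ and $\cF_\theta = \cL[u_\theta]-f$ on $\Omega$ for the interior term and analogously for the other two. Applied term by term, Theorem \ref{thm:generalization} yields a bound of the shape
\begin{equation*}
\E{\norm{\rpde[v]}_{L^2(\Omega)}^2} \leq \E{\qu{N_{\mathrm{int}}}^{\Omega}[\rpde^2]} + \sqrt{\tfrac{2 c_{\mathrm{int}}^2 (n+1)}{N_{\mathrm{int}}}\ln(R \mathfrak{L}_{\mathrm{int}} \sqrt{N_{\mathrm{int}}})},
\end{equation*}
and similarly for the temporal- and spatial-boundary residuals. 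The spatial term, which enters the stability bound only to the first power, is first converted to an $L^2$-squared quantity through the Jensen step $\E{\norm{\rs}_{L^2}}\leq \sqrt{\E{\norm{\rs}_{L^2}^2}}$ before Theorem \ref{thm:generalization} is invoked. Summing, the three empirical averages reassemble $\cE_T(\theta^*(\S),\S)^2$ and the remaining gaps produce the three displayed decay terms in $N_{\mathrm{int}}, N_t, N_s$.

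The hard part will be the third step: establishing that $n, R, c$ and $\mathfrak{L}$ in Theorem \ref{thm:generalization} are simultaneously at most polynomial in $d$ for the networks of Example \ref{ex:kolmogorov-approx}. The parameter count $n$, the width, and the weight bound $R$ are polynomial in $d$ by the explicit construction recorded there. For the uniform bound $c$ on the squared residual, one combines Assumption \ref{ass:diff-operator} with the polynomial-in-$d$ control of $\norm{u}_{C^{(s,2)}}$ and with the $C^k$-derivative estimates for tanh networks underlying Theorem \ref{thm:approx-sobolev}, which together bound the relevant higher derivatives of $u_\theta$ polynomially in $d$. For the Lipschitz constant, the remark recorded below Theorem \ref{thm:generalization} gives $\ln \mathfrak{L} \lesssim (k+\ell)L \ln(dRW)$, so $\mathfrak{L}$ enters the generalization bound only through a polylogarithmic factor in $d$ and cannot spoil the scaling. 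Once all four quantities are controlled in this way, every prefactor can be absorbed into a single $Cd^\alpha$, producing the stated inequality.
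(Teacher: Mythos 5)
Your route --- instantiating the stability bound of Theorem \ref{thm:stability-heat} with $v=u_{\theta^*(\mathcal{S})}$, taking expectations, applying the Monte-Carlo generalization bound of Theorem \ref{thm:generalization} term by term (with the Jensen step to handle the boundary residual, which enters the stability estimate only to the first power), and then checking that $n$, $R$, $c$ and $\mathfrak{L}$ are polynomial in $d$ for the networks of Example \ref{ex:kolmogorov-approx} --- is exactly the paper's argument, which is stated there only as a one-line sketch with the detailed constants deferred to \citet{deryck2021pinn}. The one caveat is your claim that the gaps ``produce the three displayed decay terms'': Theorem \ref{thm:generalization} yields gaps of order $N^{-1/2}\sqrt{\ln(R\mathfrak{L}\sqrt{N})}$ for the interior and temporal terms and, after the square root forced by the first-power boundary term, of order $N_s^{-1/4}$ for the spatial term, rather than the printed $N_{\mathrm{int}}^{-2}$, $N_t^{-2}$, $N_s^{-4}$; this mismatch (which also makes the subsequent remark $N_s\gg N_t\approx N_{\mathrm{int}}$ sensible) lies in the theorem statement as printed, not in your reasoning.
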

A more general result for linear Kolmogorov equations, including a more detailed expression for $C$ and $\alpha$, can be found in \citet{deryck2021pinn}. A comparison with the corresponding result for the Navier-Stokes equations (Theorem \ref{thm:generalization-ns}) reveals: 
\begin{itemize}
    \item Just like for the Navier-Stokes equations, the total error scales with the \emph{square root} of the training error, $\cE(\theta) \sim \sqrt{\cE_T(\theta,\cS)}$ when the training set is large enough. 
    \item For the Navier-Stokes equations (with $d=2$ or $d=3$) we found that one needs to choose $N_{\mathrm{int}} \gg N_s \gg N_t$. However, in this case we find that much less training points in the interior are needed, as $N_s \gg N_t \approx N_{\mathrm{int}}$ is sufficient. 
\end{itemize}

As a first example, the authors of \citet{mishra2021estimates} consider the one-dimensional heat equation on the domain $[-1,1]$ with initial condition $\sin(\pi x)$ and final time $T=1$. The results can be seen in Figure \ref{fig:heat-error}. As one can see, both the generalization and training error decrease with increasing number of quadrature points. In particular, the relationship $\cE(\theta) \sim \sqrt{\cE_T(\theta,\cS)}$ is (approximately) observed as well. 

As a second example, we consider the one-dimensional heat equation with parametric initial condition, and where the parameter space is very high-dimensional. In more detail, the considered initial condition is parametrized by $\mu$ and given by, 
\begin{eqnarray}
    u_0(x;\mu) = \sum_{i=1}^d \frac{-1}{dm^2}\sin(m\pi(x-\mu_i)).
\end{eqnarray}
In Figure \ref{fig:heat-cod}, it is demonstrated that the generalization error $\cE_G$ does not grown exponentially  (but rather sub-quadratically) in the parameter dimension $d$, and that therefore the curse of dimensionality is mitigated. Even for very large $d$ the relative generalization error is less than $2\%$. 

\begin{figure}
\centering
\begin{minipage}{.5\textwidth}
  \centering
  \includegraphics[width=\linewidth]{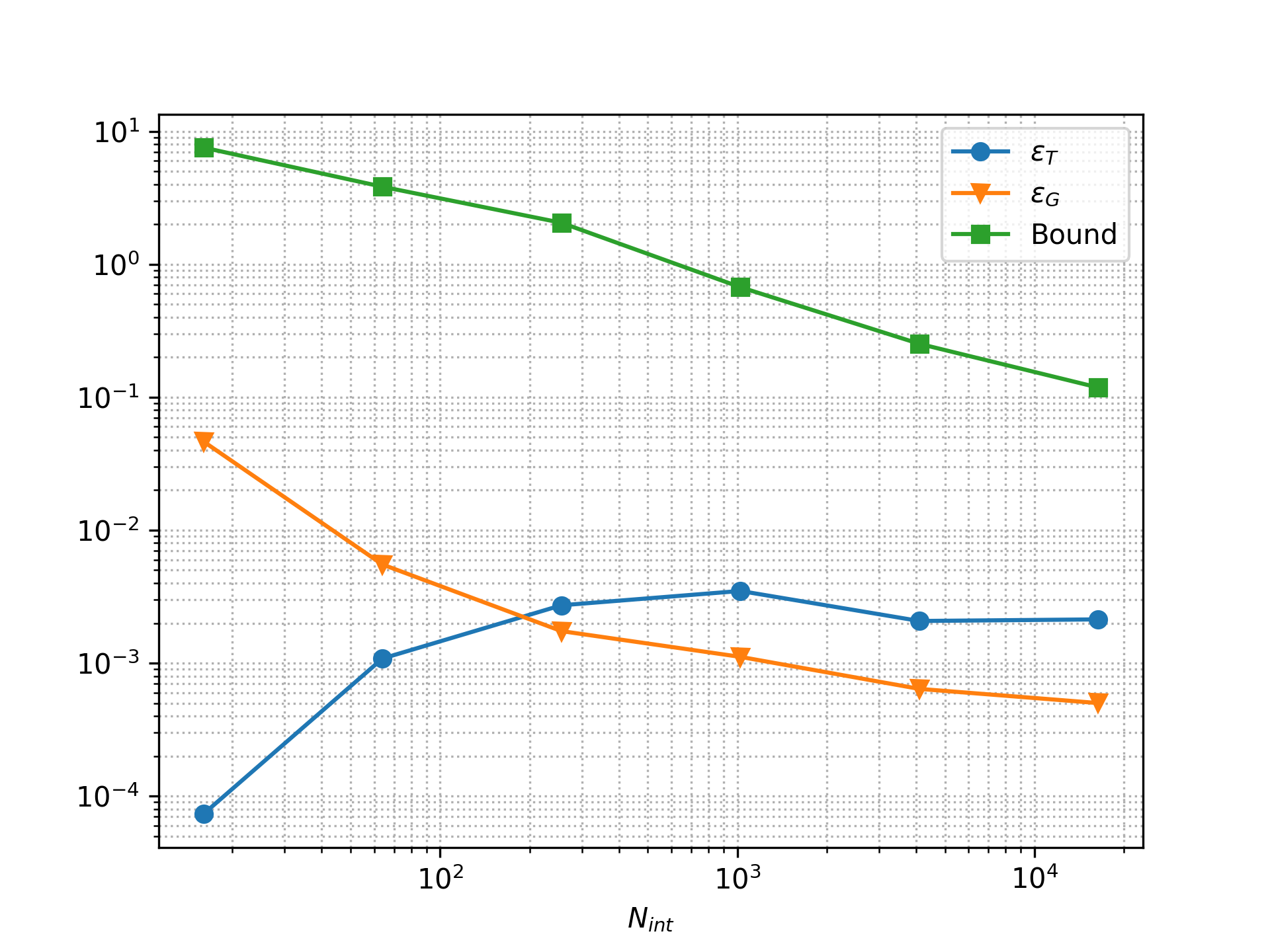}
\end{minipage}%
\begin{minipage}{.5\textwidth}
  \centering
  \includegraphics[width=\linewidth]{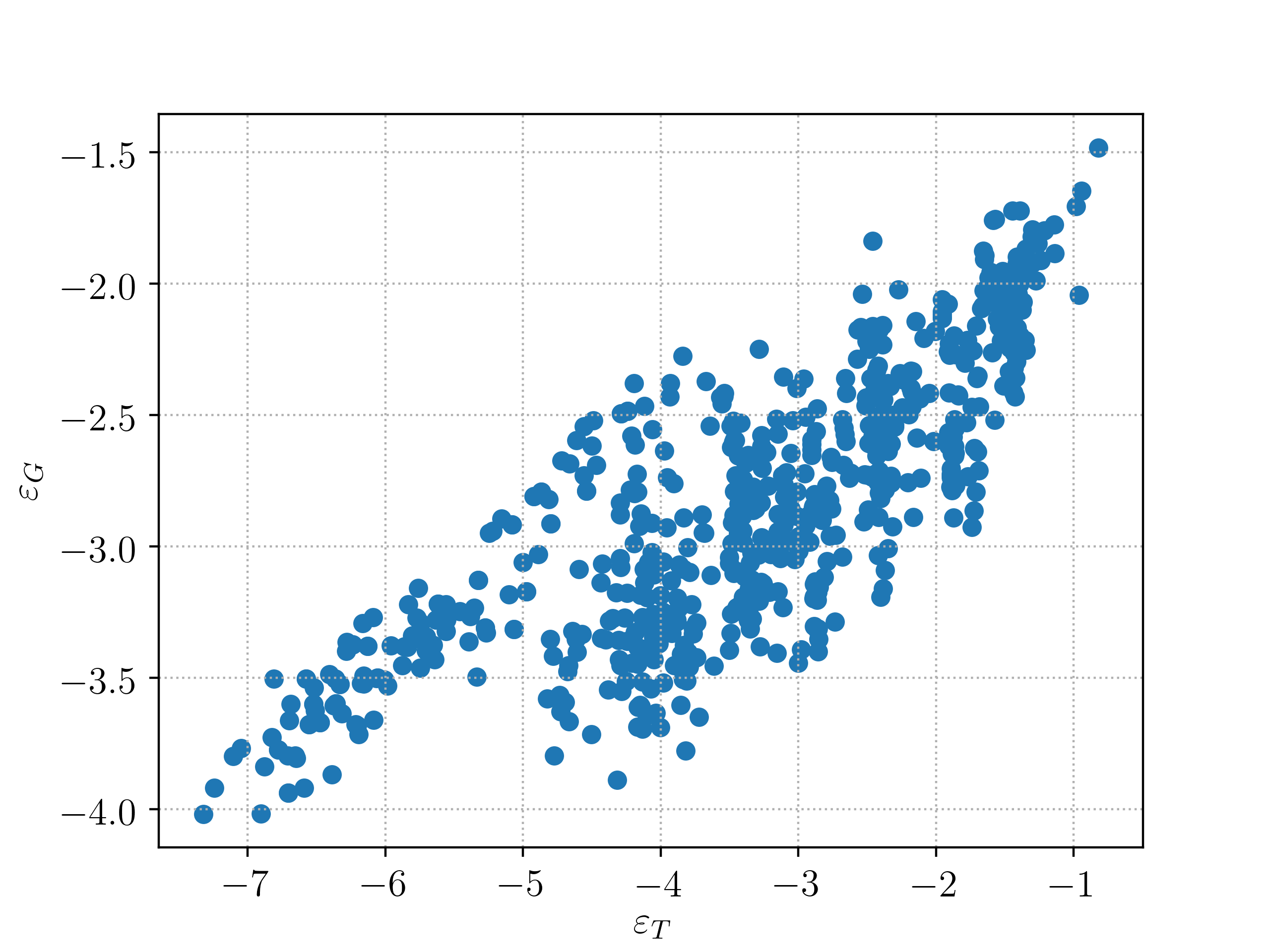}
\end{minipage}
\caption{Experimental results for the heat equation. The generalization error $\cE_G$ and the training error $\cE_T$ are shown in terms of the number of residual points $N_{\mathrm{int}}$ (\emph{left}; and compared with the bound from Theorem \ref{thm:generalization-heat}) and also in terms of each other (\emph{right}). Figure from \citet{molinaro_thesis}. }
        \label{fig:heat-error}
\end{figure}

\begin{figure}
    \centering
    \includegraphics[width=0.5\textwidth]{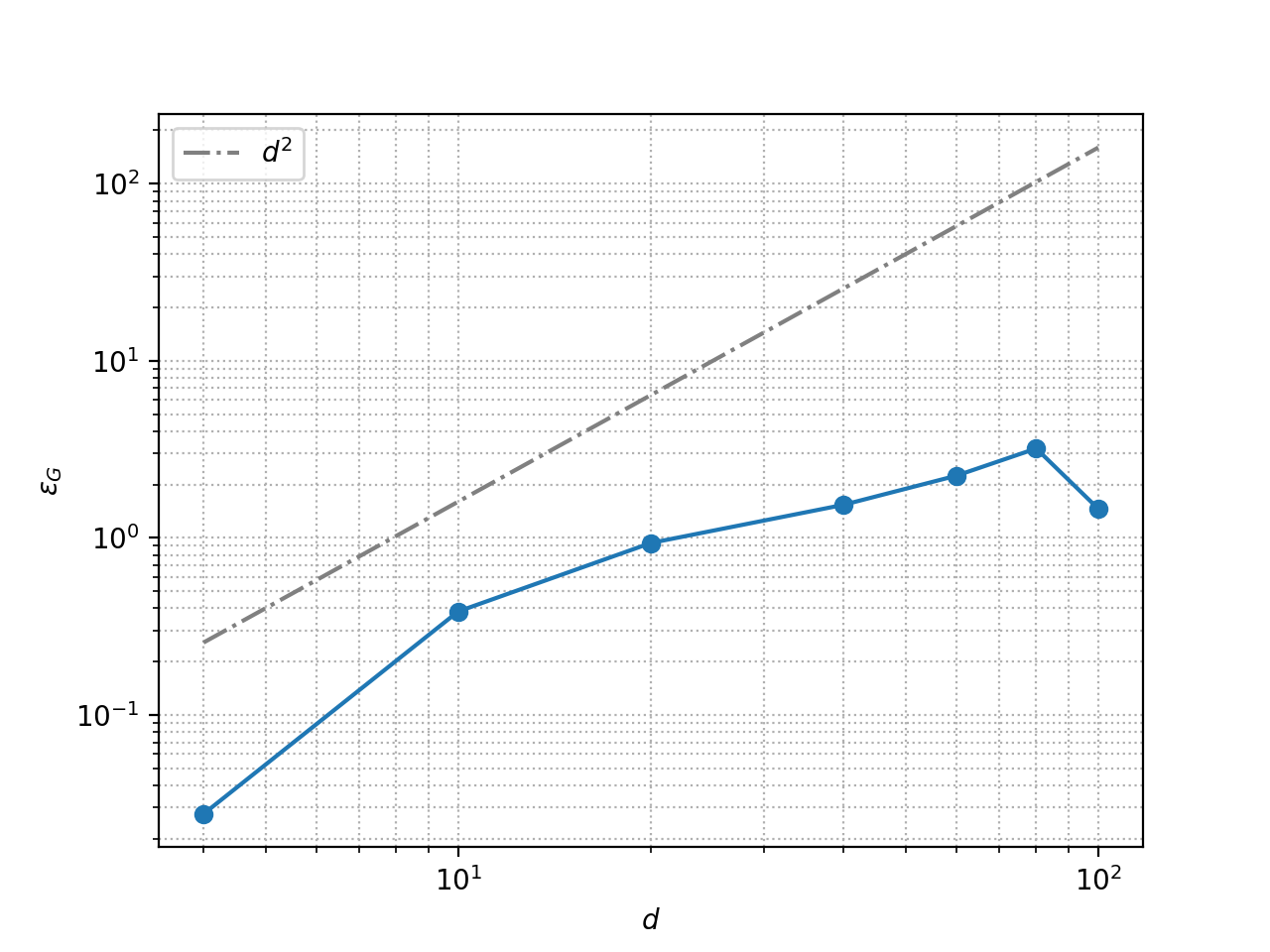}
    \caption{Relative generalization error $\cE_G$ (in percent) in terms of the parameter dimension $d$ for the heat equation with a parametrized initial condition. Figure from \citet{molinaro_thesis}.}
    \label{fig:heat-cod}
\end{figure}

\end{example}

\begin{example}[Scalar conservation laws]
We consider again weak PINNs for the scalar conservation laws. Recall from the stability result (Theorem \ref{ex:scl-stab}) that the generalization error is given by, 
\begin{equation}\label{eq:def-gen-error}
    \begin{split}
        \Eg(\theta,\varphi, c) &= - \int_0^1\int_0^T \left(\abs{u_{\theta^*(\S)}(x,t)-c}\partial_t\varphi(x,t) + Q[u_{\theta^*(\S)}(x,t); c]\partial_x\varphi(x,t) \right)dx dt\\
        &\qquad + \int_0^1 \abs{u_\theta(x,0)-u(x,0)} dx + \int_0^T \abs{u_\theta(0,t)-u_\theta(1,t)} dt. 
    \end{split}
\end{equation}
To this end, we consider the simplest case of random (Monte Carlo) quadrature and generate a set of collocation points, $\S = \{(x_i, t_i)\}_{i=1}^{N} \subset D\times [0,T]$, where all $(x_i, t_i)$ are iid drawn from the uniform distribution on $D\times [0,T]$. 
For a fixed $\theta\in\Theta$, $\varphi\in\Phi_\epsilon$, $c\in\cC$ and for this data set $\S$ we can then define the \emph{training error}, 
\begin{equation}\label{eq:def-training-error}
    \begin{split}
        \Et(\theta, \S,\varphi, c) &= -\frac{T}{N} \sum_{i=1}^{N} \left(\abs{u_\theta(x_i,t_i)-c}\partial_t\varphi(x_i,t_i) + Q[u_\theta(x_i,t_i); c]\partial_x\varphi(x_i,t_i) \right)\\
        &\qquad + \frac{T}{N}\sum_{i=1}^{N} \abs{u_\theta(x_i,0)-u(x_i,0)} +\frac{T}{N}\sum_{i=1}^{N}\abs{u_\theta(0,t_i)-u_\theta(1,t_i)}.
    \end{split}
\end{equation}
During training, one then aims to obtain neural network parameters $\theta^*_\S$, a test function $\varphi^*_\S$ and a scalar $c^*_\S$ such that
\begin{equation}
     \Et(\theta^*_\S, \S,\varphi^*_\S, c^*_\S)\approx \min_{\theta\in\Theta} \max_{\varphi\in\Phi_\epsilon}\max_{c\in\cC} \Et(\theta, \S,\varphi, c),
\end{equation}
for some $\epsilon>0$. We call the resulting neural network $u^*:=u_{\theta^*_\S}$ a weak PINN (\emph{wPINN}). If the network has width $W$, depth $L$ and its weights are bounded by $R$, and the parameter $\epsilon>0$ is as in Theorem \ref{thm:stability-scl}, then the following theorem \cite[Theorem 3.9 and Corollary 3.10]{deryck2022wpinns}. 
\begin{theorem}\label{thm:generalization-scl}
Let $\cC$ be the essential range of $u$ and let $N, L, W\in\mathbb{N}$, $R\geq \max\{1, T, \abs{\cC}\}$ with $L\geq 2$ and $N\geq 3$. Moreover let $u_\theta:D\times [0,T]\to\mathbb{R}$, $\theta\in \Theta,$ be tanh neural networks with at most $L-1$ hidden layers, width at most $W$ and weights and biases bounded by $R$. Assume that $\Eg_G$ and $\Et$ are bounded by $B\geq 1$. It holds with a probability of at least $1-\delta$ that,
\begin{equation}
    \Eg_G(\theta^*_\S,\varphi^*_\S, c^*_\S) \leq \Et(\theta^*_\S, \S,\varphi^*_\S, c^*_\S) + \frac{3BLW}{\sqrt{N}}\sqrt{\ln(\frac{C\ln(1/\epsilon)WRN}{\epsilon^3 \delta B})}. 
\end{equation}
\end{theorem}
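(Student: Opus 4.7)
The proof will follow the standard covering-number plus concentration scheme used for Theorem \ref{thm:generalization}, adapted to the min-max structure of wPINNs. Write $F(\theta,\varphi,c):=\Eg_G(\theta,\varphi,c)$ and $F_\S(\theta,\varphi,c):=\Et(\theta,\S,\varphi,c)$. Observe from \eqref{eq:def-gen-error} and \eqref{eq:def-training-error} that $F$ is (up to a constant factor) the expectation over the uniform distribution on $D\times[0,T]$ of the summands defining $F_\S$, and all three summands are a.s.\ bounded by $\bigO(B)$ by hypothesis. The plan is to control $\sup_{\theta,\varphi,c}|F_\S-F|$ and then evaluate at $(\theta_\S^*,\varphi_\S^*,c_\S^*)$.

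First, for a fixed triple $(\theta,\varphi,c)$ Hoeffding's inequality applied to each of the three empirical averages in $F_\S$ gives, with probability at least $1-\delta_0$,
\begin{equation*}
|F_\S(\theta,\varphi,c)-F(\theta,\varphi,c)|\;\le\; C_1 B\sqrt{\tfrac{\ln(1/\delta_0)}{N}}.
\end{equation*}
Next I would establish joint Lipschitz continuity of $F$ and $F_\S$ in $(\theta,\varphi,c)$. The integrand of $\cR$ couples the model $u_\theta$ with $\partial_{t,x}\varphi$ through $|u_\theta-c|$ and $Q[u_\theta;c]=\sgn(u_\theta-c)(f(u_\theta)-f(c))$, the latter being Lipschitz in its first argument with constant $2L_f$ even though $\sgn$ itself is not (because $f(u)-f(c)$ vanishes at $u=c$). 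Using that tanh and its first derivatives are $1$-Lipschitz and bounded, the standard argument (cf.\ proof of Theorem \ref{thm:generalization}; see also \citet{deryck2021pinn}) yields uniform bounds $\|u_\theta\|_{C^0}, \|\nabla u_\theta\|_{C^0}\le (WR)^{\bigO(L)}$ and analogous control of $\|u_\theta-u_{\theta'}\|_{C^0}$ in terms of $\|\theta-\theta'\|$. Combined with $|\varphi|_{W^{1,p}}=\bigO(\epsilon^{-3})$ on $\Phi_\epsilon$, this produces
\begin{equation*}
|F_\bullet(\theta,\varphi,c)-F_\bullet(\theta',\varphi',c')|\;\le\;\mathfrak{L}\bigl(\|\theta-\theta'\|+\|\varphi-\varphi'\|+|c-c'|\bigr),
\end{equation*}
for both $F_\bullet\in\{F,F_\S\}$, with $\ln\mathfrak{L}=\bigO(L\ln(WR))+3\ln(1/\epsilon)+\ln B$.

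Third, cover the admissible parameter set $\Theta\times(\text{params of }\varphi)\times\cC\subset[-R,R]^n\times[-R,R]^m\times\cC$ by an $\eta$-net of cardinality $\bigO((R/\eta)^{n+m}\,|\cC|/\eta)$, where $n+m=\bigO(LW(W+d))$. Choosing $\eta=B/(\mathfrak{L}\sqrt{N})$ makes the Lipschitz error along the grid $\le B/\sqrt{N}$, comparable to the Hoeffding fluctuation. Apply Step~1 at each grid point with $\delta_0=\delta/N_{\mathrm{cov}}$ and take a union bound: with probability at least $1-\delta$,
\begin{equation*}
\sup_{\theta,\varphi,c}|F_\S(\theta,\varphi,c)-F(\theta,\varphi,c)|\;\le\; C_2 B\sqrt{\tfrac{1}{N}\ln\!\bigl(N_{\mathrm{cov}}/\delta\bigr)}\;+\;\tfrac{B}{\sqrt{N}}.
\end{equation*}
Substituting $N_{\mathrm{cov}}\lesssim(R\mathfrak{L}\sqrt{N}/B)^{n+m}$ and $n+m=\bigO(LW^2)$, then bounding $\ln\mathfrak{L}$ as above, the leading constant collapses to $\bigO(BLW/\sqrt{N})$ and the logarithmic factor to $\sqrt{\ln(C\ln(1/\epsilon)WRN/(\epsilon^3\delta B))}$. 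Evaluating the uniform bound at the (random) minimizer $(\theta_\S^*,\varphi_\S^*,c_\S^*)$ yields the claim.

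The main technical obstacle is the Lipschitz step: one must certify that the $\sgn$ inside $Q$ does not spoil the estimate (which is fine because $Q$ is globally Lipschitz in its first slot, as noted above) and simultaneously track the $\epsilon^{-3}$ blow-up coming from $\Phi_\epsilon$ in such a way that it appears only inside the logarithm of the final bound, not as a prefactor. Once that accounting is done carefully, the remaining computations are routine.
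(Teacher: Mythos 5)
Your proposal is correct and follows essentially the same route as the paper's source argument (De Ryck--Mishra--Molinaro, wPINNs): parameter-space Lipschitz estimates for the Kruzkhov residual and boundary/initial terms (handling $Q$ via its global Lipschitz property and tracking the $\epsilon^{-3}$ growth of $\Phi_\epsilon$ inside the logarithm), an $\eta$-net covering of the network, test-function and $c$ parameters, Hoeffding plus a union bound, and evaluation of the uniform bound at the trained triple $(\theta^*_\S,\varphi^*_\S,c^*_\S)$. No substantive gap.
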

\end{example}

\section{Training}\label{sec:training}

Despite the generally affirmative answers to our first three questions (Q1-3) in the previous sections, significant problems have been identified with physics-informed machine learning. Arguably, the foremost problem lies with the \emph{training} these frameworks with (variants of) gradient descent methods. It has been increasingly observed that PINNs and their variants are \emph{slow, even infeasible}, to train even on certain model problems, with the training process either not converging or converging to unacceptably large loss values \citep{krishnapriyan_characterizing,Mos1,wang2021understanding,wang2022and}. These observations highlight the relevance of our fourth question. 

\begin{question}[Q4]
\emph{Can the training error $\Eg_T^*$ be made sufficiently sufficiently close to the true minimum of the loss function $\min_\theta \cJ(\theta, \cS)$?}
\end{question}

To answer this question, we must find out the reason behind the issues observed with training physics-informed machine learning algorithms? Empirical studies such as \cite{krishnapriyan_characterizing} attribute failure modes to the non-convex loss landscape, which is much more complex when compared to the loss landscape of supervised learning. Others like \cite{Mos1,Mos2} have implicated the well-known spectral bias \cite{SB1} of neural networks as being a cause for poor training whereas \cite{wang2021understanding,wang2021learning} used infinite-width NTK theory to propose that the subtle balance between the PDE residual and supervised components of the loss function could explain and possibly ameliorate training issues. Nevertheless, it is fair to say that there is still a paucity of principled analysis of the training process for gradient descent algorithms in the context of physics-informed machine learning. 

In what follows, we first demonstrate that when an affirmative answer to Question 4 is available (or assumed) it become possible to provide \emph{a priori error estimates} on the error of models learned by minimizing a physics-informed loss function (Section \ref{sec:glob-min}. Next, we derive precise conditions under which gradient descent for a physics-informed loss function can be approximated by a \emph{simplified gradient descent} algorithm, which amounts to the gradient descent update for a \emph{linearized} form of the training dynamics (Section \ref{sec:char-GD}). It will then turn out that the speed of convergence of the gradient descent is related to the \emph{condition number} of an operator, which in turn is composed of the \emph{Hermitian square} ($\cL^{\ast}\cL$) of the differential operator ($\cL$) of the underlying PDE and a \emph{kernel integral operator}, associated to the tangent kernel for the underlying model (Section \ref{sec:op-precond}). This analysis automatically suggests that \emph{preconditioning} the resulting operator is necessary to alleviate training issues for physics-informed machine learning (Section \ref{sec:precond}). 
Finally in Section \ref{sec:strategies}, we discuss how different preconditioning strategies can overcome training bottlenecks and also how existing techniques, proposed in the literature for improving training, can be viewed from this operator preconditioning perspective.

\subsection{Global minimum of the loss approximates PDE solution well}\label{sec:glob-min}

We demonstrate that the results of the previous sections can be used to prove \emph{a priori error estimates} for physics-informed learning, provided that one can find a global minimum of the physics-informed loss function \eqref{eq:minimization-problem}. 

We revisit the error decomposition \eqref{eq:error-decomposition} that was proposed in Sector \ref{sec:analysis}. There, it was shown that if one can answer Question 2 in an affirmative way, then it holds for any $\theta^*, \hat{\theta}\in \Theta$ that,
\begin{eqnarray}\label{eq:error-decomposition-2}
    \cE(\theta^*) &\leq& C \cE_G(\theta^*)^\alpha \\ &\leq& C \big({\Eg_G(\hat{\theta})} + 2 {\sup_{\theta\in\Theta}\abs{\Et(\theta,\S)-\Eg_G(\theta)}} + {\Et(\theta^*, \S)-\Et(\hat{\theta}, \S)}\big)^\alpha. 
\end{eqnarray}
For any $\epsilon>0$, we can now let $\hat{\theta}$ be the parameter corresponding to the model from Question 1, and hence we find that $\cE_G(\hat{\theta})<\epsilon$ if the model class is expressive enough. Next, we can use the results of Section \ref{sec:generalization} to deduce a lower limit on the size of the training set $\S$ (in terms of $\epsilon$) such that also ${\sup_{\theta\in\Theta}\abs{\Et(\theta,\S)-\Eg_G(\theta)}}<\epsilon$. Finally, if we assume that $\theta^* = \theta^*(\S)$ is the global minimizer of the loss function $\theta\mapsto\cJ(\theta, \S) = \cE_T(\theta, \S)$ \eqref{eq:minimization-problem}, then it must hold that $\Et(\theta^*, \S)\leq\Et(\hat{\theta}, \S)$ and hence $\Et(\theta^*, \S)-\Et(\hat{\theta}, \S)\leq 0$. As a result, we can infer from \eqref{eq:error-decomposition-2} that
\begin{eqnarray}\label{eq:3eps-bound}
    \cE^* \leq C(3\epsilon)^\alpha. 
\end{eqnarray}
Alternatively, we note that many of the a posteriori error bounds in Section \ref{sec:generalization} are of the form, 
\begin{eqnarray}\label{eq:general-a-posteriori}
     \textstyle\cE(\theta^*) \leq C \big(\Et(\theta^*, \S) + \epsilon\big)^\alpha, 
\end{eqnarray}
if the training set is large enough (depending on $\epsilon$). As before, it holds that 
\begin{eqnarray}\label{eq:a-priori-technique}
    \Et(\theta^*, \S) \leq \Et(\hat{\theta}, \S) \leq \cE_G(\hat{\theta}) + \sup_{\theta\in\Theta}\abs{\Et(\theta,\S)-\Eg_G(\theta)} \leq 2 \epsilon, 
\end{eqnarray}
such that in total we again find that $ \cE^* \leq C(3\epsilon)^\alpha$, in agreement with \eqref{eq:3eps-bound}. 

We first demonstrate this argument for the Navier-Stokes equations, and then show a similar result for the Poisson equation in which the curse of dimensionality is overcome.

\begin{example}[Navier-Stokes equations]
In Sections \ref{sec:approximation-error}, \ref{sec:stability} and \ref{sec:generalization} we have found affirmative answers to Questions Q1-Q3, and hence we should be apply to apply the above argument under the assumptions that an exact global minimizer to the loss function can be found. 

An a posteriori estimate as in \eqref{eq:general-a-posteriori} is provided by Theorem \ref{thm:generalization-ns}, but it is initially unclear whether this can also be used for an a priori estimate, given the dependence of the bound on the derivatives of the model. It turns out that this is possible for PINNs if the true solution is sufficiently smooth and the neural network and training set sufficiently large  \cite[Corollary 3.9]{deryck2021navierstokes}. 

\begin{corollary}
Let $\epsilon>0$, $T>0$, $d\in\mathbb{N}$, $k>6(3d+8) =: \gamma$, let $(u,p)\in H^k(\mathbb{T}^d\times [0,T])$ be the classical solution of the Navier-Stokes equation \eqref{eq:navier-stokes}, let the hypothesis space consists of PINNs for which the upper bound on the weights is at least $R \geq \epsilon^{-1/(k-\gamma)}\ln(1/\epsilon) $, the width is at least $W \geq \epsilon^{-(d+1)/(k-\gamma)}$ and the depth is at least $L \geq 3$,  let $(u_{\theta^*(\S)},p_{\theta^*(\S)})$ be the PINN that solves \eqref{eq:minimization-problem} and let the training set $\S$ satisfy  $N_t \geq \epsilon^{-d(1+\gamma/(k-\gamma))}$, $N_{\mathrm{int}} \geq \epsilon^{-2(d+1)(1+\gamma/(k-\gamma))}$ and $N_s \geq \epsilon^{-2d(1+\gamma/(k-\gamma))}$. It holds that
\begin{equation}
    \Vert u-u_{\theta^*(\S)}\Vert_{L^2(D\times [0,T])} = \bigO(\epsilon).
\end{equation}
\end{corollary}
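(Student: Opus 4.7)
The plan is to follow the \textit{a priori} recipe outlined in \eqref{eq:a-priori-technique} by combining three results already established in the paper: the approximation theorem for PINNs for Navier--Stokes (Theorem \ref{thm:pinn-approx-ns}), the stability bound (Theorem \ref{thm:stability-ns}), and the a posteriori/generalization estimate (Theorem \ref{thm:generalization-ns}). Since the minimizer $\theta^*(\S)$ is assumed to be global, $\Et(\theta^*(\S),\S) \leq \Et(\hat\theta,\S)$ for any comparison parameter $\hat\theta$, so the optimization error in the decomposition drops out. What remains is to exhibit a good $\hat\theta$, control the generalization gap at $\hat\theta$, and propagate through stability.

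I would first invoke Theorem \ref{thm:pinn-approx-ns} with an integer $N \sim \epsilon^{-2/(k-\gamma)}$ (up to logarithmic factors), producing a tanh PINN $\hat u$ of the required two-hidden-layer form whose PDE-, divergence-, and initial-condition residuals are each $\bigO(\ln^{2}(N)\,N^{-k+2})$ in $L^2$, with weights bounded by $\bigO(N^{\gamma_0}\ln N)$ for the $\gamma_0 = \max\{1,d(2+k^{2}+d)/n\}$ from that theorem. Choosing the parameter $n$ in the statement so that $\gamma_0$ together with the derivative-cost factors of the stability/generalization bounds fits inside the abbreviation $\gamma = 6(3d+8)$ gives widths $\lesssim N^{d+1} \sim \epsilon^{-(d+1)/(k-\gamma)}$ and weight bound $R \sim \epsilon^{-1/(k-\gamma)}\ln(1/\epsilon)$, matching the hypothesis. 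At this point Theorem \ref{thm:stability-ns} (or equivalently the $\Et$-free portion of Theorem \ref{thm:generalization-ns}) gives $\cE_G(\hat\theta) = \bigO(\epsilon^{2})$.

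Next, I apply Theorem \ref{thm:generalization-ns} at the trained parameter $\theta^*(\S)$:
\begin{equation*}
\cE(\theta^*(\S))^{2} \;\lesssim\; \Et(\theta^*(\S),\S) \;+\; N_t^{-2/d} + N_{\mathrm{int}}^{-1/(d+1)} + N_s^{-1/d},
\end{equation*}
where the hidden constant is an increasing polynomial in $\|u_{\theta^*(\S)}\|_{C^1}$ and $\|p_{\theta^*(\S)}\|_{C^0}$. Because the hypothesis class has width $W$ and weights bounded by $R$ as above, these $C^1$-type norms are bounded polynomially by $WR \sim \epsilon^{-\gamma/(k-\gamma)}$ (this is the precise origin of the exponent $1+\gamma/(k-\gamma) = k/(k-\gamma)$ appearing in the statement). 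Then I use the global-minimality $\Et(\theta^*(\S),\S) \leq \Et(\hat\theta,\S)$ together with
\begin{equation*}
\Et(\hat\theta,\S) \;\leq\; \cE_G(\hat\theta) \;+\; \bigl|\Et(\hat\theta,\S) - \cE_G(\hat\theta)\bigr|
\end{equation*}
and bound the quadrature gap at the \emph{fixed} network $\hat u$ by the standard midpoint-rule error estimate \eqref{eq:quad-error}, where again the multiplicative constants are polynomial in the $C^{2}$-norms of $\hat u$, hence in $\epsilon^{-\gamma/(k-\gamma)}$.

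Finally, the stated lower bounds $N_t \geq \epsilon^{-d(1+\gamma/(k-\gamma))}$, $N_{\mathrm{int}} \geq \epsilon^{-2(d+1)(1+\gamma/(k-\gamma))}$, $N_s \geq \epsilon^{-2d(1+\gamma/(k-\gamma))}$ are chosen so that each quadrature term, after multiplication by the $\epsilon^{-\gamma/(k-\gamma)}$-sized constants from the $C^1/C^2$ norms of $u_{\theta^*(\S)}$ and $\hat u$, is $\bigO(\epsilon^{2})$. Summing the contributions yields $\cE(\theta^*(\S))^{2} = \bigO(\epsilon^{2})$, hence the claim. The main obstacle is the self-referential book-keeping: the constants in the a posteriori estimate depend on norms of the network, which in turn grow with $\epsilon^{-1}$ because the approximation network must be larger to achieve smaller residuals. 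Balancing the approximation rate $N^{-k+2}$ against these growing constants is exactly what forces the threshold $k > \gamma = 6(3d+8)$ and converts the "natural" exponent $1/(k-2)$ into $1/(k-\gamma)$; verifying that the choice of $N$ above is consistent across the approximation, stability, and quadrature estimates is the technical crux.
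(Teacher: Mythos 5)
Your proposal is correct and follows essentially the same route as the paper: the paper's proof of this corollary simply combines the approximation result (Theorem \ref{thm:pinn-approx-ns}), the stability bound (Theorem \ref{thm:stability-ns}) and the a posteriori generalization estimate (Theorem \ref{thm:generalization-ns}) via the global-minimality argument \eqref{eq:a-priori-technique}, with the detailed bookkeeping of how the $C^1$/$C^2$-norm-dependent constants and the weight bound $R$ force the exponent shift from $k-2$ to $k-\gamma$ carried out in the cited reference. Your identification of that self-referential balancing as the technical crux is exactly the point the paper flags before stating the corollary.
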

\begin{proof}
    The results follows from combining results on the approximation error (Theorem \ref{thm:pinn-approx-ns}), the stability (Theorem \ref{thm:stability-ns}) and the generalization error (Theorem \ref{thm:generalization-ns}). 
\end{proof}
\end{example}

\begin{example}[Poisson's equation]
We revisit Poisson's equation with the variation residual, as previously considered in Example \ref{ex:barron-poisson} (Question 1) and Example \ref{ex:poisson-stab} (Question 2). Recall that if $f\in \cB^{0}(\Omega)$ (see Definition \ref{def:spectral-barron}) satisfies $\int_\Omega f(x)dx = 0$ than the corresponding unique solution $u$ to Poisson's equation $ - \Delta u = f$ with zero Neumann boundary conditions is $u\in \cB^2(\Omega)$. 

Now, assume that the training set $\S$ consists of $N$ iid randomly generated points in $\Omega$, and that one optimizes over a subset of shallow softplus neural networks of width $m$, corresponding to parameter space $\Theta^*\subset\Theta$ (a more precise definition can be found in \cite[Eq. 2.13]{lu2021priori}). As usual, we then define $u_{\theta^*(\S)}$ as the minimizer of the discretized energy residual $\cJ(\theta,\S)$. In this setting, one can prove the following a priori generalization result \cite[Remark 2.1]{lu2021priori}. 
\begin{theorem}
    Assume the setting defined above. If one sets $m = N^{\frac{1}{3}}$ then there exists a constant $C>0$ (independent of $m$ and $N$) such that,
    \begin{eqnarray}
        \E{\norm{u-u_{\theta^*(\S)}}^2_{H^1(\Omega)}} \leq C \frac{(\log(N))^2}{N^{\frac{1}{3}}}. 
    \end{eqnarray}
\end{theorem}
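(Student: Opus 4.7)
The plan is to combine the three ingredients one already has in hand: the approximation result for spectral Barron functions (Theorem \ref{thm:spectral-barron-approx}), the variational stability estimate (Example \ref{ex:poisson-stab}), and a generalization bound for the energy loss, and then to balance approximation against generalization in the width $m$. Since Example \ref{ex:barron-poisson} guarantees $u\in \cB^2(\Omega)$, and the variational stability gives
\begin{equation}
    \tfrac{1}{2}\|u_\theta-u\|_{H^1(\Omega)}^2 \leq \cE_G(\theta) = I[u_\theta]-I[u] \leq \tfrac{1}{2}\max\{2C_P+1,2\}\,\|u_\theta-u\|_{H^1(\Omega)}^2
\end{equation}
(using $\tilde u = u$), it suffices to prove $\E{\cE_G(\theta^*(\S))} \lesssim (\log N)^2/N^{1/3}$.

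For the approximation step I would feed $u\in\cB^2(\Omega)$ into Theorem \ref{thm:spectral-barron-approx} to get a shallow softplus network $\hat u_m$ of width $m$ with $\|u-\hat u_m\|_{H^1(\Omega)}\lesssim \|u\|_{\cB^2}(\log m)/\sqrt{m}$, and with the explicit weight bounds of that theorem guaranteeing that the corresponding parameter $\hat\theta_m$ lies in the chosen hypothesis class $\Theta^*$. The upper half of the variational stability immediately yields $\cE_G(\hat\theta_m)\lesssim (\log m)^2/m$.

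For the generalization step I would adapt the framework of Theorem \ref{thm:generalization} to the energy integrand
\begin{equation}
    h_\theta(x,x') := \tfrac{1}{2}|\nabla u_\theta(x)|^2 + \tfrac{1}{2}u_\theta(x)u_\theta(x') - f(x)u_\theta(x),
\end{equation}
so that $\cE_G(\theta) - \cE_G(\hat\theta_m)$ coincides with the difference of expectations of $h_\theta$ and $h_{\hat\theta_m}$ (up to the constant $I[u]$). A covering-number / Rademacher argument, together with the Barron-type weight constraints defining $\Theta^*$ in \citet{lu2021priori}, keeps $h_\theta$ uniformly bounded and Lipschitz in $\theta$ with constants polynomial in $m$, and hence yields $\E{\sup_{\theta\in\Theta^*}|\cE_T(\theta,\S)-\cE_G(\theta)|}\lesssim \sqrt{m\log N/N}$. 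Using $\cE_T(\theta^*(\S),\S) \leq \cE_T(\hat\theta_m,\S)$ (optimality) together with the error decomposition \eqref{eq:error-decomposition-2} then gives
\begin{equation}
    \E{\cE_G(\theta^*(\S))} \lesssim \frac{(\log m)^2}{m} + \sqrt{\frac{m\log N}{N}},
\end{equation}
and setting $m = N^{1/3}$ balances the two terms up to logarithms, producing the claimed $(\log N)^2/N^{1/3}$ bound on $\E{\|u-u_{\theta^*(\S)}\|_{H^1(\Omega)}^2}$ via the lower half of the stability estimate.

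The main obstacle will be the generalization step. The energy integrand involves both $|\nabla u_\theta|^2$ and the bilinear term $u_\theta(x)u_\theta(x')$ coming from the Neumann penalty $\tfrac{1}{2}(\int_\Omega u_\theta)^2$, so the relevant class is neither the network class itself nor a simple Lipschitz composition of it; to get the gap $\sqrt{m\log N/N}$ rather than something worse, one needs the shallow architecture together with the explicit weight constraints of \citet{lu2021priori} to propagate through the gradient and the U-statistic. If instead the gap scaled like $m/\sqrt{N}$ or worse, the balance with the approximation term would shift and the optimal rate would no longer be $N^{-1/3}$; so the technical heart of the proof is the careful tracking of how $m$ enters the complexity of the energy-loss class.
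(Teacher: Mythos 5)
Your proposal is correct and follows essentially the same route as the paper, which obtains this theorem by citing \citet[Remark 2.1]{lu2021priori}: spectral Barron approximation (Theorem \ref{thm:spectral-barron-approx}) for the approximation error, the two-sided energy--$H^1$ stability of Example \ref{ex:poisson-stab}, a Rademacher/covering bound for the energy-loss class under the Barron-norm weight constraints, and the optimality of $\theta^*(\S)$ to kill the optimization term, with $m=N^{1/3}$ balancing the $(\log m)^2/m$ and $\sqrt{m\log N/N}$ contributions. Your closing caveat about tracking the width dependence through the gradient term and the $\bigl(\int_\Omega u_\theta\bigr)^2$ U-statistic is exactly where the cited proof does its technical work, so no gap remains beyond what you already flagged.
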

In the above theorem, we can see that the convergence rate is independent of $d$, such that the curse of dimensionality is mitigated. The constant $C$ might depend on the Barron norm of $u$ and its dependence on $d$ might therefore not exactly clear. 
\end{example}

\begin{example}[Scalar conservation law]
    Using Theorem \ref{thm:stability-scl} and the above bound on the generalization gap, one can prove the following rigorous upper bound \cite[Corollary 3.10]{deryck2022wpinns} on the total $L^1$-error of the weak PINN, which we denote as,
\begin{equation}\label{eq:def-total-error}
\begin{split}
        \cE(\theta) &= \int_D \abs{u_\theta(x,T)-u(x,T)} dx. 
    \end{split}
\end{equation}

\begin{corollary}\label{cor:generalization}
Assume the setting of Theorem \ref{thm:generalization-scl}. It holds with a probability of at least $1-\delta$ that,
\begin{equation}
\label{eq:erest}
\begin{split}
     \cE^* &\leq C \Bigg[{\Et^*}+ {\frac{3BdLW}{\sqrt{N}}\sqrt{\ln(\frac{C\ln(1/\epsilon)WRN}{\epsilon^3 \delta B})}} 
     + {(1+\norm{u^*}_{C^1})\ln(1/\epsilon)^3\epsilon}\Bigg],
\end{split}
\end{equation}
where $\cE^* := \cE(\theta^*_\S)$ and $\Et^* := \Et(\theta^*_\S, \S,\varphi^*_\S, c^*_\S), c^*_\S)$. 
\end{corollary}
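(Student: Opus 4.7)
The plan is to combine the stability estimate for wPINNs (Theorem \ref{thm:stability-scl}) with the uniform generalization bound (Theorem \ref{thm:generalization-scl}) in a direct three-step chain, applied to the trained triple $(\theta^*_\S, \varphi^*_\S, c^*_\S)$.

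First I would apply Theorem \ref{thm:stability-scl} to $u^\ast = u_{\theta^*_\S}$. This gives
\begin{equation*}
    \cE^\ast \;\leq\; C\!\left(\int_0^1\!|u^\ast(x,0)-u(x,0)|\,dx \;+\; \max_{c\in\cC,\varphi\in\Phi_\epsilon}\!\cR(u^\ast,\varphi,c) \;+\; (1+\|u^\ast\|_{C^1})\ln(1/\epsilon)^3\epsilon \;+\; \int_0^T\!|u^\ast(1,t)-u^\ast(0,t)|\,dt\right).
\end{equation*}
The third summand is already the $\epsilon$-term that appears in the conclusion, so the remaining task is to bound everything else by the training error and the generalization gap.

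Next, I would use the fact that $(\theta^*_\S,\varphi^*_\S,c^*_\S)$ is the min–max solution of the training problem, hence $(\varphi^*_\S,c^*_\S)$ approximately maximizes $\cR(u^\ast,\varphi,c)$ over $\Phi_\epsilon\times\cC$. Assuming exact inner maximization (the optimization-gap contribution can be absorbed into $\Et^\ast$ as in \eqref{eq:error-decomposition}), the max term in the stability bound equals $\cR(u^\ast,\varphi^*_\S,c^*_\S)$. Combining this with the initial- and boundary-data summands from the stability bound, which are exactly the two supervised terms in the definition \eqref{eq:def-gen-error} of $\cE_G$, yields
\begin{equation*}
    \cE^\ast \;\leq\; C\,\cE_G(\theta^*_\S,\varphi^*_\S,c^*_\S) \;+\; C(1+\|u^\ast\|_{C^1})\ln(1/\epsilon)^3\epsilon.
\end{equation*}

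Finally I would invoke Theorem \ref{thm:generalization-scl} with the same triple $(\theta^*_\S,\varphi^*_\S,c^*_\S)$ to conclude that with probability at least $1-\delta$,
\begin{equation*}
    \cE_G(\theta^*_\S,\varphi^*_\S,c^*_\S) \;\leq\; \Et^\ast \;+\; \tfrac{3BLW}{\sqrt{N}}\sqrt{\ln\!\big(\tfrac{C\ln(1/\epsilon)WRN}{\epsilon^3\delta B}\big)}.
\end{equation*}
Substituting this into the previous display, and absorbing constants, gives \eqref{eq:erest}.

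The step I expect to be most delicate is the second one: carefully asserting that the \emph{maximum} over $\varphi\in\Phi_\epsilon$ and $c\in\cC$ appearing in the stability bound can be replaced by the value at the particular saddle-point iterates $(\varphi^*_\S,c^*_\S)$. This requires that the inner maximization is performed (at least approximately) exactly, and that $\Phi_\epsilon$ in the stability theorem matches the test-function class used in the min–max training — both of which are design choices baked into the wPINN formulation of \citet{deryck2022wpinns}, so no additional ingredient beyond the two cited theorems is needed; any inner-optimization slack is cleanly bookkept into $\Et^\ast$.
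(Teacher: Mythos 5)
Your overall route — apply the wPINN stability bound (Theorem \ref{thm:stability-scl}) to $u^\ast=u_{\theta^*_\S}$ and then control the resulting terms by the training error plus the generalization gap from Theorem \ref{thm:generalization-scl} — is exactly the intended argument. However, your step 2 contains a genuine flaw. From the fact that $(\varphi^*_\S,c^*_\S)$ (exactly or approximately) maximizes the \emph{discretized} objective $\Et(\theta^*_\S,\S,\cdot,\cdot)$ it does \emph{not} follow that it approximately maximizes the continuous residual $\cR(u^\ast,\cdot,\cdot)$ over $\Phi_\epsilon\times\cC$: the empirical and continuous maximizers can be different test functions, and the discrepancy $\max_{\varphi,c}\cR(u^\ast,\varphi,c)-\cR(u^\ast,\varphi^*_\S,c^*_\S)$ is a quadrature/generalization error, not an optimization error, so it cannot be ``absorbed into $\Et^\ast$'' without changing the statement (the right-hand side of \eqref{eq:erest} contains the actually computed training error plus an explicit gap term). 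Moreover, invoking Theorem \ref{thm:generalization-scl} only at the trained triple does not repair this, because the point at which you would need the gap bound, namely $(\theta^*_\S,\bar\varphi,\bar c)$ with $(\bar\varphi,\bar c)$ the continuous maximizer, is not the trained triple.

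The fix uses the same ingredients but in a different order, and it is how the cited source proceeds: the covering-number/Hoeffding argument behind Theorem \ref{thm:generalization-scl} yields a bound on $\abs{\Eg_G-\Et}$ that holds \emph{uniformly} over the whole class of $(\theta,\varphi,c)$ (uniformity is essential since both $\theta^*_\S$ and the continuous maximizer are data-dependent). One first writes, via Theorem \ref{thm:stability-scl}, $\cE^\ast \leq C\big(\max_{\varphi\in\Phi_\epsilon,c\in\cC}\Eg_G(\theta^*_\S,\varphi,c) + (1+\norm{u^\ast}_{C^1})\ln(1/\epsilon)^3\epsilon\big)$, noting that the initial- and boundary-data integrals combine with $\cR$ into $\Eg_G$ as in \eqref{eq:def-gen-error}; then the uniform gap bound gives $\max_{\varphi,c}\Eg_G(\theta^*_\S,\varphi,c)\leq \max_{\varphi,c}\Et(\theta^*_\S,\S,\varphi,c)+\mathrm{gap}$, and only now does the (approximate) exactness of the inner maximization in training identify $\max_{\varphi,c}\Et(\theta^*_\S,\S,\varphi,c)$ with $\Et^\ast$. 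With this reordering the rest of your chain (the $\epsilon$-term, the probability $1-\delta$, and the constants, up to the harmless $d$-factor appearing in \eqref{eq:erest}) goes through.
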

Unfortunately this result is in its current form not strong enough to lead to an a priori generalization error. Using \eqref{eq:a-priori-technique}, one can easily ensure that $\Et^*$ is small, and one can verify that $B, R, N$ and $L$ depend (at most) polynomially on $\epsilon^{-1}$ such that the second term in \eqref{eq:erest} can be made small for large $N$. The problem lies in the third term, as most available upper bounds on $\norm{u^*}_{C^1}$ will be overestimates that grow with $\epsilon^{-1}$. 
\end{example}

\subsection{Characterization of gradient descent}\label{sec:char-GD}

As mentioned in the introduction of Section \ref{sec:training}, for some types of PDEs significant problems during the training of the model have been observed. To investigate this issue, we first consider a linearized version of the gradient descent (GD) update formula. 

First, recall that physics-informed machine learning boils down to minimizing the physics-informed loss, as constructed in Section \ref{sec:quad}, i.e. to find, 
\begin{equation}
\label{eq:opt}
\theta^\ast(\S) = \argmin\limits_{\theta \in \Theta} L(\theta),
\end{equation}
where in this section we focus on loss functions of the form, 
\begin{equation}
\label{eq:loss}
L(\theta) =  \underbrace{\frac{1}{2}\int\limits_{\Omega} \left| \cL u(x) - f(x)\right|^2 dx}_{R(\theta)} + \underbrace{\frac{\lambda}{2}\int\limits_{\partial \Omega} \left| u(x) - g(x)\right|^2 d\sigma(x)}_{B(\theta)}. 
\end{equation}
As mentioned in Section \ref{sec:intro-optimization}, it is customary in machine learning \citep{DLbook} that the non-convex optimization problem \eqref{eq:opt} is solved with (variants of) a gradient descent algorithm which takes the following generic form,
\begin{equation}
\theta_{k+1} = \theta_k - \eta \nabla_\theta L(\theta_k),
    \label{eq:GD}
\end{equation}    
with descent steps $k > 0$, learning rate $\eta > 0$, loss $L$ and the initialization $\theta_0$ chosen randomly, following Section \ref{sec:initialization}. 

We want to investigate the \emph{rate of convergence} to ascertain the computational cost of training. 
As the loss $L$ (\ref{eq:GD}) is non-convex, it is  hard to rigorously analyze the training process in complete generality. One needs to make certain assumptions on (\ref{eq:GD}) to make the problem tractable. To this end, we fix the step $k$ in (\ref{eq:GD}) and start with the following Taylor expansion,
\begin{align}
\label{eq:texp}
    u_{}(x;\theta_k) = u_{}(x;\theta_0) + \nabla_{\theta} u_{}(x;\theta_0)^\top (\theta_k-\theta_0) + \tfrac{1}{2}(\theta_k-\theta_0)^\top H_k(x) (\theta_k-\theta_0).
\end{align}
Here, $H_k(x) := \textnormal{Hess}_\theta(u_{}(x;\tau_k \theta_0 + (1-\tau_k)\theta_k)$ is the Hessian of $u(\cdot\:,\theta)$ evaluated at intermediate values, with $0\leq \tau_k\leq 1$. 
Now introducing the notation $\phi_i(x) = \partial_{\theta_i} u(x;\theta_0)$, we define the matrix $\sA \in \R^{n\times n}$ and the vector $\sC,  \in \R^n$ as,
\begin{equation}
\label{eq:defn}
\begin{aligned}
\sA_{i,j} &=  \langle \cL \phi_i, \cL \phi_j \rangle_{L^2(\Omega)} + \lambda \langle  \phi_i, \phi_j \rangle_{L^2(\partial \Omega)},\\
\sC_{i} &=  \langle \cL u_{\theta_0}-f, \cL \phi_i \rangle_{L^2(\Omega)} +  \lambda \langle u_{\theta_0}-u, \phi_i \rangle_{L^2(\partial \Omega)}
\end{aligned}
\end{equation}
Substituting the above formulas in the GD algorithm (\ref{eq:GD}), we can rewrite it identically as,
\begin{align}\label{eq:GD-with-error}
    \theta_{k+1} = \theta_k -\eta \nabla_\theta L(\theta_k) = (I-\eta \sA)\theta_k + \eta(\sA \theta_0 + \sC) + \eta \epsilon_k,
\end{align}
where $\epsilon_k$ is an error term that collects all terms that depend on the Hessians $H_k$ and $\cL H_k$, and is explicitly given by,
\begin{equation}
\label{eq:defn-app}
\begin{aligned}
2\epsilon_k =&  \langle \cL u_{\theta_k}-f, \cL H_k(\theta_k-\theta_0) \rangle_{L^2(\Omega)} +  \lambda \langle u_{\theta_k}-u, H_k(\theta_k-\theta_0) \rangle_{L^2(\partial \Omega)} \\
&+ \langle (\theta_k-\theta_0)^\top \cL H_k(\theta_k-\theta_0), \cL \nabla_\theta u_{\theta_0}\rangle_{L^2(\Omega)} \\&+ \lambda \langle (\theta_k-\theta_0)^\top  H_k(\theta_k-\theta_0),  \nabla_\theta u_{\theta_0}\rangle_{L^2(\partial \Omega)}. 
\end{aligned}
\end{equation}
From this characterization of gradient descent, we clearly see that (\ref{eq:GD}) is related to a \emph{simplified} version of gradient descent given by,
\begin{equation}
\label{eq:SGD}
        \Tilde{\theta}_{k+1} = (I-\eta \sA)\Tilde{\theta}_k + \eta(\sA \Tilde{\theta}_0 + \sC), \quad \Tilde{\theta}_0 = \theta_0,
\end{equation}
modulo the \emph{error} term $\epsilon_k$ defined in (\ref{eq:defn-app}).

The following lemma \cite{deryck2023operator} shows that this simplified GD dynamics (\ref{eq:SGD}) approximates the full GD dynamics (\ref{eq:GD}) to desired accuracy as long as the error term $\epsilon_k$ is small. 
\begin{lemma}
\label{lem:1}
    Let $\delta>0$ be such that $\max_k \|\epsilon_k\|_2 \leq \delta$. Let 
    If $\sA$ is invertible and $\eta=c/\lambda_{\max}(\sA)$ for some $0<c<1$ then it holds for any $k\in\N$ that,
    \begin{align}
        \|\theta_k - \Tilde{\theta}_k\|_2 \leq { \kappa(\sA)\delta}/{c},
    \end{align}
  with $\lambda_{\min} := \min_j \abs{\lambda_j(\sA)}$, $\lambda_{\max} := \max_j \abs{\lambda_j(\sA)}$ and the \emph{condition number}, \begin{equation}
 \label{eq:cond}
 \kappa(\sA) = {\lambda_{\max}(\sA)}/{\lambda_{\min}(\sA)}.
 \end{equation}

\end{lemma}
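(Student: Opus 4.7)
The plan is to study the difference sequence $d_k := \theta_k - \tilde{\theta}_k$ through a linear recursion, solve the recursion in closed form, and then bound the result using the spectral structure of $\sA$.

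First, I would subtract \eqref{eq:SGD} from \eqref{eq:GD-with-error}. Since $\theta_0 = \tilde{\theta}_0$, the deterministic forcing $\eta(\sA\theta_0 + \sC)$ cancels, leaving the clean recursion
\begin{equation*}
d_{k+1} = (I - \eta\sA)\, d_k + \eta\,\epsilon_k, \qquad d_0 = 0.
\end{equation*}
Unrolling this yields $d_k = \eta\sum_{j=0}^{k-1}(I - \eta\sA)^{k-1-j}\epsilon_j$, so the triangle inequality together with the uniform bound $\|\epsilon_j\|_2 \le \delta$ reduces the entire task to estimating $\sum_{m=0}^{k-1}\|(I - \eta\sA)^m\|_2$.

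The structural observation that drives the estimate is that $\sA$, as defined in \eqref{eq:defn}, is a Gram matrix built from $L^2$ inner products of real-valued functions, hence symmetric and positive semi-definite; the invertibility hypothesis upgrades this to strict positive definiteness, so the spectrum of $\sA$ is contained in $[\lambda_{\min}(\sA), \lambda_{\max}(\sA)] \subset (0,\infty)$. With the step size $\eta = c/\lambda_{\max}(\sA)$ and $0 < c < 1$, the eigenvalues of the symmetric matrix $I - \eta\sA$ all lie in $[1-c,\,1-c/\kappa(\sA)] \subset [0,1)$, and the spectral theorem pins down $\|(I - \eta\sA)^m\|_2 = (1 - c/\kappa(\sA))^m$.

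Finally, summing the geometric series gives $\sum_{m=0}^{k-1}(1 - c/\kappa(\sA))^m \le \kappa(\sA)/c$, and combining with the prefactor $\eta\delta$ produces $\|d_k\|_2 \le \eta\delta\kappa(\sA)/c$. Since $\eta = c/\lambda_{\max}(\sA) \le 1$ in the natural regime $\lambda_{\max}(\sA) \ge c$, this delivers the stated bound $\kappa(\sA)\delta/c$ (indeed, in the sharper equivalent form $\delta/\lambda_{\min}(\sA) = \kappa(\sA)\delta/\lambda_{\max}(\sA)$). The argument is essentially bookkeeping: there is no deep obstacle, and the only point deserving a moment of care is exploiting the Gram (and hence symmetric) structure of $\sA$, which is what forces $\|I - \eta\sA\|_2$ to equal the spectral radius of $I - \eta\sA$ exactly, thereby avoiding any loss from non-normal behaviour that would otherwise inflate $\|(I-\eta\sA)^m\|_2$ by Jordan-block factors.
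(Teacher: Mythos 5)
Your proof is correct and follows essentially the same route as the argument the paper defers to in \citet{deryck2023operator}: subtract the two recursions to get $d_{k+1}=(I-\eta\sA)d_k+\eta\epsilon_k$ with $d_0=0$, unroll, use the symmetry and positive definiteness of the Gram-type matrix $\sA$ to obtain $\|(I-\eta\sA)^m\|_2=(1-c/\kappa(\sA))^m$, and sum the geometric series. The only point worth flagging is one you already note: the computation directly yields $\|\theta_k-\Tilde{\theta}_k\|_2\le \eta\delta\kappa(\sA)/c=\delta/\lambda_{\min}(\sA)=\kappa(\sA)\delta/\lambda_{\max}(\sA)$, which coincides with (and in fact sharpens) the stated bound $\kappa(\sA)\delta/c$ precisely when $\lambda_{\max}(\sA)\ge c$, i.e.\ $\eta\le 1$, so the discrepancy is only in the constant bookkeeping of the statement, not in the argument.
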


The key assumption in Lemma \ref{lem:1} is the smallness of the error term $\epsilon_k$ (\ref{eq:GD-with-error}) for all $k$. This is trivially satisfied for linear models $u_\theta(x) = \sum_k \theta_k \phi_k$ as $\epsilon_k = 0$ for all $k$ in this case. 
From the definition of $\epsilon_k$ \eqref{eq:defn-app}, we see that a more general sufficient condition for ensuring this smallness is to ensure that the Hessians of $u_\theta$ and $\cL u_\theta$ (resp. $H_k$ and $\cL H_k$ in (\ref{eq:texp})) are small during training. This amounts to requiring \emph{approximate linearity} of the parametric function $u(\cdot\:;\theta)$ near the initial value $\theta_0$ of the parameter $\theta$. 
For any differentiable parametrized function $f_\theta$, its linearity is equivalent to the constancy of the associated \emph{tangent kernel} \cite{liu2020linearity},
\begin{eqnarray}
    \Theta[f_\theta](x,y) := \nabla_\theta f_\theta(x) ^\top \nabla_\theta f_\theta(y).
\end{eqnarray}
Hence, it follows that if the tangent kernel associated to $u_\theta$ and $\cL u_\theta$ is (approximately) constant along the optimization path, then the error term $\epsilon_k$ will be small. 

For neural networks this entails that the \emph{neural tangent kernels} (NTK) $\Theta[u_{\theta}]$ and $\Theta[\cL u_{\theta}]$ stay approximately constant along the optimization path. The following informal lemma \cite{deryck2023operator}, based on \cite{wang2022and}, confirms that this is indeed the case for wide enough neural networks.

\begin{lemma}\label{lem:NTK-constant}
For a neural network $u_\theta$ with one hidden layer of width $m$ and a linear differential operator $\cL$ it holds that,
 $\lim_{m\to\infty} \Theta[u_{\theta_k}] = \lim_{m\to\infty} \Theta[u_{\theta_0}]$ and $\lim_{m\to\infty} \Theta[\cL u_{\theta_k}] = \lim_{m\to\infty} \Theta[\cL u_{\theta_0}]$ for all $k$. 
Consequently, the error term $\epsilon_k$ (\ref{eq:GD-with-error}) is small for wide neural networks, $ \lim_{m\to\infty} \max_k \|\epsilon_k\|_2 =0$. 
\end{lemma}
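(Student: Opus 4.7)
The plan is to adapt the standard neural tangent kernel (NTK) theory of lazy training to the physics-informed setting, exploiting the linearity of $\cL$. Throughout, I adopt the NTK parameterization $u_\theta(x) = \tfrac{1}{\sqrt{m}} \sum_{i=1}^m a_i\, \sigma(w_i^\top x + b_i)$ with $a_i, w_i, b_i$ drawn i.i.d.\ from centered Gaussians at initialization, and I assume $\sigma$ is sufficiently smooth that $\cL\sigma$ is well-defined pointwise.

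First I would verify kernel convergence at initialization. For fixed $(x,y)$, the quantity $\Theta[u_{\theta_0}](x,y) = \nabla_\theta u_{\theta_0}(x)^\top \nabla_\theta u_{\theta_0}(y)$ is, thanks to the $1/\sqrt{m}$ prefactor, an average of $m$ i.i.d.\ random terms, so by the law of large numbers it converges almost surely as $m\to\infty$ to a deterministic kernel $\Theta^\infty(x,y)$. Because $\cL$ is linear in $u$ and acts only on the spatial variable, $\partial_{\theta_i}(\cL u_\theta) = \cL(\partial_{\theta_i} u_\theta)$, and hence $\Theta[\cL u_{\theta_0}](x,y) = \cL_x \cL_y \Theta[u_{\theta_0}](x,y)$ termwise; the same LLN argument applied to derivatives of $\sigma$ yields a deterministic infinite-width limit for $\Theta[\cL u_{\theta_0}]$ as well.

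The bulk of the work is NTK constancy along the trajectory. I would run a self-consistent lazy-training argument, tentatively assuming $\|\theta_k - \theta_0\|_2 = \bigO(1)$ uniformly in $k$. Because each entry of the Hessian $H_k(x) = \mathrm{Hess}_\theta\, u(x;\cdot)$ involves a single weight product scaled by $1/\sqrt{m}$, one obtains $\|H_k(x)\|_{\mathrm{op}} = \bigO(1/\sqrt{m})$ and, under the smoothness hypothesis on $\sigma$, also $\|\cL H_k(x)\|_{\mathrm{op}} = \bigO(1/\sqrt{m})$. The Taylor expansion \eqref{eq:texp} then bounds $u_{\theta_k}$ by its linearization to $\bigO(1/\sqrt{m})$ accuracy, and the same holds for $\cL u_{\theta_k}$. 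Plugging these bounds back into the gradient descent update and using positive-definiteness of the limiting operator $\sA$ closes the loop: the simplified dynamics \eqref{eq:SGD} converges linearly, which feeds back into $\|\theta_k-\theta_0\|_2 = \bigO(1)$ uniformly in $k$ and therefore forces $\|\Theta[u_{\theta_k}] - \Theta[u_{\theta_0}]\| \to 0$ and $\|\Theta[\cL u_{\theta_k}] - \Theta[\cL u_{\theta_0}]\| \to 0$ as $m\to\infty$.

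Given NTK constancy, the bound on $\epsilon_k$ is immediate from the explicit formula \eqref{eq:defn-app}: each of the four summands pairs either $H_k$ or $\cL H_k$ (each of size $\bigO(1/\sqrt{m})$ in operator norm) against a residual or a Jacobian component (each of size $\bigO(1)$ in the relevant $L^2$ norm under lazy training), with at most two factors of $\theta_k-\theta_0 = \bigO(1)$. Hence $\|\epsilon_k\|_2 = \bigO(1/\sqrt{m})$ uniformly in $k$, giving $\lim_{m\to\infty}\max_k \|\epsilon_k\|_2 = 0$. The main obstacle is precisely the uniform-in-$k$ trajectory control: Gr\"onwall-type bounds degrade with $k$, so closing the argument requires $\lambda_{\min}(\sA) > 0$ in the limit to obtain linear contraction. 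Establishing this spectral gap rigorously (it can degenerate for certain residual-point configurations or activations) is the essential caveat justifying the informal nature of the statement.
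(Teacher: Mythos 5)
Your sketch is essentially the argument behind the paper's (informal) lemma: the paper gives no proof beyond citing \citet{deryck2023operator} and \citet{wang2022and}, and those works establish exactly what you outline --- convergence of the tangent kernels at initialization via the law of large numbers under NTK scaling, commutation of the linear operator $\cL$ with $\nabla_\theta$, and constancy along the optimization path via lazy-training bounds with Hessians of order $1/\sqrt{m}$, which then makes every term in \eqref{eq:defn-app} vanish. Your closing caveat about the uniform-in-$k$ trajectory control and the needed spectral gap of the limiting kernel is precisely the gap that keeps the statement at the level of an informal lemma, so nothing essential is missing relative to the paper.
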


Now, given the much simpler structure of (\ref{eq:SGD}), when compared to (\ref{eq:GD}), one can study the corresponding gradient descent dynamics explicitly and obtain the following convergence theorem \cite{deryck2023operator}.
\begin{theorem}\label{thm:convergence-simplified-gd}
    Let $\sA$ in (\ref{eq:SGD}) be invertible with condition number $\kappa(\sA)$ (\ref{eq:cond}) and let $0<c<1$. Set $\eta = c/\lambda_{\max}(\sA)$ and $\vartheta = \theta_0 + \sA^{-1}\sC$. It holds for any $k\in\N$ that,
    \begin{align}
    \label{eq:crate}
        \|\Tilde{\theta}_k - \vartheta\|_2 \leq \left(1-{c}/{\kappa(\sA)}\right)^{k}\| \theta_0 - \vartheta \|_2.
    \end{align}
\end{theorem}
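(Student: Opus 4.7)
The plan is to exploit the affine-linear structure of the simplified iteration \eqref{eq:SGD}: since the map $\theta \mapsto (I-\eta\sA)\theta + \eta(\sA\theta_0+\sC)$ is affine and its linear part $I-\eta\sA$ will turn out to be contractive, the iterates must converge geometrically to the unique fixed point, which I will identify as $\vartheta$. A direct substitution verifies this:
\begin{equation*}
(I-\eta\sA)\vartheta + \eta(\sA\theta_0+\sC) = \vartheta - \eta\sA\bigl(\theta_0+\sA^{-1}\sC\bigr) + \eta\sA\theta_0 + \eta\sC = \vartheta.
\end{equation*}
Subtracting $\vartheta$ from both sides of \eqref{eq:SGD} therefore collapses the inhomogeneous recursion to the homogeneous one $\tilde\theta_{k+1} - \vartheta = (I-\eta\sA)(\tilde\theta_k - \vartheta)$, which iterates (using $\tilde\theta_0 = \theta_0$) to $\tilde\theta_k - \vartheta = (I-\eta\sA)^k(\theta_0 - \vartheta)$.

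The second step is to bound $\|I-\eta\sA\|_2$. Reading off the definition \eqref{eq:defn}, for any $v\in\R^n$ one has
\begin{equation*}
v^\top \sA v = \Bigl\|\textstyle\sum_i v_i\, \cL\phi_i\Bigr\|_{L^2(\Omega)}^2 + \lambda \Bigl\|\textstyle\sum_i v_i\, \phi_i\Bigr\|_{L^2(\partial\Omega)}^2 \ge 0,
\end{equation*}
so $\sA$ is symmetric and positive semi-definite; combined with invertibility, this upgrades to positive definiteness, so every eigenvalue of $\sA$ is strictly positive and the absolute values implicit in $\kappa(\sA)$ may be dropped. With $\eta = c/\lambda_{\max}(\sA)$ and $0<c<1$, each eigenvalue $\lambda_j(\sA) \in [\lambda_{\min}(\sA), \lambda_{\max}(\sA)]$ produces an eigenvalue $1 - \eta\lambda_j(\sA) \in [1-c,\, 1 - c/\kappa(\sA)] \subset (0,1)$ of $I-\eta\sA$. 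By symmetry of $I-\eta\sA$, the spectral norm coincides with the largest absolute eigenvalue, giving $\|I-\eta\sA\|_2 = 1 - c/\kappa(\sA)$.

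Combining the two steps yields
\begin{equation*}
\|\tilde\theta_k - \vartheta\|_2 \le \|I-\eta\sA\|_2^{\,k}\, \|\theta_0 - \vartheta\|_2 \le \bigl(1 - c/\kappa(\sA)\bigr)^k\, \|\theta_0 - \vartheta\|_2,
\end{equation*}
which is exactly \eqref{eq:crate}. There is no genuine obstacle here; the only point requiring a moment of care is that symmetry and positive definiteness of $\sA$ are not stated as hypotheses of the theorem but must instead be read off from the Gram-matrix structure in \eqref{eq:defn} (together with $\lambda \ge 0$), after which everything reduces to a textbook contraction argument for affine iterations on a finite-dimensional normed space.
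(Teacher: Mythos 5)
Your proof is correct and is the standard argument one would expect here (the paper defers the proof to the cited reference, which proceeds the same way): identify $\vartheta$ as the fixed point of the affine iteration, reduce to the homogeneous recursion $\tilde\theta_{k+1}-\vartheta=(I-\eta\sA)(\tilde\theta_k-\vartheta)$, and bound the spectral norm of $I-\eta\sA$ using the symmetric positive-definite Gram structure of $\sA$ from \eqref{eq:defn}. Your explicit remark that positive definiteness must be read off from the Gram structure rather than from the stated hypotheses is a worthwhile point of care, and the eigenvalue computation $\|I-\eta\sA\|_2=1-c/\kappa(\sA)$ is exactly right.
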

An immediate consequence of the quantitative convergence rate (\ref{eq:crate}) is as follows: to obtain an error of size $\varepsilon$, i.e., $ \|\Tilde{\theta}_k - \vartheta\|_2 \leq \varepsilon$, we can readily calculate the number of GD steps $N(\varepsilon)$ as,
\begin{equation}
\label{eq:nsteps}
N(\varepsilon) = \ln(\varepsilon/\| \theta_0 - \vartheta \|_2) / \ln(1-c/\kappa(\sA)) = O\left(\kappa(\sA)\ln \tfrac{1}{\epsilon}\right).
\end{equation}
Hence, for a fixed value $c$, large values of the condition number $\kappa(\sA)$ will severely impede convergence of the simplified gradient descent (\ref{eq:SGD}) by requiring a much larger number of steps.

\subsection{Training and operator preconditioning}\label{sec:op-precond}

So far, we have established that, under suitable assumptions, the rate of convergence of the gradient descent algorithm for physics-informed machine learning boils down to the \emph{conditioning} of the matrix $\sA$ (\ref{eq:defn}). However, at first sight, this matrix is not very intuitive and we want to relate it to the differential operator $\cL$ from the underlying PDE (\ref{eq:pde}). To this end, we first introduce the so-called \emph{Hermitian square} $\cA$, given by,
\begin{equation}
\label{eq:hsq}
\cA = \cL^*\cL, 
\end{equation}
in the sense of operators, where $\cL^\ast$ is the \emph{adjoint operator} for the differential operator $\cL$. Note that this definition implicitly assumes that the adjoint $\cL^{\ast}$ exists and the Hermitian square operator $\cA$ is defined on an appropriate function space. As an example, consider as differential operator the Laplacian, i.e., $\cL u = -\Delta u$, defined for instance on $u \in H^1(\Omega)$, then the corresponding Hermitian square is $\cA u = \Delta ^2 u$, identified as the \emph{bi-Laplacian} that is well defined on $u \in H^2(\Omega)$. 

Next for notational simplicity, we consider a loss function that only consists of the integral of the squared PDE residual \eqref{eq:PDE-residual} and omit boundary terms in the following. Let $\cH$ be the span of the functions $\phi_k := \partial_{\theta_k} u(\cdot; \theta_0)$. Define the maps $T:\R^n \to \cH, v \to \sum_{k=1}^n v_k\phi_k$ and $T^*: L^2(\Omega) \to \R^n; f \to \{\langle\phi_k, f\rangle\}_{k=1, \dots, n}$. We define the following scalar product on $L^2(\Omega)$, 
\begin{align}
    \langle f, g \rangle_{\cH} := \langle f, TT^* g \rangle_{L^2(\Omega)} = \langle T^* f, T^* g \rangle_{\mathbb{R}^n}.
\end{align}
Note that the maps $T,T^{\ast}$ provide a correspondence between the continuous space ($L^2$) and discrete space ($\cH$) spanned by the functions $\phi_k$. This continuous-discrete correspondence allows us to relate the conditioning of the matrix $\sA$ in (\ref{eq:defn}) to the conditioning of the Hermitian square operator $\cA = \cL^{\ast} \cL$ through the following theorem \cite{deryck2023operator}. 
\begin{theorem}\label{thm:connection-matrix-operator}
    It holds for the operator $\cA\circ TT^*:L^2(\Omega)\to L^2(\Omega)$ that $\kappa(\sA) \geq \kappa(\cA \circ T T^*)$. Moreover, if the Gram matrix $\langle \phi, \phi \rangle_{\cH}$ is invertible then 
    equality holds, i.e.,  $\kappa(\sA) = \kappa(\cA \circ T T^*)$.
\end{theorem}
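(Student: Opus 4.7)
The plan is to exploit the factorization $\sA = T^*\cA T$ and then to compare $T^*(\cA T)$ with $(\cA T)T^* = \cA TT^*$ via the classical shuffle identity that, for any maps $A:X\to Y$ and $B:Y\to X$, the products $AB$ and $BA$ share the same nonzero spectrum with the same algebraic multiplicities. Concretely, substituting $\cA=\cL^*\cL$ into the definition $\sA_{ij}=\langle\cL\phi_i,\cL\phi_j\rangle_{L^2}$ from \eqref{eq:defn} gives $\sA_{ij}=\langle\phi_i,\cA\phi_j\rangle_{L^2}$, which in terms of the maps $T$ and $T^*$ of the statement reads $\sA=T^*\cA T$ on $\R^n$. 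Setting $A=T^*$ and $B=\cA T$ yields $AB=\sA$ and $BA=\cA TT^*$, so the nonzero spectra of these two operators coincide. For the boundary term in \eqref{eq:loss}, the same argument applies verbatim with $\cL$ replaced by the trace operator and is absorbed into $\cA$ without changing the structure of the proof.

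Next I would carefully compare the two condition numbers. Because $TT^*$ has rank at most $n$ on the infinite dimensional space $L^2(\Omega)$, the operator $\cA TT^*$ has a non-trivial kernel; the only meaningful reading of $\kappa(\cA TT^*)$ is therefore the ratio $\lambda_{\max}^{\neq 0}/\lambda_{\min}^{\neq 0}$ over its \emph{nonzero} eigenvalues. By the spectral identity above, these extrema agree with the extrema of the nonzero eigenvalues of $\sA$. On the other hand $\kappa(\sA)=\lambda_{\max}(\sA)/\lambda_{\min}(\sA)$ where $\lambda_{\min}(\sA)=\min_j|\lambda_j(\sA)|$ is the minimum over \emph{all} eigenvalues, and is thus bounded above by $\lambda_{\min}^{\neq 0}(\sA)=\lambda_{\min}^{\neq 0}(\cA TT^*)$. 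Combining both observations gives $\kappa(\sA)\geq \kappa(\cA TT^*)$.

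For the equality assertion, a short computation shows that the Gram matrix with entries $\langle\phi_i,\phi_j\rangle_{\cH}=\langle\phi_i,TT^*\phi_j\rangle_{L^2}$ equals $M^2$, where $M\in\R^{n\times n}$ is the ordinary Gram matrix with entries $\langle\phi_i,\phi_j\rangle_{L^2}$; indeed $T^*\phi_j=[\langle\phi_k,\phi_j\rangle_{L^2}]_k$ and hence $TT^*\phi_j=\sum_k M_{kj}\phi_k$. Invertibility of $\langle\phi,\phi\rangle_{\cH}$ is therefore equivalent to invertibility of $M=T^*T$, i.e. to $T$ being injective (equivalently, the $\phi_k$ being linearly independent). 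Under this hypothesis the identity $\sA=T^*\cL^*\cL T$ gives $v^\top \sA v=\|\cL Tv\|_{L^2}^2$, so $\sA v=0$ forces $\cL Tv=0$; invoking the injectivity of $\cL$ that is implicit in the well-posedness of the PDE \eqref{eq:pde}, one concludes $Tv=0$ and then $v=0$. Hence $\sA$ has trivial kernel, so $\lambda_{\min}(\sA)=\lambda_{\min}^{\neq 0}(\sA)=\lambda_{\min}^{\neq 0}(\cA TT^*)$ and the inequality collapses to equality.

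The step I expect to require the most care is the rigorous use of the $AB$/$BA$ nonzero-spectrum identity when one factor $\cA=\cL^*\cL$ is an unbounded differential operator on $L^2(\Omega)$ and the other is the finite-rank map $TT^*$; the elementary matrix-theoretic proof only covers bounded or finite dimensional operators. I would resolve this by noting that any nonzero eigenvector $f$ of $\cA TT^*$ with eigenvalue $\mu\neq 0$ satisfies $\mu f=\cA TT^*f\in \cA T(\R^n)$, so $f$ lies in the finite dimensional invariant subspace $\cA T(\R^n)\subset L^2(\Omega)$. Restricting the eigenvalue problem to that subspace reduces everything to a finite dimensional setting where the shuffle identity is classical, and the remaining portion of the $L^2$ spectrum consists purely of the zero eigenvalue, which has already been excluded by our convention for $\kappa(\cA TT^*)$.
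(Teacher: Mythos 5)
The paper itself does not prove this theorem — it is quoted from \citet{deryck2023operator} — but your argument reproduces what is essentially the intended proof: factor $\sA = T^*\cA T$, invoke the $AB$/$BA$ identity for nonzero spectra with $A=T^*$ and $B=\cA T$, and observe that the two condition numbers can only differ through the treatment of the zero eigenvalue. Your handling of the two delicate points (the unboundedness of $\cA$ in the shuffle identity, resolved by restricting to the finite-dimensional invariant subspace $\cA T(\R^n)$; and the identification of the $\cH$-Gram matrix with $M^2$, hence of its invertibility with injectivity of $T$) is correct.

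One point deserves emphasis: for the equality you need $\ker\sA=\{0\}$, i.e.\ injectivity of $\cL T$, and Gram invertibility only gives injectivity of $T$. Your appeal to "injectivity of $\cL$ implicit in well-posedness" is an extra hypothesis not contained in the theorem statement, and it is genuinely needed for the boundary-free loss considered here: in the paper's own Fourier-features example the interior-residual block is $\sD$ with $\sD_{00}=0$, so $\sA$ is singular despite the features being linearly independent, because the constant feature lies in $\ker\cL$. In the full setting of Remark \ref{rem:connection-matrix} (with $\lambda>0$) the quadratic form becomes $\|\cL Tv\|^2_{L^2(\Omega)}+\lambda\|Tv\|^2_{L^2(\partial\Omega)}$, and uniqueness for the homogeneous boundary-value problem then yields $\cL T$ injective from Gram invertibility alone; in the residual-only setting you are right that an additional injectivity assumption on $\cL$ restricted to the range of $T$ must be imposed, and flagging this is a genuine (minor) sharpening of the statement rather than a defect of your proof.
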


Thus, we show that the conditioning of the matrix $\sA$ that determines the speed of convergence of the simplified gradient descent algorithm (\ref{eq:SGD}) for physics-informed machine learning is intimately tied with the conditioning of the operator $\cA \circ T T^*$. This operator, in turn, composes the Hermitian square of the underlying differential operator of the PDE (\ref{eq:pde}), with the so-called \emph{Kernel Integral operator} $TT^{\ast}$, associated with the (neural) tangent kernel $\Theta[u_\theta]$. Theorem \ref{thm:connection-matrix-operator} implies in particular that if the operator $\cA \circ T T^*$ is ill-conditioned, then the matrix $\sA$ is ill-conditioned and the gradient descent algorithm (\ref{eq:SGD}) for physics-informed machine learning will converge very slowly.

\begin{remark}\label{rem:connection-matrix}
One can readily generalize Theorem \ref{thm:connection-matrix-operator} to the setting with boundary conditions i.e., with $\lambda > 0$ in the loss (\ref{eq:loss}). In this case one can prove for the operator $\cA = \1_{\mathring{\Omega}} \cdot \cL^*\cL + \lambda \1_{{\partial\Omega}} \cdot \Id$,
and its corresponding matrix $\sA$ (as in (\ref{eq:defn})) that $\kappa(\sA) \geq \kappa(\cA \circ T T^*)$, where equality holds if the relevant Gram matrix is invertible. More details can be found in \cite[Appendix A.6]{deryck2023operator}.
\end{remark} 

It is instructive to compare physics-informed machine learning with standard supervised learning through the prism of the analysis presented here. It is straightforward to see that for supervised learning, i.e., when the physics-informed loss is replaced with the supervised loss $\tfrac{1}{2}\|u-u_{\theta}\|^2_{L^2(\Omega}$ by simply setting $\cL = \Id$, the corresponding operator in Theorem \ref{thm:connection-matrix-operator} is simply the kernel integral operator $TT^{\ast}$, associated with the tangent kernel as $\cA = \Id$. Thus, the complexity in training physics-informed machine learning models is entirely due to the spectral properties of the Hermitian square $\cA$ of the underlying differential operator $\cL$.

\subsection{Preconditioning to improve training in physics-informed machine learning}\label{sec:precond}

Having established in the previous section that, under suitable assumptions, the speed of training physics-informed machine learning models depends on the condition number of the operator $\cA \circ TT^{\ast}$ or, equivalently the matrix $\sA$ (\ref{eq:defn}), we now investigate whether this operator is ill-conditioned and if so, how can we better condition it by reducing the condition number. The fact that $\cA \circ TT^{\ast}$ (equiv. $\sA$) is very poorly conditioned for most PDEs of practical interest will be demonstrated both theoretically and empirically below. This makes \emph{preconditioning}, i.e., strategies to improve (reduce) the conditioning of  the underlying operator (matrix), a key component in improving training for physics-informed machine learning models. 

\subsubsection{General framework for preconditioning.} 
Intuitively, reducing the condition number of the underlying operator $\cA \circ TT^{\ast}$ amounts to finding new maps $\Tilde{T}, \Tilde{T}^*$ for which the kernel integral operator $\Tilde{T} \Tilde{T}^*$ is approximately equal to $ \cA^{-1}$, i.e., choosing the architecture and initialization of the parametrized model $u_\theta$ such that the associated kernel integral operator $\Tilde{T} \Tilde{T}^*$ is an (approximate) Green's function for the Hermitian square $\cA$ of the differential operator $\cL$. 
For an operator $\cA$ with well-defined eigenvectors $\psi_k$ and eigenvalues $\omega_k$, the ideal case  $\Tilde{T} \Tilde{T}^* = \cA^{-1}$ is realized when $\Tilde{T} \Tilde{T}^* \phi_k = \tfrac{1}{\omega_k} \psi_k$. This can be achieved by transforming $\phi$ linearly with a (positive definite) matrix $\sP$ such that $(\sP^\top \phi)_k = \tfrac{1}{\omega_k} \psi_k$, which corresponds to the change of variables $\cP u_\theta := u_{\sP \theta}$. 
Assuming the invertibility of $\langle \phi, \phi\rangle_{\cH}$,  Theorem \ref{thm:connection-matrix-operator}  then shows that $\kappa(\cA \circ \Tilde{T} \Tilde{T}^*) = \kappa(\Tilde{\sA})$ for a new matrix $\Tilde{\sA}$, which can be computed as,
\begin{align}
     \Tilde{\sA} := \langle \cL \nabla_\theta u_{\sP\theta_0}, \cL  \nabla_\theta u_{\sP\theta_0}\rangle_{L^2(\Omega)} =   \langle \cL  \sP^\top \nabla_\theta u_{\theta_0}, \cL \sP^\top \nabla_\theta u_{\theta_0}\rangle_{L^2(\Omega)} 
     = \sP^\top \sA \sP. 
\end{align}

This implies a general approach for preconditioning, namely linearly transforming the parameters of the model, i.e. considering $\cP u_\theta := u_{\sP \theta}$ instead of $u_\theta$, which corresponds to replacing the matrix $\sA$ by its preconditioned variant $\Tilde{\sA} = \sP^\top \sA \sP$. 
The new simplified GD update rule is then 
\begin{align}
    \theta_{k+1} = \theta_k -\eta \Tilde{\sA} (\theta_k-\theta_0) +\sC.
\end{align} 
Hence, finding $\Tilde{T} \Tilde{T}^* \approx \cA^{-1}$, which is the aim of preconditioning, reduces to constructing a matrix $\sP$ such that $1 \approx \kappa(\Tilde{\sA}) \ll \kappa(\sA)$. We emphasize that $\Tilde{T} \Tilde{T}^*$ need not serve as the exact inverse of $\mathcal{A}$; even an approximate inverse can lead to significant performance improvements, this is the foundational principle of preconditioning.

Moreover, performing gradient descent using the transformed parameters $\hat{\theta}_k := \sP\theta_k$ yields, 
\begin{align}
    \hat{\theta}_{k+1} = \sP\theta_{k+1} = \sP\theta_k -\eta \sP\sP^\top\nabla_\theta L(\sP\theta_k) = \hat{\theta}_k - \eta \sP\sP^\top \nabla_\theta L(\hat{\theta_k}). 
\end{align}
Given that any positive definite matrix can be written as $\sP\sP^\top$, this shows that linearly transforming the parameters is equivalent to preconditioning the gradient of the loss by multiplying with a positive definite matrix. Hence, parameter transformations are all that are needed in this context. 

\subsubsection{Preconditioning {linear} physics-informed machine learning models.}
We know rigorously analyze the effect of preconditioning linear parametrized models 
of the form  $u_\theta(x) = \sum_k \theta _k \phi_k(x)$, where $\phi_1, \ldots, \phi_n$ are any smooth functions, as introduced in Section \ref{sec:linear-models}. A corresponding preconditioned model, as explained above, would have the form $\Tilde{u}_\theta(x) = \sum_k (\sP \theta)_k \phi_k(x)$, where $\sP\in\R^{n\times n}$ is the preconditioner. We motivate the choice of this 
preconditioner with a simple, yet widely used example.


Our differential operator is the one-dimensional Laplacian $\cL = \frac{d^2}{dx^2}$, defined on the domain $(-\pi,\pi)$, for simplicity with periodic zero boundary conditions. Consequently, the corresponding PDE is the Poisson equation (Example \ref{def:poisson}). As the machine learning model, we choose
$u_\theta(x) = \sum_{k=-K}^K \theta_k\phi_k(x)$ with,
\begin{eqnarray}
    \phi_0(x)= \tfrac{1}{\sqrt{2\pi}}, \quad \phi_{-k}(x)=\tfrac{1}{\sqrt{\pi}}\cos(kx), \quad \phi_k(x) = \tfrac{1}{\sqrt{\pi}}\sin(kx),
\end{eqnarray}
for $1\leq k\leq K$. This model corresponds to the widely used learnable \emph{Fourier Features} in the machine learning literature \citep{FF} or \emph{spectral methods} in numerical analysis \citep{Specbook}. We can readily verify that the resulting matrix $\sA$ (\ref{eq:defn}) is given by 
\begin{eqnarray}
    \sA = \sD + \lambda vv^\top,
\end{eqnarray}
where  $\sD$ is a diagonal matrix with $\sD_{kk} = k^4$ and $v:=\phi(\pi)$ i.e., that $v$ is a vector with $v_{-k} = (-1)^k/\sqrt{\pi}$, $v_0=1/\sqrt{2\pi}$, $v_k = 0$ for $1\leq k\leq K$. 
Preconditioning solely based on $\cL^*\cL$ would correspond to finding a matrix $\sP$ such that $\sP\sD\sP^\top = \Id$. However, given that $\sD_{00}=0$, this is not possible. We therefore set $\sP_{kk} = 1/k^2$ for $k\neq 0$ and $\sP_{00} = \gamma \in \R$. The preconditioned matrix is therefore
\begin{align}
\label{eq:pcm}
    \Tilde{\sA}(\lambda,\gamma) = \sP\sD\sP^\top + \lambda \sP v(\sP v)^\top. 
\end{align}
The conditioning of the unpreconditioned and preconditioned matrices considered above are summarized in the following theorem \cite{deryck2023operator}.
\begin{theorem}\label{thm:preconditioned-matrix}
The following statements hold for all $K\in\N$:
\begin{enumerate}
    \item The condition number of the unpreconditioned matrix above satisfies, 
    \begin{eqnarray}
    \kappa(\sA(\lambda)) \geq K^4.
    \end{eqnarray}
    \item There exists a constant $C(\lambda,\gamma)>0$ that is independent of $K$ such that,
    \begin{eqnarray}
        \kappa(\Tilde{\sA}(\lambda,\gamma) \leq C.
    \end{eqnarray}
    \item It holds that $\kappa(\Tilde{\sA}(2\pi/\gamma^2, \gamma)) = 1+ O(1/\gamma)$ and hence,
    \begin{eqnarray}
        \lim_{\gamma\to +\infty} \kappa(\Tilde{\sA}(2\pi/\gamma^2, \gamma)) = 1.
    \end{eqnarray}
\end{enumerate}
\end{theorem}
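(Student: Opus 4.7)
The plan is to exploit the rank-one structure of both $\sA$ and $\Tilde{\sA}$ by finding invariant subspaces and reducing the nontrivial spectrum of the preconditioned matrix to a $2 \times 2$ secular problem. Throughout I write $e_k$ for the standard basis vectors indexed by $-K \le k \le K$.

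For part (1), I would simply exhibit two explicit eigenvectors of $\sA = \sD + \lambda vv^\top$. Since $v_k = 0$ for $1 \le k \le K$, the vector $e_k$ lies in the kernel of $vv^\top$ for each such index and hence $\sA e_k = k^4 e_k$. Choosing $k = 1$ yields $\lambda_{\min}(\sA) \le 1$ while $k = K$ yields $\lambda_{\max}(\sA) \ge K^4$, giving $\kappa(\sA) \ge K^4$ independent of $\lambda$.

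For parts (2) and (3), I would first simplify $\Tilde{\sA}$. A direct diagonal computation shows $\sP\sD\sP^\top = \Id - e_0 e_0^\top$, so $\Tilde{\sA} = \Id - e_0 e_0^\top + \lambda u u^\top$ with $u := \sP v$, whose components are $u_0 = \gamma/\sqrt{2\pi}$, $u_{-k} = (-1)^k/(k^2\sqrt{\pi})$ for $1 \le k \le K$, and $u_k = 0$ for $1 \le k \le K$. Because $u$ vanishes on $V_+ := \Span\{e_1,\ldots,e_K\}$, this subspace is invariant and $\Tilde{\sA}$ acts as the identity on it. On the complement $V_- := \Span\{e_{-K},\ldots,e_{-1},e_0\}$, I decompose $u|_{V_-} = u_0 e_0 + \tilde{u}$ with $\tilde{u} \in \Span\{e_{-1},\ldots,e_{-K}\}$; the restriction equals $P + \lambda(u_0 e_0 + \tilde{u})(u_0 e_0 + \tilde{u})^\top$ where $P$ is the orthogonal projection onto $\Span\{e_{-1},\ldots,e_{-K}\}$. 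Any vector in $V_-$ orthogonal to both $e_0$ and $\tilde{u}$ is an eigenvector with eigenvalue $1$ (contributing $K-1$ further copies), and the remaining two eigenvalues $\mu_\pm$ are those of the $2\times 2$ block in the basis $\{e_0,\, \tilde{u}/\|\tilde{u}\|\}$, namely
\begin{equation*}
\begin{pmatrix} \lambda u_0^2 & \lambda u_0 \|\tilde{u}\| \\ \lambda u_0 \|\tilde{u}\| & 1 + \lambda \|\tilde{u}\|^2 \end{pmatrix},
\end{equation*}
so $\mu_+ \mu_- = \lambda u_0^2$ and $\mu_+ + \mu_- = 1 + \lambda u_0^2 + \lambda \|\tilde{u}\|^2$.

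The key analytic input is the uniform bound $\|\tilde{u}\|^2 = \tfrac{1}{\pi}\sum_{k=1}^K k^{-4} \le \zeta(4)/\pi$, which decouples all $K$-dependence. For part (2), the trace and determinant of the above block are therefore bounded above in terms of $\lambda,\gamma$ only, and the determinant is bounded below by $\lambda \gamma^2/(2\pi) > 0$, so $\mu_\pm$ lie in a compact positive interval independent of $K$; together with the eigenvalue $1$ this yields $\kappa(\Tilde{\sA}) \le C(\lambda,\gamma)$. For part (3), substituting $\lambda = 2\pi/\gamma^2$ makes $\lambda u_0^2 = 1$, so the secular equation reduces to $\mu^2 - (2+\epsilon)\mu + 1 = 0$ with $\epsilon := \lambda\|\tilde{u}\|^2 = O(1/\gamma^2)$; its roots are $\mu_\pm = 1 \pm \sqrt{\epsilon} + O(\epsilon)$, and since $\mu_+\mu_- = 1$ forces $\mu_- \le 1 \le \mu_+$, the extremes of the entire spectrum of $\Tilde{\sA}$ are exactly $\mu_\pm$, giving $\kappa(\Tilde{\sA}) = \mu_+/\mu_- = 1 + O(1/\gamma)$. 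The argument is largely bookkeeping; the only mildly delicate point is recognising that preconditioning replaces the unbounded diagonal $\sD$ with the bounded projection $\Id - e_0 e_0^\top$, collapsing the whole problem onto a universal $2\times 2$ block whose coefficients are $K$-independent thanks to the summability of $\sum k^{-4}$.
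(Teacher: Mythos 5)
Your proof is correct. The paper itself states this theorem only with a citation to the external reference and gives no proof, so there is nothing in-text to compare against; on its own merits your argument is sound and complete. Part (1) correctly exploits that $v$ vanishes on the indices $1\le k\le K$, so $e_1$ and $e_K$ are exact eigenvectors of $\sA$ with eigenvalues $1$ and $K^4$, giving the lower bound for every $\lambda$. For parts (2) and (3), the reduction $\sP\sD\sP^\top=\Id-e_0e_0^\top$, the identification of the invariant subspaces on which $\Tilde{\sA}$ acts as the identity, and the resulting $2\times2$ block with determinant $\lambda u_0^2$ and trace $1+\lambda u_0^2+\lambda\|\tilde u\|^2$ are all verified correctly; the uniform bound $\|\tilde u\|^2\le\zeta(4)/\pi$ is exactly what removes the $K$-dependence, and the observation that $\mu_+\mu_-=1$ under the choice $\lambda=2\pi/\gamma^2$ pins the extreme eigenvalues of the whole spectrum to $\mu_\pm=1\pm\sqrt{\epsilon}+O(\epsilon)$ with $\epsilon=O(1/\gamma^2)$, yielding $\kappa=1+O(1/\gamma)$. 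This is the natural route for a diagonal-plus-rank-one matrix and almost certainly mirrors the cited computation.
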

We observe from Theorem \ref{thm:preconditioned-matrix}
that (i) the matrix $\sA$, which governs gradient descent dynamics for approximating the Poisson equation with learnable Fourier features, is very poorly conditioned and (ii) we can (optimally) precondition it by \emph{rescaling} the Fourier features based on the eigenvalues of the underlying differential operator (or its Hermitian square). 

\begin{figure}[htbp]
    \centering
    \includegraphics[width=\textwidth]{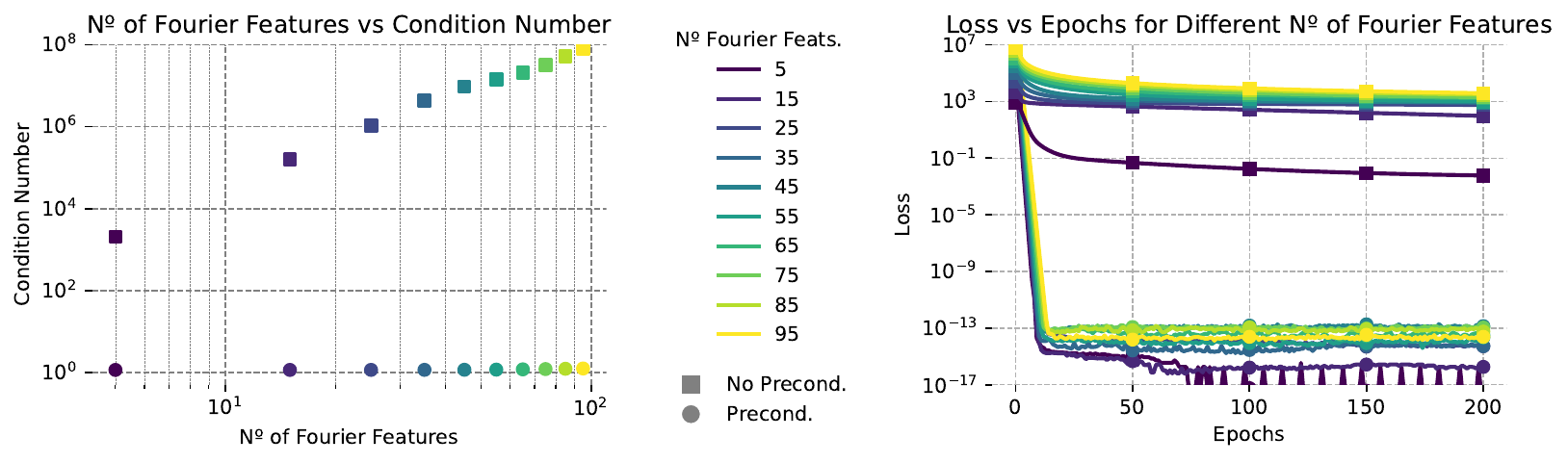}
    \caption{Poisson equation with Fourier features. \emph{Left:} Optimal condition number vs. Number of Fourier features. \emph{Right:} Training for the unpreconditioned and preconditioned Fourier features. Figure from \citet{deryck2023operator}. }
     \label{fig:1}
\end{figure}

These conclusions are also observed empirically. 
In Figure \ref{fig:1} (left), we plot the condition number of the matrix $\sA$, minimized over $\lambda$, as a function of maximum frequency $K$ and verify that this condition number increases as $K^4$, as predicted by Theorem \ref{thm:preconditioned-matrix}. Consequently as shown in Figure \ref{fig:1} (right), where we plot the loss function in terms of increasing training epochs, the model is very hard to train with large losses (particularly for higher values of $K$), showing a very slow decay of the loss function as the number of frequencies is increased. On the other hand, in Figure \ref{fig:1} (left), we also show that the condition number (minimized over $\lambda$) of the preconditioned matrix (\ref{eq:pcm}) remains constant with increasing frequency and is very close to the optimal value of $1$, verifying Theorem \ref{thm:preconditioned-matrix}. As a result, we observe from Figure \ref{fig:1} (right) that the loss in the preconditioned case decays exponentially fast as the number of epochs is increased. This decay is independent of the maximum frequency of the model. The results demonstrate that the preconditioned version of the Fourier features model can learn the solution of the Poisson equation efficiently, in contrast to the failure of the unpreconditioned model to do so. Entirely analogous results are obtained with the Helmholtz equation \cite{deryck2023operator}. 

\begin{figure}[htbp]
    \centering
    \includegraphics[width=\textwidth]{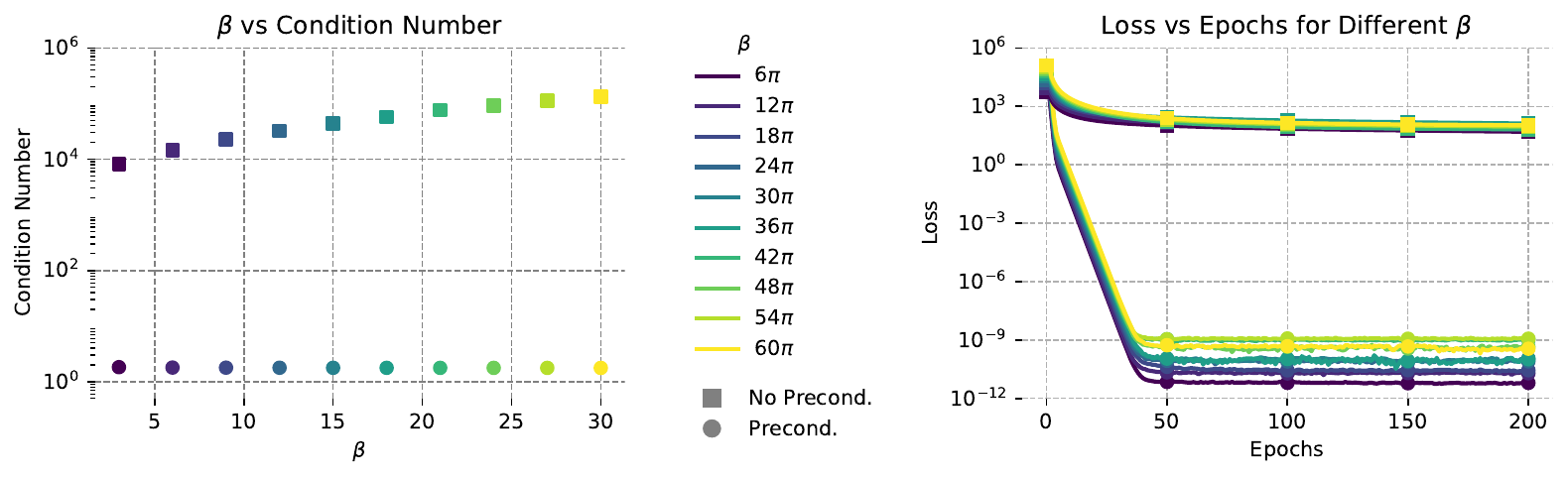}
    \caption{Linear advection equation with Fourier features. \emph{Left:} Optimal condition number vs. $\beta$. \emph{Right:} Training for the unpreconditioned and preconditioned Fourier features. Figure from \citet{deryck2023operator}. }
     \label{fig:2}
\end{figure}

As a different example, we consider the linear advection equation $u_t + \beta u_x =0$ on the one-dimensional spatial domain $x \in [0,2\pi]$ and with $2\pi$-periodic solutions in time with $t\in [0, 1]$. As in \cite{krishnapriyan_characterizing}, our focus in this case is to study how physics-informed machine learning models train when the advection speed $\beta > 0$ is increased. To empirically evaluate this example, we again choose learnable time-dependent Fourier features as the model and precondition the resulting matrix $\sA$ (\ref{eq:defn}). In Figure \ref{fig:2} (left), we see that the condition number of $\sA(\beta) \sim \beta^2$ grows quadratically with advection speed. On the other hand, the condition number of the preconditioned model remains constant. Consequently as shown in Figure \ref{fig:2} (right), the unpreconditioned model trains very slowly (particularly for increasing values of the advection speed $\beta$) with losses remaining high despite being trained for a large number of epochs. In complete contrast, the preconditioned model trains very fast, irrespective of the values of the advection speed $\beta$. 

\begin{figure}
\centering
\begin{minipage}{.5\textwidth}
  \centering
  \includegraphics[width=\linewidth]{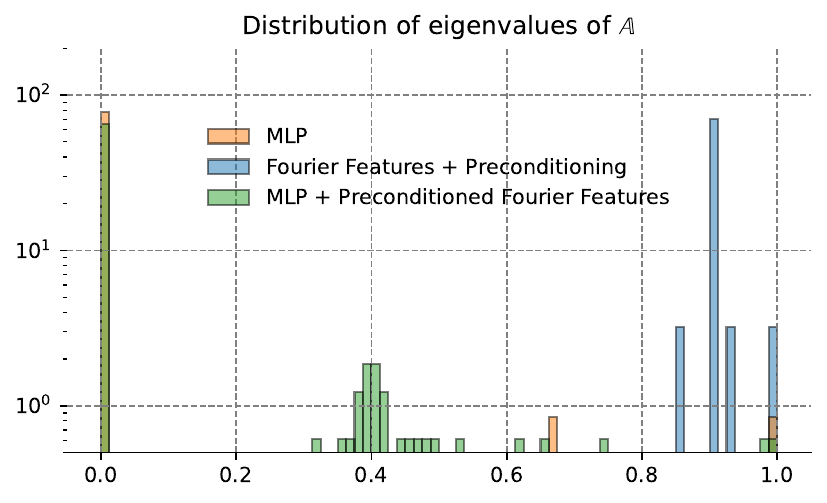}
\end{minipage}%
\begin{minipage}{.5\textwidth}
  \centering
  \includegraphics[width=\linewidth]{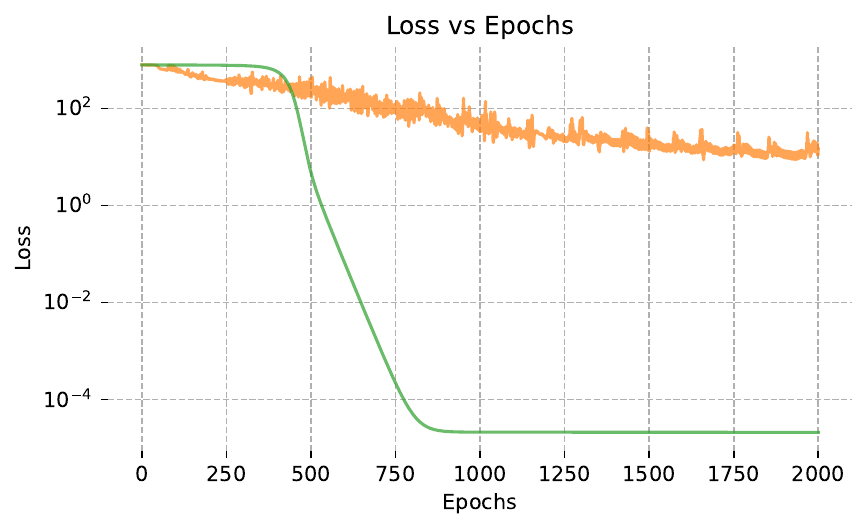}
\end{minipage}
\caption{Poisson equation with MLPs. \emph{Left:} Histogram of normalized spectrum (eigenvalues multiplied with learning rate). \emph{Right:} Loss vs. number of epochs. Figure from \citet{deryck2023operator}. }
        \label{fig:3}
\end{figure}

\subsubsection{Preconditioning nonlinear physics-informed machine learning models.} 
Next, \citet{deryck2023operator} have investigated the conditioning of nonlinear models (Section \ref{sec:nonlinear-models}) by considering the Poisson equation on $(-\pi,\pi)$ and learning its solution with neural networks of the form $u_\theta(x) = \Phi_\theta(x)$. It was observed that  most of the eigenvalues of the resulting matrix $\sA$ \eqref{eq:defn} are clustered near zero. Consequently, it is difficult to analyze the condition number per se. However, they also observed that there are only a couple of large non-zero eigenvalues of $\sA$ indicating a very uneven spread of the spectrum. It is well-known in classical numerical analysis \citep{TREF} that such spread-out spectra are very poorly conditioned and this will impede training with gradient descent. This is corroborated in Figure \ref{fig:3} (right) where the physics-informed MLP trains very slowly. Moreover, it turns out that preconditioning also localizes the spectrum \citep{TREF}. This is attested in Figure \ref{fig:3} (left) where we see the localized spectrum of the preconditioned Fourier features model considered previously, which is also correlated with its fast training (Figure \ref{fig:1} (right)).

However, preconditioning $\sA$ for nonlinear models such as neural networks can, in general, be hard. Here, we consider a 
simple strategy by coupling the MLP with Fourier features $\phi_k$ defined above, i.e., setting $u_\theta = \Phi_\theta\left(\sum_{k} \alpha_k \phi_k\right)$. 
Intuitively, one must carefully choose $\alpha_k$ to control
$\frac{d^{2n}}{dx^{2n}}u_\theta(x)$ as it will include terms such as $(\sum_{k\neq 0} \alpha_k (-k)^{2n}\phi_k)\Phi_\theta^{(2n)}\left(\sum_{k} \alpha_k \phi_k\right)$. Hence,
such rescaling could better condition the Hermitian square of the differential operator. To test this for Poisson's equation, we choose $\alpha_k = 1/k^2$ (for $k \neq 0$) in this FF-MLP model and present the eigenvalues in Figure \ref{fig:3} (left) to observe that although there are still quite a few (near)-zero eigenvalues, the number of non-zero eigenvalues is significantly increased leading to a much more even spread in the spectrum, when compared to the unpreconditioned MLP case. This possibly accounts for the fact that the resulting loss function decays much more rapidly, as shown in Figure \ref{fig:3} (right), when compared to unpreconditioned MLP.  

\subsection{Strategies for improving training in physics-informed machine learning}\label{sec:strategies}

Given the difficulties encountered in training physics-informed machine learning models, several ad-hoc strategies have been proposed in the recent literature to improve training. It turns out that many of these strategies can also be interpreted in terms of preconditioning. 

\subsubsection{Choice of $\lambda$.} The parameter $\lambda$ in a physics-informed loss such as (\ref{eq:epil-td}) plays a crucial role as it balances the relative contributions of the physics-informed loss $R$ and the supervised loss at the boundary $B$. Given the analysis of the previous sections, it is natural to suggest that this parameter should be chosen as,
\begin{eqnarray}\label{eq:optimal-lambda}
    \lambda^* := \min_\lambda \kappa(\sA(\lambda))
\end{eqnarray}
in order to obtain the smallest condition number of $\sA$ and accelerate convergence. Another strategy is suggested by \citet{wang2021understanding}, who suggest a learning rate annealing algorithm to iteratively update $\lambda$ throughout training as follows,
\begin{equation}\label{eq:lambda-a}
    \lambda^*_a := \frac{\max_k \abs{(\nabla_\theta R(\theta))_k}}{(2K+1)^{-1}\sum_k \abs{(\nabla_\theta B(\theta))_k}}. 
\end{equation}
A similar work is \citet{wang2022and}, where it is proposed to set
\begin{align}\label{eq:lambda-b}
    \lambda^*_b := \frac{Tr(K_{rr}(n))}{Tr(K_{uu}(n))} \approx \frac{\int_\Omega \nabla_\theta \cL u_\theta^\top \nabla_\theta \cL u_\theta}{\int_{\partial\Omega} \nabla_\theta  u_\theta^\top \nabla_\theta  u_\theta},
\end{align}
where we refer to \citet{wang2022and} for the exact definition of $K_{rr}$ and $K_{uu}$. 

In the below example, we compare $\lambda^*$ \eqref{eq:optimal-lambda}, $\lambda^*_a$ \eqref{eq:lambda-a} and $\lambda^*_b$ \eqref{eq:lambda-b} for the Poisson equation. 

\begin{example}[Poisson's equation]
We compare the three proposed strategies above for the 1D Poisson equation and a linear model using Fourier features up to frequency $K$. 
First of all, the authors of \cite{deryck2023operator} computed that $\lambda^*\sim K^2$. 
Next, to calculate $\lambda^*_a$ we observe that $(\nabla_\theta R(\theta))_k = k^4 \theta_k$ for $k\neq \ell$ and $(\nabla_\theta R(\theta))_\ell = \ell^4 (\theta_k-1)$. We find that $\max_k \abs{(\nabla_\theta R(\theta))_k} \sim K^4$ at initialization. Next we calculate that (given that at initialization it holds that $\theta_m\sim \cN(0,1)$ iid),
\begin{align}
    \mathbb{E}\sum_k \abs{(\nabla_\theta B(\theta))_k} =  \sum_{k=0}^K\mathbb{E} \abs{\sum_{m=0}^K\theta_m(-1)^{k+m}} = \sum_{k=0}^K\mathbb{E} \abs{\sum_{m=0}^K\theta_m} = (K+1)^{3/2}
\end{align}
This brings us to the rough estimate that $(2K+1)^{-1}\sum_k \abs{(\nabla_\theta B(\theta))_k} \sim \sqrt{K}$. 
So in total we find that $\lambda^*_a \sim K^{3.5}$. 
Finally, for $\lambda^*_b$ one can make the estimate that,
\begin{align}
    \lambda^*_b \approx \frac{2\pi \sum_{k=1}^K k^4}{K+1} \sim K^4, 
\end{align}

Hence, it turns out that applying these different strategies leads to different scalings of $\lambda$ with respect to increasing $K$ for the Fourier features model. Despite these differences, it was observed that the resulting condition numbers $\kappa(\sA(\lambda^*))$, $\kappa(\sA(\lambda^*_a))$ and $\kappa(\sA(\lambda^*_b))$ are actually very similar. 
\end{example}

\subsubsection{Hard boundary conditions.} From the very advent of PINNs \citep{Lag1,Lag2}, several authors have advocated modifying machine learning models such that the boundary conditions in PDE (\ref{eq:pde}) can be imposed exactly and the boundary loss in \eqref{eq:epil-td} is zero. Such \emph{hard} imposition of boundary conditions (BCs) has been empirically shown to aid training, e.g. \cite{dong2021method,Mos1,Mos2} and references therein. This can be explained as implementing hard boundary conditions leads to a different matrix $\sA$, through changing the operator $TT^*$. 

We first consider a toy model to demonstrate these phenomena in a straightforward way. We consider again the Poisson equation $-\Delta u = -\sin$ with zero Dirichlet boundary conditions (cf. Example \ref{def:poisson}). We choose as model
\begin{eqnarray}
u_\theta(x) = \theta_{-1}\cos(x)+\theta_0+\theta_{1}\sin(x)
\end{eqnarray}
and report the comparison of the condition number between various variants of soft and hard boundary conditions from \cite{deryck2023operator}.
\begin{itemize}
    \item \emph{Soft boundary conditions.} 
    If one sets the optimal $\lambda$ using \eqref{eq:optimal-lambda} we find that the condition number for the optimal $\lambda^*$ is given by $3+2\sqrt{2}\approx 5.83$. 
    \item \emph{Hard boundary conditions - variant 1.} A first common method to implement hard boundary conditions is to multiply the model with a function $\eta(x)$ so that the product exactly satisfies the boundary conditions, regardless of $u_\theta$. In our setting, we could consider $\eta(x)u_\theta(x)$ with $\eta(\pm \pi) = 0$. For $\eta=\sin$ the total model is given by
    \begin{align}
        \eta(x)u_\theta(x) = -\frac{\theta_{-1}}{2}\cos(2x) + \frac{\theta_{-1}}{2} + \theta_0 \sin(x) + \frac{\theta_1}{2}\sin(2x),
    \end{align}
    and gives rise to a condition number of $4$. Different choices of $\eta$ will inevitably lead to different condition numbers. 
    \item \emph{Hard boundary conditions - variant 2.} Another option would be to subtract $u_\theta(\pi)$ from the model so that the boundary conditions are exactly satisfied. This corresponds to the model, 
    \begin{align}
        u_\theta(x)-u_\theta(\pi) = \theta_{-1}(\cos(x)+1) + \theta_1\sin(x)
    \end{align}
    Note that this implies that one can discard $\theta_0$ as parameter, leaving only two trainable parameters. The corresponding condition number is $1$. 
\end{itemize}
Hence, in this example the the condition number for hard boundary conditions is strictly smaller than for soft boundary conditions i.e., 
\begin{eqnarray}
    \kappa(\sA_{\text{hard BC}}) < \min_\lambda \kappa(\sA_{\text{soft BC}}(\lambda)). 
\end{eqnarray}
This phenomenon can also be observed for other PDEs such as the linear advection equation \cite{deryck2023operator}.

\subsubsection{Second-order optimizers.} There are many empirical studies which demonstrate that first-order optimizers such as (stochastic) gradient descent or ADAM are not suitable for physics-informed machine learning and one needs to use second-order (quasi-)Newton type optimizers such as L-BGFS in order to make training of physics-informed machine learning models feasible. As it turns out,  one can prove that for linear physics-informed models the Hessian of the loss is identical to the matrix $\sA$ (\ref{eq:defn}). Given any loss $\cJ(\theta)$, Newton's method's gradient flow is given by
\begin{equation}
    \frac{d\theta(t)}{dt} = - \gamma H[\cJ(\theta(t))]^{-1} \nabla_\theta \cJ(\theta(t)),
\end{equation}
where $H$ is the Hessian. In our case, we consider the physics-informed loss $L(\theta) = R(\theta)+\lambda B(\theta)$ and consider the model $u_\theta(x) = \sum_\ell \theta_\ell \phi_\ell(x)$, which corresponds to a linear model or a neural network in the NTK regime (e.g. Lemma \ref{lem:NTK-constant}).
Using that $\partial_{\theta_i}\partial_{\theta_j} u_\theta = 0$, we calculate, 
\begin{equation}
\begin{split}
    \partial_{\theta_i}\partial_{\theta_j}L(\theta) &= \int_\Omega  (\cL \partial_{\theta_i} u_\theta(x)) \cdot  \cL \partial_{\theta_j} u_\theta(x) dx  +\lambda \int_{\partial\Omega} \partial_{\theta_i}u_{\theta(t)}(x) \cdot  \partial_{\theta_j} u_\theta(x) dx\\
    &= \int_\Omega  \cL \phi_i(x) \cdot \cL \phi_j(x) dx  + \lambda \int_{\partial\Omega} \phi_i(x) \cdot  \phi_j(x) dx \\
    &= \sA_{ij}. 
\end{split}
\end{equation}
Therefore, in this case (quasi-)Newton methods automatically compute an (approximate) inverse of the Hessian and hence, precondition the matrix $\sA$, relating the use of (quasi-)Newton type optimizers to preconditioning operators in this context. See \cite{Zeinhofer} for further analysis of this connection.

\subsubsection{Domain decomposition.} Domain decomposition (DD) is a widely used technique in numerical analysis to precondition linear systems that arise out of classical methods such as finite elements \citep{DDbook}. Recently, there have been attempts to use DD-inspired methods within physics-informed machine learning, see \cite{Mos1,Mos2} and references therein, although no explicit link with preconditioning the models was established. This link was established in \cite{deryck2023operator} for the linear advection equation, where they computed that the condition number of $\sA$ depends quadratically on the advection speed $\beta$. They argued that considering $N$ identical submodels on subintervals of $[0,T]$ essentially comes down to rescaling $\beta$. Since the condition number scales as $\beta^2$ splitting the time domain in $N$ parts will then reduce the condition number of each individual submodel by a factor of $N^2$. This is also intrinsically connected to what happens in {\bf causal learning} based training of PINNs \citep{wang2022respecting}, which therefore also can be viewed as a strategy for improving the condition number. 
\section{Conclusion}
\label{sec:conc}
In this article, we have provided a detailed review of available theoretical results on the numerical analysis of physics-informed machine learning models, such as PINNs and its variants, for the solution of forward and inverse problems for large classes of PDEs. We formulated physics-informed machine learning in terms of minimizing suitable forms (strong, weak, variational) of the \emph{residual} of the underlying PDE within suitable subspaces of Ansatz spaces of \emph{parametric functions}. PINNs are a special case of this general formulation corresponding to the strong form of the PDE residual and neural networks as Ansatz spaces. The residual is minimized with variants of the (stochastic)-gradient descent algorithm. 

Our analysis revealed that as long as the solutions to the PDE are \emph{stable} in a suitable sense, the overall (total) error, which is the mismatch between the exact PDE solution and the approximation produced (after training) of the physics-informed machine learning model can be estimated in terms of the following constituents i) approximation error which measures the smallness of the PDE residual within the Ansatz space (or hypothesis class) ii) the generalization gap which measures the \emph{quadrature} error and iii) the optimization error which quantifies how well is the optimization problem solved. Detailed analysis of each component of the error is provided, both in general terms as well as exemplified for many prototypical PDEs.  
Although a lot of the analysis is PDE-specific, we can discern many features that are shared across PDEs and models, which summarize below,
\begin{itemize}
\item The (weak) stability of the underlying PDE plays a key role in the analysis and this can be PDE specific. In general, error estimates can depend on how the solution of the underlying PDEs change with respect to perturbations. This is also consistent with the analysis of traditional numerical methods. 
\item Sufficient regularity of the underlying PDE solution is often required in the analysis although this regularity can be lessened by changing the form of the PDE. 
\item The key bottleneck in physics-informed machine learning is due to the inability of the models to train properly. Generically, training PINNs and their variants can be difficult as it depends on the spectral properties of the Hermitian square of the underlying differential operator although suitable preconditioning might ameliorate training issues. 
\end{itemize}
Finally, theory does provide some guidance to practitioners about how and when PINNs and their variants can be used. Succinctly, regularity dictates the form (strong vs. weak) and physics-informed machine learning models are expected to outperform traditional methods on high-dimensional problems (see \citet{mishra2020physics} as an example) or problems with complex geometries, where grid generation is prohibitively expensive or in inverse problems, where data supplements the physics. Physics-informed machine learning will be particularly attractive when coupled with operator learning models such as neural operators as the underlying space is infinite-dimensional. More research on physics-informed operator learning models is highly desirable. 

\clearpage

\appendix 

\section{Sobolev spaces}\label{sec:sobolev}

Let $d\in\mathbb{N}$, $k\in\mathbb{N}_0$, $1\leq p\leq \infty$ and let $\Omega \subseteq \mathbb{R}^d$ be open. For a function $f:\Omega\to\mathbb{R}$ and a (multi-)index $\alpha \in \N^d_0$ we denote by 
\begin{equation}
    D^\alpha f= \frac{\partial^{\abs{\alpha}} f}{\partial x_1^{\alpha_1}\cdots \partial x_d^{\alpha_d}}
\end{equation}
the classical or distributional (i.e. weak) derivative of $f$. 
We denote by $L^p(\Omega)$ the usual Lebesgue space and for we define the Sobolev space $W^{k,p}(\Omega)$ as
\begin{equation}
    W^{k,p}(\Omega) = \{f \in L^p(\Omega): D^\alpha f \in L^p(\Omega) \text{ for all } \alpha\in\mathbb{N}^d_0 \text{ with } \abs{\alpha}\leq k\}. 
\end{equation}
For $p<\infty$, we define the following seminorms on $W^{k,p}(\Omega)$, 
\begin{equation}
    \abs{f}_{W^{m,p}(\Omega)} = \left(\sum_{\abs{\alpha}= m}\norm{D^\alpha f}^p_{L^p(\Omega)}\right)^{1/p} \qquad \text{for } m=0,\ldots, k, 
\end{equation}
and for $p=\infty$ we define
\begin{equation}
    \abs{f}_{W^{m,\infty}(\Omega)} =\max_{\abs{\alpha}= m} \norm{D^\alpha f}_{L^\infty(\Omega)}\qquad \qquad \text{for } m=0,\ldots, k. 
\end{equation}
Based on these seminorms, we can define the following norm for $p<\infty$,
\begin{equation}
    \norm{f}_{W^{k,p}(\Omega)} = \left(\sum_{m=0}^k \abs{f}_{W^{m,p}(\Omega)}^p\right)^{1/p}, 
\end{equation}
and for $p=\infty$ we define the norm
\begin{equation}
    \norm{f}_{W^{k,\infty}(\Omega)} =\max_{0\leq m\leq k}  \abs{f}_{W^{m,\infty}(\Omega)}. 
\end{equation}
The space $W^{k,p}(\Omega)$ equipped with the norm $\norm{\cdot}_{W^{k,p}(\Omega)}$ is a Banach space. 

We denote by $C^k(\Omega)$ the space of functions that are $k$ times continuously differentiable and equip this space with the norm $\norm{f}_{C^k(\Omega)} = \norm{f}_{W^{k,\infty}(\Omega)}$.

\clearpage 
\addcontentsline{toc}{section}{References}
\bibliography{ref}
\label{lastpage}
\end{document}